\newtheorem{theorem}{Theorem}[section]
\newaliascnt{lem}{theorem}
\newtheorem{lemma}[lem]{Lemma}
\newaliascnt{ass}{theorem}
\newaliascnt{prop}{theorem}
\newtheorem{prop}[prop]{Proposition}
\newaliascnt{cor}{theorem}
\newtheorem{corollary}[cor]{Corollary}
\newaliascnt{defi}{theorem}
\newtheorem{defi}[defi]{Definition}
\theoremstyle{definition}
\newaliascnt{ex}{theorem}
\newaliascnt{conv}{theorem}
\newtheorem{convention}[conv]{Convention}
\newaliascnt{rem}{theorem}
\newtheorem{remark}[rem]{Remark}
\newcommand{\Regd}{\ensuremath{\textup{RD}}}
\renewcommand{\d}{\,\mathrm{d}}											
\renewcommand*{\epsilon}{\varepsilon}                                   
\renewcommand*{\rho}{\varrho}                                   		
\newcommand*{\sep}{\; \vrule \;}                                        
\newcommand*{\sepb}{\; \big| \;}                                        
\newcommand*{\N}{\mathbb{N}}                                            
\newcommand*{\R}{\mathbb{R}}                                            
\newcommand*{\real}{\mathbb{R}}         
\newcommand*{\C}{\mathbb{C}}                                            
\newcommand*{\Z}{\mathbb{Z}}                                            
\newcommand*{\B}{\mathcal{B}}                                           
\renewcommand*{\L}{\mathcal{L}}                                         	
\newcommand*{\D}{\mathcal{D}}                                         	
\newcommand*{\I}{\mathcal{I}}                                         	
\newcommand*{\0}{\mathcal{O}}                                           
\newcommand*{\abs}[1]{\left| #1 \right|}                                
\newcommand*{\norm}[1]{\left\| #1 \right\|}                             
\newcommand*{\normmod}[1]{\left\|\hspace{-1pt}\left| #1 \right|\hspace{-1pt}\right\|}                 
\newcommand*{\floor}[1]{\left\lfloor #1 \right\rfloor}                  
\newcommand*{\ceil}[1]{\left\lceil #1 \right\rceil}                     
\newcommand*{\dist}[2]{\mathrm{dist}\!\left( #1, #2 \right)}            
\renewcommand{\tilde}[1]{ \widetilde{#1} }        						
\DeclareMathOperator{\supp}{supp}										
\newcommand{\loc}{\mathrm{loc}}
\newcommand{\beq}{\begin{equation}}
\newcommand{\eeq}{\end{equation}}
\newcommand{\bit}{\begin{itemize}}
\newcommand{\eit}{\end{itemize}}
\setlist{itemsep=2pt, topsep=2pt}
\newcommand{\cont}{\ensuremath{\mathcal{C}}}
\newcommand{\Abs}[1]{\ensuremath{\big\lvert  #1 \big\rvert}}
\newcommand{\grklam}[1]{\big(#1\big)}
\newcommand{\sgrklam}[1]{\Big(#1\Big)}
\newcommand{\ssgrklam}[1]{\bigg(#1\bigg)}
\newcommand{\ggklam}[1]{\big\{#1\big\}}
\newcommand{\sggklam}[1]{\Big\{#1\Big\}}
\newcommand{\ssggklam}[1]{\bigg\{#1\bigg\}}
\newcommand{\eklam}[1]{\left[#1\right]}
\newcommand{\geklam}[1]{\big [#1 \big ]}
\newcommand{\dx}{\mathrm d x}
\newcommand{\dr}{\mathrm d r}
\newcommand{\gdomain}{\ensuremath{G}}   
\newcommand{\domain}{\ensuremath{\mathcal{O}}}   
\DeclareFontFamily{U}{matha}{\hyphenchar\font45}
\DeclareFontShape{U}{matha}{m}{n}{
      <5> <6> <7> <8> <9> <10> gen * matha
      <10.95> matha10 <12> <14.4> <17.28> <20.74> <24.88> matha12
      }{}
\DeclareSymbolFont{matha}{U}{matha}{m}{n}
\DeclareFontFamily{U}{mathx}{\hyphenchar\font45}
\DeclareFontShape{U}{mathx}{m}{n}{
      <5> <6> <7> <8> <9> <10>
      <10.95> <12> <14.4> <17.28> <20.74> <24.88>
      mathx10
      }{}
\DeclareSymbolFont{mathx}{U}{mathx}{m}{n}
\DeclareMathDelimiter{\vvvert}{0}{matha}{"7E}{mathx}{"17}
\newcommand{\scco}{\color{black}}
\title{Sobolev spaces with mixed weights and the\\ Poisson equation on angular domains}
\date{}
\author{
Petru A. Cioica-Licht\footnote{\emph{Corresponding author}. Institute of Mathematics, University of Kassel, Heinrich-Plett Str.~40, 34132 Kassel, Germany. Email: \href{mailto:cioica-licht@mathematik.uni-kassel.de}{cioica-licht@mathematik.uni-kassel.de}}
\qquad
Cornelia Schneider\footnote{Friedrich-Alexander University Erlangen-Nuremberg, Applied Mathematics III, Cauerstr. 11, 91058 Erlangen, Germany. Email: \href{mailto:cornelia.schneider@math.fau.de}{cornelia.schneider@math.fau.de}. 
The work of this author has been supported by Deutsche Forschungsgemeinschaft (DFG), Grant No. SCHN 1509/1-2.}
\qquad	
Markus Weimar\footnote{Julius-Maximilians-Universit\"at W\"urzburg, Faculty of Mathematics and Computer Science, Mathematics~IX (Scientific Computing), Emil-Fischer-Stra{\ss}e 30,
97074 W\"urzburg, Germany. Email: \href{mailto:markus.weimar@uni-wuerzburg.de}{markus.weimar@uni-wuerzburg.de}
}
}
\begin{document}   
\maketitle   

\vspace*{-2em}

\smallskip

\begin{abstract}
\noindent 
We introduce and analyse a class of weighted Sobolev spaces with mixed weights on angular domains.
The weights are based on both the distance to the boundary and the distance to the one vertex of the domain.
Moreover, we show how the regularity of the Poisson equation can be analysed in the framework of these spaces by means of the Mellin transform, provided the integrability parameter equals two. 
Our main motivation comes from the study of stochastic partial differential equations and associated degenerate deterministic parabolic  equations.

\smallskip

\noindent \textbf{Keywords:} weighted Sobolev space, Poisson equation, angular domain, stochastic partial differential equation, degenerate equation, Mellin transform, Dirichlet Laplacian

\smallskip
\noindent \textbf{2020 Mathematics Subject Classification:} 
Primary: 
46E35;  
Secondary: 
35J15,  
35J70,  
46N20,  
60H15.  
\end{abstract}

\tableofcontents

\section{Introduction}
\label{sec:Introduction}

In this paper we present a thorough analysis of a class of   weighted Sobolev spaces $H^\gamma_{p,\Theta,\theta}(\D)$ involving mixed weights on angular domains
\begin{equation}\label{eq:D}
\D
:=
\D_\kappa
:=
\ggklam{
x=(x_1,x_2)\in\R^2
\sep
x=(r\cos(\phi),r\sin(\phi)),\, 0<r<\infty,\,0<\phi<\kappa
}\subset\R^2
\end{equation}
with arbitrary angle $0<\kappa<2\pi$. Recently, these spaces  have been  used for the regularity analysis of stochastic partial differential equations (SPDEs) and related degenerate partial differential equations (PDEs) on angles and polygons~\cite{Cio20,CioKimLee2019,CioKimLee+2018,KimLeeSeo2021,KimLeeSeo2022b}.
Moreover, we initiate the analysis of the Dirichlet Laplacian on $\D$  in this scale of  Sobolev spaces by proving existence and uniqueness of solutions to the Poisson equation
\begin{equation}\label{eq:Poisson}
\Delta u =f \,\text{ on }\,\D,\qquad u=0\,\text{ on }\,\partial\D,
\end{equation}
within the aforementioned spaces.
To keep the manuscript at a reasonable length, we restrict the analysis of the Poisson equation to the case where the integrability parameter $p$ equals two and postpone the general case to a forthcoming paper. 
Both, the study of the spaces and the analysis of the Dirichlet Laplacian in these spaces, are important steps that are needed for generalizations of~\cite{Cio20,CioKimLee2019,CioKimLee+2018,KimLeeSeo2021,KimLeeSeo2022b} towards a refined $L_p$-theory for SPDEs on non-smooth domains.

The context that motivates our analysis can be roughly summarized as follows: Sobolev spaces provide a natural framework for the regularity analysis of PDEs.  
The usual unweighted Sobolev spaces work particularly well for deterministic, non-degenerate 
PDEs on smooth domains~\cite{AdaFou2003,Eva2002,Kry2008}.
However, they are not very well suited for the analysis of equations that do not satisfy these `classical' assumptions.
In that case, weighted Sobolev spaces turn out to be a  viable alternative,
in particular, in the following situations: 
\medskip
\begin{itemize}
\item \emph{PDEs on non-smooth domains. } Singularities at the boundary of the underlying domain, i.e., corners, edges, cusps, and any other points where the boundary is not sufficiently smooth, are known to lead to singularities of solutions to PDEs.
This results in a breakdown of the  (unweighted) Sobolev regularity of higher order, see e.g.~\cite{CioWei2020,Cos2019,Gri1985,JK95}; see also \cite{Lin2014} for the same effect for SPDEs.
However, the singularities of the solution can often be described accurately by means of Sobolev spaces with weights that involve the distance to the set 
of boundary singularities.
This idea goes back to Kondratiev~\cite{Kon1967,Kon1970}, followed by an abundant number of related papers and monographs.
In this context we only mention~\cite{Dau1988,Gri1985,Gri1992, HarWei2018, KozNaz2014, MazRos2010, RoiSchSei2021, Sol2001} and the references therein. This list is by no means complete.

\item \emph{PDEs that degenerate at the boundary. } If the underlying domain is sufficiently smooth (usually, at least $C^1$ is required) but the equation is degenerate at the boundary in the sense that, for instance, the coefficients are not uniformly elliptic towards the boundary or the forcing terms have blow-ups at the boundary, then weighted Sobolev spaces based on the distance to the entire boundary have proven useful. Again, there is a long list of publications on this topic, of which we mention just a few~\cite{HumLin2021,KimKry2004,Kry1999c,Kuf1980,LinLorRoo+2024+,LinVer2020,LioMag1968,Tri1995}. 

\item \emph{Stochastic PDEs.}
In~\cite{Fla1990,Kry1994} it has been shown that, even if the underlying domain and the coefficients are smooth, solutions to \emph{stochastic} PDEs may fail to have higher order unweighted Sobolev regularity. 
This is due to the roughness of the noise and a resulting  incompatibility between noise and boundary conditions, which leads to blow-ups of the higher order derivatives of the solution along the boundary.
However, in a series of papers~\cite{Kim2004, Kim2008,Kim2014,Kim2017,Kry1994,KryLot1999b,KryLot1999} initiated by N.V.~Krylov it has been demonstrated that, as long as the underlying domain $\domain\subset\R^d$ is of class $C^1$, second order SPDEs with zero Dirichlet boundary conditions can be analysed very accurately by means of certain weighted Sobolev spaces $H^\gamma_{p,\Theta}(\domain)$; see \autoref{sec:WSob:Lototsky} for a definition and the basic properties of these spaces.
\end{itemize}
Thus,
by means of appropriate weighted Sobolev spaces, 
a fairly comprehensive $L_p$-theory can be established for non-degenerate PDEs on non-smooth domains on the one hand and for degenerate PDEs as well as for SPDEs on smooth domains on the other hand. 
However, up to now very little is known about the regularity of \emph{degenerate} PDEs and of \emph{stochastic} PDEs on \emph{non-smooth} domains. 
The main challenge in closing this long persisting gap is to find suitable function spaces that capture both the singular behaviour along the boundary (caused by the noise and/or due to the degeneracy of the equation) and the singularities of the solution caused by the singularities of the boundary.

In~\cite{Cio20,CioKimLee2019,CioKimLee+2018} a research program that aims at narrowing this gap for SPDEs and related degenerate parabolic PDEs has been initiated. 
Therein, the focus lies on the stochastic heat equation with zero Dirichlet boundary condition on the angular domains $\D=\D_\kappa$  introduced above
as well as on polygonal domains.
As has been shown in~\cite{Cio20,CioKimLee2019}, see also~\cite{KimLeeSeo2021,KimLeeSeo2022b}, in this setting the different types of singularities described above and their interplay can be captured accurately by means of certain weighted Sobolev spaces $H^\gamma_{p,\Theta,\theta}(\D)$ with $\gamma\in\N_0$, $1<p<\infty$, and $\Theta,\theta\in\R$, which consist of (equivalence classes of) locally integrable scalar-valued functions $u$ on  $\D$  such that
\begin{equation}\label{eq:norm:intro}
    \sum_{\alpha\in\N_0^2\colon \abs{\alpha}\leq \gamma } \int_\D \abs{\rho_\D^{\abs{\alpha}}\, D^\alpha u}^p  \rho_\circ^{\theta-2}\sgrklam{\frac{\rho_\D}{\rho_\circ}}^{\Theta-2}\,\dx<\infty,
\end{equation}
where $\rho_\circ:=\dist{\cdot}{\{0\}}$ and $\rho_\D:=\dist{\cdot}{\partial\D}$ are the distances to the corner and to the boundary of $\D$, respectively.
In these spaces, existence, uniqueness, and higher order regularity for the stochastic heat equation on $\D$ can be established for sharp ranges of weight parameters $\Theta,\theta\in\R$; cf.~\cite{Cio20,KimLeeSeo2021,KimLeeSeo2022}. Due to the nature of the problem, this is neither possible in unweighted Sobolev spaces nor in the spaces $H^\gamma_{p,\Theta}(\D)$ mentioned above (except for a very restricted range of weight parameters $\Theta\in\R$, see~\cite{Kim2014}).
However, so far, the analysis is limited to non-negative integer smoothness parameters $\gamma\in\N_0$. 
Moreover, an analysis of the space-time regularity, including uncoupling of the integrability parameters in time and space as well as sharp initial conditions, has yet to be done.
Among other things, these extensions require a detailed analysis of the spaces $H^\gamma_{p,\Theta,\theta}(\D)$ and  the behaviour of the Dirichlet Laplacian as well as related (degenerate) PDEs within these spaces. These investigations are the subject of this paper.

We choose the following outline: 
In \autoref{sec:Preliminaries} we present some preliminaries which will be needed throughout the manuscript. 
\autoref{sec:WSobAngle} is dedicated to the detailed study of the weighted Sobolev spaces $H^\gamma_{p,\Theta,\theta}(\D)$ which is inspired by the corresponding analysis of the spaces $H^\gamma_{p,\Theta}(\domain)$ from~\cite{Kry1999c,Lot2000}. 
We first (re)define the spaces  for arbitrary $\gamma\in\R$, $1<p<\infty$, and $\Theta,\theta\in\R$, by means of suitable (approximate) resolutions of unity subordinate to the one vertex of $\D$ at $x=0$ and by means of the weighted Sobolev spaces $H^\gamma_{p,\Theta}(\D)$, see \autoref{def:Hspaces} below. 
Of course, we prove that for non-negative integers $\gamma\in\N_0$ our definition is consistent with the one from \cite[Section~3]{Cio20}.
We then address several properties of these spaces such as    the density of the space of smooth compactly supported functions, interpolation and duality, pointwise multipliers and embeddings---among others. 
\autoref{sec:Poisson} is devoted to the analysis of the Poisson equation within the framework of the spaces $H^\gamma_{2,\Theta,\theta}(\D)$ which mainly relies on a characterization of these spaces in terms of polar coordinates (cf.~\autoref{thm:polar}) and the Mellin transform (cr. Theorem~\ref{thm:Mellinrepresentation}).

\bigskip


\subsubsection*{Notation}\label{sec:Notation}
Before we start, let us fix some notation. 
Let $d\in\N$. Then we let $\abs{\alpha}:=\abs{\alpha}_1=\sum_{j=1}^d \abs{\alpha_j}$ for multi-indices $\alpha:=(\alpha_1,\ldots,\alpha_d)\in\N_0^d$, but $\abs{x}:=\abs{x}_2$ for points $x:=(x_1,\ldots,x_d)\in\R^d$, where $\abs{x}_p:=\left(\sum_{j=1}^d \abs{x_j}^p \right)^{1/p}$ if $0<p<\infty$. We put $\ceil{x}:=\min\{k\in \mathbb{Z}: \ k\geq x\}$ and $\floor{x}:=\max\{k\in \mathbb{Z}: \ k\leq x\}$. Assume $U$ is a set. Whenever we write 
$ 
A(u)\lesssim B(u),\  u\in U,
 \text{ or }  
A\lesssim B \text{ on }U,
$
it means that there is a finite constant $C>0$ that does not depend on $u$, such that $A(u)\leq CB(u)$ for all $u\in U$. 
Sometimes we omit $U$, if it is clear from the context. 
If we want to emphasize that the constant only depends on some parameters $a_1,\ldots,a_n$ for some $n\in\N$, then we write $A(u)\lesssim_{a_1,\ldots,a_n}B(u)$. If $A\lesssim B$ and $B\lesssim A$ on $U$ then we write $A\sim B$ on $U$ or $A(u)\sim B(u)$, $u\in U$. 

For an arbitrary domain $\domain\subsetneq\R^d$ we write $\rho_\domain(x):=\dist{x}{\partial\domain}$ for the distance of a point $x\in\domain$ to the boundary $\partial\domain$ of $\domain$. 
For a (generalised) scalar-valued function $u$ on a domain $\domain\subset\R^d$ and any multi-index $\alpha$ we let $D^\alpha u$ be the $\alpha$ generalized/distributional derivative of $u$ on $\domain$.
We write $\partial^\alpha u$ for the classical derivative -- if it exists. 
$u_{x_i}$ and $u_{x_ix_i}$ is short for the (generalized) first and second order derivative of $u$, respectively, with respect to the variable $x_i$, whereas  $\Delta u = \sum_{i=1}^d u_{x_ix_i}$.
For $k\in\N_0$ we write $D^ku$ for the vector of all $k$-th order partial generalized derivatives of $u$;   similar for   $D:=D^1$ and $\partial$, $\partial^k$.  
 If $u$ is $\C^n$-valued or $\C^{n\times n}$-valued, then $D$ (as well as $D^k$, $D^\alpha$, $\partial$, $\partial^k$, $\partial^\alpha$) is understood component-wise.
 We sometimes need to specify the variable, say $x$, with respect to   which   we differentiate. We do this by writing $D_x$ or $\partial_x$ instead of $D$ or $\partial$. 
For $k\in\N$, $C^k(\domain)$ denotes the space of all $k$-times continuously differentiable scalar-valued functions on $\domain\subset\R^d$, whereas $C^k_0(\domain)$ stands for the spaces of all functions in $C^k(\domain)$ with compact support in $\domain$. 
 Accordingly, $C^\infty(\domain)$ is the space of all infinitely differentiable scalar-valued functions on $\domain\subset\R^d$ and by $C_0^\infty(\domain)\scco$ we denote the spaces of all functions in $C^\infty(\domain)$ with compact support in $\domain$. We write $\mathscr{D}'(\domain)$ for the space of all generalised functions on a domain $\domain\subset\R^d$ and $(u,\varphi):=u(\varphi)$ for $u\in\mathscr{D}'(\domain)$ applied to $\varphi\in C_0^\infty(\domain)$. If $\gdomain\subset\R^d$ is another domain and $\Psi\colon \gdomain\to\domain$ is a $ C^\infty$ diffeomorphism, we write $u\circ\Psi:=\Psi^*u:=\{ C_0^\infty(\gdomain)\ni \psi\mapsto (u,\psi\circ\Psi^{-1}\cdot\abs{\det D\Psi^{-1}}) \}\in \mathscr D'(\gdomain)$  for the pullback of $u\in \mathscr{D}'(\domain)$ w.r.t.\ $\Psi$. 
 
If $(E,\norm{\cdot\sep E})$ is a normed space consisting of (equivalence classes of) scalar-valued functions on $\domain$ and $u=(u_1,\ldots,u_n)^T$ is a vector of scalar-valued functions on $\domain$, then $u\in E$ means that $u_i\in E$ for all $i=1,\ldots,n$, and $\norm{u\sep E}=\sum_{i=1}^n \norm{u_i\sep E}$. 
We use the standard notation $(E',\norm{\,\cdot \sep E'})$ for the topological dual of a normed space $E$, where $\norm{x'\sep E'}:=\sup_{\norm{x\sep E}\leq 1} x'(x)$ for $x'\in E'$. We use the word ``isomorphism'' as follows: Let $(E_1,\norm{\cdot\sep E_1})$ and $(E_2,\norm{\cdot\sep E_2})$ be two normed spaces. We say that a linear mapping $T\colon E_1\to E_2$ is an isomorphism, if $T$ is invertible and $\norm{T(u)\sep E_2}\sim \norm{u\sep E_1}$ for all $u\in E_1$. 
Let $\zeta=(\zeta_\nu)_{\nu\in\Z}$ be a sequence of smooth functions on a domain $\domain  \subset   \R^d$ and let $u\in\mathscr{D}'(\domain)$ be a generalized function. Moreover, let $X$ be a Banach space and $(c_{\nu})_{\nu\in\Z}\subset [0,\infty)$. Whenever we write 
 $
\sum_{\nu\in\Z} c_\nu \norm{\zeta_\nu u \sep X} <\infty,
$ 
we mean that $\zeta_\nu u\in X$ for all $\nu\in\Z$ \emph{and} that the series is finite. Given two quasi-Banach spaces $X$ and $Y$, we write $X\hookrightarrow Y$ if $X\subset Y$ and the natural embedding is bounded.  Moreover, $[X, Y]_{\vartheta}$  denotes the complex interpolation space of exponent $\vartheta$ of the interpolation couple $(X,Y)$, see~\cite[Chapter~4]{BL76}.

If $\gdomain\subset\R^d$, we write $\mathcal B(\gdomain)$ for the Borel $\sigma$-algebra on $\gdomain$. If $\mu$ is a measure on $(\gdomain,\mathcal{B}(\gdomain))$ and $E$ is a Banach space, we write $L_0(\gdomain,\mathcal{B}(\gdomain), \mu;E)$ for the space of all equivalence classes (w.r.t. $\mu$) of Borel-measurable $E$-valued functions. Note that if $\lambda^d$ is   the   Lebesgue measure and $w\colon \gdomain\to (0,\infty)$ is a strictly positive Borel-measurable function, then $L_0(\gdomain):=L_0(\gdomain,\mathcal{B}(\gdomain), \lambda^d;\C)=L_0(\gdomain,\mathcal{B}(\gdomain), w\lambda^d;\C)$.
We write 
\[
L_{1,\loc}(\gdomain,\mathcal{B}(\gdomain),\mu;E)
:=
\sggklam{
f\in L_0(\gdomain,\mathcal{B}(\gdomain), \mu;E)
\colon 
\int_K\norm{f\sep E}\mathrm d\mu<\infty \, \forall   K\subset \gdomain \text{ compact} 
};
\]
$L_{1,\loc}(\gdomain;E):=L_{1,\loc}(\gdomain,\mathcal{B}(\gdomain),\lambda^d;E)$ and $L_{1,\loc}(\gdomain):=L_{1,\loc}(\gdomain,\mathcal{B}(\gdomain),\lambda^d;\C)$.
Note that if $w\colon\gdomain\to (0,\infty)$ is such that 
$
0<\inf_K w\leq\sup_K w<\infty \quad   \text{for all compact } K  \subset   \gdomain, 
$
then $L_{1,\loc}(\gdomain)=L_{1,\loc}(\gdomain,\mathcal{B}(\gdomain),w\lambda^d;\C)$.  
 
Recall that for $\sigma=s+m>0$ with $m\in\N_0$ and $0<s\leq 1$ the H\"older-Zygmund norm of a function $g$ on $G\subset\R^d$ is given by
$$
	\norm{g \sep \mathcal{C}^\sigma(G) } := \norm{g \sep C^m(G) } + \sum_{\abs{\alpha}=m} \big[ \partial^\alpha g \big]_{\mathcal{C}^s(G)}, 
$$
where $\norm{g \sep C^m(G) }:=\sum_{\abs{\alpha}\leq m} \norm{\partial^\alpha g \sep C(G)}$ with $\norm{f \sep C(G)}:=\sup_{x\in G} \abs{f(x)}$ and
$$
   \big[ f \big]_{\mathcal{C}^s(G)} 
    := \begin{cases}
        \displaystyle \sup_{x,y\,\in\,G} \frac{\abs{f(x)-f(y)}}{\abs{x-y}^s} & \quad \text{if} \quad 0<s<1,\\
        \displaystyle \sup_{\substack{x,y\,\in\,G:\\(x+y)/2\,\in\,G}} \frac{\abs{f(x)-2\, f((x+y)/2) + f(y)}}{\abs{x-y}^s} & \quad \text{for} \quad s=1.
    \end{cases}
$$
Moreover,  
$\mathcal{C}^\sigma(\gdomain):=\ggklam{g\colon \norm{g\sep \mathcal{C}^s(\gdomain)}<\infty}$
and
\[
\mathcal{C}^{\sigma}_{\mathrm{loc}}(\gdomain):=\left\{f\colon G\to\C \,\colon f|_{K}\in  \mathcal{C}^{\sigma}(K)\text{ for all } K\subset \gdomain \text{ compact} \right\}.
\]


\section{Preliminaries}
\label{sec:Preliminaries}

In this preliminary section we collect some notation as well as some definitions and facts that we need in order to define and analyse the spaces $H^\gamma_{p,\Theta,\theta}(\D)$ in Section~\ref{sec:WSobAngle}.
In particular, we take a closer look at some basic properties of  the angular domains $\D=\D_\kappa$, (approximate) smooth resolutions of unity, and the weighted Sobolev spaces $H^\gamma_{p,\Theta}(\domain)$ on arbitrary domains $\domain\subsetneq\R^d$.


\subsection{The angular domains \texorpdfstring{$\D=\D_\kappa$}{D}}

Let $0<\kappa<2\pi$ and let $\D=\D_\kappa$ be the angular domain (or \emph{sector} or \emph{two-dimensional cone}) from~\eqref{eq:D}. 
The boundary of $\D$ is smooth everywhere, except at one point: the vertex at $x=0$.
We write
\[
\rho_\circ(x):=\dist{x}{\{0\}}, \qquad x\in\D,
\]
for the distance of a point $x\in\D$ to this vertex.
Moreover, we write $\Phi$ for the transformation 
\[
\Phi \colon (0,\infty)\times [0,2\pi)\to\R^2\setminus\{0\}, \qquad (r,\phi)\mapsto \Phi(r,\phi):=\big( r\,\cos(\phi), r\,\sin(\phi)\big), 
\]
of polar coordinates into Cartesian coordinates.
We let 
$\I:=\I_{\kappa}:=(0,\kappa)$ 
and $\tilde{\D}:=\tilde{\D}_{\kappa}:=(0,\infty)\times \I_{\kappa}$, so that $\D=\Phi(\tilde{\D})$. 
The following simple relationship between $\rho_\circ$ and the distance $\rho_\D$ to the boundary $\partial\D$ turns out to be very useful in the course of this manuscript. 

\begin{lemma}\label{lem:dist-polar}
For $x=\Phi(r,\phi)\in\D$ with $(r,\phi)\in\tilde{\D}$ we have $\rho_\circ(x)=\abs{x} = r$ and $\rho_\D(x)= r \, \sin (\mu(\phi) )$ with
\[
\mu(\phi):=\min\!\left\{ \frac{\pi}{2},\, \phi,\, \kappa-\phi\right\}, \quad\phi\in \I.
\]
 Moreover, 
\[
\rho_{\I}
\sim
\mu
\sim
\sin(\mu)
\sim
\psi_\I
\quad\text{on }\I,	
\]
where 
\[
\psi_\I(\phi)
:=
\sin\sgrklam{\frac{\pi}{\kappa}\phi}, \quad \phi\in\I.
\]

\noindent\begin{minipage}[c]{0.5\textwidth}
\begin{proof}
The equality $\rho_\D(x)=r\sin(\mu(\phi))$ follows simply from the definition of the sine function.
Moreover, $\mu\sim \rho_{\I}$ on $\I$ since for all $\phi\in\I$ it holds that $\mu(\phi) \in(0,\pi/2]$ and
	\begin{align*}
	\mu(\phi) 
    &\leq \min\{\phi, \kappa-\phi\} = \rho_{\I}(\phi)\\
    &= \min\{\pi, \phi, \kappa-\phi\} \\
    &\leq \min\{\pi, 2\phi, 2(\kappa-\phi)\} = 2\, \mu(\phi).    
	\end{align*}
	
The basic inequality
	$$
		\frac{2}{\pi} \alpha \leq \sin(\alpha) \leq \alpha,\qquad \alpha\in [0,\pi/2],
	$$
yields $\sin(\mu)\sim\mu$ on $\I$. Using symmetry, this also gives $\rho_{\I}\sim \psi_\I$ on $\I$.
\end{proof}
	\end{minipage}\hfill
 \raisebox{3ex}{
\begin{minipage}[c]{0.48\textwidth}
\begin{figure}[H]
\includegraphics[width=7cm]{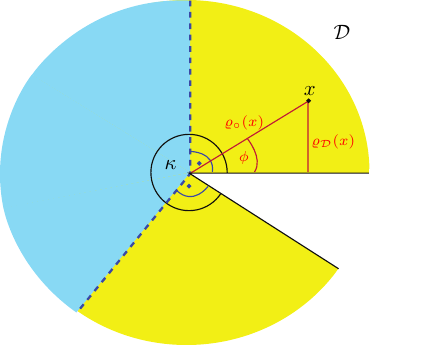}
\caption{Angular domain $\D=\D_{\kappa}$}
\end{figure}

\end{minipage}}
\end{lemma}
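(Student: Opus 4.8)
The plan is to establish the three assertions in turn. The identity for $\rho_\circ$ is immediate from the definitions: since $\rho_\circ(x)=\dist{x}{\{0\}}=\abs{x}$ and $x=\Phi(r,\phi)=(r\cos\phi,r\sin\phi)$ with $r>0$, we get $\abs{x}=r\,(\cos^2\phi+\sin^2\phi)^{1/2}=r$, and nothing more is needed.

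For $\rho_\D(x)=r\sin(\mu(\phi))$ I would argue by elementary planar geometry. The boundary $\partial\D$ is the union of the two rays $L_0$ and $L_\kappa$ issuing from the vertex at polar angles $0$ and $\kappa$, so that $\rho_\D(x)=\min\{\dist{x}{L_0},\dist{x}{L_\kappa}\}$. The basic building block is the distance from a point of radius $r$ to a ray from the origin whose direction makes a geometric angle $\alpha\in[0,\pi]$ with it: this distance equals $r\sin\alpha$ when $\alpha\le\pi/2$, because the orthogonal projection of the point lands on the ray, and it equals $r$ when $\alpha\ge\pi/2$, because then the nearest boundary point is the vertex itself; in both regimes it is $r\sin(\min\{\alpha,\pi/2\})$. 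Applying this with the in-sector separations $\phi$ (to $L_0$) and $\kappa-\phi$ (to $L_\kappa$), and using that $\sin$ is nondecreasing on $[0,\pi/2]$ to pull the outer minimum inside the sine, I would arrive at $\rho_\D(x)=r\sin(\min\{\pi/2,\phi,\kappa-\phi\})=r\sin(\mu(\phi))$.

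The hard part --- indeed the only step that is not entirely routine --- is that the in-sector separation $\phi$ need not be the \emph{geometric} angle between $x$ and $L_0$: for $\kappa>\pi$ one may have $\phi>\pi$, and then the true angle is $2\pi-\phi$, so the $\phi$-term overestimates $\dist{x}{L_0}$. I would dispose of this by checking that in exactly such configurations the opposite ray $L_\kappa$ (whose in-sector separation then satisfies $\kappa-\phi<\pi$) realizes the minimum and is handled correctly by the formula, so that the overestimation is harmless; the symmetric case $\kappa-\phi>\pi$ is treated by exchanging the roles of the two rays.

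For the equivalences I would first note $\rho_\I(\phi)=\min\{\phi,\kappa-\phi\}$, since $\partial\I=\{0,\kappa\}$. The comparison $\mu\sim\rho_\I$ follows from the elementary chain $\mu(\phi)\le\min\{\phi,\kappa-\phi\}=\rho_\I(\phi)\le 2\,\mu(\phi)$, where the final inequality uses $\rho_\I\le\kappa/2<\pi$ together with $\min\{\pi,2\phi,2(\kappa-\phi)\}=2\,\mu(\phi)$. Since $\mu$ takes values in $(0,\pi/2]$, the standard two-sided bound $\tfrac{2}{\pi}\alpha\le\sin\alpha\le\alpha$ on $[0,\pi/2]$ immediately gives $\sin(\mu)\sim\mu$. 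Finally, $\psi_\I\sim\rho_\I$ follows by symmetry about $\phi=\kappa/2$: both functions are invariant under $\phi\mapsto\kappa-\phi$, so it suffices to treat $\phi\in(0,\kappa/2]$, where $\rho_\I(\phi)=\phi$ and $\tfrac{\pi}{\kappa}\phi\in(0,\pi/2]$, whence $\psi_\I(\phi)=\sin(\tfrac{\pi}{\kappa}\phi)\sim\tfrac{\pi}{\kappa}\phi\sim\phi$. Concatenating the three equivalences yields $\rho_\I\sim\mu\sim\sin(\mu)\sim\psi_\I$ on $\I$, completing the proof.
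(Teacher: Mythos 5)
Your proof is correct and follows essentially the same route as the paper: the chain $\mu\leq\min\{\phi,\kappa-\phi\}=\rho_\I=\min\{\pi,\phi,\kappa-\phi\}\leq 2\mu$, the bound $\tfrac{2}{\pi}\alpha\leq\sin\alpha\leq\alpha$ on $[0,\pi/2]$, and the symmetry argument for $\psi_\I\sim\rho_\I$ are exactly the paper's steps. The only difference is that you spell out the distance formula $\rho_\D(x)=r\sin(\mu(\phi))$ in full geometric detail --- two-ray decomposition of $\partial\D$, the dichotomy $r\sin\alpha$ versus $r$ according to whether the geometric angle exceeds $\pi/2$, and the case $\kappa>\pi$ where the in-sector separation overestimates the true angle --- whereas the paper dismisses this as following ``simply from the definition of the sine function''; your added care there, in particular checking that the overestimate is harmless because the opposite ray then realizes the minimum, is a genuine (and welcome) tightening of a step the paper leaves implicit.
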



\subsection{Smooth resolutions of unity}\label{sec:Resolution}
In this manuscript we deal with different types of weighted Sobolev spaces on various types of domains. For the definitions of these spaces and  the proofs of some of their fundamental properties we use smooth resolutions of unity $\xi=(\xi_{\nu})_{\nu\in\Z}$ on domains $\domain\subset\R^d$ ($d\in\N$), which, for a prescribed closed and non-empty set $M\subset\partial\domain$, some $c>1$ and some $k_0\in\N$, satisfy the following conditions:
\medskip
\begin{itemize}
\item[]{\bfseries{[S$_{k_0}^{c}$]}} For all $\nu\in\Z$ it holds that  $\xi_\nu \in  C^\infty(\overline{\domain}\setminus M)$ and 
\[
    \supp(\xi_\nu) \subset \domain^{[\nu]}_{c,k_0}(M)
    := \ggklam{x\in\overline{\domain}\setminus M  :\  c^{\nu-k_0}<\mathrm{dist}(x,M)<c^{\nu+k_0}}.
\]

\item[]{\bfseries{[D$^c$]}} For all $\alpha\in\N_0^d$ it holds that  
\[
\abs{\partial^\alpha \xi_\nu(x)}\lesssim_\alpha  c^{-\abs{\alpha}\nu},
\quad \nu\in\Z,\quad x\in\R^d.
\]

\item[]{\bfseries{[R]}} For all $x\in\domain$ it holds that  $\xi_\nu(x)\geq 0$ for all $\nu\in\Z$ and $\sum_{\nu\in\Z} \xi_\nu(x) = 1$.
\end{itemize} 
\medskip

\noindent Often we 
shall also 
use only \emph{approximate} resolutions of unity, i.e., instead of {\bfseries{[R]}} we merely assume:
\medskip
\begin{itemize}
    \item[]{\bfseries{[}L{]}} There exists a positive number $\delta>0$ such that $\sum_{\nu\in\Z} \xi_\nu(x) \geq \delta$ for all $x\in\domain$.
\end{itemize}
\medskip
\noindent And, in certain situations, it even suffices to assume that $\xi$ merely satisfies {\bfseries{[S$_{k_0}^{c}$]}} and {\bfseries{[D$^c$]}}.

The domain $\domain$ and the set $M$ vary in the course of the manuscript. In particular, we choose $M:=\partial\domain$ for the definition of $H^\gamma_{p,\Theta}(\domain)$ on arbitrary domains $\domain\subset\R^d$ with non-empty boundary $\partial\domain$, whereas  $M:=\{0\}$ in the definition of the spaces $H^\gamma_{p,\Theta,\theta}(\D)$ on angular domains $\D\subset\R^2$. 
In order to avoid any confusion when switching from one to the other setting, we introduce the following sets.
\begin{defi}
\label{def:resolutions:sets}
Let $\domain\subsetneq\R^d$ be a domain and let $\emptyset\neq M\subset\partial\domain$ be closed. Let $c>1$ and $k_0\in\N$.
Let $i\in \{\textup{L},\textup{R}\}$. We write
\[
    \mathscr{A}_{c,k_0}(\domain,M)
    :=\ggklam{\xi=(\xi_\nu)_{\nu\in\Z} :\ \xi\text{ satisfies } \textup{{\bfseries{[S$_{k_0}^{c}$]}}}, {\textup{\bfseries{[D$^c$]}}} \text{ w.r.t. } \domain \text{ and } M}
\]
and 
\begin{align*}
    \mathscr{A}_{c,k_0}^{[i]}(\domain,M)
    := \ggklam{\xi=(\xi_\nu)_{\nu\in\Z} \in \mathscr{A}_{c,k_0}(\domain,M)  :\  \xi\text{ satisfies } \textup{\bfseries{[$i$]}}}.
\end{align*}
Moreover, 
\[
    \mathscr{A}_{c}(\domain,M)
    := \bigcup_{k_0\in\N} \mathscr{A}_{c,k_0}(\domain,M)
    \quad\text{and}\quad
    \mathscr{A}_{c}^{[i]}(\domain,M)
    := \bigcup_{k_0\in\N} \mathscr{A}_{c,k_0}^{[i]}(\domain,M).
\]
If clear from the context, we omit $\domain$ and $M$ from the notation.
\end{defi}

The following observations will be frequently used in the proofs below. They are verified by straightforward calculations and can be skipped at first reading.

\begin{remark}\label{rem:resolution}
Let $\domain \subsetneq \R^d$ be a domain and let $\emptyset\neq M\subset\partial\domain$ be closed. Moreover, let $c,c_1>1$ and $k_0,k_1\in\N$.

\begin{enumerate}[label=\textup{(\roman*)}]

\item\label{it:resolution:cover}  
\emph{Finite overlapping of level sets. } 
Obviously, 
\[
    \domain 
    \subset  \bigcup_{\nu\in\Z}\domain_{c,k_0}^{[\nu]}(M).
\]
Moreover, there exists $N=N(c,c_1,k_0,k_1)\in\N$ such that, for all $\nu\in\Z$,  
\begin{align*}
    A_\nu
    := A_\nu(c,c_1,k_0,k_1)
    :&= \ggklam{\mu\in\Z :\  \domain_{c_1,k_1}^{[\mu]}(M)\cap \domain_{c,k_0}^{[\nu]}(M)\neq\emptyset}\\
    &= \ssggklam{\mu\in\Z : \  \left\lvert\mu - \frac{\nu}{\log_c(c_1)}\right\rvert < \frac{k_0}{\log_{c}(c_1)}+k_1}\\
    &\subset \ggklam{\alpha(\nu)\pm j :\  j\in \{0,1,\ldots,N\}},    
\end{align*}
where $\alpha(\nu):=\alpha(\nu,c,c_1):=\lfloor \nu/\log_c(c_1)\rfloor$.
In particular,  the cardinality of $A_\nu$ is bounded uniformly in $\nu$. 
Moreover, if  $\xi=(\xi_\nu)_{\nu\in\Z}$ satisfies~{\bfseries{[S$_{k_1}^{c_1}$]}}, then, for all $\nu\in\Z$, 
\[
    \sum_{\mu\in\Z}\xi_{\mu} (x)
    = \sum_{j=-N}^N \xi_{\alpha(\nu)+j}(x),\qquad x\in \domain_{c,k_0}^{[\nu]}(M).
\]
If $c=c_1$, then $\alpha(\nu)=\nu$ and any $N\geq k_0+k_1-1$ is an admissible choice.

\item\label{it:one} Assume $c=c_1$, $k_0=k_1$, and let  $\xi,\zeta\in \mathscr{A}_{c,k_0}^{[\textup{L}]}(\domain,M)$.
Then, by part~\ref{it:resolution:cover}, each
\[
    \eta_\nu
    := \frac{\xi_\nu}{\sum_{j=-2k_0+1}^{2k_0-1}\zeta_{\nu+j}},
\qquad\nu\in\Z,
\]
is well-defined (using the convention ``$\frac{0}{0}=0$'') and we have $\eta=(\eta_\nu)_{\nu\in\Z}\in\mathscr{A}_{c,k_0}^{[\textup{L}]}(\domain,M)$.
The property~{\bfseries{[D$^c$]}} may be verified by using Leibniz's rule.
If, in addition, $\xi=\zeta$, then $\eta\in \mathscr{A}_{c,k_0}^{[\textup{R}]}(\domain,M)$.

\item\label{it:resolution:construction} 
\emph{Construction of $\xi\in\mathscr{A}^{[\textup{L}]}_c(\domain,M)$. } Since $M$ is assumed to be closed, we may construct a regularized distance $\psi$ to $M$ on $\domain$, i.e.,  
an element of the set
\[
    \Regd(\domain,M)
    \!:=\!\ggklam{\psi\in C^\infty(\overline{\domain}\setminus M)  :\  \psi\sim\dist{\cdot}{M},\,  \abs{\partial^\alpha\psi}\lesssim_\alpha{ \big(\dist{\cdot}{M}\big) }^{1-\abs{\alpha}},\,   \alpha\in\N_0^d },
\]
by following the lines of~\cite[Chapter~VI, Section~2.1]{Ste70}.
Then we can use $\psi$ to construct a sequence $\xi=(\xi_\nu)_{\nu\in\Z}\in\mathscr{A}^{[\textup{L}]}_{c}(\domain,M)$ the following way:
Choose an arbitrary non-negative $\eta\in C_0^\infty((0,\infty))$ such that $\eta\equiv 1$ on $[c^{-1},c]$ and set $\xi_\nu(x):=\eta(c^{-\nu}\psi(x))$ for all $x\in \overline{\domain}\setminus M$ and all $\nu\in\Z$.
Then, since $\psi\sim\dist{\cdot}{M}$, there exists $k_1\in\N$ such that $\supp(\xi_\nu)\subset \domain_{c,k_1}^{[\nu]}(M)$ for all $\nu\in\Z$ and, due to the properties of $\psi$ and $\eta$, we even have $\xi\in\mathscr{A}^{[\textup{L}]}_{c,k_1}(\domain,M)$.
In particular, $\mathscr{A}^{[\textup{L}]}_{c}(\domain,M)\neq\emptyset$ and also $\mathscr{A}^{[\textup{R}]}_{c}(\domain,M)\neq\emptyset$, see~\ref{it:one}.

\item\label{it:resolution:constr:D} 
\textit{Construction of $\zeta\in\mathscr{A}^{[\textup{R}]}_{c,1}(\D,\{0\})$. } 
If $\domain=\D$ and $M=\{0\}$, then $\rho_\circ=\dist{\cdot}{\{0\}}=\abs{\cdot}$ belongs to $\Regd(\D,\{0\})$.
Moreover, in this particular case we can construct a resolution of unity $\zeta\in\mathscr{A}^{[\textup{R}]}_{c,1}(\D,\{0\})$ with the additional property that
\begin{equation}\label{eq:dilation}
\zeta_\nu=\zeta_0(c^{-\nu}\cdot),\qquad \nu\in\Z,    
\end{equation}
in the following way: Choose $1<a<b<c$ and set $\zeta_0(x):=\eta(\abs{x})$, $x\in\R^2$, where $\eta:=\varphi - \varphi(c\,\cdot)$ is based on some $\varphi \in  C_0^\infty(\R)$ with
\[
\varphi(r) = \begin{cases}
1 & \quad \text{if } \abs{r} \leq a,\\
0 & \quad \text{if } \abs{r} \geq b,
\end{cases}
\]
and $\varphi(r) \in [0,1]$ otherwise. 
Then $\zeta:=(\zeta_\nu)_{\nu\in\Z}:=(\zeta_0(c^{-\nu}\cdot))_{\nu\in\Z}\in\mathscr{A}^{[\textup{R}]}_{c,1}(\D,\{0\})$.
By setting
\[
    \widetilde{\zeta}_\nu
    :=\sum_{j=-k_0}^{k_0} \zeta_{\nu+j},\qquad\nu\in\Z,
\]
we obtain a new sequence $\widetilde{\zeta}:=(\widetilde{\zeta}_\nu)_{\nu\in\Z}\in \mathscr{A}^{[\textup{L}]}_{c,k_0+1}(\D,\{0\})$
such that
\[
\widetilde{\zeta}_\nu \equiv 1\text{ on }\D_{c,k_0}^{[\nu]}(\{0\}),
\quad
\text{and}
\quad
\widetilde{\zeta}_\nu=\widetilde{\zeta}_0(c^{-\nu}\cdot),
\qquad
\nu\in\Z.
\]
\item\label{it:resolution:r-c}
\emph{Retraction-coretraction pairs. } 
Let $\zeta\in\mathscr{A}^{\textup{[R]}}_{c, 1}(\D,\{0\})$ for some $c>1$ and let $\eta_\nu:=\sum_{j=-1}^1\zeta_{\nu+j}$, $\nu\in\Z$.
In our proofs in \autoref{sec:WSobAngle} we will frequently use the linear mappings
\begin{align*}
    S_c\colon \mathscr{D}'(\D) &\to \mathscr{D}'(\D)^{\Z} && \text{ and }& R_c\colon \mathscr{D}'(\D)^{\Z}&\to \mathscr{D}'(\D) \\
    u &\mapsto \grklam{(\zeta_\nu u)(c^\nu\cdot)}_{\nu\in\Z} && & (f_\nu)_{\nu\in\Z} &\mapsto \sum_{\nu\in\Z}\eta_\nu f_\nu(c^{-\nu}\cdot).
\end{align*}
We also write $S_c$ and $R_c$ for 
their restrictions
to suitable subspaces of the space $\mathscr{D}'(\D)$ of generalized functions on $\D$ and $\mathscr{D}'(\D)^\Z$, respectively.
Obviously, $S_c$ is well-defined since $\zeta_\nu\in C^\infty(\D)$ for all $\nu\in\Z$.
Also $R_c$ is well-defined, as the series $\sum_{\nu\in\Z}\eta_\nu f_\nu(c^{-\nu}\cdot)$ is locally a finite sum of generalized functions on $\D$.
Moreover, note that $R_c$ is a left inverse of $S_c$, since $\eta_\nu\equiv 1$ on $\supp(\zeta_\nu)$, so that for all $u\in\mathscr{D}'(\D)$ there holds
\[
R_cS_cu
=
R_c\grklam{\grklam{(\zeta_\nu u)(c^\nu\cdot)}_{\nu\in\Z}}
=
\sum_{\nu\in\Z} \eta_{\nu}\zeta_\nu u
=
\sum_{\nu\in\Z} \zeta_\nu u
=
u.
\]
In particular, if we can prove that $R_c\in\L(X,Y)$ and $S_c\in\L(Y,X)$ for some (reflexive) Banach space
$X\subset\mathscr{D}'(\D)^\Z$ and some normed space $Y\subset\mathscr{D}'(\D)$, then $Y$ is a (reflexive) Banach space, too,  and $(R_c,S_c)$ is a so-called retraction-coretraction pair for $(X,Y)$.
For details we refer, e.g., to \cite[Section~1.2.4]{Tri1995} and the proof given therein.
\end{enumerate}
\end{remark}

\subsection{The spaces \texorpdfstring{$H^\gamma_{p,\Theta}(\domain)$}{Hgamma,p,Theta(O)}}
\label{sec:WSob:Lototsky}

For the definition and analysis of the spaces $H^\gamma_{p,\Theta,\theta}(\D)$ in \autoref{sec:WSobAngle} we need some knowledge of the weighted Sobolev spaces $H^\gamma_{p,\Theta}(\domain)$ defined in~\cite{Lot2000}, see also~\cite[Section~3.2.3]{Tri1995}.
For the convenience of the reader,  we provide in this section the definition and those properties of $H^\gamma_{p,\Theta}(\domain)$ that are relevant for our analysis.
Note that, at least for special choices of parameters, these spaces had been discussed before in the context of degenerate elliptic PDEs, see e.g.~\cite{Kuf1980,LioMag1968,Tri1995} and the references therein.

\begin{defi}\label{def:Lot_spaces}
Let $\domain\subsetneq\R^d$ be an arbitrary domain with boundary $\partial\domain$. Let $1<p<\infty$ as well as $\gamma,\Theta\in\R$. Moreover, let $\xi=(\xi_\nu)_{\nu\in\Z}\in \mathscr{A}^{[\textup{L}]}_{c}(\domain,\partial\domain)$ for some $c>1$. Then we set
\[
    H^\gamma_{p,\Theta}(\0)
    := H^\gamma_{p,\Theta}(\0)_{c,\xi}
    := \left\{ u \in \mathscr{D}'(\domain)  :\ 
    \norm{u \sep H^\gamma_{p,\Theta}(\domain)}_{c,\xi} < \infty \right\},
\]	
where
\begin{align}\label{eq:Lot_norm}
    \norm{u \sep H^\gamma_{p,\Theta}(\domain)}
    := \norm{u \sep H^\gamma_{p,\Theta}(\domain)}_{c,\xi} 
    := \left( \sum_{\nu\in\Z} c^{\nu \Theta} \norm{(\xi_\nu\, u)(c^\nu \cdot) \sep H^{\gamma}_{p}(\R^d)}^p \right)^{1/p}.
\end{align}
Moreover, we set $L_{p,\Theta}(\domain):=H^0_{p,\Theta}(\domain)$.
\end{defi}

\begin{remark} Concerning the  definition and basic properties of the Bessel potential spaces $H^{\gamma}_{p}(\R^d)$, the reader is referred to \autoref{sec:Bessel}. 
At first sight, $H^\gamma_{p,\Theta}(\domain)_{c,\xi}$ and $\norm{\cdot\sep H^\gamma_{p,\Theta}(\domain)}_{c,\xi}$ seem to depend on the particular choice of $\xi$ and $c$. However, as shown in~\cite[Section~2]{Lot2000}, in the setting of \autoref{def:Lot_spaces}, it holds that
\[
    \sum_{\nu\in\Z} c_1^{\nu \Theta} \norm{({\zeta}_\nu\, u)(c_1^\nu \cdot) \sep H^{\gamma}_{p}(\R^d)}^p
    \lesssim \norm{u \sep H^\gamma_{p,\Theta}(\domain)}_{c,\xi}^p,
    \qquad u \in H^\gamma_{p,\Theta}(\domain)_{c,\xi},
\]
for any  $\zeta=(\zeta_\nu)_{\nu\in\Z}\in\mathscr{A}_{c_1}(\domain,\partial\domain)$ and $c_1>1$.
In particular, if $\zeta\in\mathscr{A}^{[\textup{L}]}_{c_1}(\domain,\partial\domain)$,  then $H^\gamma_{p,\Theta}(\domain)_{c,\xi}=H^\gamma_{p,\Theta}(\domain)_{c_1,\zeta}$ with equivalent norms. 
Therefore, we will omit these indices in the sequel.
Also, we will not mention the dependence of the constants on $\xi$ and $c$.
\end{remark}

In what follows we will use the following properties of the spaces $H^\gamma_{p,\Theta}(\domain)$. 
Throughout, $\psi_\domain\in \Regd(\domain,\partial\domain)$ denotes a regularised distance to the boundary, see \autoref{rem:resolution}\ref{it:resolution:construction} above.

\begin{lemma}\label{lem:LotSpaces:properties}
Let $\domain\subsetneq\R^d$ be a domain,
$1<p<\infty$, and $\gamma,\Theta\in\R$.
\begin{enumerate}[label=\textup{(\roman*)}]
\item\label{it:LotSpaces:Banach} $\grklam{H^\gamma_{p,\Theta}(\domain),\norm{\cdot\sep H^\gamma_{p,\Theta}(\domain)}}$ is a reflexive Banach space.
\item\label{it:LotSpaces:density} $ C_0^\infty(\domain)$ is dense in $H^\gamma_{p,\Theta}(\domain)$.
\item\label{it:LotSpaces:N} If $\gamma\in\N_0$, then 	
\[	
    H^\gamma_{p,\Theta}(\domain) = \left\{ u\in \mathscr{D}'(\domain)  :\   \normmod{u \sep H^\gamma_{p,\Theta}(\domain)}<\infty \right\},
\]	
where
\[
\normmod{u \sep H^\gamma_{p,\Theta}(\domain)}
:= 
\bigg(\sum_{\alpha\in\N_0^d\colon\abs{\alpha}\leq \gamma}
\int_\domain \Abs{\rho_\domain(x)^{\abs{\alpha
}} D^\alpha u(x)}^p \rho_\domain(x)^{\Theta-d}\,\dx
 \bigg)^{1/p}
\]
is an equivalent norm. 
\item\label{it:LotSpaces:interpolation} Let $\gamma_0,\gamma_1,\Theta_0,\Theta_1\in\R$, and $1<p_0,p_1<\infty$. Then for all $0<\vartheta<1$, as well as
\begin{align}\label{eq:iterpol_parameter}
\frac{1}{p} := \frac{1-\vartheta}{p_0} + \frac{\vartheta}{p_1}, 
\quad 
\gamma:=(1-\vartheta) \gamma_0+ \vartheta \gamma_1,
\quad \text{and}\quad
\Theta:=(1-\vartheta) \Theta_0+ \vartheta \Theta_1,
\end{align}
we have
\begin{align}\label{eq:complex_interpol}
\big[ H^{\gamma_0}_{p_0,\Theta_0 p_0}(\domain), H^{\gamma_1}_{p_1,\Theta_1 p_1}(\domain) \big]_\vartheta = H^{\gamma}_{p,\Theta p}(\domain)
\end{align}
isomorphically. 
\item\label{it:LotSpaces:duality} Assume that $1<p,p'<\infty$ and $\gamma,\Theta,\Theta'\in\R$ are such that 
$\displaystyle \tfrac{1}{p}+\tfrac{1}{p'}=1$ and 
$\displaystyle\tfrac{\Theta}{p}+\tfrac{\Theta'}{p'}=d$.
Then 
\[
(\varphi,\psi)
:=
\int_\domain \varphi(x)\,\psi(x)\d x,\qquad \varphi,\psi\in  C_0^\infty(\domain),
\]
can be uniquely extended to a continuous bilinear form on $H^\gamma_{p,\Theta}(\domain)\times H^{-\gamma}_{p',\Theta'}(\domain)$ which provides the isomorphism 
$$
\big(H^\gamma_{p,\Theta}(\domain)\big)' = H^{-\gamma}_{p',\Theta'}(\domain).
$$	
\item\label{it:LotSpaces:multiplier} Let $n\in\N_0$ and let $a\colon\domain\to\R$ satisfy $\abs{a}^{(0)}_n:=\sup_{x\in\domain}\sum_{\abs{\alpha}\leq n}\rho_\domain^{\abs{\alpha}}(x) \abs{D^\alpha a(x)}<\infty$. Then, if $\abs{\gamma}\leq n$,
\[
    \norm{a u \sep H^\gamma_{p,\Theta}(\domain)}
    \leq C(d,p,n) \abs{a}^{(0)}_n \norm{ u \sep H^\gamma_{p,\Theta}(\domain)}.
\]
\item\label{it:LotSpaces:bddDom} If $\domain$ is bounded, then $H^\gamma_{p,\Theta}(\domain)\hookrightarrow H^{\gamma}_{p,\Theta_1}(\domain)$ for all $\Theta_1\geq\Theta$.
\item\label{it:LotSpaces:embeddings} 
    Let $1<p_0 \leq p_1<\infty$, as well as $\gamma_0,\gamma_1,\Theta\in\R$ with $\gamma_0\geq \gamma_1$ such that
    \begin{align}\label{eq:cond:diff-dim}
		\gamma_0 - \frac{d}{p_0} \geq  \gamma_1 - \frac{d}{p_1}.
	\end{align}
	Then $H^{\gamma_0}_{p_0,\Theta p_0}(\domain) \hookrightarrow  H^{\gamma_1}_{p_1,\Theta p_1}(\domain)$.
\item \label{it:LotSpaces:indexshift}
	Let $\nu\in\R$. Then
		$\psi_\domain^\nu u\in H^\gamma_{p,\Theta p}(\domain)$ if, and only if, $u\in H^\gamma_{p,(\Theta+\nu) p}(\domain)$. In this case,
			\begin{align*}
				\norm{\psi_\domain^\nu u \sep H^\gamma_{p,\Theta p}(\domain)} 
				\sim \norm{u \sep H^\gamma_{p,(\Theta+\nu) p}(\domain)}.
			\end{align*}
\item \label{it:LotSpaces:lifting}
			$H^{\gamma+1}_{p,\Theta}(\domain) 
			\!=\! \left\{u \in H^{\gamma}_{p,\Theta}(\domain)  :   \psi_\domain D u \in H^{\gamma}_{p,\Theta}(\domain) \right\}
		    \!=\! \left\{u \in H^{\gamma}_{p,\Theta}(\domain)  :  D(\psi_\domain u) \in H^{\gamma}_{p,\Theta}(\domain) \right\}$ with
			\begin{align*}
				\norm{u \sep H^{\gamma+1}_{p,\Theta}(\domain)} 
				&\sim \norm{u \sep H^\gamma_{p,\Theta}(\domain)} + \norm{\psi_\domain D u \sep H^\gamma_{p,\Theta}(\domain)}\\
				&\sim \norm{u \sep H^\gamma_{p,\Theta}(\domain)} + \norm{ D(\psi_\domain u) \sep H^\gamma_{p,\Theta}(\domain)}.
			\end{align*}
\end{enumerate}
\end{lemma}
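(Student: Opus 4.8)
The plan is to reduce the statement to the elementary first-order lifting property on the whole space, namely
\[
\norm{v \sep H^{\gamma+1}_p(\R^d)} \sim \norm{v \sep H^{\gamma}_p(\R^d)} + \sum_{j=1}^d \norm{\partial_j v \sep H^{\gamma}_p(\R^d)},\qquad v\in\mathscr D'(\R^d),
\]
which holds for all $\gamma\in\R$ by a standard Fourier-multiplier argument (cf.\ \autoref{sec:Bessel}), and then to transport it to $H^\gamma_{p,\Theta}(\domain)$ by localization and rescaling. Throughout I would fix a resolution $\xi=(\xi_\nu)_{\nu\in\Z}\in\mathscr A^{[\textup{R}]}_c(\domain,\partial\domain)$, which is admissible in \autoref{def:Lot_spaces} because $\mathscr A^{[\textup{R}]}_c\subset\mathscr A^{[\textup{L}]}_c$, and which has the convenient feature $\sum_\nu\xi_\nu\equiv 1$, so that $u=\sum_\nu\xi_\nu u$. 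I abbreviate $v_\nu:=(\xi_\nu u)(c^\nu\cdot)$.

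First I would apply the $\R^d$-lifting to each $v_\nu$; since its equivalence constants depend only on $p,\gamma,d$ and not on $\nu$, summing against the weights $c^{\nu\Theta}$ gives
\[
\norm{u \sep H^{\gamma+1}_{p,\Theta}(\domain)}^p \sim \norm{u \sep H^{\gamma}_{p,\Theta}(\domain)}^p + \sum_{j=1}^d\sum_{\nu\in\Z}c^{\nu\Theta}\norm{\partial_j v_\nu \sep H^\gamma_p(\R^d)}^p.
\]
By the chain rule $\partial_j v_\nu=c^\nu\big(\partial_j(\xi_\nu u)\big)(c^\nu\cdot)$, so the derivative term equals $\sum_\nu c^{\nu(\Theta+p)}\norm{(\partial_j(\xi_\nu u))(c^\nu\cdot)\sep H^\gamma_p(\R^d)}^p$. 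I would then split $\partial_j(\xi_\nu u)=\xi_\nu\,\partial_j u+(\partial_j\xi_\nu)\,u$. The contribution of $\xi_\nu\partial_j u$ is, directly from \autoref{def:Lot_spaces} with smoothness $\gamma$ and weight parameter $\Theta+p$, exactly $\norm{\partial_j u \sep H^\gamma_{p,\Theta+p}(\domain)}^p$; and since multiplication by $\psi_\domain$ is an isomorphism from $H^\gamma_{p,\Theta+p}(\domain)$ onto $H^\gamma_{p,\Theta}(\domain)$ by \autoref{lem:LotSpaces:properties}\ref{it:LotSpaces:indexshift}, this is comparable to $\norm{\psi_\domain\,\partial_j u\sep H^\gamma_{p,\Theta}(\domain)}^p$. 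Summing over $j$ reproduces $\norm{\psi_\domain D u\sep H^\gamma_{p,\Theta}(\domain)}^p$, which yields the first characterization and its norm equivalence (the set equality being immediate, as a function lies in either set precisely when the corresponding norm is finite).

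The technical heart, and the step I expect to be the main obstacle, is showing that the cross terms $(\partial_j\xi_\nu)u$ contribute only a lower-order quantity bounded by $\norm{u\sep H^\gamma_{p,\Theta}(\domain)}^p$. Here I would set $m_\nu(y):=c^\nu(\partial_j\xi_\nu)(c^\nu y)$ and observe, using {\bfseries{[D$^c$]}}, that the $m_\nu$ are supported in a single fixed annulus (independent of $\nu$) and have $C^n$-norms bounded uniformly in $\nu$; hence multiplication by $m_\nu$ is bounded on $H^\gamma_p(\R^d)$ uniformly in $\nu$ (the pointwise-multiplier mechanism behind \autoref{lem:LotSpaces:properties}\ref{it:LotSpaces:multiplier}). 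Writing $((\partial_j\xi_\nu)u)(c^\nu\cdot)=c^{-\nu}m_\nu\cdot u(c^\nu\cdot)$ and resolving $u=\sum_\mu\xi_\mu u$ — where on $\supp m_\nu$ only the finitely many indices $\mu$ with $\abs{\mu-\nu}\le N$ survive by the finite-overlap property \autoref{rem:resolution}\ref{it:resolution:cover} — one obtains
\[
\norm{((\partial_j\xi_\nu)u)(c^\nu\cdot)\sep H^\gamma_p(\R^d)}\lesssim c^{-\nu}\sum_{\abs{j'}\le N}\norm{v_{\nu+j'}\sep H^\gamma_p(\R^d)},
\]
using in addition that dilation by the fixed factor $c^{-j'}$ is bounded on $H^\gamma_p(\R^d)$. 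Raising to the $p$-th power, multiplying by $c^{\nu(\Theta+p)}$, and summing, the factor $c^{-\nu p}$ cancels $c^{\nu p}$ and the overlap bookkeeping produces the desired bound $\lesssim\norm{u\sep H^\gamma_{p,\Theta}(\domain)}^p$; the same estimate, inserted into a triangle inequality in the weighted $\ell^p$-norm, furnishes both inequalities needed for the equivalence.

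Finally, for the second characterization I would use the product rule $D(\psi_\domain u)=\psi_\domain D u+(D\psi_\domain)\,u$. Since $\psi_\domain\in\Regd(\domain,\partial\domain)$ satisfies $\abs{\partial^\alpha\psi_\domain}\lesssim_\alpha\rho_\domain^{1-\abs{\alpha}}$, each first-order derivative $\partial_i\psi_\domain$ obeys $\abs{\partial_i\psi_\domain}^{(0)}_n<\infty$ for every $n$, so by \autoref{lem:LotSpaces:properties}\ref{it:LotSpaces:multiplier} (with $n\ge\abs{\gamma}$) one has $(D\psi_\domain)u\in H^\gamma_{p,\Theta}(\domain)$ with $\norm{(D\psi_\domain)u\sep H^\gamma_{p,\Theta}(\domain)}\lesssim\norm{u\sep H^\gamma_{p,\Theta}(\domain)}$ whenever $u\in H^\gamma_{p,\Theta}(\domain)$. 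Consequently $D(\psi_\domain u)\in H^\gamma_{p,\Theta}(\domain)$ if and only if $\psi_\domain Du\in H^\gamma_{p,\Theta}(\domain)$, and the two differ only by a term controlled by $\norm{u\sep H^\gamma_{p,\Theta}(\domain)}$; combined with the first characterization this delivers the second norm equivalence and completes the asserted set equalities.
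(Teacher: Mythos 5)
There is a genuine gap: the statement you were asked to prove is the \emph{entire} lemma, comprising ten assertions (i)--(x) --- completeness and reflexivity, density of $C_0^\infty(\domain)$, the equivalent weighted-Sobolev norm for $\gamma\in\N_0$, complex interpolation, duality, pointwise multipliers, the embedding $H^\gamma_{p,\Theta}(\domain)\hookrightarrow H^\gamma_{p,\Theta_1}(\domain)$ for bounded domains, the Sobolev-type embedding, the index shift, and the lifting --- whereas everything in your proposal concerns the lifting assertion (x) alone. Worse, your argument for (x) explicitly invokes two of the parts you never prove: you use the index shift (ix) to pass from $\norm{\partial_j u \sep H^\gamma_{p,\Theta+p}(\domain)}$ to $\norm{\psi_\domain\, \partial_j u \sep H^\gamma_{p,\Theta}(\domain)}$, and the multiplier assertion (vi) to absorb the term $(D\psi_\domain)\,u$ in the second characterization. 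Proving the parts in order and using earlier ones later would be legitimate, but those earlier proofs are simply absent, so even the one assertion you treat is conditional. For comparison, the paper's own proof is a citation: parts (i), (ii), (iii), (v), (vii), (ix), and (x) are taken from \cite{Lot2000}, part (iv) is also from \cite{Lot2000} (modulo a correction of the statement given there), part (vi) is \cite[Lemma~3.1]{Kim2006}, and part (viii) is a consequence of the classical Sobolev embedding theorem.

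That said, the localization-and-rescaling argument you sketch for (x) is essentially sound and is the kind of argument by which such results are established in \cite{Lot2000}: the uniform bounds $\abs{\partial^\alpha m_\nu}\lesssim_\alpha 1$ for $m_\nu:=c^\nu(\partial_j\xi_\nu)(c^\nu\cdot)$ do follow from \textbf{[D$^{c}$]}, the reduction of $u=\sum_\mu\xi_\mu u$ to finitely many indices on $\supp(\partial_j\xi_\nu)$ is justified by \autoref{rem:resolution}\ref{it:resolution:cover}, the multiplier and dilation estimates on $H^\gamma_p(\R^d)$ are available independently (\autoref{lem:bessel}\ref{it:bessel:multiplier} and \ref{it:bessel:differomorph}), and the factor $c^{-\nu p}$ indeed cancels $c^{\nu p}$ in the weighted sum, giving the two-sided estimate via the triangle inequality. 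Two caveats: first, the rescaled cut-offs are \emph{not} supported in a single annulus independent of $\nu$ --- their supports lie where $\dist{\cdot}{\partial(c^{-\nu}\domain)}\in(c^{-k_0},c^{k_0})$, a set that varies with $\nu$ for a general domain $\domain$ --- though this is harmless since your multiplier estimate only needs the uniform derivative bounds; second, your use of a resolution in $\mathscr{A}^{[\textup{R}]}_{c}(\domain,\partial\domain)$ is legitimate only because the norm in \autoref{def:Lot_spaces} is independent of the admissible resolution (the remark following \autoref{def:Lot_spaces}, resting on \cite[Section~2]{Lot2000}), which is itself a nontrivial input that should be acknowledged.
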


\begin{proof}
The statements~\ref{it:LotSpaces:Banach}, \ref{it:LotSpaces:density}, \ref{it:LotSpaces:N}, \ref{it:LotSpaces:duality}, \ref{it:LotSpaces:bddDom}, \ref{it:LotSpaces:indexshift}, and \ref{it:LotSpaces:lifting} have all been proven in~\cite{Lot2000}. 
Also, the statement~\ref{it:LotSpaces:interpolation} on the complex interpolation of weighted Sobolev spaces has been stated and proven in~\cite{Lot2000}, however, with a mistake in the statement which is corrected here (see also the proof of~\autoref{thm:interpolation} below). A proof of~\ref{it:LotSpaces:multiplier} may be found in~\cite[Lemma~3.1]{Kim2006}. Finally, part~\ref{it:LotSpaces:embeddings} is a simple consequence of the classical Sobolev embedding theorem.
\end{proof}

Next we provide a localization result for the spaces $H^\gamma_{p,\Theta}(\domain)$, which can be found in \cite[Theorem~3.4]{Lot2000}. 

\begin{prop}
	\label{prop:Lot_Local}
Let $\domain\subsetneq\R^d$ be a domain, $1<p<\infty$, and $\gamma,\Theta\in\R$.
	Further let $\eta=(\eta_k)_{k\in\N}$ denote a collection of $C^\infty(\domain)$-functions such that
	$$
	\sup_{x\in\domain} \sum_{k\in\N} \rho_{\domain}(x)^{\abs{\alpha}} \abs{D^\alpha \eta_k(x)} \leq C_\alpha, \qquad \alpha\in\N_0^d.
	$$
	Then
	$$
	\sum_{k\in\N} \norm{\eta_k u \sep H^\gamma_{p,\Theta}(\domain)}^p 
	\lesssim \norm{u \sep H^\gamma_{p,\Theta}(\domain)}^p, \qquad u\in H^\gamma_{p,\Theta}(\domain).
	$$
	If, in addition,
	$$
	\inf_{x\in\domain} \sum_{k\in\N} \abs{\eta_k(x)}^p \geq \delta > 0,
	$$
	then 
	$$
	\norm{u \sep H^\gamma_{p,\Theta}(\domain)}^p
	\lesssim \sum_{k\in\N} \norm{\eta_k u \sep H^\gamma_{p,\Theta}(\domain)}^p, \qquad u\in H^\gamma_{p,\Theta}(\domain).
	$$
\end{prop}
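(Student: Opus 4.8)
The plan is to deduce both inequalities, via the definition of $\norm{\cdot\sep H^\gamma_{p,\Theta}(\domain)}$ in \autoref{def:Lot_spaces} and the dilation structure built into it, from a single vector-valued pointwise-multiplier estimate on the model spaces $H^\gamma_p(\R^d)$. Fixing a resolution $\xi=(\xi_\nu)_{\nu\in\Z}\in\mathscr{A}^{[\textup{L}]}_{c}(\domain,\partial\domain)$ that generates the norm and writing $g_\nu:=(\xi_\nu u)(c^\nu\cdot)$, the identity $(\xi_\nu\eta_k u)(c^\nu\cdot)=\eta_k(c^\nu\cdot)\,g_\nu$ and Tonelli's theorem give
\[
\sum_{k\in\N}\norm{\eta_k u\sep H^\gamma_{p,\Theta}(\domain)}^p
=\sum_{\nu\in\Z}c^{\nu\Theta}\sum_{k\in\N}\norm{\eta_k(c^\nu\cdot)\,g_\nu\sep H^\gamma_p(\R^d)}^p .
\]
Everything thus reduces to controlling the inner $k$-sum by $\norm{g_\nu\sep H^\gamma_p(\R^d)}^p$ uniformly in $\nu$, since $\sum_\nu c^{\nu\Theta}\norm{g_\nu\sep H^\gamma_p(\R^d)}^p=\norm{u\sep H^\gamma_{p,\Theta}(\domain)}^p$.

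The core is the following claim for any family $(a_k)_{k\in\N}\subset C^\infty(\R^d)$ with $\sup_x\sum_k\abs{D^\alpha a_k(x)}\leq C_\alpha$ for all $\abs{\alpha}\leq n$, valid for every real $\gamma$ with $n>\abs{\gamma}$ and every $1<p<\infty$: \emph{(A)} $\sum_k\norm{a_k g\sep H^\gamma_p(\R^d)}^p\lesssim\norm{g\sep H^\gamma_p(\R^d)}^p$ and, dually, \emph{(S)} $\norm{\sum_k a_k h_k\sep H^\gamma_p(\R^d)}^p\lesssim\sum_k\norm{h_k\sep H^\gamma_p(\R^d)}^p$. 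For $\gamma=m\in\N_0$ both follow from Leibniz's rule together with the elementary bound $\sum_k x_k^p\leq(\sum_k x_k)^p$ for $x_k\geq0$ (as $p\geq1$): it converts $\sum_k$ of the $L_p$-norms of the Leibniz terms $D^{\beta'}a_k\,D^{\beta-\beta'}g$ into $\int(\sum_k\abs{D^{\beta'}a_k}^p)\abs{D^{\beta-\beta'}g}^p\leq C_{\beta'}^p\int\abs{D^{\beta-\beta'}g}^p$ (and analogously for \emph{(S)}, with an additional application of Hölder's inequality in the $k$-summation). The fractional range $\gamma\geq0$ then follows by complex interpolation between two integer orders, using $[H^{m_0}_p,H^{m_1}_p]_\vartheta=H^\gamma_p(\R^d)$ together with $[\ell^p(H^{m_0}_p),\ell^p(H^{m_1}_p)]_\vartheta=\ell^p(H^\gamma_p(\R^d))$, cf.\ \cite{BL76}. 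Finally, \emph{(A)} at $(\gamma,p)$ and \emph{(S)} at $(-\gamma,p')$ are adjoint to one another; since the latter is already known for $-\gamma\geq0$ and $H^\gamma_p(\R^d)$ is reflexive, \emph{(A)} holds for $\gamma<0$ as well.

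For the first inequality I apply \emph{(A)} to the rescaled family $(\eta_k(c^\nu\cdot))_k$. The only delicate point is that the hypothesis bounds $\sum_k\abs{D^\alpha\eta_k}$ only after multiplication by $\rho_\domain^{\abs{\alpha}}$, so on $\supp\xi_\nu\subset\domain_{c,k_0}^{[\nu]}(\partial\domain)$, where $\rho_\domain\sim c^\nu$, the rescaled multipliers have $\nu$-uniformly bounded derivative-sums, but not globally---and $\rho_\domain$ does not dilate for a general domain. I therefore cut off: with a regularised distance $\psi_\domain\in\Regd(\domain,\partial\domain)$ (\autoref{rem:resolution}\ref{it:resolution:construction}) and a fixed bump $\Xi$, the functions $\chi_\nu:=\Xi\grklam{c^{-\nu}\psi_\domain(c^\nu\cdot)}$ satisfy $\chi_\nu\equiv1$ on $c^{-\nu}\supp\xi_\nu$ and have $\nu$-uniformly bounded derivatives (the estimates $\abs{D^\beta\psi_\domain}\lesssim\rho_\domain^{1-\abs{\beta}}$ make the scaling factors cancel). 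Replacing $\eta_k(c^\nu\cdot)$ by $\chi_\nu\,\eta_k(c^\nu\cdot)$ does not change $\eta_k(c^\nu\cdot)g_\nu$ and produces a family obeying the \emph{global}, $\nu$-uniform bound required by \emph{(A)}; the reduction of the first paragraph then yields $\sum_k\norm{\eta_k u\sep H^\gamma_{p,\Theta}(\domain)}^p\lesssim\norm{u\sep H^\gamma_{p,\Theta}(\domain)}^p$.

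Under the extra assumption $\inf_x\sum_k\abs{\eta_k(x)}^p\geq\delta>0$, I set $b_k:=\eta_k\abs{\eta_k}^{p-2}\big/\sum_j\abs{\eta_j}^p$, so that $\sum_k b_k\eta_k\equiv1$ and hence $u=\sum_k b_k(\eta_k u)$; a direct computation (the denominator lies in $[\delta,C_0^p]$, and its weighted derivatives are controlled by the hypothesis) shows that $(b_k)$ again satisfies $\sup_x\sum_k\rho_\domain^{\abs{\alpha}}\abs{D^\alpha b_k}\leq C'_\alpha$. Choosing $p'$ and $\Theta'$ with $\tfrac1p+\tfrac1{p'}=1$ and $\tfrac\Theta p+\tfrac{\Theta'}{p'}=d$ and invoking the duality $(H^\gamma_{p,\Theta}(\domain))'=H^{-\gamma}_{p',\Theta'}(\domain)$ from \autoref{lem:LotSpaces:properties}\ref{it:LotSpaces:duality}, I estimate, for $\norm{w\sep H^{-\gamma}_{p',\Theta'}(\domain)}\leq1$,
\[
\abs{(u,w)}=\Big|\sum_k(\eta_k u,\,b_k w)\Big|
\leq\Big(\sum_k\norm{\eta_k u\sep H^\gamma_{p,\Theta}(\domain)}^p\Big)^{1/p}\Big(\sum_k\norm{b_k w\sep H^{-\gamma}_{p',\Theta'}(\domain)}^{p'}\Big)^{1/p'},
\]
and the already-established first inequality, applied to $(b_k)$ in $H^{-\gamma}_{p',\Theta'}(\domain)$, bounds the last factor by a constant; taking the supremum over $w$ gives the reverse estimate. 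The main obstacle is the multiplier estimate \emph{(A)}/\emph{(S)} for \emph{all} real $\gamma$---reaching negative and fractional orders by interpolation and duality while respecting the $\ell^p$-sum over $k$---together with the $\nu$-uniform cutoff forced by the non-scaling of the boundary-distance weight on a general domain.
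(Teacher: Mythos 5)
The paper offers no internal proof of this proposition --- it quotes it from \cite[Theorem~3.4]{Lot2000} --- so your argument has to stand on its own, and its first half does. The Tonelli reduction of $\sum_k\norm{\eta_k u\sep H^\gamma_{p,\Theta}(\domain)}^p$ to a $\nu$-uniform vector-valued multiplier estimate on $H^\gamma_p(\R^d)$, the Leibniz argument for integer $\gamma$, interpolation for fractional $\gamma\ge 0$ and the adjointness of \emph{(A)} and \emph{(S)} for $\gamma<0$ are all correct, and your cutoffs $\chi_\nu=\Xi\grklam{c^{-\nu}\psi_\domain(c^\nu\cdot)}$ are exactly the right device for the fact that $\rho_\domain$ does not dilate on a general domain: the bounds $\abs{D^\beta\psi_\domain}\lesssim\rho_\domain^{1-\abs{\beta}}$ make the powers of $c^\nu$ cancel, and $\chi_\nu\equiv 1$ on a neighbourhood of $\supp\grklam{(\xi_\nu u)(c^\nu\cdot)}$, so inserting $\chi_\nu$ changes nothing.

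The second inequality, however, has a genuine gap: the functions $b_k:=\eta_k\abs{\eta_k}^{p-2}\big/\sum_j\abs{\eta_j}^p$ are in general not differentiable, so the hypothesis $\sup_x\sum_k\rho_\domain^{\abs{\alpha}}\abs{D^\alpha b_k}\le C'_\alpha$ --- which you need in order to apply the already-proven first inequality to $(b_k)$ in $H^{-\gamma}_{p',\Theta'}(\domain)$ --- is not just unverified but false. The maps $t\mapsto t\abs{t}^{p-2}=\operatorname{sgn}(t)\abs{t}^{p-1}$ and $t\mapsto\abs{t}^p$ are smooth at $t=0$ only when $p$ is an even integer, and the lower bound $\sum_j\abs{\eta_j}^p\ge\delta$ prevents all $\eta_j$ from vanishing simultaneously but not individual $\eta_k$ from vanishing (which they do in the intended partition-of-unity applications). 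Concretely, if $p=3/2$ and $\eta_1$ vanishes to first order at an interior point while $\eta_2\equiv 1/2$ nearby, then $b_1$ behaves like $\operatorname{sgn}(t)\abs{t}^{1/2}$ in one variable, so even $\abs{\alpha}=1$ fails; for larger $p$ one loses the higher-order derivatives needed when $\abs{\gamma}$ is large. The repair is cheap and keeps your duality scheme intact: the hypotheses already imply $\Psi:=\sum_j\abs{\eta_j}^2\ge\delta'>0$ (for $p\ge 2$ use $\abs{\eta_j}^p\le C_0^{p-2}\abs{\eta_j}^2$; for $1<p\le 2$ use H\"older in $j$ with exponents $\tfrac{1}{2-p}$, $\tfrac{1}{p-1}$ to get $\sum_j\abs{\eta_j}^p\le\grklam{\sum_j\abs{\eta_j}}^{2-p}\grklam{\sum_j\abs{\eta_j}^2}^{p-1}$), and $\Psi$ \emph{is} smooth with $\rho_\domain^{\abs{\alpha}}\abs{D^\alpha\Psi}$ uniformly bounded, since $\abs{\eta_j}^2=\eta_j\overline{\eta_j}$ is a product of smooth functions. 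Taking $b_k:=\overline{\eta_k}/\Psi$ one still has $\sum_k b_k\eta_k\equiv 1$, Leibniz and Fa\`a di Bruno give the weighted bounds for $(b_k)$, and the remainder of your argument (H\"older in $k$, the first inequality for $(b_k)$, and the duality of \autoref{lem:LotSpaces:properties}\ref{it:LotSpaces:duality}) goes through verbatim. Alternatively, one can avoid duality altogether by applying the $\R^d$-localization result of \autoref{lem:bessel}\ref{it:bessel:localization} levelwise to $(\xi_\nu u)(c^\nu\cdot)$ with the family $\ggklam{\chi_\nu\,\eta_k(c^\nu\cdot)}_{k}\cup\ggklam{1-\chi_\nu}$, whose $p$-th power sum is bounded below on all of $\R^d$.
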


\begin{remark}
    Let us briefly mention that assertions in the spirit of \autoref{prop:Lot_Local} play an important role in the theory of so-called \emph{refined localization spaces} $F^{\gamma,\mathrm{rloc}}_{p,q}(\domain)$; see, e.g., \cite[Section~2.2.3]{T08} and the references therein.
    Indeed, for $1<p<\infty$ and $\gamma\in\R$ it is easy to show that $F^{\gamma,\mathrm{rloc}}_{p,2}(\domain)=H^\gamma_{p,d-\gamma p}(\domain)$ under very mild assumptions on $\domain$ which particularly cover bounded Lipschitz domains. However, we will not follow this line of research here but refer to \cite{HSS24} in this context.
\end{remark}

The following result  describes the growth/decay of functions in $H^\gamma_{p,\Theta}(\domain)$ (and some of its derivatives) near and far away from the boundary~$\partial\domain$.  We refer to~\autoref{sec:Notation} for the definition of the Hölder-Zygmund spaces $\cont^s(\domain)$ and associated (semi-)norms.
Recall that $\psi_\domain\in \Regd(\domain,\partial\domain)$ denotes a regularised distance to the boundary, see \autoref{rem:resolution}\ref{it:resolution:construction} above.

\begin{prop}\label{prop:Lot_growth}
    Let $\domain\subsetneq\R^d$ be a domain,  $1<p<\infty$, and $\gamma,\Theta\in\R$ with $\gamma>d/p$ such that $\gamma-d/p=s+m$ with  $m:=\ceil{\gamma-d/p}-1\in\N_0$ and $0<s\leq 1$. 
    Then $u \in H^\gamma_{p,\Theta p}(\domain)$ admits continuous partial derivatives $D^\alpha u$ up to order $m$, and for each $\alpha\in\N_0^d$ with $\abs{\alpha}\leq m$ we have
    \begin{align}\label{eq:C-norm_bound}
        \norm{ \psi_{\domain}^{\Theta+\abs{\alpha}}\, D^\alpha u \sep C(\domain) }
        \lesssim \norm{u \sep H^\gamma_{p,\Theta p}(\domain)}
        \quad\text{and}\quad \left[ \psi_{\domain}^{\Theta+s+\abs{\alpha}}\, D^\alpha u\right]_{\mathcal{C}^s(\domain)} \lesssim \norm{u \sep H^\gamma_{p,\Theta p}(\domain)};
    \end{align}
    in particular, $\psi_{\domain}^{\Theta+s+\abs{\alpha}}\, D^\alpha u \in \mathcal{C}^{s}_{\mathrm{loc}}(\domain)$.
    If, in addition, there holds $\supp(u)\subset S\subset \overline{\domain}$ and $\rho_{\domain}\leq R<\infty$ on $S$, then 
    $\psi_{\domain}^{\Theta+s+\abs{\alpha}}\, D^\alpha u \in \mathcal{C}^{s}(\domain)$ with
    $$
   		 \norm{ \psi_{\domain}^{\Theta+s+\abs{\alpha}}\, D^\alpha u \sep \mathcal{C}^s(\domain)} 
   		 \lesssim R_1^s \norm{u \sep H^\gamma_{p,\Theta p}(\domain)}, \qquad \abs{\alpha} \leq m,
    $$
    where $R_1:=\max\{ 1,\, R\}$.
\end{prop}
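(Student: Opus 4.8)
The plan is to exploit the very definition of $\norm{\cdot\sep H^\gamma_{p,\Theta p}(\domain)}$ as a weighted $\l_p$-sum over the dyadic-type pieces $(\xi_\nu u)(c^\nu\cdot)$ and to transfer the classical Sobolev embedding $H^\gamma_p(\R^d)\hookrightarrow\mathcal{C}^{\gamma-d/p}(\R^d)=\mathcal{C}^{m+s}(\R^d)$ (see \autoref{sec:Bessel}) from each rescaled piece back to $u$. By the norm-equivalence remark following \autoref{def:Lot_spaces} together with \autoref{rem:resolution}\ref{it:resolution:construction},\ref{it:one}, I may assume that the underlying sequence $\xi=(\xi_\nu)_{\nu\in\Z}$ is a genuine resolution of unity, i.e.\ $\xi\in\mathscr{A}^{[\textup{R}]}_c(\domain,\partial\domain)$ with $\sum_{\nu}\xi_\nu\equiv1$ on $\domain$. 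Two structural facts will be used throughout: on $\supp(\xi_\nu)$ one has $\rho_\domain\sim\psi_\domain\sim c^\nu$ (from \textup{{\bfseries{[S$_{k_0}^{c}$]}}} with $M=\partial\domain$ and $\psi_\domain\sim\rho_\domain$), and at every fixed $x\in\domain$ only boundedly many indices $\nu$ (those with $c^\nu\sim\rho_\domain(x)$) contribute, by finite overlap (\autoref{rem:resolution}\ref{it:resolution:cover}).

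Writing $v_\nu(y):=(\xi_\nu u)(c^\nu y)$, so that $(\xi_\nu u)(x)=v_\nu(c^{-\nu}x)$ and $D^\alpha(\xi_\nu u)(x)=c^{-\nu\abs{\alpha}}(D^\alpha v_\nu)(c^{-\nu}x)$, the embedding gives $\norm{v_\nu\sep\mathcal{C}^{m+s}(\R^d)}\lesssim\norm{v_\nu\sep H^\gamma_p(\R^d)}$, and since a single summand is dominated by the full norm, $c^{\nu\Theta}\norm{v_\nu\sep H^\gamma_p(\R^d)}\leq\norm{u\sep H^\gamma_{p,\Theta p}(\domain)}$. In particular every $\xi_\nu u$ lies in $\mathcal{C}^{m+s}(\R^d)$ with compact support, so $D^\alpha u=\sum_\nu D^\alpha(\xi_\nu u)$ is locally a finite sum of continuous functions for $\abs{\alpha}\leq m$, proving the asserted continuity. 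For the first estimate in \eqref{eq:C-norm_bound} I fix $x$ and the finitely many contributing $\nu$, and combine the three displayed facts:
\[
\psi_\domain(x)^{\Theta+\abs{\alpha}}\abs{D^\alpha(\xi_\nu u)(x)}\sim c^{\nu(\Theta+\abs{\alpha})}c^{-\nu\abs{\alpha}}\abs{(D^\alpha v_\nu)(c^{-\nu}x)}\lesssim c^{\nu\Theta}\norm{v_\nu\sep H^\gamma_p(\R^d)}\leq\norm{u\sep H^\gamma_{p,\Theta p}(\domain)};
\]
summing over the $O(1)$ relevant indices and taking the supremum over $x$ yields the $C(\domain)$-bound.

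The main work, and the step I expect to be the genuine obstacle, is the Hölder/Zygmund seminorm in \eqref{eq:C-norm_bound}, because the common weight $\psi_\domain^{\Theta+s+\abs{\alpha}}$ must be differenced together with $D^\alpha u$, and the two quantities live on different scales. I would split into two regimes for a pair $x,y\in\domain$ (write $g:=\psi_\domain^{\Theta+s+\abs{\alpha}}D^\alpha u$). In the \emph{far} regime $\abs{x-y}\geq\tfrac12\max\{\rho_\domain(x),\rho_\domain(y)\}$ I estimate $\abs{g(x)-g(y)}\leq\abs{g(x)}+\abs{g(y)}$ and use $\abs{g(x)}=\psi_\domain(x)^s\cdot\psi_\domain(x)^{\Theta+\abs{\alpha}}\abs{D^\alpha u(x)}\lesssim\rho_\domain(x)^s\,\norm{u\sep\ldots}\lesssim\abs{x-y}^s\,\norm{u\sep\ldots}$ by the already-proven $C$-bound. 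In the \emph{near} regime $\abs{x-y}<\tfrac12\max\{\rho_\domain(x),\rho_\domain(y)\}$ the points sit at a common scale $c^\nu\sim\rho_\domain(x)\sim\rho_\domain(y)$, and I split
\[
g(x)-g(y)=\bigl(\psi_\domain(x)^{\Theta+s+\abs{\alpha}}-\psi_\domain(y)^{\Theta+s+\abs{\alpha}}\bigr)D^\alpha u(x)+\psi_\domain(y)^{\Theta+s+\abs{\alpha}}\bigl(D^\alpha u(x)-D^\alpha u(y)\bigr).
\]
The first term is controlled through the mean-value bound $\abs{\psi_\domain(x)^{\beta}-\psi_\domain(y)^{\beta}}\lesssim\rho_\domain^{\beta-1}\abs{x-y}$ with $\beta:=\Theta+s+\abs{\alpha}$ (using that $\psi_\domain\in\Regd(\domain,\partial\domain)$ is Lipschitz with $\abs{\nabla\psi_\domain}\lesssim1$), together with $\abs{x-y}\lesssim\abs{x-y}^s\rho_\domain^{1-s}$ and the $C$-bound (note $\beta-s=\Theta+\abs{\alpha}$); the second term is handled by differencing the rescaled pieces, using $[D^\alpha v_\nu]_{\mathcal{C}^s(\R^d)}\lesssim\norm{v_\nu\sep\mathcal{C}^{m+s}(\R^d)}$ on the fixed bounded reference annulus and the scaling $\abs{c^{-\nu}x-c^{-\nu}y}^s=c^{-\nu s}\abs{x-y}^s$, so that the powers of $c^\nu$ again collapse to $c^{\nu\Theta}\norm{v_\nu\sep H^\gamma_p(\R^d)}\leq\norm{u\sep\ldots}$. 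Summing over the $O(1)$ relevant $\nu$ closes the near regime. The borderline case $s=1$ runs along the same two regimes but with the second difference $g(x)-2g(\tfrac{x+y}2)+g(y)$ and the Zygmund version of the embedding; here the midpoint satisfies $\rho_\domain(\tfrac{x+y}2)\lesssim\abs{x-y}$ in the far regime, and the near regime splits analogously.

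Finally, the localized conclusion $\psi_\domain^{\Theta+s+\abs{\alpha}}D^\alpha u\in\mathcal{C}^s_{\mathrm{loc}}(\domain)$ is immediate from the seminorm estimate, while the global statement under $\supp(u)\subset S$ with $\rho_\domain\leq R$ on $S$ follows by noting that $g$ is supported in $S$, where the extra factor $\psi_\domain^s\sim\rho_\domain^s\lesssim R^s\leq R_1^s$ turns the $C$-bound into $\norm{g\sep C(\domain)}\lesssim R_1^s\,\norm{u\sep H^\gamma_{p,\Theta p}(\domain)}$; combined with the (scale-invariant) seminorm bound this gives $\norm{g\sep\mathcal{C}^s(\domain)}\lesssim R_1^s\,\norm{u\sep H^\gamma_{p,\Theta p}(\domain)}$.
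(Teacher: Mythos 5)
Your proposal is correct, but it takes a genuinely different route from the paper on the main part of the statement. The paper does not prove the two estimates in \eqref{eq:C-norm_bound} at all: it imports them from \cite[Theorem~4.3]{Lot2000}, remarking only that the borderline case $s=1$ follows by the same considerations, and its written proof consists solely of the short computation for the localized assertion (your final paragraph, which coincides with it). You instead reprove the cited result from scratch out of the definition of the norm: dominate a single rescaled piece by $c^{\nu\Theta}\norm{v_\nu\sep H^{\gamma}_{p}(\R^d)}\leq\norm{u\sep H^{\gamma}_{p,\Theta p}(\domain)}$, apply the classical embedding $H^{\gamma}_{p}(\R^d)\hookrightarrow \mathcal{C}^{m+s}(\R^d)$ piecewise, and recombine using $\psi_\domain\sim\rho_\domain\sim c^{\nu}$ on $\supp(\xi_\nu)$ together with the finite overlap of the supports; your far/near dichotomy and the splitting of $g(x)-g(y)$ into a weight-difference term (mean value inequality for $\psi_\domain^{\beta}$, legitimate because in the near regime the segment $[x,y]$ stays in $\domain$ at comparable scale) and a $u$-difference term (rescaled H\"older seminorm of $D^{\alpha}v_\nu$) is sound, and the exponent bookkeeping ($\beta-s=\Theta+\abs{\alpha}$ and $\beta-\abs{\alpha}-s=\Theta$) closes both terms. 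Your route buys self-containedness, and in particular an explicit argument for $s=1$, which \cite{Lot2000} does not contain; the paper's route buys brevity. Two small repairs: first, the embedding $H^{\gamma}_{p}(\R^d)\hookrightarrow\mathcal{C}^{\gamma-d/p}(\R^d)$ is not among the facts collected in \autoref{sec:Bessel}, so it needs an independent citation (e.g.\ $H^{\gamma}_{p}(\R^d)=F^{\gamma}_{p,2}(\R^d)\hookrightarrow B^{\gamma-d/p}_{\infty,\infty}(\R^d)=\mathcal{C}^{\gamma-d/p}(\R^d)$, see \cite{Tri1983}); second, in the Zygmund case the near regime requires the second-difference product splitting
\[
f(x)h(x)-2f(z)h(z)+f(y)h(y)=f(x)\bigl(h(x)-2h(z)+h(y)\bigr)+2h(z)\bigl(f(x)-f(z)\bigr)+h(y)\bigl(f(y)-f(x)\bigr),
\]
with $z:=(x+y)/2$, $f=\psi_\domain^{\beta}$, $h=D^{\alpha}u$, whose three terms are then bounded exactly by the ingredients you list (first difference of the weight, sup bound on $h$, rescaled second difference of $D^\alpha v_\nu$); this is worth writing out, since ``analogously'' is the one place in your sketch where actual work is hidden.
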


\begin{proof} The first part of \autoref{prop:Lot_growth} is taken from \cite[Theorem~4.3]{Lot2000}.
Note that, although not included therein, the case $s=1$ can be obtained by the same considerations.
For the second part, note that if $\supp(u)\subset S\subset \overline{\domain}$ and $\rho_{\domain}\leq R<\infty$ on $S$, then~\eqref{eq:C-norm_bound} implies
\begin{align*}
	\norm{ \psi_\domain^{\Theta+s+\abs{\alpha}}\, D^\alpha u \sep \mathcal{C}^s(\domain)}
	&=\sup_{x\in\domain} \abs{\psi_\domain(x)^s\, \psi_\domain(x)^{\Theta+\abs{\alpha}}\, (D^\alpha u)(x)} 
	+ \Big[ \psi_\domain^{\Theta+s+\abs{\alpha}}\, D^\alpha u \Big]_{\mathcal{C}^s(\domain)} \\
	&\lesssim R^s \norm{ \psi_\domain^{\Theta+\abs{\alpha}}\, D^\alpha u \sep C(\domain)} + \Big[ \psi_\domain^{\Theta+s+\abs{\alpha}}\, D^\alpha u \Big]_{\mathcal{C}^s(\domain)} \\
	&\lesssim R_1^s \norm{u \sep H^\gamma_{p,\Theta p}(\domain)}
\end{align*}
for $\alpha\in\N_0^d$ with $\abs{\alpha}\leq m$, since $\psi_\domain\sim \rho_\domain \leq R \leq \max\{ 1,\, R\} =: R_1$ on $S\supset \supp(D^\alpha u)$.
\end{proof}

For $\domain=\D$ we also have the following assertions, which we will frequently use below.

\begin{lemma}\label{lem:Lot_pw-mult-special}
Let  $1<p<\infty$ as well as $\gamma,\Theta\in\R$.
\begin{enumerate}[label=\textup{(\roman*)}]
\item\label{it:LotSpaces:D:dilation} $\displaystyle \norm{u(s\,\cdot) \sep H^\gamma_{p,\Theta}(\D)} 
		\sim s^{-\Theta/p} \norm{u \sep H^\gamma_{p,\Theta}(\D)}$,\quad $s>0$,\quad $u\in H^\gamma_{p,\Theta}(\D)$.
\item\label{it:LotSpaces:D:localization} Assume that $\zeta:=(\zeta_\nu)_{\nu\in\Z}\in \mathscr{A}_c(\D,\{0\})$ for some $c>1$.  
Then
\begin{equation}\label{eq:Lot_pw-mult-special:1}
    \norm{\zeta_\nu u \sep H^\gamma_{p,\Theta}(\D)} 
    \lesssim \norm{u \sep H^\gamma_{p,\Theta}(\D)},
    \qquad u \in H^\gamma_{p,\Theta}(\D),\quad \nu\in\Z,    
\end{equation}
and for all $\alpha\in\N_0^2$ it holds that
\begin{equation}\label{eq:Lot_pw-mult-special:2}
    \norm{\partial^\alpha\big(\zeta_\nu(c^\nu\cdot)\big) u \sep H^\gamma_{p,\Theta}(\D)} 
    \lesssim \norm{u \sep H^\gamma_{p,\Theta}(\D)},
    \qquad u \in H^\gamma_{p,\Theta}(\D),\quad \nu\in\Z.    
\end{equation}
\end{enumerate}
\end{lemma}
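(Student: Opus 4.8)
The plan is to handle the two parts separately, reducing each to tools already collected in \autoref{lem:LotSpaces:properties}. For part~\ref{it:LotSpaces:D:dilation} I would first establish the \emph{exact} scaling for non-negative integer smoothness and then bootstrap to arbitrary $\gamma$ by interpolation and duality. For $\gamma\in\N_0$ I use the equivalent integral norm $\normmod{\cdot\sep H^\gamma_{p,\Theta}(\D)}$ from \autoref{lem:LotSpaces:properties}\ref{it:LotSpaces:N}. Writing $v:=u(s\,\cdot)$ one has $D^\alpha v(x)=s^{\abs{\alpha}}(D^\alpha u)(sx)$, and since $\D$ is a cone the two relevant quantities scale cleanly, namely $\rho_\D(sx)=s\,\rho_\D(x)$ and $\dx=s^{-2}\dy$ under $y=sx$. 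Substituting $y=sx$ in each summand turns $\rho_\D(x)^{\abs{\alpha}}D^\alpha v(x)$ into $\rho_\D(y)^{\abs{\alpha}}(D^\alpha u)(y)$ and produces a single factor $s^{-\Theta}$ from the weight $\rho_\D^{\Theta-2}$ together with the Jacobian. This yields the exact identity $\normmod{u(s\,\cdot)\sep H^\gamma_{p,\Theta}(\D)}=s^{-\Theta/p}\normmod{u\sep H^\gamma_{p,\Theta}(\D)}$, hence the claimed equivalence for $\gamma\in\N_0$ with constants independent of $s$.

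To pass to arbitrary $\gamma\in\R$ I regard $T_s\colon u\mapsto u(s\,\cdot)$ as a single linear operator, which keeps the constants uniform in $s$. For $\gamma\geq 0$ I bracket $\gamma$ between the integers $\gamma_0:=\floor{\gamma}$ and $\gamma_1:=\ceil{\gamma}$ and invoke \autoref{lem:LotSpaces:properties}\ref{it:LotSpaces:interpolation} with $p_0=p_1=p$ and a common weight, giving $[H^{\gamma_0}_{p,\Theta}(\D),H^{\gamma_1}_{p,\Theta}(\D)]_\vartheta=H^\gamma_{p,\Theta}(\D)$ for the suitable $\vartheta$. Since $T_s$ has norm $\sim s^{-\Theta/p}$ on each integer endpoint, the complex interpolation bound gives $\norm{T_s\sep H^\gamma_{p,\Theta}(\D)}\lesssim (s^{-\Theta/p})^{1-\vartheta}(s^{-\Theta/p})^{\vartheta}=s^{-\Theta/p}$; applying the same to $T_{1/s}=T_s^{-1}$ yields the reverse inequality. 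For $\gamma<0$ I use the duality $(H^\gamma_{p,\Theta}(\D))'=H^{-\gamma}_{p',\Theta'}(\D)$ from \autoref{lem:LotSpaces:properties}\ref{it:LotSpaces:duality}: a short computation shows the adjoint of $T_s$ with respect to the pairing is $s^{-2}T_{1/s}$, so $\norm{T_s\sep H^\gamma_{p,\Theta}(\D)}=s^{-2}\norm{T_{1/s}\sep H^{-\gamma}_{p',\Theta'}(\D)}\sim s^{-2}s^{\Theta'/p'}$, and the relation $\Theta/p+\Theta'/p'=2$ collapses the exponent to $-\Theta/p$, as required.

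For part~\ref{it:LotSpaces:D:localization} I would derive both estimates from the pointwise multiplier bound \autoref{lem:LotSpaces:properties}\ref{it:LotSpaces:multiplier} with $n:=\ceil{\abs{\gamma}}$, the task being to bound the multiplier seminorm $\abs{a}^{(0)}_n=\sup_{x\in\D}\sum_{\abs{\beta}\leq n}\rho_\D(x)^{\abs{\beta}}\abs{D^\beta a(x)}$ uniformly in $\nu$, once for $a=\zeta_\nu$ and once for $a=\partial^\alpha(\zeta_\nu(c^\nu\cdot))$. Because $\zeta\in\mathscr{A}_c(\D,\{0\})$ is localized with respect to the \emph{vertex} distance, on $\supp(\zeta_\nu)$ one has $\rho_\circ\sim c^\nu$ by \textbf{[S$_{k_0}^{c}$]}, while \textbf{[D$^c$]} gives $\abs{\partial^\beta\zeta_\nu}\lesssim_\beta c^{-\abs{\beta}\nu}$. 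The seminorm, however, weights derivatives by the \emph{boundary} distance $\rho_\D$; the two are reconciled by the elementary one-sided inequality $\rho_\D\leq\rho_\circ$, valid since $0\in\partial\D$. Hence on $\supp(\zeta_\nu)$ one gets $\rho_\D^{\abs{\beta}}\abs{D^\beta\zeta_\nu}\lesssim (c^{\nu+k_0})^{\abs{\beta}}c^{-\abs{\beta}\nu}=c^{k_0\abs{\beta}}\lesssim 1$ uniformly in $\nu$, which proves \eqref{eq:Lot_pw-mult-special:1}. For \eqref{eq:Lot_pw-mult-special:2} the same bookkeeping applies after rescaling: $D^\beta[\partial^\alpha(\zeta_\nu(c^\nu\cdot))](x)=c^{\nu(\abs{\alpha}+\abs{\beta})}(\partial^{\alpha+\beta}\zeta_\nu)(c^\nu x)$ is $\lesssim 1$ by \textbf{[D$^c$]}, its support lies in $\{c^{-k_0}<\rho_\circ<c^{k_0}\}$ by \textbf{[S$_{k_0}^{c}$]}, and there $\rho_\D\leq\rho_\circ\leq c^{k_0}$, so the seminorm is again bounded uniformly in $\nu$.

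I expect the only genuinely delicate points to be, first, ensuring that the extension to general $\gamma$ in part~\ref{it:LotSpaces:D:dilation} keeps the constant independent of $s$ (which is guaranteed here because the complex-interpolation operator bound is a clean geometric mean and the duality identity is exact), and, second, the distance mismatch in part~\ref{it:LotSpaces:D:localization}: the multiplier seminorm is phrased in terms of $\rho_\D$, whereas the resolution of unity is adapted to $\rho_\circ$. The inequality $\rho_\D\leq\rho_\circ$ is exactly what bridges this gap and makes the uniform-in-$\nu$ bound work.
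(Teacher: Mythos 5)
Your proposal is correct and follows essentially the same route as the paper: for part~(i) exact scaling of the integer-order norm via $\rho_\D(sx)=s\rho_\D(x)$ and change of variables, then interpolation for $\gamma\geq 0$ and duality for $\gamma<0$; for~\eqref{eq:Lot_pw-mult-special:2} the pointwise-multiplier bound with a uniform-in-$\nu$ seminorm estimate on the rescaled cutoffs. The only (harmless) divergence is in~\eqref{eq:Lot_pw-mult-special:1}: you prove it directly by bounding $\abs{\zeta_\nu}^{(0)}_n$ via $\rho_\D\leq\rho_\circ\lesssim c^{\nu}$ on $\supp(\zeta_\nu)$, whereas the paper deduces it from~\eqref{eq:Lot_pw-mult-special:2} with $\alpha=0$ combined with the dilation equivalence of part~(i) --- both arguments rest on the same inequality $\rho_\D\leq\rho_\circ$, applied before rather than after rescaling.
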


\begin{proof}
We first prove~(i). Due to \autoref{lem:LotSpaces:properties}\ref{it:LotSpaces:interpolation} and~\ref{it:LotSpaces:duality}, it is enough to verify the assertion for $\gamma\in\N_0$. However, in this case the claim follows from the fact that $\rho_\D(sx)=s\rho_\D(x)$, $x\in\D$, simply by using \autoref{lem:LotSpaces:properties}\ref{it:LotSpaces:N} together with the chain rule and Jacobi's transformation theorem. 

To verify~(ii), we first prove~\eqref{eq:Lot_pw-mult-special:2}.
In view of \autoref{lem:LotSpaces:properties}\ref{it:LotSpaces:multiplier} it suffices to show that for all $n\in\N_0$ 
\begin{equation}\label{eq:Lot_pw-mult-special:2:proof}
\sup_{\nu\in\Z}\abs{\partial^\alpha\big(\zeta_\nu(c^\nu\cdot)\big)}_n^{(0)}
= 
\sup_{\nu\in\Z}\sup_{x\in\D}\sum_{\abs{\beta}\leq n}\rho_\D^{\abs{\beta}}(x) \Abs{\partial^{\beta+\alpha} [\zeta_\nu(c^\nu\cdot)](x)}
<\infty.    
\end{equation}
To see this, note that $\zeta\in\mathscr{A}_c(\D,\{0\})$ implies
\[
    \Abs{\partial^{\beta+\alpha} [\zeta_\nu(c^\nu\cdot)](x)}
    =\Abs{c^{\nu\abs{\alpha+\beta}}(\partial^{\beta+\alpha} \zeta_\nu)(c^\nu x)}
    \leq C_{\alpha+\beta},
    \qquad x\in\D,
\]
and $\supp\!\big(\zeta_\nu(c^\nu\cdot)\big)\subset\D_{c,k_0}^{[0]}(\{0\})$ for some $k_0\in\N$ and all $\nu\in\Z$ while 
$\rho_\D\lesssim 1$ on $\D_{c,k_0}^{[0]}(\{0\})$.

Estimate~\eqref{eq:Lot_pw-mult-special:1} follows now from~\eqref{eq:Lot_pw-mult-special:2} with $\alpha=0$ and part~\ref{it:LotSpaces:D:dilation}, which together yield \begin{align*}
        \norm{\zeta_\nu u \sep H^\gamma_{p,\Theta}(\D)} 
        \sim c^{\nu\Theta/p} \norm{(\zeta_\nu u)(c^{\nu}\cdot) \sep H^\gamma_{p,\Theta}(\D)} 
    	\lesssim c^{\nu\Theta/p} \norm{u(c^{\nu}\cdot) \sep H^\gamma_{p,\Theta}(\D)} 
        \sim \norm{u \sep H^\gamma_{p,\Theta}(\D)}
	\end{align*}
with constants that do not depend on $\nu\in\Z$ and $u \in H^\gamma_{p,\Theta}(\D)$.
\end{proof}

\section{The weighted Sobolev spaces \texorpdfstring{$H^\gamma_{p,\Theta,\theta}(\D)$}{Hgamma,p,Theta,theta(D)}}
\label{sec:WSobAngle}
After the preparations in the previous section, we now introduce the spaces $H^\gamma_{p,\Theta,\theta}(\D)$ and prove several properties of the resulting family of spaces.
Throughout this section we fix some arbitrary $0<\kappa<2\pi$ and write $\D:=\D_{\kappa}$, see~\eqref{eq:D}.

\subsection{Definition and basic properties} 
We start with the definition.

\begin{defi}\label{def:Hspaces}
Let $\zeta=(\zeta_\nu)_{\nu\in\Z}\in \mathscr{A}_{c}^{\textup{[L]}}(\D,\{0\})$ for some $c>1$ (see~\autoref{def:resolutions:sets}). 
Let $1<p<\infty$ as well as $\gamma,\Theta,\theta\in\R$. 
Then
\begin{align*}
    H^\gamma_{p,\Theta,\theta}(\D)
    := H^\gamma_{p,\Theta,\theta}(\D)_{c,\zeta}
    := \left\{ u \in \mathscr{D}'(\D)  :\ 
    \norm{u \sep H^\gamma_{p,\Theta,\theta}(\D)}_{c,\zeta}
    < \infty \right\},    
\end{align*}
where 	
\begin{equation}
\label{eq:norm-def}
\norm{u \sep H^\gamma_{p,\Theta,\theta}(\D)}_{c,\zeta} 
:=
\left( 
\sum_{\nu\in\Z} c^{\nu \theta} \norm{(\zeta_\nu\, u)(c^\nu \cdot) \sep H^{\gamma}_{p,\Theta}(\D)}^p 
\right)^{1/p}.
\end{equation}
\end{defi}

For any choice $\zeta$, $p$, $\gamma$, $\Theta$, $\theta$, and $c$ as in \autoref{def:Hspaces}, $H^\gamma_{p,\Theta,\theta}(\D)_{c,\zeta}$ is a vector space and $\norm{\cdot\sep H^\gamma_{p,\Theta,\theta}(\D)}_{c,\zeta}$ defines a norm on it
(to check the definiteness note that $\zeta$ satisfies {\bfseries{[L]}} with $\domain=\D$ and that $\sum_{\nu\in\Z}\zeta_\nu$ is locally finite). 
However, at first sight, $\norm{\cdot\sep H^\gamma_{p,\Theta,\theta}(\D)}_{c,\zeta}$ and therefore $H^\gamma_{p,\Theta,\theta}(\D)_{c,\zeta}$ seem to depend on the concrete choice of~$\zeta$ and $c$. 
The following lemma shows that this is not the case: Replacing $\zeta$ in the definition above by any $\xi\in\mathscr{A}^{[\textup{L}]}_{c_1}(\D,\{0\})$ for some $c_1>1$
will lead to the same spaces with equivalent norms. 

\begin{lemma}\label{lem:norm-change}
Let $\zeta$, $p$, $\gamma$, $\Theta$, $\theta$, and $c$ be as in \autoref{def:Hspaces}. 
Moreover let  
$\xi:=(\xi_\nu)_{\nu\in\Z}\in\mathscr{A}_{c_1}(\D,\{0\})$ for some $c_1>1$. 
Then 
\begin{equation}\label{eq:norm:equiv}
    \sum_{\nu\in\Z} c_1^{\nu \theta} \norm{(\xi_\nu\, u)(c_1^\nu \cdot) \sep H^{\gamma}_{p,\Theta}(\D)}^p
    \lesssim \norm{u \sep H^\gamma_{p,\Theta,\theta}(\D)}_{c,\zeta}^p,
    \qquad u\in H^\gamma_{p,\Theta,\theta}(\D)_{c,\zeta}.    
\end{equation}
In particular, 
if 
$\xi\in\mathscr{A}^{[\textup{L}]}_{c_1}(\D,\{0\})$, 
then
$H^\gamma_{p,\Theta,\theta}(\D)_{c,\zeta}=H^\gamma_{p,\Theta,\theta}(\D)_{c_1,\xi}$ with equivalent norms.
\end{lemma}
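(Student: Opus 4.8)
The plan is to follow the strategy used for the analogous statement about $H^\gamma_{p,\Theta}(\domain)$ in~\cite{Lot2000} (cited after \autoref{def:Lot_spaces}), reducing everything to the dilation invariance and the pointwise-multiplier estimate for the spaces $H^\gamma_{p,\Theta}(\D)$. First I would apply the dilation property \autoref{lem:Lot_pw-mult-special}\ref{it:LotSpaces:D:dilation} to strip the inner rescalings from both sides: since $\norm{(\zeta_\nu u)(c^\nu\cdot)\sep H^\gamma_{p,\Theta}(\D)}\sim c^{-\nu\Theta/p}\norm{\zeta_\nu u\sep H^\gamma_{p,\Theta}(\D)}$ with constants independent of $\nu$, the $p$-th power of the $(c,\zeta)$-norm is comparable to $\sum_\nu c^{\nu(\theta-\Theta)}\norm{\zeta_\nu u\sep H^\gamma_{p,\Theta}(\D)}^p$, and likewise for $(\xi,c_1)$. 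Thus \eqref{eq:norm:equiv} reduces to
\[
\sum_{\nu\in\Z}c_1^{\nu(\theta-\Theta)}\norm{\xi_\nu u\sep H^\gamma_{p,\Theta}(\D)}^p
\lesssim
\sum_{\mu\in\Z}c^{\mu(\theta-\Theta)}\norm{\zeta_\mu u\sep H^\gamma_{p,\Theta}(\D)}^p.
\]

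Next I would localize each piece $\xi_\nu u$ against the approximate partition $\zeta$. Because $\zeta\in\mathscr{A}^{[\textup{L}]}_{c}(\D,\{0\})$ satisfies {\bfseries{[L]}}, the sum $\Sigma:=\sum_{\mu}\zeta_\mu$ is locally finite and bounded below by $\delta>0$, so on $\supp(\xi_\nu)$ one may write $\xi_\nu u=\sum_{\mu\in B_\nu}\frac{\xi_\nu}{\Sigma}\,(\zeta_\mu u)$, where $B_\nu:=\ggklam{\mu:\D_{c,k_0}^{[\mu]}(\{0\})\cap\D_{c_1,k_1}^{[\nu]}(\{0\})\neq\emptyset}$ is the (uniformly bounded, by \autoref{rem:resolution}\ref{it:resolution:cover}) index set of $\zeta$-level sets meeting $\supp(\xi_\nu)$. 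The key point is that $a_\nu:=\xi_\nu/\Sigma$ is a pointwise multiplier on $H^\gamma_{p,\Theta}(\D)$ with $\sup_\nu\abs{a_\nu}^{(0)}_n<\infty$ for any fixed integer $n\geq\abs{\gamma}$: on $\supp(\xi_\nu)$ one has $\rho_\circ\sim c_1^\nu$, hence $\rho_\D\leq\rho_\circ\sim c_1^\nu$, while {\bfseries{[D$^{c_1}$]}} gives $\abs{\partial^\alpha\xi_\nu}\lesssim c_1^{-\nu\abs{\alpha}}$ and, since each $\mu\in B_\nu$ satisfies $c^\mu\sim c_1^\nu$, also $\abs{\partial^\alpha\Sigma}\lesssim c_1^{-\nu\abs{\alpha}}$ there; the quotient rule together with $\Sigma\geq\delta$ then yields $\rho_\D^{\abs{\alpha}}\abs{\partial^\alpha a_\nu}\lesssim 1$ uniformly in $\nu$. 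Applying \autoref{lem:LotSpaces:properties}\ref{it:LotSpaces:multiplier}, the triangle inequality, and the uniform bound on $\abs{B_\nu}$ gives $\norm{\xi_\nu u\sep H^\gamma_{p,\Theta}(\D)}^p\lesssim\sum_{\mu\in B_\nu}\norm{\zeta_\mu u\sep H^\gamma_{p,\Theta}(\D)}^p$.

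It then remains to sum over $\nu$ and reorganize. Here I would use that for $\mu\in B_\nu$ the two level sets overlap, so $c^\mu\sim c_1^\nu$ and consequently $c_1^{\nu(\theta-\Theta)}\sim c^{\mu(\theta-\Theta)}$ with constants independent of $\nu,\mu$; substituting this and interchanging the order of summation---legitimate because, symmetrically, each fixed $\mu$ lies in $B_\nu$ for only boundedly many $\nu$---bounds the left-hand side by $\sum_\mu c^{\mu(\theta-\Theta)}\norm{\zeta_\mu u\sep H^\gamma_{p,\Theta}(\D)}^p$, and undoing the dilation step finishes \eqref{eq:norm:equiv}. For the ``in particular'' claim, if $\xi\in\mathscr{A}^{[\textup{L}]}_{c_1}(\D,\{0\})$ then $\xi$ itself satisfies {\bfseries{[L]}}, so the same estimate applies with the roles of $(\zeta,c)$ and $(\xi,c_1)$ exchanged, yielding the reverse inequality and hence equality of the two spaces with equivalent norms.

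The main obstacle I anticipate is the uniform multiplier bound for $a_\nu=\xi_\nu/\Sigma$: one must track the $\nu$-dependence carefully and exploit that the weight $\rho_\D$ appearing in the multiplier seminorm $\abs{\cdot}^{(0)}_n$ is controlled by $\rho_\circ\sim c_1^\nu$ on $\supp(\xi_\nu)$, so that the factors $c_1^{-\nu\abs{\alpha}}$ produced by the derivatives of $\xi_\nu$ and $\Sigma$ are exactly compensated. The only other genuinely delicate point is the bookkeeping across the two scales $c$ and $c_1$ via the comparison $c^\mu\sim c_1^\nu$ on overlapping level sets; everything else is a routine combination of dilation, multiplier, and finite-overlap estimates.
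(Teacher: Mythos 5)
Your proof is correct, and it reaches the estimate by a route that is organized quite differently from the paper's, even though the underlying ingredients (finite overlap of level sets, uniform multiplier bounds, dilation invariance, and the comparison $c^\mu\sim c_1^\nu$ on overlapping annuli) are the same. The paper proceeds in two steps: it first proves \eqref{eq:norm:equiv} under the stronger hypothesis $\zeta\in\mathscr{A}^{[\textup{R}]}_{c}(\D,\{0\})$, inserting the cutoff $\widetilde{\zeta}_\nu=\sum_{j=-N}^N\zeta_{\alpha(\nu)+j}$, which is identically $1$ on $\supp(\xi_\nu)$ precisely because $\zeta$ is an exact resolution of unity, and treating $\xi_\nu(c_1^\nu\cdot)$ as the multiplier via \autoref{lem:Lot_pw-mult-special}; it then reduces the general case $\zeta\in\mathscr{A}^{[\textup{L}]}_{c}$ to the exact case through the renormalization $\zeta^*_\nu=\zeta_\nu/\eta_\nu$ of \autoref{rem:resolution}\ref{it:one}. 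You instead handle the approximate case in one shot by dividing by $\Sigma=\sum_\mu\zeta_\mu\geq\delta$ and decomposing $\xi_\nu u=\sum_{\mu\in B_\nu}(\xi_\nu/\Sigma)\,\zeta_\mu u$, which makes $\xi_\nu/\Sigma$ the multiplier; your normalization by $\Sigma$ plays exactly the role that the paper's passage from $[\textup{L}]$ to $[\textup{R}]$ plays, but deployed inside the estimate rather than as a preliminary reduction. A second organizational difference is that you work with undilated norms throughout (applying \autoref{lem:Lot_pw-mult-special}\ref{it:LotSpaces:D:dilation} once at the start and once at the end), whereas the paper shuttles between the $c_1^\nu$- and $c^{\alpha(\nu)+j}$-scalings inside the chain of inequalities. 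What the paper's structure buys is modularity: the multiplier bound is quoted from \autoref{lem:Lot_pw-mult-special}\ref{it:LotSpaces:D:localization}, whose proof already contains the scaling bookkeeping, and the $[\textup{R}]$-case argument is reused almost verbatim for \eqref{eq:norm:equiv:1}. What your argument buys is directness: no case distinction, and a single clean decomposition — at the cost of verifying the uniform seminorm bound $\sup_\nu\abs{\xi_\nu/\Sigma}^{(0)}_n<\infty$ by hand, which you do correctly by exploiting $\rho_\D\leq\rho_\circ\sim c_1^\nu$ on $\supp(\xi_\nu)$ together with \textbf{[D$^{c_1}$]}, \textbf{[D$^{c}$]}, the lower bound $\Sigma\geq\delta$, and the quotient rule. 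Your treatment of the double sum (uniform bound on $\#B_\nu$ and, symmetrically, on $\#\{\nu:\mu\in B_\nu\}$) and of the ``in particular'' claim by exchanging the roles of $(\zeta,c)$ and $(\xi,c_1)$ matches the logic of the paper.
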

\begin{proof}
We first prove~\eqref{eq:norm:equiv} under the additional assumptions that $\zeta\in\mathscr{A}^{[\textup{R}]}_{c}(\D,\{0\})$.
  Then, by  
definition, $\zeta\in\mathscr{A}^{[\textup{R}]}_{c,k_0}(\D,\{0\})$ and $\xi\in \mathscr{A}_{c_1,k_1}(\D,\{0\})$ for some $k_0,k_1\in\N$.
Let 
\[
    \widetilde{\zeta}_\nu
    := \sum_{j=-N}^N \zeta_{\alpha(\nu)+j},
    \qquad \nu\in\Z,
\]
with $\alpha(\nu)=\lfloor\nu/\log_{c_1}(c)\rfloor$ and $N=N(c_1,c,k_1,k_0)$ as in \autoref{rem:resolution}\ref{it:resolution:cover}.
Then, for all $\nu\in\Z$, $\widetilde{\zeta}_\nu\equiv 1$ on $\D^{[\nu]}_{c_1,k_1}(\{0\})\supset \supp(\xi_\nu)$.
Therefore, 
  by  
repeatedly applying \autoref{lem:Lot_pw-mult-special}, we obtain
\begin{align*}
\norm{(\xi_\nu u)(c_1^\nu\cdot) \sep H^\gamma_{p,\Theta}(\D)}
&=
\norm{\xi_\nu(c_1^\nu\cdot)\, (\widetilde{\zeta}_\nu u)(c_1^\nu\cdot) \sep H^\gamma_{p,\Theta}(\D)} \\
&\lesssim
\norm{(\widetilde{\zeta}_\nu u)(c_1^\nu\cdot) \sep H^\gamma_{p,\Theta}(\D)} \\
&\sim
\sgrklam{\frac{c_1}{c}}^{-\nu\Theta/p}\norm{(\widetilde{\zeta}_\nu u)(c^\nu\cdot) \sep H^\gamma_{p,\Theta}(\D)} \\
&\leq
\sgrklam{\frac{c_1}{c}}^{-\nu\Theta/p}\sum_{j=-N}^N\norm{(\zeta_{\alpha(\nu)+j} u)(c^\nu\cdot) \sep H^\gamma_{p,\Theta}(\D)} \\
&\sim
c_1^{-\nu\Theta/p}\sum_{j=-N}^N c^{(\alpha(\nu)+j)\Theta/p}\norm{(\zeta_{\alpha(\nu)+j} u)(c^{\alpha(\nu)+j}\cdot) \sep H^\gamma_{p,\Theta}(\D)}
\end{align*}
with constants that do not depend on $\nu\in\Z$, so that
\begin{align*}
    \sum_{\nu\in\Z} c_1^{\nu\Theta} \norm{(\xi_\nu u)(c_1^\nu\cdot) \sep H^\gamma_{p,\Theta}(\D)}^p
    &\lesssim \sum_{j=-N}^N \sum_{\nu\in\Z}  c^{-(\alpha(\nu)+j)\Theta}\norm{(\zeta_{\alpha(\nu)+j} u)(c^{\alpha(\nu)+j}\cdot) \sep H^\gamma_{p,\Theta}(\D)}^p \\
    &\lesssim \norm{u \sep H^\gamma_{p,\Theta,\theta}(\D)}_{c,\zeta}^p,
\end{align*}
where we used that $N$ does not depend on $\nu$ and that for all $m\in\Z$ the cardinality $\#  \ggklam{\nu\in\Z :\ \alpha(\nu)=m}\leq \lfloor \log_{c_1}(c)\rfloor + 1$.

In order to obtain~\eqref{eq:norm:equiv} for arbitrary $\zeta\in\mathscr{A}^{[\textup{L}]}_{c,k_0}(\D,\{0\})$ it now suffices to prove that
\begin{equation}\label{eq:norm:equiv:1}
    \norm{u \sep H^\gamma_{p,\Theta,\theta}(\D)}_{c,\zeta^*}
    \lesssim \norm{u \sep H^\gamma_{p,\Theta,\theta}(\D)}_{c,\zeta}, 
    \qquad u\in H^\gamma_{p,\Theta,\theta}(\D)_{c,\zeta},    
\end{equation}
with $\zeta^*:=(\zeta_\nu/\eta_\nu)_{\nu\in\Z}$ where $\eta_\nu:=\sum_{j=-2k_0+1}^{2k_0-1}\zeta_{\nu+j}$, $\nu\in\Z$.
As mentioned in \autoref{rem:resolution}\ref{it:one}, it holds that $\zeta^*\in \mathscr{A}^{[\textup{R}]}_{c,k_0}(\D,\{0\})$ and $\zeta^{**}:=(\zeta_\nu^*/\eta_\nu)_{\nu\in\Z}\in\mathscr{A}^{[\textup{L}]}_{c,k_0}(\D,\{0\})$.
Thus, since
\begin{align*}
    \norm{(\zeta_\nu^* u)(c^\nu \cdot ) \sep H^\gamma_{p,\Theta}(\domain) }
    = \norm{\zeta_\nu^{**}(c^\nu\cdot)\,(\eta_\nu u)(c^\nu\cdot) \sep H^\gamma_{p,\Theta}(\domain) },
    \qquad \nu\in\Z,
\end{align*}
very similar arguments as above with $\eta$ instead of $\tilde{\zeta}$ yield~\eqref{eq:norm:equiv:1}.
\end{proof}

\autoref{lem:norm-change} in mind, we make the following convention.
\begin{convention}\label{conv:zeta}
For the remainder of this text, we fix some arbitrary $\zeta=(\zeta_\nu)_{\nu\in\Z}\in\mathscr{A}^{\textup{[R]}}_{e,1}(\D,\{0\})$ as constructed in \autoref{rem:resolution}\ref{it:resolution:constr:D} (note that $\zeta$ then satisfies~\eqref{eq:dilation} with $c=e$). 
If not explicitly stated otherwise, we set $\norm{\cdot \sep H^\gamma_{p,\Theta,\theta}(\D)}:=\norm{\cdot \sep H^\gamma_{p,\Theta,\theta}(\D)}_{e,\zeta}$. 
By \autoref{lem:norm-change} all statements and proofs below hold mutatis mutandis with $e$ replaced by any $c>1$ and with any other $\zeta\in\mathscr{A}^{\textup{[L]}}_{c}(\D,\{0\})$.
\end{convention}

For $1<p<\infty$, $\theta\in\R$, and a normed space $X$, we define the vector-valued sequence space
\[
    \ell_p^\theta(\Z;X)
    := \ggklam{x=(x_\nu)_{\nu\in\Z} :\  x_\nu\in X \text{ for all }\nu\in\Z \text{ and } \norm{x\sep \ell_p^\theta(\Z;X)} <\infty },
\]
where 
\[
\norm{x\sep \ell_p^\theta(\Z;X)}
:=
\ssgrklam{\sum_{\nu\in\Z} e^{\nu\theta}\norm{x_\nu \sep X}^p}^{1/p}.
\]
We write $\ell_p(\Z;X):=\ell_p^0(\Z;X)$ and $\ell_p(\Z):=\ell_p(\Z;\R)$. 
Recall that if $X$ is a (reflexive) Banach space, then also $\grklam{\ell_p^\theta(\Z;X),\norm{\cdot\sep \ell_p^\theta(\Z;X)}}$ is a (reflexive) Banach space, see~\cite[Theorem~VI.2.1.1]{Ama2019}.

\begin{prop}\label{prop:basic}
    Let $1<p<\infty$ and $\gamma,\Theta,\theta\in\R$. 
    Then $\grklam{H^\gamma_{p,\Theta,\theta}(\D),\norm{\cdot \sep H^\gamma_{p,\Theta,\theta}(\D)}}$ is a  reflexive Banach space.
\end{prop}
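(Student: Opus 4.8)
The plan is to realise $H^\gamma_{p,\Theta,\theta}(\D)$ as a complemented subspace of the vector-valued sequence space $X:=\ell_p^\theta(\Z;H^\gamma_{p,\Theta}(\D))$ and then to invoke the abstract retraction--coretraction principle recorded in \autoref{rem:resolution}\ref{it:resolution:r-c}. Since $H^\gamma_{p,\Theta}(\D)$ is a reflexive Banach space by \autoref{lem:LotSpaces:properties}\ref{it:LotSpaces:Banach}, the space $X$ is itself reflexive Banach by~\cite[Theorem~VI.2.1.1]{Ama2019}. The two ingredients of the pair are the coretraction $S_e$ and the retraction $R_e$ from \autoref{rem:resolution}\ref{it:resolution:r-c} (with $c=e$ and the fixed $\zeta$ of \autoref{conv:zeta}), and we already know that $R_eS_e=\Id$ on $\mathscr D'(\D)$.

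Observe that $S_e$ is, by the very definition of the norm in~\eqref{eq:norm-def}, an \emph{isometry} from $H^\gamma_{p,\Theta,\theta}(\D)$ into $X$: indeed $\norm{u\sep H^\gamma_{p,\Theta,\theta}(\D)}=\norm{S_eu\sep X}$ for all $u$, and $u\in H^\gamma_{p,\Theta,\theta}(\D)$ holds precisely when this quantity is finite. Hence $S_e\in\L(H^\gamma_{p,\Theta,\theta}(\D),X)$ with no further work. The substantive step is therefore to prove that $R_e\in\L(X,H^\gamma_{p,\Theta,\theta}(\D))$, i.e.\ that for every $(f_\nu)_{\nu\in\Z}\in X$ the (locally finite) sum $g:=R_e\grklam{(f_\nu)_{\nu\in\Z}}=\sum_{\nu}\eta_\nu\,f_\nu(e^{-\nu}\cdot)$ satisfies $\norm{g\sep H^\gamma_{p,\Theta,\theta}(\D)}\lesssim\norm{(f_\nu)_{\nu\in\Z}\sep X}$.

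To estimate $\norm{g\sep H^\gamma_{p,\Theta,\theta}(\D)}^p=\sum_\mu e^{\mu\theta}\norm{(\zeta_\mu g)(e^\mu\cdot)\sep H^\gamma_{p,\Theta}(\D)}^p$ I would first use the support conditions \textbf{[S$_{k_0}^{e}$]} together with \autoref{rem:resolution}\ref{it:resolution:cover} to see that, for each fixed $\mu$, only finitely many indices $\nu$ with $\abs{\mu-\nu}\leq M$ contribute, with $M$ independent of $\mu$. For such indices one rewrites $(\zeta_\mu g)(e^\mu\cdot)=\sum_{\abs{\nu-\mu}\le M}[\zeta_\mu\eta_\nu](e^\mu\cdot)\,f_\nu(e^{\mu-\nu}\cdot)$ and controls each summand by combining the pointwise-multiplier and dilation estimates of \autoref{lem:Lot_pw-mult-special}: the rescaled cut-offs $[\zeta_\mu\eta_\nu](e^\mu\cdot)$ act as multipliers on $H^\gamma_{p,\Theta}(\D)$ with norms bounded uniformly in $\mu$, while the dilation by the bounded factor $e^{\mu-\nu}$ costs only a constant $e^{-(\mu-\nu)\Theta/p}\lesssim 1$. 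This yields $\norm{(\zeta_\mu g)(e^\mu\cdot)\sep H^\gamma_{p,\Theta}(\D)}\lesssim\sum_{\abs{\nu-\mu}\le M}\norm{f_\nu\sep H^\gamma_{p,\Theta}(\D)}$.

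Raising to the $p$-th power, multiplying by $e^{\mu\theta}$, using $e^{\mu\theta}\sim e^{\nu\theta}$ for $\abs{\mu-\nu}\le M$, and interchanging the uniformly finite sums then gives $\norm{g\sep H^\gamma_{p,\Theta,\theta}(\D)}^p\lesssim\sum_\nu e^{\nu\theta}\norm{f_\nu\sep H^\gamma_{p,\Theta}(\D)}^p=\norm{(f_\nu)_{\nu\in\Z}\sep X}^p$, which is the desired boundedness of $R_e$. With $S_e\in\L(H^\gamma_{p,\Theta,\theta}(\D),X)$, $R_e\in\L(X,H^\gamma_{p,\Theta,\theta}(\D))$, and $R_eS_e=\Id$ in hand, the principle of \autoref{rem:resolution}\ref{it:resolution:r-c} (see also \cite[Section~1.2.4]{Tri1995}) shows that $S_eR_e$ is a bounded projection of the reflexive Banach space $X$ onto $S_e\grklam{H^\gamma_{p,\Theta,\theta}(\D)}$, a closed subspace isomorphic to $H^\gamma_{p,\Theta,\theta}(\D)$ via $S_e$; a complemented subspace of a reflexive Banach space being again reflexive Banach, the claim follows. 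I expect the only genuinely delicate point to be the uniform-in-$\mu$ control of the rescaled product cut-offs $[\zeta_\mu\eta_\nu](e^\mu\cdot)$ as pointwise multipliers, which is exactly where \autoref{lem:Lot_pw-mult-special} and a Leibniz-rule bookkeeping of the relevant seminorms come into play.
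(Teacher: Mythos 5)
Your proposal is correct and follows essentially the same route as the paper's proof: both realise $H^\gamma_{p,\Theta,\theta}(\D)$ through the retraction--coretraction pair $(R_e,S_e)$ of \autoref{rem:resolution}, note that $S_e$ is an isometry into the reflexive Banach space $\ell_p^\theta(\Z;H^\gamma_{p,\Theta}(\D))$, and establish boundedness of $R_e$ via the finite-overlap of supports combined with the multiplier and dilation estimates of \autoref{lem:Lot_pw-mult-special}. The paper merely compresses the $R_e$-estimate (citing the proof of \autoref{lem:norm-change}) where you spell it out, so no further changes are needed.
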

\begin{proof}
By \autoref{lem:LotSpaces:properties}\ref{it:LotSpaces:Banach}, $H^\gamma_{p,\Theta}(\domain)$ is a reflexive Banach space, thus so is $\ell_p^\theta(\Z;H^\gamma_{p,\Theta}(\D))$.
Take $\zeta$ as in \autoref{conv:zeta} and let $(\eta_\nu)_{\nu\in\Z}$ as well as $S:=S_e$ and $R:=R_e$ be as in \autoref{rem:resolution}\ref{it:resolution:r-c}.
As mentioned therein, our assertion follows once we can prove that 
\[
    S\in\L\big(H^\gamma_{p,\Theta,\theta}(\D),\ell_p^\theta(\Z;H^\gamma_{p,\Theta}(\D))\big)
    \quad\text{and}\quad 
    R\in \L\big(\ell_p^\theta(\Z;H^\gamma_{p,\Theta}(\D)),H^\gamma_{p,\Theta,\theta}(\D)\big).
\]
The former is obvious, where $S$ is even isometric.
To see the latter, note that
by \autoref{rem:resolution}\ref{it:resolution:cover} and~\ref{it:resolution:constr:D}, for all $f=(f_\nu)_{\nu\in \Z}\in \ell_p^\theta(\Z;H^\gamma_{p,\Theta}(\D))$, it holds that
\[
    \zeta_\nu Rf
    = \zeta_\nu \sum_{\mu\in\Z}\eta_\mu f_\mu(e^{-\mu}\cdot)
    = \sum_{j=-2}^2 \zeta_\nu\eta_{\nu+j}f_{\nu+j}(e^{-(\nu+j)}\cdot),
    \qquad \nu\in\Z.
\]
Thus, arguing along the lines of the proof of \autoref{lem:norm-change}, by applying \autoref{lem:Lot_pw-mult-special} we obtain
\begin{align*}
    \norm{\grklam{\zeta_\nu Rf}(e^\nu\cdot) \sep H^\gamma_{p,\Theta}(\D)}
    \lesssim \sum_{j=-2}^2 \norm{f_{\nu+j} \sep H^\gamma_{p,\Theta}(\D)},
    \qquad \nu\in\Z,\quad f\in \ell_p^\theta(\Z;H^\gamma_{p,\Theta}(\D)),
\end{align*}
so that
\begin{align*}
    \norm{Rf \sep H^\gamma_{p,\Theta,\theta}(\D)}^p
    &\lesssim \sum_{j=-2}^2 \sum_{\nu\in\Z} e^{(\nu+j)\theta} \norm{f_{\nu+j} \sep H^\gamma_{p,\Theta}(\D)}^p \\
    &\sim \norm{f \sep \ell_p^\theta(\Z;H^\gamma_{p,\Theta}(\D))}^p,
    \qquad\qquad\quad f=(f_\nu)_{\nu\in\Z}\in \ell_p^\theta(\Z;H^\gamma_{p,\Theta}(\D)).
\end{align*}
Thus, $R\in \L\big(\ell_p^\theta(\Z;H^\gamma_{p,\Theta}(\D)),H^\gamma_{p,\Theta,\theta}(\D)\big)$. 
\end{proof}

The following embedding result includes the monotonicity of the scale  $H^\gamma_{p,\Theta,\theta}(\D)$ w.r.t.\ the smoothness parameter $\gamma$ as well as a Sobolev-type embedding as special cases.
\begin{prop}\label{prop:embeddings} 
Let $1<p_0 \leq p_1<\infty$, as well as $\gamma_0,\gamma_1,\Theta,\theta\in\R$ with $\gamma_0\geq \gamma_1$ such that
	\begin{align*}
		\gamma_0 - \frac{d}{p_0} \geq  \gamma_1 - \frac{d}{p_1}.
	\end{align*}
	Then $H^{\gamma_0}_{p_0,\Theta p_0,\theta p_0}(\D) \hookrightarrow  H^{\gamma_1}_{p_1,\Theta p_1,\theta p_1}(\D)$.
\end{prop}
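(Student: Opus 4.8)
The plan is to lift the level-set embedding for the spaces $H^\gamma_{p,\Theta}(\D)$ from \autoref{lem:LotSpaces:properties}\ref{it:LotSpaces:embeddings} to the scale $H^\gamma_{p,\Theta,\theta}(\D)$ by applying it term-by-term inside the defining series of \autoref{def:Hspaces}, and then to close the argument with the elementary monotonicity of $\ell_p$-norms in $p$. Since the hypotheses $1<p_0\le p_1<\infty$, $\gamma_0\ge\gamma_1$, and $\gamma_0-d/p_0\ge\gamma_1-d/p_1$ match exactly those of \autoref{lem:LotSpaces:properties}\ref{it:LotSpaces:embeddings}, the inner embedding $H^{\gamma_0}_{p_0,\Theta p_0}(\D)\hookrightarrow H^{\gamma_1}_{p_1,\Theta p_1}(\D)$ holds, with a single constant that does not depend on the function it is applied to.

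First I would fix $u\in H^{\gamma_0}_{p_0,\Theta p_0,\theta p_0}(\D)$ and, using the resolution $\zeta$ from \autoref{conv:zeta}, abbreviate
\[
b_\nu := e^{\nu\theta}\,\norm{(\zeta_\nu u)(e^\nu\cdot)\sep H^{\gamma_0}_{p_0,\Theta p_0}(\D)}, \qquad \nu\in\Z,
\]
so that, by \autoref{def:Hspaces}, $\norm{u\sep H^{\gamma_0}_{p_0,\Theta p_0,\theta p_0}(\D)}=\norm{(b_\nu)_{\nu\in\Z}\sep\ell_{p_0}(\Z)}$. Applying the inner embedding to the single function $(\zeta_\nu u)(e^\nu\cdot)$, for each fixed $\nu$, yields
\[
\norm{(\zeta_\nu u)(e^\nu\cdot)\sep H^{\gamma_1}_{p_1,\Theta p_1}(\D)}\lesssim b_\nu\, e^{-\nu\theta},\qquad\nu\in\Z,
\]
with a $\nu$-independent implicit constant. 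The crucial bookkeeping step is that the weight $e^{\nu\theta p_1}$ carried by the $\nu$-th summand of the target norm cancels exactly the factor $e^{-\nu\theta p_1}$ produced above, so that each summand is controlled by $b_\nu^{p_1}$. In other words, $\norm{u\sep H^{\gamma_1}_{p_1,\Theta p_1,\theta p_1}(\D)}^{p_1}\lesssim\sum_{\nu\in\Z}b_\nu^{p_1}=\norm{(b_\nu)_{\nu\in\Z}\sep\ell_{p_1}(\Z)}^{p_1}$.

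Finally I would invoke the embedding $\ell_{p_0}(\Z)\hookrightarrow\ell_{p_1}(\Z)$ for $p_0\le p_1$, that is $\norm{(b_\nu)_{\nu\in\Z}\sep\ell_{p_1}(\Z)}\le\norm{(b_\nu)_{\nu\in\Z}\sep\ell_{p_0}(\Z)}$, which gives
\[
\norm{u\sep H^{\gamma_1}_{p_1,\Theta p_1,\theta p_1}(\D)}\lesssim\norm{u\sep H^{\gamma_0}_{p_0,\Theta p_0,\theta p_0}(\D)},
\]
and hence the claimed embedding. I do not expect a genuine obstacle here, since the argument is essentially a reduction. The only points that require care are that the outer weight parameters are normalised as $\Theta p_i$ and $\theta p_i$ precisely so that, after factoring out $(e^{\nu\theta})^{p_i}$, the inner norms are compared at the same dilation level $\nu$ and the $\theta$-weights cancel cleanly; and that the constant in \autoref{lem:LotSpaces:properties}\ref{it:LotSpaces:embeddings} is uniform in $\nu$ because it is independent of the argument function.
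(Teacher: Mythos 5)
Your proof is correct and follows exactly the paper's argument: the paper's own proof is just the one-line observation that the claim follows from \autoref{def:Hspaces}, the level-wise embedding of \autoref{lem:LotSpaces:properties}\ref{it:LotSpaces:embeddings}, and $\ell_{p_0}(\Z)\hookrightarrow\ell_{p_1}(\Z)$, which is precisely the reduction you carry out. Your write-up merely makes explicit the bookkeeping (cancellation of the $e^{\nu\theta}$ factors) that the paper leaves to the reader.
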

\begin{proof}
Since $\ell_{p_0}(\Z)\hookrightarrow \ell_{p_1}(\Z)$, the assertion is an immediate consequence of \autoref{def:Hspaces} and \autoref{lem:LotSpaces:properties}\ref{it:LotSpaces:embeddings}.
\end{proof}

We conclude the current subsection with elementary monotonicity assertions related to the weight parameters $\Theta$ and $\theta$.

\begin{lemma}
\label{lem:monotone}
	Let $1<p<\infty$ as well as $\gamma,\Theta_0,\Theta_1,\theta\in\R$ with $\Theta_0\leq \Theta_1$. Then
	$H^{\gamma}_{p,\Theta_0,\theta}(\D) \hookrightarrow H^{\gamma}_{p,\Theta_1,\theta}(\D)$.
\end{lemma}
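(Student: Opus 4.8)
The plan is to work directly from \autoref{def:Hspaces}: with $\zeta$ as fixed in \autoref{conv:zeta} (so $c=e$ and $k_0=1$), the two norms differ only through their inner spaces,
\[
\norm{u \sep H^\gamma_{p,\Theta_i,\theta}(\D)}^p = \sum_{\nu\in\Z} e^{\nu\theta}\,\norm{(\zeta_\nu u)(e^\nu\cdot) \sep H^\gamma_{p,\Theta_i}(\D)}^p, \qquad i\in\{0,1\}.
\]
Since the weight $e^{\nu\theta}$ is identical on both sides, it suffices to establish the \emph{uniform} inner estimate $\norm{v \sep H^\gamma_{p,\Theta_1}(\D)} \lesssim \norm{v \sep H^\gamma_{p,\Theta_0}(\D)}$ for all $v$ supported in the fixed bounded set $S:=\{x\in\overline{\D}:e^{-1}\leq\abs{x}\leq e\}$, with a constant independent of $v$. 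Indeed, by the support and dilation properties of $\zeta$ recorded in \autoref{rem:resolution}\ref{it:resolution:constr:D}, each piece $v_\nu:=(\zeta_\nu u)(e^\nu\cdot)$ satisfies $\supp(v_\nu)\subset S$; applying the inner estimate to $v=v_\nu$ and summing against $e^{\nu\theta}$ then yields the claim. It is worth stressing that the global embedding $H^\gamma_{p,\Theta_0}(\D)\hookrightarrow H^\gamma_{p,\Theta_1}(\D)$ is \emph{false} on the unbounded sector (the factor $\rho_\D^{\Theta_1-\Theta_0}$ blows up as $\rho_\D\to\infty$, which is why \autoref{lem:LotSpaces:properties}\ref{it:LotSpaces:bddDom} requires $\domain$ bounded); the localization is exactly what repairs this.

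For the inner estimate I would invoke the index shift and the pointwise multiplier theorem, as these handle all $\gamma\in\R$ at once. Put $\mu:=(\Theta_1-\Theta_0)/p\geq 0$. By \autoref{lem:LotSpaces:properties}\ref{it:LotSpaces:indexshift} (with $\domain=\D$),
\[
\norm{v \sep H^\gamma_{p,\Theta_1}(\D)} \sim \norm{\psi_\D^{\mu}\, v \sep H^\gamma_{p,\Theta_0}(\D)},
\]
so it remains to bound the right-hand side by $\norm{v \sep H^\gamma_{p,\Theta_0}(\D)}$. Fix a cut-off $\chi\in C_0^\infty(\R^2)$ with $\chi\equiv 1$ on $S$; since $\supp(v)\subset S$ we have $\psi_\D^\mu v=(\chi\psi_\D^\mu)\,v$. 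The function $a:=\chi\psi_\D^\mu$ is a genuine multiplier: using $\psi_\D\sim\rho_\D$ and the regularized-distance bounds $\abs{\partial^\alpha\psi_\D}\lesssim\rho_\D^{1-\abs{\alpha}}$ one obtains $\rho_\D^{\abs{\beta}}\abs{D^\beta(\psi_\D^\mu)}\lesssim\rho_\D^{\mu}$, and on the compact set $\supp(\chi)$ the distance $\rho_\D$ is bounded, whence $\abs{a}^{(0)}_n<\infty$ for every $n\in\N_0$ (terms where derivatives fall on $\chi$ are harmless for the same reason, as $\mu\geq 0$). Taking $n\geq\abs{\gamma}$ and applying \autoref{lem:LotSpaces:properties}\ref{it:LotSpaces:multiplier} gives $\norm{a v \sep H^\gamma_{p,\Theta_0}(\D)}\lesssim\norm{v \sep H^\gamma_{p,\Theta_0}(\D)}$, which is what is needed.

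The one delicate point — and the step I would treat most carefully — is the uniformity in $\nu$: both the index-shift constant and the multiplier norm $\abs{a}^{(0)}_n$ must be independent of $\nu$. This is guaranteed precisely because every $v_\nu$ is supported in the single fixed set $S$, so one cut-off $\chi$ serves all of them and $\rho_\D\leq e$ on $\supp(\chi)$ uniformly. (For integer $\gamma\in\N_0$ one can bypass the index shift entirely: by \autoref{lem:LotSpaces:properties}\ref{it:LotSpaces:N} passing from the $\Theta_0$-weight to the $\Theta_1$-weight multiplies the integrand by $\rho_\D^{\Theta_1-\Theta_0}\leq e^{\Theta_1-\Theta_0}$ on $S$, giving the inner estimate immediately; the multiplier argument above is the replacement that also covers fractional and negative $\gamma$.) Combining the uniform inner estimate with the summation over $\nu$ completes the proof.
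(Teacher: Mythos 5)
Your proof is correct, and its skeleton coincides with the paper's: both use the dilation property of $\zeta$ to observe that every rescaled piece $(\zeta_\nu u)(e^\nu\cdot)$ is supported in the fixed set $\D^{[0]}_{e,1}(\{0\})$, reduce the lemma to a $\nu$-uniform inner estimate $\norm{v\sep H^\gamma_{p,\Theta_1}(\D)}\lesssim\norm{v\sep H^\gamma_{p,\Theta_0}(\D)}$ for $v$ supported there, and then sum against $e^{\nu\theta}$. Where you genuinely differ is in the justification of that inner estimate. The paper disposes of it with the phrase ``similar to \autoref{lem:LotSpaces:properties}\ref{it:LotSpaces:bddDom}'', i.e.\ it implicitly reruns Lototsky's bounded-domain argument: when $\rho_\D\lesssim 1$ on $\supp(v)$, only dyadic levels $j$ bounded from above contribute to the $H^\gamma_{p,\Theta}$-norm, and on those levels $c^{j\Theta_1}\lesssim c^{j\Theta_0}$ since $\Theta_1\geq\Theta_0$. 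You instead argue entirely at the level of the \emph{stated} properties: the index shift \autoref{lem:LotSpaces:properties}\ref{it:LotSpaces:indexshift} trades the $\Theta_1$-norm for the $\Theta_0$-norm of $\psi_\D^{\mu}v$ with $\mu=(\Theta_1-\Theta_0)/p\geq 0$, and a single fixed cut-off $\chi$ makes $a:=\chi\psi_\D^{\mu}$ an admissible pointwise multiplier ($\abs{a}^{(0)}_n<\infty$ because $\rho_\D$ is bounded on $\supp(\chi)$ and $\mu\geq 0$), so \autoref{lem:LotSpaces:properties}\ref{it:LotSpaces:multiplier} closes the loop. Your route costs a little more writing, but it is self-contained modulo the statements of \autoref{lem:LotSpaces:properties} (no need to unwind the proof of part~\ref{it:LotSpaces:bddDom}, which as stated requires a bounded domain and hence does not apply verbatim to $\D$), it covers all real $\gamma$ in one stroke, and it makes the $\nu$-uniformity of all constants explicit — which is exactly the delicate point. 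Your side remarks are also correct and worthwhile: the unlocalized embedding $H^\gamma_{p,\Theta_0}(\D)\hookrightarrow H^\gamma_{p,\Theta_1}(\D)$ indeed fails on the unbounded sector, so the localization is essential and not cosmetic, and for $\gamma\in\N_0$ the shortcut via \autoref{lem:LotSpaces:properties}\ref{it:LotSpaces:N} (bounding $\rho_\D^{\Theta_1-\Theta_0}$ by a constant on the fixed annulus) gives the inner estimate immediately.
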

\begin{proof}
Since $(\zeta_\nu)_{\nu\in\Z}$ satisfies~\eqref{eq:dilation}, 
we have that
$\supp(\zeta_\nu u)(c^\nu\cdot)\subset \D^{[0]}_{e,1}(\{0\})$ for all $\nu\in\Z$. 
Thus, similar to \autoref{lem:LotSpaces:properties}\ref{it:LotSpaces:bddDom}, we obtain
\[
	\norm{(\zeta_\nu u)(e^\nu\cdot) \sep H^{\gamma}_{p,\Theta_1}(\D)} 
    \lesssim \norm{(\zeta_\nu u)(e^\nu\cdot) \sep H^{\gamma}_{p,\Theta_0}(\D)}, 
    \qquad u\in H^\gamma_{p,\Theta_0,\theta}(\D), \quad \nu\in\Z, 
\]	
and the assertion follows.
\end{proof}

\begin{lemma}
\label{lem:monotone2}
Let $1<p<\infty$ as well as $\gamma,\Theta,\theta_0,\theta_1\in\R$ with $\theta_0\leq \theta_1$. Moreover, let	$0<r<R<\infty$.
\begin{enumerate}[label=\textup{(\roman*)}]
    \item\label{it:mon1} If $u\in H^{\gamma}_{p,\Theta,\theta_0}(\D)$ satisfies $\supp(u) \subset  \overline{B_{R}(0)}$, then $u\in H^{\gamma}_{p,\Theta,\theta_1}(\D)$ and
    \[
        \norm{u\sep H^{\gamma}_{p,\Theta,\theta_1}(\D)} 
        \lesssim R^{(\theta_1-\theta_0)/p} \,\norm{u\sep H^{\gamma}_{p,\Theta,\theta_0}(\D)}
    \]
    with constants independent of $u$ and $R$.

    \item\label{it:mon2} If $u\in H^{\gamma}_{p,\Theta,\theta_1}(\D)$ satisfies $\supp(u) \subset  \overline{\D}\setminus B_{r}(0)$, then $u\in H^{\gamma}_{p,\Theta,\theta_0}(\D)$ and
    \[
        \norm{u\sep H^{\gamma}_{p,\Theta,\theta_0}(\D)} 
        \lesssim r^{(\theta_0-\theta_1)/p} \,\norm{u\sep H^{\gamma}_{p,\Theta,\theta_1}(\D)}
    \]
    with constants independent of $u$ and $r$.
\end{enumerate}
In particular, if $u\in\mathscr{D}'(\D)$ with $\supp(u)\subset (\overline{\D}\cap \overline{B_R(0)})\setminus B_r(0)$, then $u\in H^\gamma_{p,\Theta,\theta}(\D)$ if, and only if, $u\in H^\gamma_{p,\Theta}(\D)$. In this case, $\norm{u\sep H^\gamma_{p,\Theta,\theta}(\D)}\sim \norm{u\sep H^\gamma_{p,\Theta}(\D)}$.
\end{lemma}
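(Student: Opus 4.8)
The plan is to exploit that, for the fixed resolution $\zeta$ from \autoref{conv:zeta}, each building block in the defining norm~\eqref{eq:norm-def} satisfies $\supp(\zeta_\nu)\subset\D^{[\nu]}_{e,1}(\{0\})=\{x:e^{\nu-1}<\abs{x}<e^{\nu+1}\}$. Consequently the norms $\norm{\cdot\sep H^\gamma_{p,\Theta,\theta_0}(\D)}$ and $\norm{\cdot\sep H^\gamma_{p,\Theta,\theta_1}(\D)}$ differ in their $\nu$-th summand only by the factor $e^{\nu(\theta_1-\theta_0)}$. Each support hypothesis restricts the indices $\nu$ for which $\zeta_\nu u\neq 0$ to a half-line, and on that half-line this factor is bounded by the claimed power of $R$ (resp.\ $r$), with a constant independent of $u$.

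For part~\ref{it:mon1}, since $\supp(u)\subset\overline{B_R(0)}$ while $\supp(\zeta_\nu)\subset\{e^{\nu-1}<\abs{x}\}$, the $\nu$-th summand vanishes unless $e^{\nu-1}<R$, i.e.\ $\nu<1+\ln R$. As $\theta_1-\theta_0\geq 0$, on this range $e^{\nu(\theta_1-\theta_0)}\leq e^{\theta_1-\theta_0}R^{\theta_1-\theta_0}$. First I would write $e^{\nu\theta_1}=e^{\nu(\theta_1-\theta_0)}e^{\nu\theta_0}$ in~\eqref{eq:norm-def}, pull this uniform bound out of the sum, and obtain $\norm{u\sep H^\gamma_{p,\Theta,\theta_1}(\D)}^p\lesssim R^{\theta_1-\theta_0}\norm{u\sep H^\gamma_{p,\Theta,\theta_0}(\D)}^p$; taking $p$-th roots gives the stated estimate. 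Part~\ref{it:mon2} is symmetric: $\supp(u)\subset\overline{\D}\setminus B_r(0)$ together with $\supp(\zeta_\nu)\subset\{\abs{x}<e^{\nu+1}\}$ forces $\nu>\ln r-1$, and since now $\theta_0-\theta_1\leq 0$ the factor $e^{\nu(\theta_0-\theta_1)}$ decreases in $\nu$, hence is bounded on this half-line by its value near $\nu=\ln r-1$, namely $e^{\theta_1-\theta_0}r^{\theta_0-\theta_1}$.

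For the concluding ``in particular'' assertion, I would note that a support contained in the annulus $(\overline{\D}\cap\overline{B_R(0)})\setminus B_r(0)$ makes only finitely many indices $\nu\in\{N_0,\dots,N_1\}$ relevant; on this finite range both weights $e^{\nu\theta}$ and $e^{\nu\Theta/p}$ are bounded above and below by positive constants depending only on $r,R$ and the parameters. Combining this with the dilation equivalence \autoref{lem:Lot_pw-mult-special}\ref{it:LotSpaces:D:dilation}, the claim reduces to $\norm{u\sep H^\gamma_{p,\Theta}(\D)}^p\sim\sum_{\nu=N_0}^{N_1}\norm{\zeta_\nu u\sep H^\gamma_{p,\Theta}(\D)}^p$. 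The ``$\gtrsim$'' direction of the latter is the pointwise-multiplier bound \autoref{lem:Lot_pw-mult-special}\ref{it:LotSpaces:D:localization} applied to each of the finitely many terms, while ``$\lesssim$'' follows from $u=\sum_{\nu=N_0}^{N_1}\zeta_\nu u$ (property~{\bfseries{[R]}} of $\zeta$, the remaining $\zeta_\nu$ vanishing on $\supp(u)$) via the triangle inequality and the equivalence of the $\ell_1$- and $\ell_p$-norms over a fixed finite index set.

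Throughout, the computations are routine; the only point requiring care is the bookkeeping of the index ranges and the correct sign of $\theta_1-\theta_0$, which is exactly what produces the powers $R^{(\theta_1-\theta_0)/p}$ and $r^{(\theta_0-\theta_1)/p}$ with constants independent of $u$ and of $R$ (resp.\ $r$). I do not anticipate a genuine obstacle beyond ensuring these uniform constants.
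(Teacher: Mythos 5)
Your proposal is correct and follows essentially the same route as the paper's proof: restrict the index set via the support properties of $\zeta_\nu$, bound the weight ratio $e^{\nu(\theta_1-\theta_0)}$ uniformly on the resulting half-line of indices, and for the annulus case reduce to a finite sum handled by \autoref{lem:Lot_pw-mult-special} in one direction and the triangle inequality (using $\sum_\nu\zeta_\nu\equiv 1$) in the other. The only cosmetic difference is that the paper indexes the cutoff by an integer $K$ with $e^{K-1}\leq R<e^{K}$ instead of working with $\ln R$ directly.
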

\begin{proof}
To show~(i) we choose $K\in\Z$ such that $e^{K-1} \leq R < e^{K}$.
Then, since $\supp(u) \subset \overline{B_{R}(0)}$, we have $\zeta_\nu u\equiv 0$ on $\D$ for all $\nu\geq K+1$.
Thus, since $\theta_0\leq \theta_1$, 
\begin{align*}
    \norm{u\sep H^{\gamma}_{p,\Theta,\theta_1}(\D)}^p
    &= \sum_{\nu \leq K} e^{\nu \theta_1} \, \norm{(\zeta_\nu u)(e^{\nu}\cdot) \sep H^\gamma_{p,\Theta}(\D)}^p \\
    &\leq e^{K(\theta_1-\theta_0)} \norm{u \sep H^{\gamma}_{p,\Theta,\theta_0}(\D)}^p \\
    &\lesssim R^{(\theta_1-\theta_0)} \norm{u \sep H^{\gamma}_{p,\Theta,\theta_0}(\D)}^p.
\end{align*}
Part~(ii) follows with very similar arguments.
Moreover, if $u\in\mathscr{D}'(\D)$ is such that $\supp(u)\subset (\overline{\D}\cap \overline{B_R(0)})\setminus B_r(0)$, then there exists some $K\in\N$ such that $\zeta_\nu u \equiv 0$ on $\D$ for all $\nu\in\Z$ with $\abs{\nu}>K$. 
Thus, by \autoref{def:Hspaces} and \autoref{lem:Lot_pw-mult-special}\ref{it:LotSpaces:D:dilation}, we have that
\begin{align*}
    \norm{u\sep H^{\gamma}_{p,\Theta,\theta}(\D)}
    &= \left( \sum_{\nu\in\Z} e^{\nu\theta}\norm{(\zeta_\nu u)(e^\nu\cdot)\sep H^{\gamma}_{p,\Theta}(\D)}^p \right)^{1/p} \\
    &\sim \left( \sum_{\nu\in\Z} e^{\nu(\theta-\Theta)}\norm{\zeta_\nu u\sep H^{\gamma}_{p,\Theta}(\D)}^p \right)^{1/p} 
    \sim \sum_{\abs{\nu}\leq K} \norm{\zeta_\nu u\sep H^{\gamma}_{p,\Theta}(\D)}.
\end{align*}
Now \autoref{lem:Lot_pw-mult-special}\ref{it:LotSpaces:D:localization} yields $\norm{u\sep H^{\gamma}_{p,\Theta,\theta}(\D)} \lesssim \norm{u\sep H^{\gamma}_{p,\Theta}(\D)}$ provided $u\in H^{\gamma}_{p,\Theta}(\D)$.
And, if $u\in H^{\gamma}_{p,\Theta,\theta}(\D)$, then the converse estimate is implied by the triangle inequality in $H^{\gamma}_{p,\Theta}(\D)$, as $\sum_{\nu\in\Z} \zeta_\nu\equiv 1$ on $\D$.
This proves the final assertion.
\end{proof}

\subsection{Density of \texorpdfstring{$C_0^\infty(\D)$}{test functions}, duality, and interpolation}
\label{sec:Density}

For many properties of the spaces $H^\gamma_{p,\Theta,\theta}(\D)$ that we address below, in particular, for the duality and interpolation statements, we   shall   
use the density of $ C^\infty_0(\D)$ in $H^\gamma_{p,\Theta,\theta}(\D)$ which we prove now.

\begin{theorem}\label{thm:density}
Let $1<p<\infty$ and $\gamma,\Theta,\theta\in\R$. Then $ C_0^\infty(\D)$ is dense in $H^\gamma_{p,\Theta,\theta}(\D)$.
\end{theorem}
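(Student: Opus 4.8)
The plan is to exploit the retraction–coretraction structure already set up in the proof of \autoref{prop:basic}, rather than to mollify $u$ directly. Recall that, with $\zeta$ as in \autoref{conv:zeta} and $S:=S_e$, $R:=R_e$, $\eta_\nu:=\sum_{j=-1}^1\zeta_{\nu+j}$ as in \autoref{rem:resolution}\ref{it:resolution:r-c}, the map $S$ is an isometry from $H^\gamma_{p,\Theta,\theta}(\D)$ into $\ell_p^\theta(\Z;H^\gamma_{p,\Theta}(\D))$, the map $R$ is bounded from $\ell_p^\theta(\Z;H^\gamma_{p,\Theta}(\D))$ onto $H^\gamma_{p,\Theta,\theta}(\D)$, and $RS=\Id$. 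Consequently, for every $u\in H^\gamma_{p,\Theta,\theta}(\D)$ and every $f\in\ell_p^\theta(\Z;H^\gamma_{p,\Theta}(\D))$,
\[
\norm{u-Rf\sep H^\gamma_{p,\Theta,\theta}(\D)}
=\norm{R(Su-f)\sep H^\gamma_{p,\Theta,\theta}(\D)}
\lesssim \norm{Su-f\sep \ell_p^\theta(\Z;H^\gamma_{p,\Theta}(\D))}.
\]
Thus it suffices to approximate $Su$ in the sequence space by some $f$ for which $Rf\in C_0^\infty(\D)$, and the whole argument reduces to a density statement on the level of $\ell_p^\theta(\Z;H^\gamma_{p,\Theta}(\D))$.

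First I would approximate $Su$ within $\ell_p^\theta(\Z;H^\gamma_{p,\Theta}(\D))$ by a \emph{finitely supported}, $C_0^\infty(\D)$-valued sequence. This is the standard two-step density argument for vector-valued sequence spaces: given $\varepsilon>0$, I truncate $Su=\big((\zeta_\nu u)(e^\nu\cdot)\big)_{\nu\in\Z}$ to the indices $\abs{\nu}\leq n$, which costs less than $\varepsilon/2$ in the $\ell_p^\theta$-norm since the defining series $\sum_\nu e^{\nu\theta}\norm{(\zeta_\nu u)(e^\nu\cdot)\sep H^\gamma_{p,\Theta}(\D)}^p$ converges; then, using that $C_0^\infty(\D)$ is dense in $H^\gamma_{p,\Theta}(\D)$ by \autoref{lem:LotSpaces:properties}\ref{it:LotSpaces:density}, I replace each of the finitely many entries $(\zeta_\nu u)(e^\nu\cdot)$ by some $g_\nu\in C_0^\infty(\D)$, incurring a further error below $\varepsilon/2$ because only the finitely many fixed weights $e^{\nu\theta}$ with $\abs{\nu}\leq n$ enter. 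Setting $f_\nu:=g_\nu$ for $\abs{\nu}\leq n$ and $f_\nu:=0$ otherwise gives $\norm{Su-f\sep\ell_p^\theta(\Z;H^\gamma_{p,\Theta}(\D))}<\varepsilon$.

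The key observation will be that $Rf\in C_0^\infty(\D)$ for such $f$. Indeed, $Rf=\sum_{\abs{\nu}\leq n}\eta_\nu\, f_\nu(e^{-\nu}\cdot)$ is a \emph{finite} sum; each factor $f_\nu(e^{-\nu}\cdot)$ is a dilate of a test function and hence lies in $C_0^\infty(\D)$, while each $\eta_\nu$ is smooth with support contained in the annular region $\D_{e,2}^{[\nu]}(\{0\})$, which is bounded away from the vertex $0$. Therefore every summand, and thus $Rf$, belongs to $C_0^\infty(\D)$. Combining this with the displayed estimate yields $\norm{u-Rf\sep H^\gamma_{p,\Theta,\theta}(\D)}\lesssim\varepsilon$ with $Rf\in C_0^\infty(\D)$, which proves the density.

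I do not expect a genuine obstacle here, since all the analytic substance has been absorbed into the isometry of $S$ and the boundedness of $R$ (\autoref{prop:basic}) on the one hand, and into the density statement for the spaces $H^\gamma_{p,\Theta}(\D)$ on the other. The only point that truly requires care is the last one, namely verifying that $R$ maps finitely supported, $C_0^\infty(\D)$-valued sequences into $C_0^\infty(\D)$; this hinges on the supports of the $\eta_\nu$ staying away from the singular vertex, so that the products $\eta_\nu\,f_\nu(e^{-\nu}\cdot)$ are compactly supported \emph{inside} $\D$. A more hands-on alternative would be to truncate $u=\sum_\nu\zeta_\nu u$ directly and then mollify on the resulting annulus, passing between $H^\gamma_{p,\Theta,\theta}(\D)$ and $H^\gamma_{p,\Theta}(\D)$ via \autoref{lem:monotone2}; however, this route forces one to repeat the overlap-and-multiplier bookkeeping of \autoref{lem:norm-change}, so the retraction–coretraction argument above is cleaner and preferable.
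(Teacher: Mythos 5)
Your proof is correct, but it reassembles the approximation differently from the paper. The paper truncates $u$ directly: with $h_k:=\sum_{\abs{\nu}\leq k}\zeta_\nu$ it shows $(1-h_k)u\to 0$ in $H^\gamma_{p,\Theta,\theta}(\D)$, then invokes \autoref{lem:monotone2} --- since $h_ku$ is supported in a fixed annulus, its $H^\gamma_{p,\Theta,\theta}$- and $H^\gamma_{p,\Theta}$-norms are equivalent --- and finishes with the density of $C_0^\infty(\D)$ in $H^\gamma_{p,\Theta}(\D)$. Your truncation step is the paper's step in disguise: applying $R$ to the truncated sequence $\big((\zeta_\nu u)(e^\nu\cdot)\big)_{\abs{\nu}\leq n}$ returns exactly $h_n u$. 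The second step, however, is genuinely different: you approximate each of the finitely many components in $H^\gamma_{p,\Theta}(\D)$ separately and push the total error through the bounded retraction $R$, the only new thing to verify being that $R$ maps finitely supported $C_0^\infty(\D)$-valued sequences into $C_0^\infty(\D)$ --- which holds, as you observe, because $\eta_\nu$ is smooth and each $g_\nu(e^{-\nu}\cdot)$ is compactly supported in the cone $\D$ (dilations preserve $\D$). What your route buys: it sidesteps a subtlety that the paper's final step glosses over. If $\varphi_j\to h_ku$ in $H^\gamma_{p,\Theta}(\D)$, the error $h_ku-\varphi_j$ is supported in an annulus that varies with $j$, and the constants in \autoref{lem:monotone2} depend on the inner and outer radii of that annulus; to argue rigorously one must first replace $\varphi_j$ by, say, $h_{k+1}\varphi_j$ and use the multiplier bound of \autoref{lem:Lot_pw-mult-special} to fix the supports. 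In your argument all error control happens in $\ell_p^\theta(\Z;H^\gamma_{p,\Theta}(\D))$, where the boundedness of $R$ (already established in the proof of \autoref{prop:basic}) does this bookkeeping once and for all. The paper's route, in exchange, is shorter and stays at the level of stated lemmas rather than reaching into the retraction--coretraction machinery.
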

\begin{proof}
For $k\in\N$ let $h_k:=\sum_{\abs{\nu}\leq k}\zeta_\nu$. Then $\supp(h_k)\subset \D^{[0]}_{e,k+1}(\{0\})$ and $h_k\equiv 1$ on $\D^{[0]}_{e,k}(\{0\})$ for all $k\in\N$.
In particular, applying \autoref{lem:Lot_pw-mult-special} again and using similar arguments as in the proof of \autoref{lem:norm-change}, yields that for all $u\in H^\gamma_{p,\Theta,\theta}(\D)$,
\[
    (1-h_k) u \longrightarrow 0 \quad \text{in}\quad H^\gamma_{p,\Theta,\theta}(\D), \quad\text{as}\quad k\to\infty.
\]
Since $\supp(h_k)\subset \D^{[0]}_{e,k+1}(\{0\})$ for all $k\in\N$, by \autoref{lem:monotone2}, 
it  holds  
\[
    \norm{h_k u \sep H^\gamma_{p,\Theta,\theta}(\D)} \sim \norm{h_k u \sep H^\gamma_{p,\Theta}(\D)}.
\]
Thus, the assertion follows from the fact that $h_k u$ can be approximated by $C_0^\infty(\D)$ functions which are dense in $H^\gamma_{p,\Theta}(\D)$, see  \autoref{lem:LotSpaces:properties}\ref{it:LotSpaces:density}.  
\end{proof}

Next we show that the duals of the spaces introduced above can be characterized by means of spaces from the same scale. 
For an alternative characterization of the duals of spaces with regularity $\gamma\in\N_0$, we refer to \autoref{prop:dual:N} below.

\begin{theorem}
\label{thm:dual}
Assume that $1<p,p'<\infty$ and  $\gamma,\Theta,\Theta',\theta,\theta'\in\R$ satisfy
\begin{equation}\label{eq:dual:parameters}
		\frac{1}{p}+\frac{1}{p'}=1,  
		\qquad \text{and}\qquad  \frac{\Theta}{p}+\frac{\Theta'}{p'}=\frac{\theta}{p}+\frac{\theta'}{p'}=2.  
\end{equation}
Then 
\begin{equation}\label{eq:dualform:D}
    (\varphi,\psi):=\int_\D \varphi(x)\,\psi(x)\d x, 
    \qquad \varphi,\psi\in  C_0^\infty(\D),    
\end{equation}
extends uniquely to a continuous bilinear form on $H^\gamma_{p,\Theta,\theta}(\D)\times H^{-\gamma}_{p',\Theta',\theta'}(\D)$ which provides the isomorphism 
\[
    \big(H^\gamma_{p,\Theta,\theta}(\D)\big)' = H^{-\gamma}_{p',\Theta',\theta'}(\D).
\]	
\end{theorem}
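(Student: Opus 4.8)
The plan is to exploit the product structure of the norm in \autoref{def:Hspaces} and combine two dualities: the duality of the Lototsky--Krylov spaces $H^\gamma_{p,\Theta}(\D)$ from \autoref{lem:LotSpaces:properties}\ref{it:LotSpaces:duality} (with $d=2$) and the duality of the weighted vector-valued sequence spaces $\ell_p^\theta(\Z;\cdot)$. Recall from \autoref{rem:resolution}\ref{it:resolution:r-c} that $S:=S_e$ maps $Y:=H^\gamma_{p,\Theta,\theta}(\D)$ isometrically into $X:=\ell_p^\theta(\Z;H^\gamma_{p,\Theta}(\D))$ and that $R:=R_e$ is a bounded left inverse, so $Y$ is a complemented subspace of the reflexive space $X$ and $Y'$ is a retract of $X'$. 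The analytic heart of the matter is a change of variables: for $\varphi,\psi\in C_0^\infty(\D)$, inserting $\sum_{\nu}\zeta_\nu\equiv 1$ and substituting $x=e^\nu y$ (whose Jacobian on $\D\subset\R^2$ equals $e^{2\nu}$, since $e^\nu\D=\D$) yields
\[
   (\varphi,\psi)=\sum_{\nu\in\Z}\int_\D \zeta_\nu\,\varphi\,\psi\,\dx
   =\sum_{\nu\in\Z} e^{2\nu}\,\big((\zeta_\nu\varphi)(e^\nu\cdot),\,\psi(e^\nu\cdot)\big),
\]
where each inner bracket is the $H^\gamma_{p,\Theta}(\D)\times H^{-\gamma}_{p',\Theta'}(\D)$ pairing of \autoref{lem:LotSpaces:properties}\ref{it:LotSpaces:duality}. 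This single identity explains both weight conditions in \eqref{eq:dual:parameters}: the condition $\Theta/p+\Theta'/p'=2$ is exactly what renders the inner pairing bounded, while the Jacobian factor $e^{2\nu}$ forces the \emph{Jacobian-weighted} sequence pairing $\langle (f_\nu),(v_\nu)\rangle:=\sum_{\nu}e^{2\nu}(f_\nu,v_\nu)$, for which Hölder's inequality with exponents $p,p'$ gives boundedness on $\ell_p^\theta\times\ell_{p'}^{\theta'}$ precisely when $\theta/p+\theta'/p'=2$. Thus the standard $\ell_p$--$\ell_{p'}$ duality composed with the inner duality and adjusted by this weight identifies $X'=\ell_{p'}^{\theta'}(\Z;H^{-\gamma}_{p',\Theta'}(\D))$.

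Writing $W:=H^{-\gamma}_{p',\Theta',\theta'}(\D)$, I would then proceed as follows. \emph{Boundedness of the pairing.} For $\varphi,\psi\in C_0^\infty(\D)$ I rewrite the displayed identity, replacing $\psi(e^\nu\cdot)$ by $(\widetilde{\zeta}_\nu\psi)(e^\nu\cdot)$ with some $\widetilde\zeta_\nu\equiv 1$ on $\supp\zeta_\nu$ (permissible because $(\zeta_\nu\varphi)(e^\nu\cdot)$ is supported in $\D^{[0]}_{e,1}(\{0\})$), expand $\widetilde\zeta_\nu\psi$ into finitely many neighbouring pieces $\zeta_{\nu+j}\psi$ via \autoref{rem:resolution}\ref{it:resolution:cover}, and estimate each resulting bracket by \autoref{lem:LotSpaces:properties}\ref{it:LotSpaces:duality}; \autoref{lem:Lot_pw-mult-special} controls the fixed dilations by $e^{-j}$. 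The Jacobian-weighted Hölder estimate above then gives $|(\varphi,\psi)|\lesssim\norm{\varphi\sep Y}\,\norm{\psi\sep W}$, so by \autoref{thm:density} the form extends continuously to $Y\times W$ and induces a bounded map $\iota\colon W\to Y'$.

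\emph{Surjectivity.} Given $F\in Y'$, the functional $F\circ R$ lies in $X'$ and is represented under the Jacobian pairing by a unique $(v_\nu)\in\ell_{p'}^{\theta'}(\Z;H^{-\gamma}_{p',\Theta'}(\D))$. I set $\psi:=\sum_{\nu}\zeta_\nu\,v_\nu(e^{-\nu}\cdot)$, which lies in $W$ with $\norm{\psi\sep W}\lesssim\norm{(v_\nu)}\lesssim\norm{F\sep Y'}$ because this is an operator of the same type as $R$ in \autoref{rem:resolution}\ref{it:resolution:r-c}, whose boundedness is proven exactly as in \autoref{prop:basic}. Using $RS=\Id_Y$, the unique representation of $F\circ R$, and the substitution $x=e^\nu y$ once more, one checks on test functions $u$ that
\[
   F(u)=(F\circ R)(Su)=\sum_{\nu\in\Z}e^{2\nu}\big((\zeta_\nu u)(e^\nu\cdot),v_\nu\big)
   =\sum_{\nu\in\Z}\int_\D \zeta_\nu\,u\,v_\nu(e^{-\nu}\cdot)\,\dx=(u,\psi),
\]
so $\iota(\psi)=F$ by density. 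Combined with the reverse bound $\norm{\psi\sep W}\lesssim\norm{F\sep Y'}$ and the boundedness of $\iota$, this shows $\iota$ is an isomorphism; injectivity is automatic since the integral pairing separates points of $W$ by \autoref{thm:density}.

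The main obstacle is the bookkeeping in the two displayed identities: the term $\psi(e^\nu\cdot)$ is \emph{not} literally $(\zeta_\nu\psi)(e^\nu\cdot)$, so one must insert the cut-off $\widetilde\zeta_\nu$ and track the finitely many off-diagonal indices produced by the finite-overlap structure, all while carrying the correct powers $e^{2\nu}$ and $e^{-j}$ through the dilations. Verifying that this genuinely recovers the concrete integral pairing (rather than merely an abstract retract of $X'$) and that the weight exponents balance to give \emph{both} equalities in \eqref{eq:dual:parameters} simultaneously is the delicate part; everything else reduces to \autoref{lem:LotSpaces:properties}\ref{it:LotSpaces:duality}, the sequence-space duality, and the density \autoref{thm:density}.
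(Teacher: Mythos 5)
Your proof is correct, and its first half (boundedness of the pairing via $\sum_\nu \zeta_\nu\equiv 1$, the substitution $x=e^\nu y$ with Jacobian $e^{2\nu}$, the inner duality of \autoref{lem:LotSpaces:properties}\ref{it:LotSpaces:duality}, and the weighted H\"older inequality over $\nu$) coincides with Step~1 of the paper's proof; but the representation half takes a genuinely different route. The paper proceeds by hand: it builds a distributional representative $\psi$ of a given functional $L$ by patching together local representatives obtained from $L(h_k\cdot)$ on an exhaustion of $\D$ (its Step~2), and then proves $\psi\in H^{-\gamma}_{p',\Theta',\theta'}(\D)$ through an explicit dual-norm estimate with near-optimizing test functions $g_\nu$ assembled into $\varphi_M=\sum_{\abs{k}\leq M}g_k(e^{-k}\cdot)\,\zeta_k$ (its Step~3). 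You instead exploit the retraction--coretraction pair $(R,S)$ of \autoref{rem:resolution}\ref{it:resolution:r-c}: given $F$ in the dual, $F\circ R$ lies in the dual of $\ell_p^\theta\big(\Z;H^\gamma_{p,\Theta}(\D)\big)$, which you identify with $\ell_{p'}^{\theta'}\big(\Z;H^{-\gamma}_{p',\Theta'}(\D)\big)$ under the $e^{2\nu}$-twisted pairing (your bookkeeping $\theta/p+\theta'/p'=2$ is exactly right), and you reassemble the representative $(v_\nu)$ into $\psi=\sum_\nu\zeta_\nu v_\nu(e^{-\nu}\cdot)$, whose norm bound in $H^{-\gamma}_{p',\Theta',\theta'}(\D)$ follows as for $R$ in \autoref{prop:basic}. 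This yields representative and norm estimate in one stroke and is structurally cleaner, avoiding the paper's optimizer construction; what it costs is two facts that should be made explicit: (a) the duality $\big(\ell_p^\theta(\Z;E)\big)'=\ell_{p'}^{\theta'}(\Z;E')$ for vector-valued sequence spaces, which holds for an arbitrary Banach space $E$ because the counting measure is purely atomic (no Radon--Nikod\'ym issues), and (b) the compatibility that the abstractly extended pairings, when one argument is a test function, recover the distributional action --- you need this both to check $F(u)=\psi(u)$ on $C_0^\infty(\D)$ and for injectivity. Point (b) follows from reading \autoref{lem:LotSpaces:properties}\ref{it:LotSpaces:duality} as a duality realized by distributional action together with the local finiteness of $(\zeta_\nu)_{\nu\in\Z}$; the paper is equally terse on this point, so I regard it as a presentational remark rather than a gap.
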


\begin{proof}
\emph{Step 1. } We first show that 
\begin{align}\label{eq:dual:test}
    (\varphi,\psi) 
    \lesssim \norm{\varphi \sep H^\gamma_{p,\Theta,\theta}(\D)} \norm{\psi \sep H^{-\gamma}_{p',\Theta',\theta'}(\D)},
    \qquad \varphi,\psi\in C_0^\infty(\D).
\end{align}
As a consequence, due to \autoref{thm:density}, the form $(\cdot,\cdot)$ defined in~\eqref{eq:dualform:D} uniquely extends  
to a continuous bilinear form on $H^\gamma_{p,\Theta,\theta}(\D)\times H^{-\gamma}_{p',\Theta',\theta'}(\D)$ and  $\psi\mapsto (\cdot,\psi)$ is a continuous embedding from $H^{-\gamma}_{p',\Theta',\theta'}(\D)$ to $(H^\gamma_{p,\Theta,\theta}(\D))'$. 
To verify~\eqref{eq:dual:test} let $\zeta=(\zeta_\nu)_{\nu\in\Z}\in\mathscr{A}^{\textup{[R]}}_{e,1}(\D,\{0\})$ and  $\eta=(\eta_\nu)_{\nu\in\Z}\in\mathscr{A}^{\textup{[L]}}_{e,2}(\D,\{0\})$ be as in \autoref{rem:resolution}\ref{it:resolution:r-c} with $c=e$.
Then, since $\eta_\nu\equiv 1$ on $\supp(\zeta_\nu)$ for all $\nu\in\Z$, by \autoref{lem:LotSpaces:properties}\ref{it:LotSpaces:duality} and Hölder's inequality, we have that indeed
\begin{align*}
(\varphi,\psi)
&=
\sum_{\nu\in\Z}\int_\D \zeta_\nu(x)\eta_\nu(x)\varphi(x)\psi(x)\,\dx 
=
\sum_{\nu\in\Z}e^{2\nu}\int_\D \zeta_\nu(e^\nu x)\eta_\nu(e^\nu x)\varphi(e^\nu x) \psi(e^\nu x)\,\dx \\
&\lesssim
\sum_{\nu\in\Z} e^{\nu\theta/p} \norm{(\zeta_\nu\varphi)(e^\nu\cdot) \sep H^\gamma_{p,\Theta}(\D)} e^{\nu\theta'/p'}\norm{(\eta_\nu\psi)(e^\nu\cdot) \sep H^{-\gamma}_{p',\Theta'}(\D)}\\
&\lesssim
\norm{\varphi \sep H^\gamma_{p,\Theta,\theta}(\D)} \norm{\psi \sep H^{-\gamma}_{p',\Theta',\theta'}(\D)},
\qquad\qquad\qquad\qquad\qquad\qquad\quad \varphi,\psi\in  C_0^\infty(\D).
\end{align*}

\emph{Step 2. } Now we show that for every $L\in (H^{\gamma}_{p,\Theta,\theta}(\D))'$ there is some $\psi\in \mathscr{D}'(\D)$ such that $L(\varphi)=\psi(\varphi)$ for all $\varphi\in  C_0^\infty(\D)$.
To this end, let $h=(h_k)_{k\in\N}$ be as in the proof of \autoref{thm:density}.
Then, by \autoref{lem:monotone2},
\begin{align*}
L(h_k\varphi) 
\leq C_L 
\norm{h_k\varphi \sep H^\gamma_{p,\Theta,\theta}(\D)}
\sim 
\norm{h_k\varphi \sep H^\gamma_{p,\Theta}(\D)}
\lesssim 
\norm{\varphi \sep H^\gamma_{p,\Theta}(\D)}, \qquad \varphi\in  C_0^\infty(\D),
\end{align*}
where we used \autoref{lem:LotSpaces:properties}\ref{it:LotSpaces:multiplier} for the last estimate (the constants here may depend on $k\in\N$).
Due to the density of $ C_0^\infty(\D)$ in $H^\gamma_{p,\Theta}(\D)$, this shows that $L(h_k\cdot) \in \big( H^\gamma_{p,\Theta}(\D) \big)'$ for all $k\in\N$. 
Thus, by \autoref{lem:LotSpaces:properties}\ref{it:LotSpaces:duality}, for each $k\in\N$ there exists $\psi_k \in H^{-\gamma}_{p',\Theta'}(\D)\subset \mathscr{D}'(\D)$ such that 
\[
L(h_k\varphi)=\psi_k(\varphi) \quad \text{for all}\quad \varphi \in  C_0^\infty(\D).
\]	
The choice of $h_k$ implies that 
\[
    \psi_{k+1}(\varphi)=\psi_k(\varphi)=:\psi(\varphi) \qquad \text{for } \varphi\in C_0^\infty(\D) \text{ and } k\in\N \text{ such that } \supp(\varphi)\subset \D^{[0]}_{e, k}(\{0\})
\]	
yields a well-defined generalized function $\psi \in \mathscr{D}'(\D)$. 
It satisfies $\psi= \lim_{k\to\infty} \psi_k$ and
\[
\psi(\varphi)=\psi_k(\varphi)=L(h_k\varphi)=L(\varphi)
\]
for all $\varphi\in C_0^\infty(\D)$ and suitably chosen $k\in\N$.

\emph{Step 3. } To complete the proof we need to show that $\psi\in\mathscr{D}'(\D)$ from Step~2 belongs to $H^{-\gamma}_{p',\Theta',\theta'}(\D)$.
For this purpose, let $b:=(b_\nu)_{\nu\in\Z}$ be given by
\[
b_\nu
:= 
e^{\nu \theta'/p'} \norm{(\zeta_\nu \psi)(e^\nu \cdot) \sep H^{-\gamma}_{p',\Theta'}(\D)}, \qquad \nu\in\Z,
\]
so that 
then it is enough to prove that
\begin{align}\label{eq:norm_psi-new}
\norm{\psi \sep H^{-\gamma}_{p',\Theta',\theta'}(\D)} 
= 
\norm{b \sep \ell_{p'}(\Z)} 
= 
\sup_{\norm{a \sep \ell_p(\Z)}=1} \sum_{\nu\in\Z} a_\nu\, b_\nu 
<\infty.
\end{align}
For every sequence $a:=(a_\nu)_{\nu\in\Z}$ of real numbers with $\norm{a \sep \ell_p(\Z)}=1$ we choose a corresponding sequence $(g_\nu)_{\nu\in\Z}$ in $  C_0^\infty(\D)$ 
such that 
\[
\norm{g_\nu \sep H^\gamma_{p,\Theta}(\D)}= \abs{a_\nu} \, e^{-\nu \theta/p}
\qquad\text{and} \qquad 
a_\nu\, b_\nu \lesssim \psi\big(g_\nu( e^{-\nu} \cdot)\xi_\nu\big), \quad \nu\in\Z,
\]
with constants that are also independent of $a$ and $\psi$.
This is indeed possible, since the duality $\big(H^{\gamma}_{p,\Theta}(\D)\big)'=H^{\gamma'}_{p',\Theta'}(\D)$ and the density of $C_0^\infty(\D)$ in $H^{\gamma}_{p,\Theta}(\D)$ show that
\begin{align*}
a_\nu\, b_\nu 
&\leq 
\abs{a_\nu}\, e^{\nu \theta'/p'} \norm{(\zeta_\nu \psi)(e^\nu \cdot) \sep H^{-\gamma}_{p',\Theta'}(\D)}\\
&\sim 
\abs{a_\nu} \, e^{\nu \theta'/p'}\sup_{\norm{\varphi \sep H^{\gamma}_{p,\Theta}(\D)}=1} \abs{\big((\zeta_\nu \psi)(e^\nu \cdot)\big)(\varphi)} \\
&=
e^{2\nu} \sup_{\substack{\norm{\varphi \sep H^{\gamma}_{p,\Theta}(\D)}=1\\\varphi\in  C_0^\infty(\D)}} \abs{\big((\zeta_\nu \psi)(e^\nu \cdot)\big)(a_\nu \, e^{-\nu \theta/p} \varphi)} \\
&\leq 2\, 
e^{2\nu} \big((\zeta_\nu \psi)(e^\nu \cdot)\big)(g_\nu) \\
&\sim \psi\big(g_\nu( e^{-\nu})\zeta_\nu\big)
\end{align*}
for some $g_\nu \in C_0^\infty(\D)$ with norm $\abs{a_\nu}  e^{-\nu \theta/p}$, $\nu\in\Z$, and constants
independent of $a$, $\nu$, and~$\psi$. 
For $M\in\N$  let
\[
\varphi_M:=\sum_{k\in\Z\colon \abs{k}\leq M} g_k(e^{-k}\cdot)\, \zeta_k \in  C_0^\infty(\D).
\]
Then the support properties of $\zeta=(\zeta_\nu)_{\nu\in\Z}$ and \autoref{lem:Lot_pw-mult-special} yield
\begin{align*}
\norm{(\zeta_\nu \varphi_M)(e^\nu\cdot) \sep H^{\gamma}_{p,\Theta}(\D)}
&= 
\Big\lVert 
\zeta_\nu(e^\nu\cdot) \sum_{\abs{k}\leq M} g_k(e^{\nu-k}\cdot)\, \zeta_k(e^\nu\cdot) \,\big|\, H^{\gamma}_{p,\Theta}(\D)\Big\rVert \\
&\leq 
\sum_{j=-1}^1 \norm{ \zeta_\nu(e^\nu\cdot)\, g_{\nu+j}(e^{-j}\cdot)\, \zeta_{\nu+j}(e^\nu\cdot) \sep H^{\gamma}_{p,\Theta}(\D)}\\
&\lesssim 
\sum_{j=-1}^1 \norm{ g_{\nu+j}\, \zeta_{\nu+j}(e^{\nu+j}\cdot) \sep H^{\gamma}_{p,\Theta}(\D)},\qquad\qquad \nu\in\Z,\, M\in\N,
\end{align*}
  again with constants independent of $a$ and $\psi$.   
Thus, 
\[
e^{\nu\theta} \norm{(\zeta_\nu \varphi_M)(e^\nu\cdot) \sep H^{\gamma}_{p,\Theta}(\D)}^p
\lesssim 
\sum_{j=-1}^1 e^{(\nu+j)\theta} \norm{ g_{\nu+j} \sep H^{\gamma}_{p,\Theta}(\D)}^p 
= 
\sum_{j=-1}^1 \abs{a_{\nu+j}}^p, \qquad \nu\in\Z,
\]
and hence
\[
    \norm{\varphi_M \sep H^{\gamma}_{p,\Theta,\theta}(\D)}^p 
    = \sum_{\nu\in\Z} e^{\nu\theta} \norm{(\zeta_\nu \varphi_M)(e^\nu\cdot) \sep H^{\gamma}_{p,\Theta}(\D)}^p
    \lesssim \norm{a \sep \ell_p(\Z)}^p
    =1, 
    \qquad M\in\N,
\]
with constants that are also independent of $a=(a_\nu)_{\nu\in\Z}$ and $\psi$.
Thus, by Step~2,
\begin{align*}
    \sum_{\substack{\abs{\nu}\leq M}} a_\nu b_\nu 
    \lesssim \sum_{\abs{\nu}\leq M} \psi\big(g_\nu(e^{-\nu}\cdot)\zeta_\nu \big)
    = \psi(\varphi_M)
    = L(\varphi_M)
    \leq \big\lVert L \,|\, \big(H^\gamma_{p,\Theta,\theta}(\D)\big)' \big\rVert \, \norm{\varphi_M \,|\, H^{\gamma}_{p,\Theta,\theta}(\D)}
\end{align*}
is bounded uniformly in $M$ and therefore~\eqref{eq:norm_psi-new} holds.
\end{proof}

As a corollary we obtain that the spaces $H^\gamma_{p,\Theta,\theta}(\D)$ are  continuously embedded in the space $\mathscr{D}'(\D)$ of generalized functions. This in turn implies that  any two of them form an interpolation couple in the sense of~\cite[Section~2.3]{BL76}.

\begin{corollary}\label{cor:ContDistr}
    For all $1<p<\infty$ and $\gamma,\Theta,\theta\in\R$ we have $H^\gamma_{p,\Theta,\theta}(\D)\hookrightarrow \mathscr{D}'(\D)$.
\end{corollary}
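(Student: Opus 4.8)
The plan is to unwind what the assertion $H^\gamma_{p,\Theta,\theta}(\D)\hookrightarrow\mathscr{D}'(\D)$ actually demands. Since every $u\in H^\gamma_{p,\Theta,\theta}(\D)$ is by \autoref{def:Hspaces} already an element of $\mathscr{D}'(\D)$, the inclusion holds on the level of sets and the identity is the natural embedding; only its continuity for the standard topology on $\mathscr{D}'(\D)$ needs to be shown. As that topology is the one of testing against fixed $\varphi\in C_0^\infty(\D)$, continuity is equivalent to the family of estimates
\begin{equation*}
    \abs{u(\varphi)}\leq C_\varphi\,\norm{u\sep H^\gamma_{p,\Theta,\theta}(\D)},\qquad u\in H^\gamma_{p,\Theta,\theta}(\D),
\end{equation*}
one for each test function $\varphi$, where $u(\varphi)$ denotes the genuine distributional action and $C_\varphi$ may depend on $\varphi$ but not on $u$. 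Thus the whole task reduces to producing such a bound for every fixed $\varphi$.

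To obtain it I would localise. Fix $\varphi\in C_0^\infty(\D)$; its support is compact in $\D$, hence contained in an annular region $(\overline{\D}\cap\overline{B_R(0)})\setminus B_r(0)$ with $0<r<R<\infty$, and therefore meets $\supp(\zeta_\nu)$ only for $\nu$ in a finite set $F\subset\Z$. Since $\zeta\in\mathscr{A}^{\textup{[R]}}_{e,1}(\D,\{0\})$ satisfies $\sum_{\nu}\zeta_\nu\equiv1$ on $\D$ (\autoref{conv:zeta}), we may write $\varphi=\sum_{\nu\in F}\zeta_\nu\varphi$ as a finite sum, whence $u(\varphi)=\sum_{\nu\in F}u(\zeta_\nu\varphi)=\sum_{\nu\in F}(\zeta_\nu u)(\varphi)$. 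Next I would check that each piece $\zeta_\nu u$ lies in the Lototsky space $H^\gamma_{p,\Theta}(\D)$: finiteness of $\norm{u\sep H^\gamma_{p,\Theta,\theta}(\D)}$ forces $(\zeta_\nu u)(e^\nu\cdot)\in H^\gamma_{p,\Theta}(\D)$ with $\norm{(\zeta_\nu u)(e^\nu\cdot)\sep H^\gamma_{p,\Theta}(\D)}\leq e^{-\nu\theta/p}\norm{u\sep H^\gamma_{p,\Theta,\theta}(\D)}$, and the dilation isomorphism \autoref{lem:Lot_pw-mult-special}\ref{it:LotSpaces:D:dilation} then yields $\zeta_\nu u\in H^\gamma_{p,\Theta}(\D)$ with $\norm{\zeta_\nu u\sep H^\gamma_{p,\Theta}(\D)}\lesssim_\nu\norm{u\sep H^\gamma_{p,\Theta,\theta}(\D)}$. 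It therefore suffices to bound $\abs{(\zeta_\nu u)(\varphi)}$ by $\norm{\zeta_\nu u\sep H^\gamma_{p,\Theta}(\D)}$ up to a $\varphi$-dependent constant and to sum the finitely many contributions. This last bound is precisely the statement $H^\gamma_{p,\Theta}(\D)\hookrightarrow\mathscr{D}'(\D)$; I would establish it by the same localisation one level down, now using a resolution subordinate to $\partial\D$ and \autoref{def:Lot_spaces}, reducing each summand by the scaling change of variables to the genuine distributional pairing of $H^\gamma_p(\R^d)$ against a test function, where $\abs{\distr{g}{\chi}}\leq\norm{g\sep H^\gamma_p(\R^d)}\norm{\chi\sep H^{-\gamma}_{p'}(\R^d)}$ holds by the standard duality of Bessel potential spaces.

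Alternatively, one may shortcut the outer layer via the duality already proved: by \eqref{eq:dual:parameters} choose $\Theta',\theta'$ conjugate to $\Theta,\theta$; since $\supp(\varphi)$ is compact in $\D$, \autoref{lem:monotone2} gives $\norm{\varphi\sep H^{-\gamma}_{p',\Theta',\theta'}(\D)}\sim\norm{\varphi\sep H^{-\gamma}_{p',\Theta'}(\D)}<\infty$, so \autoref{thm:dual} bounds the extended bilinear pairing by $\norm{u\sep H^\gamma_{p,\Theta,\theta}(\D)}\,\norm{\varphi\sep H^{-\gamma}_{p',\Theta',\theta'}(\D)}$. The main obstacle, and the reason I would not rely on this route alone, is the identification of this \emph{abstract} pairing $(u,\varphi)$ with the \emph{genuine} distributional action $u(\varphi)$: both agree on the dense subspace $C_0^\infty(\D)$ by \autoref{thm:density}, but agreement on a dense set does not by itself force the a priori possibly unbounded functional $u\mapsto u(\varphi)$ to be continuous. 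Breaking this circularity is exactly what the localisation together with the reduction to $H^\gamma_p(\R^d)$ accomplishes, and it is the only genuinely technical point of the argument.
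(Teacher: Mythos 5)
Your proof is correct, but note that the route you set aside as a risky ``shortcut'' is precisely the proof the paper gives: for $u_k\to u$ in $H^\gamma_{p,\Theta,\theta}(\D)$ it invokes \autoref{thm:dual} to view $u_k-u$ as elements of $\big(H^{-\gamma}_{p',\Theta',\theta'}(\D)\big)'$ and estimates $\abs{(u_k-u)(\varphi)}\lesssim \norm{u_k-u\sep H^{\gamma}_{p,\Theta,\theta}(\D)}\,\norm{\varphi\sep H^{-\gamma}_{p',\Theta',\theta'}(\D)}$ for each fixed $\varphi\in C_0^\infty(\D)$. Your main argument is genuinely different: two rounds of localization (first with $(\zeta_\nu)_{\nu\in\Z}$ around the vertex, using that only finitely many $\zeta_\nu$ meet $\supp(\varphi)$ together with \autoref{lem:Lot_pw-mult-special}\ref{it:LotSpaces:D:dilation}, then with a resolution subordinate to $\partial\D$), terminating in the genuine $\mathscr{S}'(\R^d)$--$\mathscr{S}(\R^d)$ pairing that realizes the duality of Bessel potential spaces. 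It is longer, but self-contained: it needs nothing beyond \autoref{def:Hspaces} and \autoref{lem:Lot_pw-mult-special}, so the corollary could be stated right after \autoref{prop:basic}, and along the way it also proves the fact, left implicit in the paper, that $H^\gamma_{p,\Theta}(\D)\hookrightarrow\mathscr{D}'(\D)$. The paper's version buys brevity at the cost of exactly the compatibility question you raise.

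On that question: you are right that the \emph{statement} of \autoref{thm:dual} alone does not justify replacing the abstract pairing $(u,\varphi)$ by the distributional action $u(\varphi)$ --- agreement of two functionals on the dense set $C_0^\infty(\D)$ proves nothing as long as the continuity of $u\mapsto u(\varphi)$ is the very point at stake. The paper's argument is nevertheless not irreparably circular: in Steps 2 and 3 of the proof of \autoref{thm:dual}, every bounded functional is represented by a \emph{genuine distribution} $\psi$ satisfying $L(\varphi)=\psi(\varphi)$ on test functions, the key ingredient being the distributional realization of the duality for the Lototsky spaces $H^{\gamma}_{p,\Theta}(\D)$ taken from \cite{Lot2000}; combined with a localization of the type you describe (multiplication by a cut-off supported in an annulus, where by \autoref{lem:monotone2} the mixed-weight and Lototsky norms agree), this identifies the extended pairing with the distributional action. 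So the paper's two-line proof leans on constructions inside \autoref{thm:dual} and on Lototsky-level facts that are not spelled out, whereas your argument derives the needed compatibility from scratch; that is the essential trade-off between the two approaches.
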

\begin{proof}
Since by definition $H^\gamma_{p,\Theta,\theta}(\D)\subset \mathscr{D}'(\D)$, it is enough to show that convergence in $H^\gamma_{p,\Theta,\theta}(\D)$ implies convergence in the sense of distributions.
Let $u,u_k \in H^\gamma_{p,\Theta,\theta}(\D)$, $k\in\N$, be such that $\norm{u_k - u\sep  H^\gamma_{p,\Theta,\theta}(\D)}\longrightarrow 0$ as $k\to\infty$. 
Then \autoref{thm:dual} allows to identify $u_k-u$ as elements in $\big( H^{-\gamma}_{p',\Theta',\theta'}(\D) \big)'$, where $p'$, $\Theta'$, and $\theta'$ are as in \eqref{eq:dual:parameters}, and for all $\varphi \in  C_0^\infty(\D)\subset H^{-\gamma}_{p',\Theta',\theta'}(\D)$ we have
\begin{align*}
\abs{u_k(\varphi)-u(\varphi)} 
&=\abs{(u_k-u)(\varphi)} \\
&\lesssim \norm{u_k-u \sep \big( H^{-\gamma}_{p',\Theta',\theta'}(\D) \big)'} \, \norm{\varphi \sep H^{-\gamma}_{p',\Theta',\theta'}(\D)} \\
&\lesssim \norm{u_k-u \sep H^{\gamma}_{p,\Theta,\theta}(\D) } \, \norm{\varphi \sep H^{-\gamma}_{p',\Theta',\theta'}(\D)}
\longrightarrow 0,
\end{align*}
as $k\to\infty$. 
In other words, $u_k\longrightarrow u$ in $\mathscr{D}'(\D)$, as claimed.
\end{proof}

Concerning complex interpolation in the sense of Calder\'on (denoted by the functor $[\cdot,\cdot]_\vartheta$, see \cite[Chapter~4]{BL76}), we have the following result. 
\begin{theorem}
\label{thm:interpolation}
	Let $\gamma_0,\gamma_1,\Theta_0,\Theta_1,\theta_0,\theta_1\in\R$, as well as $1<p_0,p_1<\infty$. 
	For $0<\vartheta<1$ we let 
    \[
    	\frac{1}{p} := \frac{1-\vartheta}{p_0} + \frac{\vartheta}{p_1},
	\]
	as well as
	\[ 
    	\gamma:=(1-\vartheta) \gamma_0+ \vartheta \gamma_1,
    	\qquad
    	\Theta:=(1-\vartheta) \Theta_0+ \vartheta \Theta_1,
    	\qquad \text{and}\qquad
    	\theta:=(1-\vartheta) \theta_0+ \vartheta \theta_1.
	\]
Then we have
	\begin{equation}\label{eq:interpolation}
	 \big[H^{\gamma_0}_{p_0,\Theta_0 p_0,\theta_0 p_0}(\D), H^{\gamma_1}_{p_1,\Theta_1 p_1,\theta_1 p_1}(\D)\big]_\vartheta = H^{\gamma}_{p,\Theta p, \theta p}(\D)   
	\end{equation}
isomorphically.
\end{theorem}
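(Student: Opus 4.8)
The plan is to transfer the known complex interpolation result for the constituent spaces $H^\gamma_{p,\Theta}(\D)$ (\autoref{lem:LotSpaces:properties}\ref{it:LotSpaces:interpolation}) up to the spaces $H^\gamma_{p,\Theta,\theta}(\D)$ using the retraction--coretraction machinery from \autoref{rem:resolution}\ref{it:resolution:r-c}, together with the standard interpolation behaviour of vector-valued sequence spaces. The key observation is that $S=S_e$ and $R=R_e$, which were shown in the proof of \autoref{prop:basic} to form a retraction--coretraction pair, realise $H^\gamma_{p,\Theta,\theta}(\D)$ as a complemented subspace of $\ell_p^\theta(\Z;H^\gamma_{p,\Theta}(\D))$. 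Since complex interpolation commutes with retractions (see \cite[Theorem~1.2.4]{Tri1995}), it suffices to compute the interpolation space of the corresponding couple of sequence spaces and then pull the result back.

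First I would record that $S$ and $R$ are bounded between the respective spaces \emph{simultaneously for both endpoints}. Concretely, by the proof of \autoref{prop:basic}, for $i\in\{0,1\}$ we have $S\in\L\big(H^{\gamma_i}_{p_i,\Theta_i p_i,\theta_i p_i}(\D),\,\ell_{p_i}^{\theta_i p_i}(\Z;H^{\gamma_i}_{p_i,\Theta_i p_i}(\D))\big)$ and $R$ bounded in the reverse direction, with $R\circ S=\Id$. Since the operators $S$ and $R$ are fixed (they depend only on the fixed $\zeta$ from \autoref{conv:zeta}, not on the parameters), the interpolation property yields bounded maps
\begin{align*}
S &\colon \big[H^{\gamma_0}_{p_0,\Theta_0 p_0,\theta_0 p_0}(\D),H^{\gamma_1}_{p_1,\Theta_1 p_1,\theta_1 p_1}(\D)\big]_\vartheta \to \big[\ell_{p_0}^{\theta_0 p_0}(\Z;H^{\gamma_0}_{p_0,\Theta_0 p_0}(\D)),\ell_{p_1}^{\theta_1 p_1}(\Z;H^{\gamma_1}_{p_1,\Theta_1 p_1}(\D))\big]_\vartheta,
\end{align*}
and likewise for $R$ in the opposite direction. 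Because $R S=\Id$, the general principle for complemented subspaces gives the isometric-up-to-equivalence identification
\[
\big[H^{\gamma_0}_{p_0,\Theta_0 p_0,\theta_0 p_0}(\D),H^{\gamma_1}_{p_1,\Theta_1 p_1,\theta_1 p_1}(\D)\big]_\vartheta
= R\Big(\big[\ell_{p_0}^{\theta_0 p_0}(\Z;H^{\gamma_0}_{p_0,\Theta_0 p_0}(\D)),\ell_{p_1}^{\theta_1 p_1}(\Z;H^{\gamma_1}_{p_1,\Theta_1 p_1}(\D))\big]_\vartheta\Big),
\]
with equivalent norms. To invoke this I must check that \autoref{cor:ContDistr} makes both endpoints an interpolation couple inside the ambient $\mathscr{D}'(\D)$, which it does, and that the sequence spaces likewise embed into a common Hausdorff space.

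Next I would compute the interpolation space of the sequence-space couple. The standard result on complex interpolation of vector-valued $\ell_p$ spaces (see \cite[Section~5.6]{BL76}) gives
\[
\big[\ell_{p_0}^{\theta_0 p_0}(\Z;A_0),\,\ell_{p_1}^{\theta_1 p_1}(\Z;A_1)\big]_\vartheta
= \ell_{p}^{\theta p}\big(\Z;[A_0,A_1]_\vartheta\big)
\]
for an interpolation couple $(A_0,A_1)$, where the weight exponent $\theta p = (1-\vartheta)\theta_0 p_0 + \vartheta\,\theta_1 p_1$ is exactly the one arising from the Calder\'on construction; one checks the weight bookkeeping using $\frac{1}{p}=\frac{1-\vartheta}{p_0}+\frac{\vartheta}{p_1}$ so that $e^{\nu\theta p}$ distributes correctly over the two factors. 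Taking $A_i:=H^{\gamma_i}_{p_i,\Theta_i p_i}(\D)$ and applying \autoref{lem:LotSpaces:properties}\ref{it:LotSpaces:interpolation} to identify $[A_0,A_1]_\vartheta=H^{\gamma}_{p,\Theta p}(\D)$, the target sequence space becomes $\ell_p^{\theta p}(\Z;H^{\gamma}_{p,\Theta p}(\D))$, whose image under $R$ is precisely $H^{\gamma}_{p,\Theta p,\theta p}(\D)$ by \autoref{def:Hspaces}. Assembling these identifications yields \eqref{eq:interpolation}.

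The main obstacle I anticipate is the careful weight bookkeeping: \autoref{lem:LotSpaces:properties}\ref{it:LotSpaces:interpolation} is stated with the weight exponents written as $\Theta_i p_i$ (an idiosyncratic normalisation that the authors note corrects an error in \cite{Lot2000}), so I must ensure that the $\theta$-weights on the sequence index and the $\Theta$-weights inside $H^\gamma_{p,\Theta}(\D)$ are threaded through with consistent conventions, and in particular that the exponent in $\ell_p^{\theta p}$ matches $(1-\vartheta)\theta_0 p_0+\vartheta\theta_1 p_1 = \theta p$. A secondary technical point is verifying that the abstract retraction theorem applies when the two endpoint integrability parameters $p_0,p_1$ differ, since then the sequence spaces $\ell_{p_0}^{\theta_0 p_0}$ and $\ell_{p_1}^{\theta_1 p_1}$ have different summability; this is exactly the situation covered by the vector-valued $\ell_p$ interpolation theorem, so no new difficulty arises, but it is the place where one must cite the correct form of that theorem rather than the equal-$p$ special case.
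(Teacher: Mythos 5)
Your proposal is correct and follows essentially the same route as the paper's own proof: the paper likewise realises $H^{\gamma}_{p,\Theta p,\theta p}(\D)$ as a retract of $\ell_{p}^{\theta p}\big(\Z; H^{\gamma}_{p,\Theta p}(\D)\big)$ via the pair $(R,S)$ from the proof of \autoref{prop:basic}, interpolates the weighted vector-valued sequence spaces (citing \cite[Theorem~VI.2.3.4(ii)]{Ama2019} where you cite \cite[Section~5.6]{BL76}), identifies the fibres by \autoref{lem:LotSpaces:properties}\ref{it:LotSpaces:interpolation}, uses \autoref{cor:ContDistr} to make the endpoints an interpolation couple, and concludes with \cite[Theorem~1.2.4]{Tri1995}. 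One slip in your bookkeeping, though it does not derail the argument: the identity $\theta p = (1-\vartheta)\theta_0 p_0 + \vartheta\,\theta_1 p_1$ is false in general (take $p_0=2$, $p_1=4$, $\vartheta=\tfrac12$, $\theta_0=1$, $\theta_1=0$, where $\theta p=\tfrac43$ but the right-hand side equals $1$); the exponent $\theta p$ in the interpolated sequence space instead arises from the Stein--Weiss combination of the weights, $w^{1/p}=w_0^{(1-\vartheta)/p_0}w_1^{\vartheta/p_1}$, i.e., $e^{\nu\theta}=\big(e^{\nu\theta_0}\big)^{1-\vartheta}\big(e^{\nu\theta_1}\big)^{\vartheta}$ per norm, and the formula you actually invoke downstream is the correct one, so the proof stands.
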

\begin{proof}
    For $i=0,1$ let 
	$$
		A_i := \ell_{p_i}^{\theta_i p_i}\big(\Z; X_i\big)
    	\quad\text{with}\quad
    	X_i:= H^{\gamma_i}_{p_i,\Theta_i p_i}(\D),
    	\qquad\text{and}\qquad
    	B_i:= H^{\gamma_i}_{p_i,\Theta_i p_i,\theta_i p_i}(\D).
	$$
	Further, set $A := \ell_{p}^{\theta p}\big(\Z; X\big)$ with $X:= H^{\gamma}_{p,\Theta p}(\D)$ and $B:= H^{\gamma}_{p,\Theta p,\theta p}(\D)$.	
	Then \autoref{lem:LotSpaces:properties}\ref{it:LotSpaces:interpolation} shows $[X_0,X_1]_{\vartheta}=X$
	and hence \cite[Theorem~VI.2.3.4(ii)]{Ama2019} yields
	\begin{align*}
		[A_0,A_1]_{\vartheta} 
		&= \big[ \ell_{p_0}^{\theta_0 p_0}\big(\Z; X_0\big), \ell_{p_1}^{\theta_1 p_1}\big(\Z; X_1\big) \big]_\vartheta 
		= \ell_{p}^{\theta p}\big(\Z; [ X_0, X_1 ]_\vartheta\big)
		= \ell_{p}^{\theta p}\big(\Z; X \big) 
		= A.
	\end{align*}
	Recall the definition of the operators $S$ and $R$ from the proof of \autoref{prop:basic} (see also \autoref{rem:resolution}\ref{it:resolution:r-c}). 
	We have seen there that $R$ is a retraction which continuously maps $A_i$ into~$B_i$, $i=0,1$, as well as $A$ into $B$ and that in all cases $S$ is a corresponding coretraction.
	In addition, $\{B_0,B_1\}$ is an interpolation couple, see \autoref{cor:ContDistr}. 
	Thus, by~\cite[Theorem~1.2.4]{Tri1995}, $S$ is an isomorphism from $[B_0,B_1]_\vartheta$ onto the closed subspace $\mathrm{ran}\big(SR\big|_{[A_0,A_1]_\vartheta}\big)$ of $[A_0,A_1]_\vartheta=A$.
	Thus, $u\in \big[H^{\gamma_0}_{p_0,\Theta_0 p_0,\theta_0 p_0}(\D), H^{\gamma_1}_{p_1,\Theta_1 p_1,\theta_1 p_1}(\D)\big]_\vartheta$ 
	implies $Su \in \ell_{p}^{\theta p}\big(\Z; H^{\gamma}_{p,\Theta p}(\D) \big)$ and 
	\begin{align}\label{eq:ipol_norm_new}
		\norm{u \sep \big[H^{\gamma_0}_{p_0,\Theta_0 p_0,\theta_0 p_0}(\D), H^{\gamma_1}_{p_1,\Theta_1 p_1,\theta_1 p_1}(\D)\big]_\vartheta} \sim \norm{Su \sep \ell_{p}^{\theta p}\big(\Z; H^{\gamma}_{p,\Theta p}(\D) \big)}.
	\end{align}
	Therefore,  the proof of \autoref{prop:basic} (see also \autoref{rem:resolution}\ref{it:resolution:r-c}) yields $u\in H^{\gamma}_{p,\Theta p, \theta p}(\D)$. 
	
	Conversely,  $u\in H^{\gamma}_{p,\Theta p, \theta p}(\D)$ implies  $Su\in \ell_{p}^{\theta p}\big(\Z; H^{\gamma}_{p,\Theta p}(\D) \big)=A$ and hence $Su=SR\big|_A Su \in \mathrm{ran}\big(SR\big|_{A}\big)$ proves  that $u\in \big[H^{\gamma_0}_{p_0,\Theta_0 p_0, \theta_0 p_0}(\D), H^{\gamma_1}_{p_1,\Theta_1 p_1, \theta_1 p_1}(\D)\big]_\vartheta$. 
	In conclusion, the interpolation formula~\eqref{eq:interpolation} holds and
the norm equivalence follows from~\eqref{eq:ipol_norm_new} and the fact that $S$ is an isometry.
\end{proof}
 
\subsection{Relation to weighted Sobolev spaces}
\label{sec:characterization}

The next theorem shows that for $\gamma\in\N_0$ the spaces $H^\gamma_{p,\Theta,\theta}(\D)$ can be characterized as weighted Sobolev spaces.
If $\domain\subset\overline{\D}$ is measurable, then for $1<p<\infty$ and $\Theta,\theta\in\R$ 
we write $L_{p,\Theta,\theta}(\domain):=L_p(\domain,\mathcal{B}(\domain),w_{\Theta,\theta}\d x)$ for the  
weighted $L_p(\domain)$-space with weight   
\[
    w_{\Theta,\theta}(x)
    :=\rho_\circ(x)^{\theta-2} \left(\frac{\rho_\D(x)}{\rho_\circ(x)} \right)^{\Theta-2},
    \qquad x\in \domain.
\]

\begin{theorem}
\label{thm:coincidence}
	Let $1<p<\infty$, $\gamma\in\N_0$, and $\Theta,\theta\in\R$.
	Then
	$$
		H^\gamma_{p,\Theta,\theta}(\D) 
		= 
		\left\{ u \in L_{1,\loc}(\D) : \  \normmod{u \sep H^\gamma_{p,\Theta,\theta}(\D)} < 	\infty\right\},
	$$
	where
	$$
	\normmod{u \sep H^\gamma_{p,\Theta,\theta}(\D)} 
	:= 
	\Bigg( \sum_{\alpha\in\N_0^2\colon \abs{\alpha}\leq \gamma} 
	\norm{\rho_\D^{\abs{\alpha}} \, D^\alpha u \sep L_{p,\Theta,\theta}(\D)}^p  \Bigg)^{1/p}
	$$
	is an equivalent norm.
\end{theorem}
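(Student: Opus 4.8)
The plan is to reduce everything to the integral description of the spaces $H^\gamma_{p,\Theta}(\D)$ furnished by \autoref{lem:LotSpaces:properties}\ref{it:LotSpaces:N} (with $d=2$) and then to translate the dyadic sum over $\nu$ in \autoref{def:Hspaces} into a single weighted integral over $\D$. First I would fix $\zeta$ as in \autoref{conv:zeta}, so that $\zeta\in\mathscr{A}^{\textup{[R]}}_{e,1}(\D,\{0\})$ satisfies \textbf{[R]}, i.e.\ $\sum_{\nu\in\Z}\zeta_\nu\equiv1$ on $\D$, the dilation property~\eqref{eq:dilation}, the support bound $e^{\nu-1}<\rho_\circ<e^{\nu+1}$ on $\supp(\zeta_\nu)$ (so that $e^\nu\sim\rho_\circ$ there), the derivative bounds $\abs{\partial^\beta\zeta_\nu}\lesssim_\beta e^{-\nu\abs{\beta}}$, and the uniformly bounded overlap of the supports from \autoref{rem:resolution}\ref{it:resolution:cover}. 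The central claim, to be established uniformly in $\nu\in\Z$, is
\begin{equation*}
e^{\nu\theta}\,\normmod{(\zeta_\nu u)(e^\nu\cdot)\sep H^\gamma_{p,\Theta}(\D)}^p
\sim
\sum_{\abs{\alpha}\leq\gamma}\int_\D \rho_\D^{p\abs{\alpha}}\,\abs{D^\alpha(\zeta_\nu u)}^p\, w_{\Theta,\theta}\,\dy,
\end{equation*}
with constants independent of $\nu$ and $u$, after which summation over $\nu$ and \autoref{lem:LotSpaces:properties}\ref{it:LotSpaces:N} give $\normmod{u\sep H^\gamma_{p,\Theta,\theta}(\D)}^p\sim\sum_{\nu\in\Z}\sum_{\abs{\alpha}\leq\gamma}\int_\D\rho_\D^{p\abs{\alpha}}\abs{D^\alpha(\zeta_\nu u)}^p w_{\Theta,\theta}\,\dy=:\Sigma$.

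The displayed identity is obtained by a change of variables. Writing $v_\nu:=(\zeta_\nu u)(e^\nu\cdot)$ and using $D^\alpha v_\nu=e^{\nu\abs{\alpha}}(D^\alpha(\zeta_\nu u))(e^\nu\cdot)$ together with \autoref{lem:LotSpaces:properties}\ref{it:LotSpaces:N}, I would substitute $y=e^\nu x$. Since $\D$ is a cone, both $\rho_\D$ and $\rho_\circ$ are positively homogeneous of degree one, so $\rho_\D(e^{-\nu}y)=e^{-\nu}\rho_\D(y)$ and $\dx=e^{-2\nu}\dy$. The factors $e^{p\nu\abs{\alpha}}$ from the derivatives cancel exactly against $\rho_\D(e^{-\nu}y)^{p\abs{\alpha}}=e^{-p\nu\abs{\alpha}}\rho_\D(y)^{p\abs{\alpha}}$, and collecting the remaining powers from $\rho_\D^{\Theta-2}$ and the Jacobian produces the single prefactor $e^{-\nu\Theta}$; thus $e^{\nu\theta}\normmod{v_\nu\sep H^\gamma_{p,\Theta}(\D)}^p=e^{\nu(\theta-\Theta)}\sum_{\abs{\alpha}\leq\gamma}\int_\D\rho_\D^{p\abs{\alpha}}\abs{D^\alpha(\zeta_\nu u)}^p\rho_\D^{\Theta-2}\,\dy$. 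Finally, on $\supp(\zeta_\nu)$ one has $e^{\nu(\theta-\Theta)}\sim\rho_\circ^{\theta-\Theta}$, and since $w_{\Theta,\theta}=\rho_\circ^{\theta-\Theta}\rho_\D^{\Theta-2}$, the factor $e^{\nu(\theta-\Theta)}\rho_\D^{\Theta-2}$ coincides, up to constants, with $w_{\Theta,\theta}$ on the support of the integrand, which gives the claim.

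It then remains to show $\Sigma\sim\normmod{u\sep H^\gamma_{p,\Theta,\theta}(\D)}^p$, where the right-hand side is the target norm built from $u$ itself rather than from $\zeta_\nu u$; this localization step is the main obstacle, as it requires careful bookkeeping of the two weights. For the bound $\Sigma\lesssim\normmod{u\sep H^\gamma_{p,\Theta,\theta}(\D)}^p$ I would expand $D^\alpha(\zeta_\nu u)$ by Leibniz's rule, estimate $\abs{\partial^\beta\zeta_\nu}\lesssim\rho_\circ^{-\abs{\beta}}$ on $\supp(\zeta_\nu)$, and use the pointwise inequality $\rho_\D\leq\rho_\circ$ on $\D$ (from \autoref{lem:dist-polar}, since $\rho_\D=\rho_\circ\sin(\mu(\phi))$) to absorb the negative $\rho_\circ$-powers into $\rho_\D$-powers via $\rho_\D^{\abs{\alpha}}\rho_\circ^{-\abs{\beta}}\leq\rho_\D^{\abs{\alpha-\beta}}$ for $\beta\leq\alpha$; the uniformly bounded overlap of the supports then removes the sum over $\nu$. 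For the converse $\normmod{u\sep H^\gamma_{p,\Theta,\theta}(\D)}^p\lesssim\Sigma$ I would use $\sum_{\nu}\zeta_\nu\equiv1$, which forces $\sum_\nu\partial^\beta\zeta_\nu\equiv0$ for $\abs{\beta}\geq1$ and hence $D^\alpha u=\sum_\nu D^\alpha(\zeta_\nu u)$; invoking the bounded overlap once more together with the elementary inequality $\abs{\sum_\nu a_\nu}^p\lesssim\sum_\nu\abs{a_\nu}^p$ for boundedly many nonzero summands then yields the estimate. Along the way I would record that elements of $H^\gamma_{p,\Theta,\theta}(\D)$ lie in $L_{1,\loc}(\D)$---because each $\zeta_\nu u\in H^\gamma_{p,\Theta}(\D)\subset L_{1,\loc}(\D)$ and $u=\sum_\nu\zeta_\nu u$ is locally a finite sum---so that the two descriptions agree as sets and not merely as normed spaces.
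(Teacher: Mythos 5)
Your proof is correct and follows essentially the same route as the paper's: both rest on the integral description of $H^\gamma_{p,\Theta}(\D)$ from \autoref{lem:LotSpaces:properties}\ref{it:LotSpaces:N}, the exact homogeneity of $\rho_\D$ and $\rho_\circ$ under dilation of the cone (so that $w_{\Theta,\theta}\sim e^{\nu(\theta-\Theta)}\rho_\D^{\Theta-2}$ on $\supp(\zeta_\nu)$, the paper packaging the scaling step as \autoref{lem:Lot_pw-mult-special}\ref{it:LotSpaces:D:dilation} where you redo the change of variables explicitly), and a Leibniz-plus-finite-overlap localization argument. The only blemish is notational: in your first displayed conclusion the left-hand side should be the dyadic norm $\norm{u\sep H^\gamma_{p,\Theta,\theta}(\D)}^p$ of \autoref{def:Hspaces} rather than $\normmod{u\sep H^\gamma_{p,\Theta,\theta}(\D)}^p$, as your subsequent paragraph in fact makes clear.
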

\begin{proof}
Take $\zeta$ from \autoref{conv:zeta} and let  
$\eta_\nu:=\zeta_{\nu-1}+\zeta_\nu+\zeta_{\nu+1}$, $\nu\in\Z$.
For all $\nu\in\Z$ we then have $\eta_\nu\equiv 1$ on $\D_\nu:=\D^{[\nu]}_{e,1}(\{0\})\supset \supp(\zeta_\nu)$, cf.~\autoref{rem:resolution}\ref{it:resolution:constr:D}.
Thus, for all $u\in\mathscr{D}'(\D)$ and $\nu\in\Z$, 
\begin{align*}
		\sum_{\abs{\alpha}\leq \gamma} \norm{ \rho_\D^{\abs{\alpha}} \, D^\alpha u \sep L_{p,\Theta,\theta}(\D_\nu)}^p
		&\lesssim \sum_{j=-1}^{1}  \sum_{\abs{\alpha}\leq \gamma} \norm{ \rho_\D^{\abs{\alpha}} \, D^\alpha \big( \zeta_{\nu+j} \, u \big) \sep L_{p,\Theta,\theta}(\D_{\nu+j})}^p.
\end{align*}
	On the other hand, Leibniz' rule implies
	\begin{align*}
		\sum_{\abs{\alpha}\leq \gamma} \norm{ \rho_\D^{\abs{\alpha}} \, D^\alpha (\zeta_\nu\, u) \sep L_{p,\Theta,\theta}(\D_\nu)}^p
		& \lesssim \sum_{\abs{\alpha}\leq \gamma} \sum_{\beta \leq \alpha} \norm{ \rho_\D^{\abs{\alpha}} \, (D^\beta \zeta_\nu) \, (D^{\alpha-\beta} u) \sep L_{p,\Theta,\theta}(\D_\nu)}^p \\
		&\lesssim \sum_{\abs{\alpha}\leq \gamma} \norm{ \rho_\D^{\abs{\alpha}} \, D^\alpha u \sep L_{p,\Theta,\theta}(\D_\nu)}^p, \qquad \nu\in\Z,
	\end{align*}
	since for $\beta \leq\alpha$ it holds that
\begin{align*}
	\abs{\rho_\D(x)^{\abs{\alpha}} \, (D^\beta \zeta_\nu)(x)} 
	\lesssim \rho_\D(x)^{\abs{\alpha}-\abs{\beta}}\, \left( \frac{\rho_\D(x)}{e^{\nu}}\right)^{\abs{\beta}} 
	\lesssim \rho_\D(x)^{\abs{\alpha-\beta}}, \qquad x\in \D_\nu,\quad \nu\in\Z.
\end{align*}
	Furthermore, \autoref{lem:Lot_pw-mult-special}\ref{it:LotSpaces:D:dilation} and \autoref{lem:LotSpaces:properties}\ref{it:LotSpaces:N} yield  
	\begin{align*}
		\sum_{\abs{\alpha}\leq \gamma} \norm{ \rho_\D^{\abs{\alpha}} \, D^\alpha (\zeta_\nu\, u) \sep L_{p,\Theta,\theta}(\D_\nu)}^p
		&\sim e^{\nu(\theta-\Theta)} \sum_{\abs{\alpha}\leq \gamma} \int_{\D_\nu} \abs{\rho_\D(x)^{\abs{\alpha}}\, D^\alpha (\zeta_\nu\, u)(x) }^p \rho_\D(x)^{\Theta-2} \d x \\
		&=e^{\nu(\theta-\Theta)} \norm{\zeta_{\nu}\,u \sep H^{\gamma}_{p,\Theta}(\D)}^p 
		\sim e^{\nu\theta} \norm{(\zeta_{\nu}\,u)(e^\nu\cdot) \sep H^{\gamma}_{p,\Theta}(\D)}^p
	\end{align*}
since $w_{\Theta,\theta}\sim e^{\nu(\theta-\Theta)} \rho_\D^{\Theta-2} $ on $\D_\nu \supset \supp(\zeta_\nu)$  for all $\nu\in\Z$.
	Hence, we can conclude that
	\begin{align*}
		\norm{ u  \sep H^{\gamma}_{p,\Theta,\theta}(\D)}^p
		&\sim \sum_{\nu\in\Z} e^{\nu\theta} \norm{(\zeta_{\nu}\,u)(e^\nu\cdot) \sep H^{\gamma}_{p,\Theta}(\D)}^p \\
		&\sim \sum_{\nu\in\Z} \sum_{\abs{\alpha}\leq \gamma} \norm{ \rho_\D^{\abs{\alpha}} \, D^\alpha (\zeta_\nu\, u) \sep L_{p,\Theta,\theta}(\D_\nu)}^p 
		\sim 
		\normmod{u \sep H^{\gamma}_{p,\Theta,\theta}(\D)}^p,
	\end{align*}
	where we used that according to \autoref{rem:resolution}\ref{it:resolution:cover}   our set $\D$ is covered by $\bigcup_{\nu\in\Z} \D_\nu$, where every $x\in\D$ belongs to at most 2 different $\D_\nu$.
\end{proof}
 
As a consequence we obtain the following relationship between the spaces $H^\gamma_{p,\Theta,\theta}(\D)$ and $H^\gamma_{p,\Theta}(\D)$.

\begin{corollary}
\label{cor:Lot:doubleweight}
    Let $1<p<\infty$ and $\gamma,\Theta,\theta\in\R$. 
      For $u\in\mathscr{D}'(\D)$ it  holds  
    $u\in H^\gamma_{p,\Theta,\theta}(\D)$ if, and only if, $\rho_\circ^{(\theta-\Theta)/p}u \in H^\gamma_{p,\Theta}(\D)$.
    Moreover,
    \begin{equation}
        H^\gamma_{p,\Theta,\theta}(\D)
    \ni
    u \mapsto \norm{\rho_\circ^{(\theta-\Theta)/p}u \sep H^\gamma_{p,\Theta}(\D)}
    \end{equation}
is an equivalent norm in $H^\gamma_{p,\Theta,\theta}(\D)$.
In particular, we have $H^\gamma_{p,\Theta,\Theta}(\D)=H^\gamma_{p,\Theta}(\D)$.
\end{corollary}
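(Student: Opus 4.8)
The plan is to show that multiplication by $\rho_\circ^{(\theta-\Theta)/p}$ is an isomorphism from $H^\gamma_{p,\Theta,\theta}(\D)$ onto $H^\gamma_{p,\Theta}(\D)$, for all $\gamma\in\R$ at once, working directly from \autoref{def:Hspaces} rather than only from the integer characterisation in \autoref{thm:coincidence}. Write $s:=(\theta-\Theta)/p$ and note that, since $0\notin\D$, the function $\rho_\circ^s=\abs{\cdot}^s$ is smooth on $\D$, so that $v:=\rho_\circ^s u\in\mathscr{D}'(\D)$ is well defined. I would split the claim into two equivalences: first $\norm{u \sep H^\gamma_{p,\Theta,\theta}(\D)}\sim\norm{\rho_\circ^s u \sep H^\gamma_{p,\Theta,\Theta}(\D)}$, and second $H^\gamma_{p,\Theta,\Theta}(\D)=H^\gamma_{p,\Theta}(\D)$ with equivalent norms; together these yield the corollary (and the second is exactly the ``in particular'' part). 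As a sanity check one may observe that the weight from \autoref{thm:coincidence} factorises as $w_{\Theta,\theta}=\rho_\circ^{\theta-\Theta}\,\rho_\D^{\Theta-2}$, which already makes the statement plausible for $\gamma\in\N_0$.

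For the first equivalence I would exploit the homogeneity $\rho_\circ(e^\nu x)=e^\nu\rho_\circ(x)$ (see \autoref{lem:dist-polar}). A short computation gives
\[
(\zeta_\nu v)(e^\nu\cdot)=e^{\nu s}\,\rho_\circ^s\,(\zeta_\nu u)(e^\nu\cdot),\qquad\nu\in\Z.
\]
By~\eqref{eq:dilation} the function $(\zeta_\nu u)(e^\nu\cdot)$ is supported in the \emph{fixed} annulus $\D^{[0]}_{e,1}(\{0\})=\{x\in\D:e^{-1}<\abs{x}<e\}$, on which $\rho_\circ^s$ coincides with a fixed smooth function $a$ having $\abs{a}^{(0)}_n<\infty$ for every $n$. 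Applying \autoref{lem:LotSpaces:properties}\ref{it:LotSpaces:multiplier} to $a$ and to (a fixed smooth extension of) $\rho_\circ^{-s}$, with $n\geq\abs{\gamma}$, then yields
\[
\norm{(\zeta_\nu v)(e^\nu\cdot) \sep H^\gamma_{p,\Theta}(\D)}\sim e^{\nu s}\,\norm{(\zeta_\nu u)(e^\nu\cdot) \sep H^\gamma_{p,\Theta}(\D)}
\]
with constants \emph{independent of} $\nu$. Summing the $p$-th powers against $e^{\nu\Theta}$ and using $e^{\nu\Theta}e^{\nu sp}=e^{\nu\theta}$ turns the definition of $\norm{v \sep H^\gamma_{p,\Theta,\Theta}(\D)}$ into that of $\norm{u \sep H^\gamma_{p,\Theta,\theta}(\D)}$, which is the first equivalence.

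For the second equivalence I would first use \autoref{lem:Lot_pw-mult-special}\ref{it:LotSpaces:D:dilation}: since $e^{\nu\Theta}\norm{(\zeta_\nu w)(e^\nu\cdot) \sep H^\gamma_{p,\Theta}(\D)}^p\sim\norm{\zeta_\nu w \sep H^\gamma_{p,\Theta}(\D)}^p$, the defining norm of $H^\gamma_{p,\Theta,\Theta}(\D)$ is equivalent to $\big(\sum_\nu\norm{\zeta_\nu w \sep H^\gamma_{p,\Theta}(\D)}^p\big)^{1/p}$. It then remains to feed the vertex-based resolution $(\zeta_\nu)_{\nu\in\Z}$ into the localisation result \autoref{prop:Lot_Local}; its hypotheses hold because $\rho_\D\leq\rho_\circ\sim e^\nu$ on $\supp(\zeta_\nu)$ gives $\rho_\D^{\abs{\alpha}}\abs{D^\alpha\zeta_\nu}\lesssim 1$ there with locally finite overlap, while $\sum_\nu\zeta_\nu\equiv 1$ with bounded overlap $M$ forces $\sum_\nu\zeta_\nu^p\geq M^{1-p}>0$. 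This yields $\sum_\nu\norm{\zeta_\nu w \sep H^\gamma_{p,\Theta}(\D)}^p\sim\norm{w \sep H^\gamma_{p,\Theta}(\D)}^p$, hence $H^\gamma_{p,\Theta,\Theta}(\D)=H^\gamma_{p,\Theta}(\D)$; combining this (for $w=v$) with the first equivalence completes the proof of both the characterisation and the norm equivalence.

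I expect the main obstacle to be the \emph{uniformity in $\nu$} of the multiplier constants in the first step, together with the verification that the resolution $(\zeta_\nu)$, which is subordinate to the vertex rather than to $\partial\D$, satisfies the boundary-weighted hypotheses of \autoref{prop:Lot_Local}. Both rest on the comparison $\rho_\D\leq\rho_\circ\sim e^\nu$ on $\supp(\zeta_\nu)$ coming from \autoref{lem:dist-polar} and~\eqref{eq:dilation}; once this is in place, the remaining steps are routine manipulations of the defining series, valid for every $\gamma\in\R$ without recourse to interpolation or duality.
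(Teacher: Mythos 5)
Your approach is correct in its main thrust but follows a genuinely different route from the paper, and it contains one gap that needs closing. The paper proves the statement first for $\gamma\in\N_0$ by comparing the explicit weighted-Sobolev norms from \autoref{thm:coincidence} and \autoref{lem:LotSpaces:properties}\ref{it:LotSpaces:N} via Leibniz' rule and the Fa\`a di Bruno estimate $\abs{\partial^\alpha\rho_\circ^t}\lesssim\rho_\circ^{t-\abs{\alpha}}$ (your ``sanity check'' $w_{\Theta,\theta}=\rho_\circ^{\theta-\Theta}\rho_\D^{\Theta-2}$ is exactly the observation driving that step), and then extends to non-integer $\gamma\geq 0$ by complex interpolation (\autoref{thm:interpolation}, \autoref{lem:LotSpaces:properties}\ref{it:LotSpaces:interpolation}) and to $\gamma<0$ by duality (\autoref{thm:dual}, \autoref{lem:LotSpaces:properties}\ref{it:LotSpaces:duality}), as in the proof of \autoref{thm:pw-mult_new}. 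You instead work directly from \autoref{def:Hspaces} for all $\gamma\in\R$ at once. Your first equivalence is complete and correct: the identity $(\zeta_\nu v)(e^\nu\cdot)=e^{\nu s}\rho_\circ^s\,(\zeta_\nu u)(e^\nu\cdot)$, the fact that all terms are supported in the fixed annulus $\supp(\zeta_0)$, and the uniform multiplier bound of \autoref{lem:LotSpaces:properties}\ref{it:LotSpaces:multiplier} (applied to fixed cut-off versions of $\rho_\circ^{\pm s}$, which also transfers membership termwise) yield $\norm{u\sep H^\gamma_{p,\Theta,\theta}(\D)}\sim\norm{\rho_\circ^su\sep H^\gamma_{p,\Theta,\Theta}(\D)}$. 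What your route buys is a proof valid for every real $\gamma$ without interpolation or duality; since those theorems precede the corollary in the paper neither route is circular, but yours is more self-contained and elementary.

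The gap is in your second equivalence. After reducing, via \autoref{lem:Lot_pw-mult-special}\ref{it:LotSpaces:D:dilation}, the $H^\gamma_{p,\Theta,\Theta}(\D)$-norm to $\big(\sum_\nu\norm{\zeta_\nu w\sep H^\gamma_{p,\Theta}(\D)}^p\big)^{1/p}$, you invoke \autoref{prop:Lot_Local} and conclude ``hence $H^\gamma_{p,\Theta,\Theta}(\D)=H^\gamma_{p,\Theta}(\D)$''. But both inequalities of \autoref{prop:Lot_Local} are stated only for $u\in H^\gamma_{p,\Theta}(\domain)$; in particular, the reverse inequality \emph{presupposes} membership. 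So your argument gives the embedding $H^\gamma_{p,\Theta}(\D)\hookrightarrow H^\gamma_{p,\Theta,\Theta}(\D)$ and the equivalence of the two norms on $H^\gamma_{p,\Theta}(\D)$, but not the inclusion $H^\gamma_{p,\Theta,\Theta}(\D)\subset H^\gamma_{p,\Theta}(\D)$: finiteness of $\sum_\nu\norm{\zeta_\nu w\sep H^\gamma_{p,\Theta}(\D)}^p$ does not by itself place $w$ in $H^\gamma_{p,\Theta}(\D)$. The gap is closable by a completeness argument. For finite $F\subset\Z$ the partial sum $w_F:=\sum_{\nu\in F}\zeta_\nu w$ does belong to $H^\gamma_{p,\Theta}(\D)$, so \autoref{prop:Lot_Local} applies to it; together with the uniform bound \eqref{eq:Lot_pw-mult-special:1} and the finite overlap of the supports $\supp(\zeta_\nu)$ this gives
\[
\norm{w_F\sep H^\gamma_{p,\Theta}(\D)}^p
\lesssim\sum_{k\in\Z}\norm{\zeta_kw_F\sep H^\gamma_{p,\Theta}(\D)}^p
\lesssim\sum_{\nu\in F}\norm{\zeta_\nu w\sep H^\gamma_{p,\Theta}(\D)}^p
\]
with constants independent of $F$. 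Hence the partial sums over $F_N:=\{\abs{\nu}\leq N\}$ form a Cauchy sequence in $H^\gamma_{p,\Theta}(\D)$ (their differences are controlled by tails of a convergent series); by \autoref{lem:LotSpaces:properties}\ref{it:LotSpaces:Banach} they converge in $H^\gamma_{p,\Theta}(\D)$, and since they also converge to $w$ in $\mathscr{D}'(\D)$ (locally, $\sum_\nu\zeta_\nu w$ is a finite sum) and $H^\gamma_{p,\Theta}(\D)\hookrightarrow\mathscr{D}'(\D)$ (which follows from \autoref{lem:LotSpaces:properties}\ref{it:LotSpaces:duality} exactly as \autoref{cor:ContDistr} follows from \autoref{thm:dual}), the limit is $w$. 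With this supplement your proof is complete; your verification of the hypotheses of \autoref{prop:Lot_Local} for the vertex-based resolution, resting on $\rho_\D\leq\rho_\circ\sim e^\nu$ on $\supp(\zeta_\nu)$, is fine as it stands.
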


\begin{proof}
For $\gamma\in\N_0$ the assertion 
  immediately follows  
from~\autoref{thm:coincidence},  \autoref{lem:LotSpaces:properties}\ref{it:LotSpaces:N}, and Leibniz' rule, since for all $\alpha\in\N_0^2$ and $t\in\R$ it holds that
\begin{equation}\label{eq:rhocirc:deriv}
\abs{(\partial^{\alpha} \rho_\circ^t )(x)}
\lesssim \rho_\circ(x)^{t-\abs{\alpha}}, 
\quad x\in\D,
\end{equation}
by the generalized Fa\`a di Bruno formula~\cite[Corollary~2.10]{ConSav1996}.
Then the assertion for all $\gamma\geq 0$ follows by interpolation, see \autoref{thm:interpolation} and \autoref{lem:LotSpaces:properties}\ref{it:LotSpaces:interpolation}, and extends to $\gamma<0$ by means of the duality statements from \autoref{thm:dual} and \autoref{lem:LotSpaces:properties}\ref{it:LotSpaces:duality} with similar arguments as in the proof of~\autoref{thm:pw-mult_new} below.
\end{proof}

\begin{remark}
Corollary~\ref{cor:Lot:doubleweight} 
  particularly shows  
that for all $1<p<\infty$ and all $\gamma,\Theta,\theta\in\R$ the space $H^\gamma_{p,\Theta,\theta}(\D)$ coincides with the space $K^\gamma_{p,\theta,\Theta}(\D)$ introduced in \cite[Definition~2.4]{KimLeeSeo2022b} (equivalent norms). 
\end{remark}

Let us also mention the following characterization of the duals for $\gamma\in\N_0$ which generalizes a well-known result for classical (unweighted) Sobolev spaces; see, e.g., \cite[Theorem~3.12]{AdaFou2003}. 

\begin{prop}\label{prop:dual:N}
Assume that $\gamma\in\N_0$, $1<p,p'<\infty$, and $\Theta,\Theta',\theta,\theta'\in\R$ satisfy~\eqref{eq:dual:parameters}.
    Then
    \begin{align*}
        H^{-\gamma}_{p',\Theta', \theta'}(\D)
        = \bigg\{ u\in\mathscr{D}'(\D) : \ u=\sum_{\abs{\alpha}\leq \gamma}D^{\alpha} u_{\alpha}  \; \text{ for some } \; u_{\alpha}\in L_{p',\Theta'-\abs{\alpha}p', \theta'-\abs{\alpha}p'}(\D) \bigg\}
    \end{align*}
    and
    \begin{align*}
        \norm{ u \sep H^{-\gamma}_{p',\Theta', \theta'}(\D) }
        \sim \min \bigg( \sum_{\abs{\alpha}\leq \gamma} \norm{ u_{\alpha} \sep L_{p',\Theta'-\abs{\alpha}p', \theta'-\abs{\alpha}p'}(\D)}^{p'} \bigg)^{1/p'},
    \quad u \in H^{-\gamma}_{p',\Theta', \theta'}(\D),
    \end{align*}
    where the minimum is taken over all possible representations of $u\in\mathscr{D}'(\D)$ as $\sum_{\abs{\alpha}\leq \gamma}D^{\alpha}u_{\alpha}$ with $u_{\alpha}\in  L_{p',\Theta'-\abs{\alpha}p', \theta'-\abs{\alpha}p'}(\D)$, $\abs{\alpha}\leq \gamma$. 
\end{prop}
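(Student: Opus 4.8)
The plan is to realise $H^\gamma_{p,\Theta,\theta}(\D)$ as a closed subspace of a finite direct sum of weighted $L_p$-spaces and then to dualise, following the classical Hahn--Banach argument for negative-order Sobolev spaces. By \autoref{thm:coincidence}, for $\gamma\in\N_0$ the functional $\normmod{\cdot\sep H^\gamma_{p,\Theta,\theta}(\D)}$ is an equivalent norm on $H^\gamma_{p,\Theta,\theta}(\D)$. The algebraic key is the weight identity $\rho_\D^{\abs{\alpha}p}\,w_{\Theta,\theta}=w_{\Theta+\abs{\alpha}p,\,\theta+\abs{\alpha}p}$, which gives $\norm{\rho_\D^{\abs{\alpha}}D^\alpha u\sep L_{p,\Theta,\theta}(\D)}=\norm{D^\alpha u\sep L_{p,\Theta+\abs{\alpha}p,\theta+\abs{\alpha}p}(\D)}$. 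Hence the map $T\colon u\mapsto (D^\alpha u)_{\abs{\alpha}\leq\gamma}$ is an isometry (with respect to $\normmod{\cdot}$) of $H^\gamma_{p,\Theta,\theta}(\D)$ onto a closed subspace of the finite $\ell_p$-direct sum $\mathcal L:=\bigoplus_{\abs{\alpha}\leq\gamma}L_{p,\Theta+\abs{\alpha}p,\theta+\abs{\alpha}p}(\D)$.

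Next I would identify the dual of $\mathcal L$. As the sum is finite, $\mathcal L'$ is the $\ell_{p'}$-direct sum of the duals of the summands, paired via $\langle(f_\alpha),(g_\alpha)\rangle=\sum_{\abs{\alpha}\leq\gamma}\int_\D f_\alpha g_\alpha\dx$. Each summand equals $L_{p,\Theta+\abs{\alpha}p,\theta+\abs{\alpha}p}(\D)=H^0_{p,\Theta+\abs{\alpha}p,\theta+\abs{\alpha}p}(\D)$, so \autoref{thm:dual} with $\gamma=0$ identifies its dual with $L_{p',\Theta'-\abs{\alpha}p',\theta'-\abs{\alpha}p'}(\D)$; indeed, using \eqref{eq:dual:parameters} one checks $(\Theta+\abs{\alpha}p)/p+(\Theta'-\abs{\alpha}p')/p'=\Theta/p+\Theta'/p'=2$ and likewise for the $\theta$-parameters, so the shifted parameters satisfy the required dual relation. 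Thus $\mathcal L'=\bigoplus_{\abs{\alpha}\leq\gamma}L_{p',\Theta'-\abs{\alpha}p',\theta'-\abs{\alpha}p'}(\D)$ with the conjugate $\ell_{p'}$-norm.

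To prove the inclusion ``$\subseteq$'' together with the estimate $\min\lesssim\norm{\psi}$: given $\psi\in H^{-\gamma}_{p',\Theta',\theta'}(\D)$, \autoref{thm:dual} identifies it with $L\in(H^\gamma_{p,\Theta,\theta}(\D))'$, $\norm{L}\sim\norm{\psi}$; transporting $L$ along $T$ and extending it to $\mathcal L$ by Hahn--Banach without increasing the norm yields a family $(u_\alpha)\in\mathcal L'$ with $(\sum\norm{u_\alpha}^{p'})^{1/p'}\sim\norm{\psi}$ and $L(u)=\sum_{\abs{\alpha}\leq\gamma}\int_\D D^\alpha u\cdot u_\alpha\dx$. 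Testing against $\varphi\in C_0^\infty(\D)$ and integrating by parts gives $\psi=\sum_{\abs{\alpha}\leq\gamma}(-1)^{\abs{\alpha}}D^\alpha u_\alpha$, i.e.\ the claimed representation with $v_\alpha:=(-1)^{\abs{\alpha}}u_\alpha$ of the same norm. For the reverse inclusion ``$\supseteq$'' and $\norm{\psi}\lesssim\min$: given any representation $\psi=\sum_{\abs{\alpha}\leq\gamma}D^\alpha u_\alpha$ with $u_\alpha\in L_{p',\Theta'-\abs{\alpha}p',\theta'-\abs{\alpha}p'}(\D)\subset L_{1,\loc}(\D)$, I define $L$ on $C_0^\infty(\D)$ by $L(\varphi):=\psi(\varphi)=\sum_{\abs{\alpha}\leq\gamma}(-1)^{\abs{\alpha}}\int_\D u_\alpha\,D^\alpha\varphi\dx$; two applications of H\"older's inequality (the weighted $L_p$-duality above, then the $\ell_p$--$\ell_{p'}$ pairing) bound $\abs{L(\varphi)}$ by $(\sum\norm{u_\alpha}^{p'})^{1/p'}\,\normmod{\varphi}$, since $\norm{D^\alpha\varphi\sep L_{p,\Theta+\abs{\alpha}p,\theta+\abs{\alpha}p}(\D)}\leq\normmod{\varphi}$ for each $\abs{\alpha}\leq\gamma$. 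By the density of $C_0^\infty(\D)$ in $H^\gamma_{p,\Theta,\theta}(\D)$ (\autoref{thm:density}), $L$ extends to a bounded functional, and \autoref{thm:dual} returns $\psi\in H^{-\gamma}_{p',\Theta',\theta'}(\D)$ with $\norm{\psi}\lesssim(\sum\norm{u_\alpha}^{p'})^{1/p'}$. Minimising over all representations then gives the asserted norm equivalence.

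The conceptual content is standard, so I do not expect a genuine obstacle; the work lies entirely in the bookkeeping. The points requiring care are the weight algebra---verifying $\rho_\D^{\abs{\alpha}p}w_{\Theta,\theta}=w_{\Theta+\abs{\alpha}p,\theta+\abs{\alpha}p}$ and that the conjugate parameters emerge exactly as $\Theta'-\abs{\alpha}p'$ and $\theta'-\abs{\alpha}p'$---together with tracking that the sign $(-1)^{\abs{\alpha}}$ produced by integration by parts is harmlessly absorbed into the $u_\alpha$, and noting $u_\alpha\in L_{1,\loc}(\D)$ so that $D^\alpha u_\alpha\in\mathscr{D}'(\D)$ is well defined.
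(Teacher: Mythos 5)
Your proposal is correct and is in substance the same argument the paper relies on: the paper's own proof merely combines \autoref{thm:coincidence}, \autoref{thm:density}, and \autoref{thm:dual} with a citation of \cite[Lemmas~B.2 and~B.4]{Cio20}, and the content of those lemmas is precisely the classical Hahn--Banach/direct-sum duality argument (the weighted analogue of \cite[Theorem~3.12]{AdaFou2003}) that you write out in full. Your key verifications are accurate---in particular the weight identity $\rho_\D^{\abs{\alpha}p}\,w_{\Theta,\theta}=w_{\Theta+\abs{\alpha}p,\,\theta+\abs{\alpha}p}$ and the conjugate parameter shift showing that the dual of $L_{p,\Theta+\abs{\alpha}p,\theta+\abs{\alpha}p}(\D)$ is $L_{p',\Theta'-\abs{\alpha}p',\theta'-\abs{\alpha}p'}(\D)$---so your write-up simply supplies the details the paper leaves to the reader.
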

\begin{proof}
In view of \autoref{thm:coincidence} and the density statement in \autoref{thm:density}, the claim for $\gamma=0$ is a simple corollary of \autoref{thm:dual} (see also~\cite[Lemma~B.2]{Cio20}), while for $\gamma=1$ it coincides with~\cite[Lemma~B.4]{Cio20}.
The proof of the latter extends mutatis mutandis to arbitrary $\gamma\in\N$ and is left to the reader.
\end{proof}

\subsection{Pointwise multiplication}
\label{sec:Multipliers}

Using \autoref{cor:Lot:doubleweight} we can transfer some of the properties of the spaces $H^\gamma_{p,\Theta}(\D)$ mutatis mutandis to $H^\gamma_{p,\Theta,\theta}(\D)$. 
Especially, we obtain the following assertions on pointwise multipliers and weight index shifts. 
Recall that for domains $\domain\subset\R^d$ and closed sets $M\subset\partial\domain$, we write $\Regd(\domain,M)$ to denote the set of regularized distances to $M$ on $\domain$, see  \autoref{rem:resolution}\ref{it:resolution:construction}.

\begin{corollary}
\label{cor:pwmult:indexshift}
    Let $1<p<\infty$ and $\gamma,\Theta,\theta\in\R$.
    \begin{enumerate}[label=\textup{(\roman*)}]
    \item\label{it:pwmult:simple}  If $a\colon\D\to\R$ is a pointwise multiplier for $H^\gamma_{p,\Theta}(\D)$, then it is a pointwise multiplier for $H^\gamma_{p,\Theta,\theta}(\D)$, too. 
    In particular, if 
    $\abs{a}^{(0)}_n:=\sup_{x\in\domain}\sum_{\abs{\alpha}\leq n}\rho_\D^{\abs{\alpha}}(x) \abs{D^\alpha a(x)}<\infty$ for some $n\in\N_0$, then for all $\abs{\gamma}\leq n$ we have
    \begin{align*}
        \norm{a \, u\sep H^{\gamma}_{p,\Theta,\theta}(\D)} 
        \leq C(d,p,n) \abs{a}^{(0)}_n \norm{ u\sep H^{\gamma}_{p,\Theta,\theta}(\D)},
        \qquad u\in H^{\gamma}_{p,\Theta,\theta}(\D).
    \end{align*}

    \item\label{it:indexshift} Let $\psi_\D\in \Regd(\D,\partial\D)$ and $\psi_\circ\in\Regd(\D,\{0\})$. 
    Further, let $s,t\in\R$ and $u\in\mathscr{D}'(\D)$. 
    Then $\psi_\D^s \, \psi_\circ^t\, u \in H^\gamma_{p,\Theta p,\theta p}(\D)$ if, and only if, $u \in H^\gamma_{p,(\Theta+s) p,(\theta+s+t) p}(\D)$. 
    In this case, 
    \[
        \norm{\psi_\D^s\, \psi_\circ^t\, u \sep H^\gamma_{p,\Theta p,\theta p}(\D)} 
        \sim \norm{u \sep H^\gamma_{p,(\Theta+s) p, (\theta+s+t) p}(\D)}, 
        \qquad u\in H^\gamma_{p,(\Theta+s) p,(\theta+s+t) p}(\D).
    \]
    \end{enumerate}
\end{corollary}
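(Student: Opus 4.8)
The plan is to derive both assertions from \autoref{cor:Lot:doubleweight}, which lets me trade the double-weight spaces for the single-weight spaces $H^\gamma_{p,\Theta}(\D)$, where the index-shift and multiplier machinery of \autoref{lem:LotSpaces:properties} is already available. Since \autoref{cor:Lot:doubleweight} and the relevant parts of \autoref{lem:LotSpaces:properties} hold for all real $\gamma$, no separate interpolation or duality step is needed.

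For part~\ref{it:pwmult:simple} I would argue as follows. By \autoref{cor:Lot:doubleweight} we have $\norm{v\sep H^\gamma_{p,\Theta,\theta}(\D)}\sim\norm{\rho_\circ^{(\theta-\Theta)/p}v\sep H^\gamma_{p,\Theta}(\D)}$ for every $v\in\mathscr{D}'(\D)$. Since multiplication by $a$ and by the scalar weight $\rho_\circ^{(\theta-\Theta)/p}$ commute pointwise, applying this equivalence to $v=a\,u$ gives
\[
    \norm{a\,u\sep H^\gamma_{p,\Theta,\theta}(\D)}
    \sim\norm{a\,\rho_\circ^{(\theta-\Theta)/p}u\sep H^\gamma_{p,\Theta}(\D)}
    \lesssim\norm{\rho_\circ^{(\theta-\Theta)/p}u\sep H^\gamma_{p,\Theta}(\D)}
    \sim\norm{u\sep H^\gamma_{p,\Theta,\theta}(\D)},
\]
where the middle inequality is exactly the hypothesis that $a$ is a multiplier for $H^\gamma_{p,\Theta}(\D)$. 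The ``in particular'' statement then follows by invoking \autoref{lem:LotSpaces:properties}\ref{it:LotSpaces:multiplier} for that middle step, which supplies the factor $C(d,p,n)\abs{a}^{(0)}_n$.

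For part~\ref{it:indexshift} I would first apply \autoref{cor:Lot:doubleweight} to each side to pass to single-weight spaces: the membership $\psi_\D^s\psi_\circ^t u\in H^\gamma_{p,\Theta p,\theta p}(\D)$ is equivalent to $\rho_\circ^{\theta-\Theta}\psi_\D^s\psi_\circ^t u\in H^\gamma_{p,\Theta p}(\D)$, while $u\in H^\gamma_{p,(\Theta+s)p,(\theta+s+t)p}(\D)$ is equivalent to $\rho_\circ^{\theta+t-\Theta}u\in H^\gamma_{p,(\Theta+s)p}(\D)$, in both cases with equivalent norms. Next I would strip off the boundary-distance factor $\psi_\D^s$ using the genuine index shift \autoref{lem:LotSpaces:properties}\ref{it:LotSpaces:indexshift} (which is native to $\psi_\D\in\Regd(\D,\partial\D)$), turning the first statement into $\rho_\circ^{\theta-\Theta}\psi_\circ^t u\in H^\gamma_{p,(\Theta+s)p}(\D)$. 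It then remains to prove the equivalence
\[
    \rho_\circ^{\theta-\Theta}\psi_\circ^t u\in H^\gamma_{p,(\Theta+s)p}(\D)
    \iff
    \rho_\circ^{\theta+t-\Theta}u\in H^\gamma_{p,(\Theta+s)p}(\D),
\]
that is, that multiplication by $g:=(\psi_\circ/\rho_\circ)^t$ is an isomorphism of $H^\gamma_{p,(\Theta+s)p}(\D)$, which chained with the previous reductions yields the claim.

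This last equivalence is where the only real work lies, and I expect it to be the main obstacle, because $\psi_\circ$ is an \emph{arbitrary} regularised distance to the corner rather than $\rho_\circ$ itself. Both $g$ and $g^{-1}=(\psi_\circ/\rho_\circ)^{-t}$ are bounded above and below since $\psi_\circ\sim\rho_\circ$, and combining the defining derivative bounds of $\psi_\circ\in\Regd(\D,\{0\})$ with the Fa\`a di Bruno estimate~\eqref{eq:rhocirc:deriv} for powers of $\rho_\circ$ (together with Leibniz' rule) yields $\abs{D^\alpha g}\lesssim\rho_\circ^{-\abs{\alpha}}$ for every $\alpha$. The crucial geometric fact is that the corner lies on $\partial\D$, so $\rho_\D\leq\rho_\circ$ on $\D$; hence $\rho_\D^{\abs{\alpha}}\abs{D^\alpha g}\lesssim(\rho_\D/\rho_\circ)^{\abs{\alpha}}\leq 1$, which gives $\abs{g}^{(0)}_n<\infty$ for all $n$, and likewise for $g^{-1}$. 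By \autoref{lem:LotSpaces:properties}\ref{it:LotSpaces:multiplier} (choosing $n\geq\abs{\gamma}$) both are then pointwise multipliers for $H^\gamma_{p,(\Theta+s)p}(\D)$, so multiplication by $g$ is the desired isomorphism, and chaining the three norm equivalences completes part~\ref{it:indexshift}.
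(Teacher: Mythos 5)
Your proof is correct and follows essentially the same route as the paper: both parts are reduced via \autoref{cor:Lot:doubleweight} to the single-weight spaces $H^\gamma_{p,\Theta}(\D)$ and then handled by \autoref{lem:LotSpaces:properties}\ref{it:LotSpaces:multiplier} and \autoref{lem:LotSpaces:properties}\ref{it:LotSpaces:indexshift}, exactly as in the paper's one-line argument. The only difference is that you make explicit a step the paper leaves implicit, namely that $(\psi_\circ/\rho_\circ)^{\pm t}$ is a pointwise multiplier (via the bound $\rho_\D\leq\rho_\circ$ together with the Fa\`a di Bruno/Leibniz derivative estimates), which is indeed needed once $\psi_\circ\in\Regd(\D,\{0\})$ is allowed to differ from $\rho_\circ$.
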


\begin{proof}
Both assertions immediately follow from \autoref{cor:Lot:doubleweight} together with \autoref{lem:LotSpaces:properties}\ref{it:LotSpaces:multiplier} (for Assertion~\ref{it:pwmult:simple}) and \autoref{lem:LotSpaces:properties}\ref{it:LotSpaces:indexshift} (for Assertion~\ref{it:indexshift}), respectively.
\end{proof}

The following statement complements~\autoref{cor:pwmult:indexshift}.

\begin{theorem}
\label{thm:pw-mult_new}
    Let $1<p_1< p_0<\infty$ and $\gamma,\Theta_0,\Theta_1,\theta_0,\theta_1\in\R$.
    Further assume that $a\in H^{m}_{q,(\Theta_1-\Theta_0)q,(\theta_1-\theta_0)q}(\D)$ with 
    \begin{align}\label{eq:cond_pw}
        m:=\ceil{\abs{\gamma}}
    \qquad\text{and}\qquad
    \frac{1}{q} := \frac{1}{p_1} - \frac{1}{p_0}.
    \end{align}
    Then $M_a \colon u \mapsto M_a(u):= a u$ maps $H^{\gamma}_{p_0,\Theta_0 p_0, \theta_0 p_0}(\D)$ into $H^{\gamma}_{p_1,\Theta_1 p_1,\theta_1 p_1}(\D)$ and
    \begin{align}\label{eq:pw-mult_new2}
        \norm{M_a \sep \mathcal{L}\big( H^{\gamma}_{p_0,\Theta_0 p_0, \theta_0 p_0}(\D), H^{\gamma}_{p_1,\Theta_1 p_1, \theta_1 p_1}(\D)\big)} 
        \lesssim \norm{a \sep H^{m}_{q, (\Theta_1-\Theta_0)q,(\theta_1-\theta_0)q }(\D)}.
    \end{align}
\end{theorem}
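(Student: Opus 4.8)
The plan is to strip off the corner weight first, turning the statement into a multiplication theorem between the single-weight spaces $H^\gamma_{p,\Theta}(\D)$ via \autoref{cor:Lot:doubleweight}, then to prove the resulting single-weight statement for $\gamma\ge 0$ by localising near $\partial\D$ and invoking the corresponding multiplication theorem for Bessel potential spaces on $\R^d$, and finally to reach $\gamma<0$ by duality. A pleasant feature (which I would verify at the outset) is that all three weight bookkeepings are \emph{forced} to be consistent by the exponent relation $\tfrac1{p_1}=\tfrac1q+\tfrac1{p_0}$.

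First I would carry out the reduction. Setting $v:=\rho_\circ^{\theta_0-\Theta_0}u$ and $\tilde a:=\rho_\circ^{(\theta_1-\theta_0)-(\Theta_1-\Theta_0)}a$, \autoref{cor:Lot:doubleweight} gives $v\in H^\gamma_{p_0,\Theta_0 p_0}(\D)$ and $\tilde a\in H^m_{q,(\Theta_1-\Theta_0)q}(\D)$ with equivalent norms, while a one-line exponent count shows $\rho_\circ^{\theta_1-\Theta_1}\,au=\tilde a\,v$. Hence, by \autoref{cor:Lot:doubleweight} again, the asserted membership $au\in H^\gamma_{p_1,\Theta_1 p_1,\theta_1 p_1}(\D)$ together with~\eqref{eq:pw-mult_new2} is equivalent to the single-weight multiplication estimate
\[
\norm{\tilde a\, v \sep H^\gamma_{p_1,\Theta_w}(\D)}
\lesssim \norm{\tilde a \sep H^m_{q,\Theta_a}(\D)}\,\norm{v \sep H^\gamma_{p_0,\Theta_v}(\D)},
\]
where $\Theta_v:=\Theta_0 p_0$, $\Theta_a:=(\Theta_1-\Theta_0)q$, $\Theta_w:=\Theta_1 p_1$ satisfy the compatibility relation $\Theta_w/p_1=\Theta_a/q+\Theta_v/p_0$ (indeed $\Theta_1=(\Theta_1-\Theta_0)+\Theta_0$).

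For $\gamma\ge 0$ I would prove this by localisation. With a resolution $(\xi_\nu)_\nu$ subordinate to $\partial\D$ as in \autoref{def:Lot_spaces} and a fattened sequence $(\bar\xi_\nu)_\nu$ with $\bar\xi_\nu\equiv 1$ on $\supp\xi_\nu$, one has $\xi_\nu(\tilde a v)=(\bar\xi_\nu\tilde a)(\xi_\nu v)$, so after rescaling $(\xi_\nu(\tilde a v))(e^\nu\cdot)=(\bar\xi_\nu\tilde a)(e^\nu\cdot)\,(\xi_\nu v)(e^\nu\cdot)$. Applying the multiplication theorem $\norm{fg\sep H^\gamma_{p_1}(\R^d)}\lesssim\norm{f\sep H^m_q(\R^d)}\norm{g\sep H^\gamma_{p_0}(\R^d)}$ (for $\gamma\ge 0$, $m=\ceil{\gamma}$, $1/p_1=1/q+1/p_0$; cf.\ \autoref{sec:Bessel}) to each piece, multiplying by $e^{\nu\Theta_w/p_1}=e^{\nu\Theta_a/q}e^{\nu\Theta_v/p_0}$, and summing over $\nu$ by Hölder's inequality with exponents $q/p_1$ and $p_0/p_1$ (legitimate since $p_1/q+p_1/p_0=1$) yields exactly the displayed single-weight estimate; the replacement of $\bar\xi$ by $\xi$ and the independence of the result from the chosen resolution are absorbed by the finite-overlap and change-of-resolution arguments of \autoref{rem:resolution}\ref{it:resolution:cover} and the remark after \autoref{def:Lot_spaces}.

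For $\gamma<0$ I would pass to the transpose, which both \emph{defines} the product $au$ for a distribution $u$ and establishes its boundedness. By \autoref{thm:dual} the duals of $H^\gamma_{p_1,\Theta_1 p_1,\theta_1 p_1}(\D)$ and $H^\gamma_{p_0,\Theta_0 p_0,\theta_0 p_0}(\D)$ are $H^{-\gamma}_{p_1',(2-\Theta_1)p_1',(2-\theta_1)p_1'}(\D)$ and $H^{-\gamma}_{p_0',(2-\Theta_0)p_0',(2-\theta_0)p_0'}(\D)$; setting $(M_a u)(\varphi):=u(a\varphi)$ and applying the already-proven case to $M_a\colon H^{-\gamma}_{p_1',(2-\Theta_1)p_1',(2-\theta_1)p_1'}(\D)\to H^{-\gamma}_{p_0',(2-\Theta_0)p_0',(2-\theta_0)p_0'}(\D)$ does the job: here $1<p_0'<p_1'$ and $1/q=1/p_0'-1/p_1'$ hold, $-\gamma>0$, and the multiplier weight required is again $H^m_{q,(\Theta_1-\Theta_0)q,(\theta_1-\theta_0)q}(\D)$ because $(2-\Theta_0)-(2-\Theta_1)=\Theta_1-\Theta_0$ and likewise for $\theta$, so the hypothesis on $a$ is exactly preserved. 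The main obstacle I anticipate is not the weight algebra — which collapses neatly as above — but the underlying $\R^d$ multiplication theorem at the \emph{fractional} and sharp level $m=\ceil{|\gamma|}$, and, relatedly, ensuring the cut-off manipulations ($\xi_\nu(\tilde a v)=(\bar\xi_\nu\tilde a)(\xi_\nu v)$ and the pointwise-multiplier bounds) remain valid in fractional Bessel potential norms; once that input is granted, the localisation-plus-Hölder assembly and the duality step are routine.
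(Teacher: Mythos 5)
Your proposal is correct in its overall architecture, and for $\gamma<0$ it coincides with the paper's argument (the paper's Step~3 also defines the product through $u(a\varphi)$, checks $1<p_0'<p_1'$, $1/q=1/p_0'-1/p_1'$, and that the multiplier weights $(\Theta_1-\Theta_0)q,(\theta_1-\theta_0)q$ are preserved under dualization). For $\gamma\ge 0$, however, you take a genuinely different route. The paper never localizes: for integer $\gamma$ it works directly with the double-weight characterization of \autoref{thm:coincidence}, where Leibniz' rule and the generalized H\"older inequality apply \emph{pointwise} because the weights factor as $\rho_\circ^{\theta_1-\Theta_1}\rho_\D^{\Theta_1-2/p_1+\abs{\alpha}} = \big(\rho_\circ^{(\theta_1-\theta_0)-(\Theta_1-\Theta_0)}\rho_\D^{\Theta_1-\Theta_0-2/q+\abs{\beta}}\big)\cdot\big(\rho_\circ^{\theta_0-\Theta_0}\rho_\D^{\Theta_0-2/p_0+\abs{\alpha-\beta}}\big)$; the fractional case then follows by complex interpolation of the weighted spaces (\autoref{thm:interpolation}), the multiplier norm being frozen at smoothness $\ceil{\gamma}$ at both integer endpoints. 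You instead strip the corner weight via \autoref{cor:Lot:doubleweight} (legitimate and non-circular, since that corollary is established for $\gamma\ge0$ from \autoref{thm:coincidence} and interpolation, independently of the present theorem), and delegate the analytic core to an $\R^2$ multiplication theorem $H^m_q(\R^2)\times H^\gamma_{p_0}(\R^2)\to H^\gamma_{p_1}(\R^2)$ applied to the rescaled localized pieces. Your bookkeeping is all correct: $\Theta_1=(\Theta_1-\Theta_0)+\Theta_0$, H\"older in $\nu$ with $p_1/q+p_1/p_0=1$, and the fattened cutoffs $\bar\xi_\nu$ are covered by the change-of-resolution remark following \autoref{def:Lot_spaces}, since they satisfy {\bfseries{[S$_{k_0}^{c}$]}} and {\bfseries{[D$^c$]}}. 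What your route buys is modularity; what the paper's route buys is self-containedness, since it needs no multiplication theory on $\R^2$ beyond elementary H\"older.

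The one real deficiency is precisely the ingredient you flag: the $\R^2$ multiplication theorem at fractional smoothness with $m=\ceil{\abs{\gamma}}$ is nowhere available in the paper's toolkit (\autoref{lem:bessel}\ref{it:bessel:multiplier} only handles multipliers with uniformly bounded scaled derivatives), so as written your proof has one unproven input. The gap is benign and can be closed with the paper's own devices: the integer case of that $\R^2$ theorem is elementary (Leibniz plus H\"older, exactly as in your computation), and the fractional case follows by interpolating in the second factor via \autoref{lem:bessel}\ref{it:bessel:interpolation}, because the multiplier bound $\norm{f\sep H^m_q(\R^2)}$ dominates the bounds at both integer endpoints $\floor{\gamma}$ and $\ceil{\gamma}$ --- the same interpolation trick the paper applies at the weighted level. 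With that lemma supplied (or cited), your proof is complete.
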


\begin{remark}\label{rem:pw} 
Together with \autoref{thm:coincidence} we see that 
\begin{align*}
\norm{a \sep H^{m}_{q, (\Theta_1-\Theta_0)q,(\theta_1-\theta_0)q}(\D)} 
&= \sum_{\abs{\beta}\leq m} \norm{\rho_\D^{\abs{\beta}} \, D^\beta a \sep L_{q, (\Theta_1-\Theta_0)q,(\theta_1-\theta_0)q}(\D)}\\
&= \sum_{\abs{\beta}\leq m} \norm{\rho_{\circ}^{\theta_1-\theta_0 - (\Theta_1-\Theta_0)}\, \rho_\D^{\Theta_1 - \Theta_0 - \frac{2}{q}+\abs{\beta}}  \, D^\beta a \sep L_q(\D)}. 
\end{align*}
If we formally put $q=\infty$, \autoref{thm:pw-mult_new} corresponds to the limiting case
 $p_1=p_0=:p$  in~\eqref{eq:cond_pw}, treated in  \autoref{cor:pwmult:indexshift}. 
\end{remark}

\begin{proof}[Proof of \autoref{thm:pw-mult_new}]
\emph{Step 1. } 
Let $\gamma\in\N_0$. 
Then \autoref{thm:coincidence} and direct computation using Leibniz' rule yields
\begin{align*}
		\norm{M_a(u) \sep H^{\gamma}_{p_1,\Theta_1 p_1, \theta_1 p_1}(\D)} 
		&\sim \sum_{\abs{\alpha}\leq\gamma} \norm{\rho_\D^{\abs{\alpha}}\, D^\alpha (a u) \sep L_{p_1,\Theta_1 p_1,\theta_1 p_1}(\D)}  \\
		&\lesssim \sum_{\abs{\alpha}\leq\gamma} \sum_{\beta\leq \alpha} \norm{\rho_\D^{\abs{\alpha-\beta}}\, \rho_\D^{\abs{\beta}} \, (D^\beta a) \, (D^{\alpha-\beta} u) \sep L_{p_1,\Theta_1 p_1,\theta_1 p_1}(\D)}.
\end{align*}
Next we can estimate each summand using the generalized H\"older inequality to obtain
\begin{align*}
		&\norm{\rho_\D^{\abs{\alpha-\beta}}\, \rho_\D^{\abs{\beta}} \, (D^\beta a) \, (D^{\alpha-\beta} u) \sep L_{p_1,\Theta_1 p_1,\theta_1 p_1}(\D)} \\
		&\qquad = \norm{\rho_\circ^{ \theta_1-\theta_0 - (\Theta_1-\Theta_0)} \, \rho_\D^{ \Theta_1 - \Theta_0 - 2/q+\abs{\beta}} \, (D^\beta a) \, \rho_\circ^{\theta_0-\Theta_0} \, \rho_\D^{\Theta_0-2/p_0+\abs{\alpha-\beta}}\,(D^{\alpha-\beta} u) \sep L_{p_1}(\D)} \\
		&\qquad \leq \norm{\rho_\circ^{\theta_1-\theta_0 - (\Theta_1-\Theta_0)} \, \rho_\D^{ \Theta_1 - \Theta_0 - 2/q +\abs{\beta}} \, D^\beta a \sep L_{q}(\D)} \, \norm{\rho_\circ^{\theta_0-\Theta_0} \, \rho_\D^{\Theta_0-2/p_0+\abs{\alpha-\beta}}\,D^{\alpha-\beta} u \sep L_{p_0}(\D)},
\end{align*}
since $p_0,p_1<\infty$. Therefore,
\begin{align*}
		&\norm{M_a(u) \sep H^{\gamma}_{p_1,\Theta_1 p_1, \theta_1 p_1}(\D)} \\
		&\quad \lesssim \sum_{\abs{\alpha}\leq\gamma} \sum_{\beta\leq \alpha} \norm{\rho_\circ^{ \theta_1-\theta_0 - (\Theta_1-\Theta_0)} \, \rho_\D^{\Theta_1 - \Theta_0 - 2/q +\abs{\beta}} \, D^\beta a \sep L_{q}(\D)} \, \norm{\rho_\D^{\abs{\alpha-\beta}}\, D^{\alpha-\beta} u \sep L_{p_0,\Theta_0 p_0, \theta_0 p_0}(\D)} \\
		&\quad \lesssim \norm{a \sep H_{q, (\Theta_1-\Theta_0)q,(\theta_1-\theta_0)q}^{\gamma}(\D)} \, \norm{u \sep H^{\gamma}_{p_0,\Theta_0 p_0, \theta_0 p_0}(\D)}
\end{align*}
with constants independent of $u$ and $a$.
Thus, \eqref{eq:pw-mult_new2} holds for all $\gamma\in\N_0$.

\emph{Step 2. } 
Let $\gamma \in[0,\infty)\setminus\N_0$ and assume $a\in H^{\ceil{\gamma}}_{q, (\Theta_1-\Theta_0)q,(\theta_1-\theta_0)q }(\D)$. 
Then, by Step~1, $M_a$ belongs to $\L \big(H^{\floor{\gamma}}_{p_0,\Theta_0 p_0,\theta_0 p_0}(\D),H^{\floor{\gamma}}_{p_1,\Theta_1 p_1, \theta_1 p_1}(\D)\big)$ and to 
$\L \big(H^{\ceil{\gamma}}_{p_0,\Theta_0 p_0,\theta_0 p_0}(\D),H^{\ceil{\gamma}}_{p_1,\Theta_1 p_1, \theta_1 p_1}(\D)\big)$ and in both cases its norm is bounded from above by $ \norm{a \sep H^{\ceil{\gamma}}_{q, (\Theta_1-\Theta_0)q,(\theta_1-\theta_0)q}(\D)}$ times a finite constant that does not depend on $a$. 
Due to \autoref{thm:interpolation} we know that $H^{\gamma}_{p_i,\Theta_i p_i, \theta_i p_i}(\D)$ can be written as complex interpolation space of $H^{\floor{\gamma}}_{p_i,\Theta_i p_i, \theta_i p_i}(\D)$ and $H^{\ceil{\gamma}}_{p_i,\Theta_i p_i, \theta_i p_i}(\D)$ for $i=0,1$. 
Thus, due to the interpolation property, \eqref{eq:pw-mult_new2} holds for all $\gamma\in[0,\infty)$.

\emph{Step 3. } 
Let $\gamma < 0$ and $a\in H^{\ceil{\abs{\gamma}}}_{q,(\Theta_1-\Theta_0)q,(\theta_1-\theta_0)q}(\D)$. 
Due to \autoref{thm:dual} we have $H^\gamma_{p_i,\Theta_i p_i,\theta_i p_i}(\D) = \big( H^{-\gamma}_{p_i',(\Theta_i p_i)',(\theta_i p_i)'}(\D) \big)'$, where for $i=0,1$ there holds $1/p_i+1/p_i'=1$ as well as $(\Theta_i p_i) /p_i + (\Theta_i p_i)'/p_i'=2$. That is, $\Theta_i = 2 - (\Theta_i p_i)'/p_i'$, and likewise for $\Theta_i$ replaced by $\theta_i$, $i=0,1$. 
Hence, $1<p_0'< p_1'<\infty$, 
\begin{align*}
\frac{1}{q} = \frac{1}{p_1}-\frac{1}{p_0} 
= \frac{1}{p_0'} - \frac{1}{p_1'}
\quad \text{and} \quad
\Theta_1-\Theta_0
=
\frac{(\Theta_0p_0)'}{p_0'}
-
\frac{(\Theta_1 p_1)'}{p_1'}
\end{align*}
as well as
\begin{align*}
\theta_1-\theta_0-(\Theta_1-\Theta_0) 
=
\frac{(\theta_0p_0)'}{p_0'}
-
\frac{(\theta_1p_1)'}{p_1'}
-
\bigg(
\frac{(\Theta_0p_0)'}{p_0'}
-
\frac{(\Theta_1p_1)'}{p_1'}
\bigg). 
\end{align*}
Thus, our previous steps imply $M_a \in \L\big(H^{-\gamma}_{p_1',(\Theta_1 p_1)',(\theta_1 p_1)'}(\D), H^{-\gamma}_{p_0',(\Theta_0 p_0)',(\theta_0 p_0)'}(\D) \big)$ such that
	\begin{align*}
		&\norm{a u \sep H^\gamma_{p_1,\Theta_1 p_1,\theta_1 p_1}(\D)} \\
		&\quad\sim \norm{a u \sep \big( H^{-\gamma}_{p_1',(\Theta_1 p_1)',(\theta_1 p_1)'}(\D) \big)'} \\
		&\quad= \sup\!\left\{ \abs{au(\varphi)}: \  \norm{\varphi\sep H^{-\gamma}_{p_1',(\Theta_1 p_1)',(\theta_1 p_1)'}(\D)}=1 \right\}\\
		&\quad= \sup\!\left\{ \abs{u(a\varphi)}: \  \norm{\varphi\sep H^{-\gamma}_{p_1',(\Theta_1 p_1)',(\theta_1 p_1)'}(\D)}=1 \right\}\\
		&\quad\leq \sup\!\bigg\{ \norm{u \sep H^\gamma_{p_0,\Theta_0 p_0, \theta_0 p_0}(\D)} 
		\norm{a\varphi\sep H^{-\gamma}_{p_0',(\Theta_0 p_0)',(\theta_0 p_0)'}(\D)}: \ \norm{\varphi\sep H^{-\gamma}_{p_1',(\Theta_1 p_1)',(\theta_1 p_1)'}(\D)}=1 \bigg\}  \\
		&\quad\lesssim \norm{a \sep H^{\ceil{\abs{\gamma}}}_{q, (\Theta_1-\Theta_0)q,(\theta_1-\theta_0)q}(\D)} \, \norm{u \sep H^\gamma_{p_0,\Theta_0 p_0,\theta_0 p_0}(\D)}, 
	\end{align*}
 where in the third step we translated the	multiplication of the two distributions $a$ and $u$ into the dual action between them which is well-defined due to our assumptions (i.e., $u\in H^\gamma_{p_0,\Theta_0 p_0, \theta_0 p_0}(\D)$ and $a\varphi\in H^{-\gamma}_{p_0',(\Theta_0 p_0)',(\theta_0 p_0)'}(\D)$). 	This completes the proof. 
\end{proof}

\subsection{Lifting}\label{sec:Lifting}

The following lifting result is quite useful in many calculations, in particular, in the context of PDEs. It generalizes the well-known lifting properties of derivatives along scales of classical Sobolev and Bessel potential spaces. In particular, it generalizes \autoref{lem:LotSpaces:properties}\ref{it:LotSpaces:lifting} for $\domain=\D$. 
 
\begin{theorem}\label{thm:lifting}
Let $1<p<\infty$ and $\gamma,\Theta,\theta\in\R$ as well as $m\in\N$. Moreover, let $\psi_\D\in \Regd(\D,\partial\D)$, see \autoref{rem:resolution}\ref{it:resolution:construction}. 
Then the following assertions are equivalent for $u\in\mathscr{D}'(\D)$.
\begin{enumerate}[label=\textup{(\alph*)}]
\item\label{it:lift:1} $u\in H^{\gamma}_{p,\Theta,\theta}(\D)$.
\item\label{it:lift:D} $D^{\alpha} u \in H^{\gamma-m}_{p,\Theta+\abs{\alpha}p,\theta+\abs{\alpha}p}(\D)$ for all $\alpha \in \N_0^2$ with $\abs{\alpha}\leq m$.
\item\label{it:lift:PsiD} $\psi_\D^{\abs{\alpha}} D^{\alpha} u \in H^{\gamma-m}_{p,\Theta,\theta}(\D)$ for all $\alpha \in \N_0^2$ with $\abs{\alpha}\leq m$.
\item\label{it:lift:DPsi} $D^{\alpha} \big( \psi_\D^{\abs{\alpha}} u\big) \in H^{\gamma-m}_{p,\Theta,\theta}(\D)$ for all $\alpha \in \N_0^2$ with $\abs{\alpha}\leq m$.
\end{enumerate}
	In this case,
	\begin{align*}  
		\norm{u \sep H^{\gamma}_{p,\Theta,\theta}(\D)} 
		&\sim \sum_{\abs{\alpha}\leq m} \norm{D^\alpha u \sep H^{\gamma-m}_{p,\Theta+\abs{\alpha}p,\theta+\abs{\alpha}p}(\D)} 
		\\
		&\sim \sum_{\abs{\alpha}\leq m} \norm{\psi_\D^{\abs{\alpha}} D^\alpha u \sep H^{\gamma-m}_{p,\Theta,\theta}(\D)} 
		\\
		&\sim \sum_{\abs{\alpha}\leq m} \norm{ D^{\alpha} \big( \psi_\D^{\abs{\alpha}} u\big) \sep H^{\gamma-m}_{p,\Theta,\theta}(\D)} 
	\end{align*}
	with constants independent of $u$.
\end{theorem}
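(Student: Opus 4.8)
The plan is to reduce everything to the single-weight lifting result \autoref{lem:LotSpaces:properties}\ref{it:LotSpaces:lifting} by peeling off the weights one derivative at a time and absorbing every commutator into a pointwise multiplier. I would first dispose of the two ``cheap'' equivalences, which hold for arbitrary $m\in\N$ with no lifting at all. The equivalence \ref{it:lift:D}$\iff$\ref{it:lift:PsiD} is exactly \autoref{cor:pwmult:indexshift}\ref{it:indexshift} applied termwise: taking $s=\abs{\alpha}$ and $t=0$ it reads $\psi_\D^{\abs{\alpha}}D^\alpha u\in H^{\gamma-m}_{p,\Theta,\theta}(\D)$ iff $D^\alpha u\in H^{\gamma-m}_{p,\Theta+\abs{\alpha}p,\theta+\abs{\alpha}p}(\D)$, with equivalent norms. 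For \ref{it:lift:PsiD}$\iff$\ref{it:lift:DPsi} I would use Leibniz' rule: expanding $D^\alpha(\psi_\D^{\abs{\alpha}}u)=\psi_\D^{\abs{\alpha}}D^\alpha u+\sum_{\beta<\alpha}\binom{\alpha}{\beta}(D^{\alpha-\beta}\psi_\D^{\abs{\alpha}})D^\beta u$ and writing each lower-order summand as $\big[\psi_\D^{-\abs{\beta}}D^{\alpha-\beta}\psi_\D^{\abs{\alpha}}\big]\cdot\big[\psi_\D^{\abs{\beta}}D^\beta u\big]$, the bracketed coefficient is bounded together with all its $\rho_\D$-weighted derivatives, since $\psi_\D\in\Regd(\D,\partial\D)$ forces $\abs{D^{\alpha-\beta}\psi_\D^{\abs{\alpha}}}\lesssim\psi_\D^{\abs{\beta}}$; hence it is a pointwise multiplier on $H^{\gamma-m}_{p,\Theta,\theta}(\D)$ by \autoref{cor:pwmult:indexshift}\ref{it:pwmult:simple}. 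As the transformation between $\{\psi_\D^{\abs{\alpha}}D^\alpha u\}$ and $\{D^\alpha(\psi_\D^{\abs{\alpha}}u)\}$ is triangular in $\abs{\alpha}$ with identity diagonal, induction on $\abs{\alpha}$ yields both directions and the asserted norm equivalence.

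It then remains to prove \ref{it:lift:1}$\iff$\ref{it:lift:D}, whose heart is the single step $m=1$, which I would establish in the form \ref{it:lift:1}$\iff$\ref{it:lift:PsiD}. Setting $\sigma:=(\theta-\Theta)/p$ and $w:=\rho_\circ^{\sigma}u$, \autoref{cor:Lot:doubleweight} turns \ref{it:lift:1} into $w\in H^\gamma_{p,\Theta}(\D)$ and turns the $m=1$ version of \ref{it:lift:PsiD} into ``$w\in H^{\gamma-1}_{p,\Theta}(\D)$ and $\rho_\circ^{\sigma}\psi_\D D_{x_i}u\in H^{\gamma-1}_{p,\Theta}(\D)$''. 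The single-weight lifting \autoref{lem:LotSpaces:properties}\ref{it:LotSpaces:lifting} makes the former equivalent to ``$w\in H^{\gamma-1}_{p,\Theta}(\D)$ and $\psi_\D D_{x_i}w\in H^{\gamma-1}_{p,\Theta}(\D)$''. These two conditions are reconciled by the commutator identity
\[
\psi_\D D_{x_i}w=\rho_\circ^{\sigma}\psi_\D D_{x_i}u+\Big(\sigma\,\tfrac{x_i}{\abs{x}}\,\tfrac{\psi_\D}{\rho_\circ}\Big)\,w,
\]
where I used $\rho_\circ=\abs{\cdot}$ and $D_{x_i}\rho_\circ^\sigma=\sigma\rho_\circ^{\sigma-1}x_i/\abs{x}$. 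The correction coefficient $a_i:=\sigma\frac{x_i}{\abs{x}}\frac{\psi_\D}{\rho_\circ}$ is bounded with all $\rho_\D$-weighted derivatives bounded: the factor $\psi_\D/\rho_\circ\sim\rho_\D/\rho_\circ\le1$ supplies the decisive boundedness, while \eqref{eq:rhocirc:deriv}, the bound $\abs{D^\beta\psi_\D}\lesssim\rho_\D^{1-\abs{\beta}}$, and $\rho_\D\le\rho_\circ$ control the derivatives. Thus $a_i$ is a multiplier on $H^{\gamma-1}_{p,\Theta}(\D)$ by \autoref{lem:LotSpaces:properties}\ref{it:LotSpaces:multiplier}, so (granted $w\in H^{\gamma-1}_{p,\Theta}(\D)$) we have $\psi_\D D_{x_i}w\in H^{\gamma-1}_{p,\Theta}(\D)$ iff $\rho_\circ^{\sigma}\psi_\D D_{x_i}u\in H^{\gamma-1}_{p,\Theta}(\D)$, with equivalent norms. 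Chaining the three equivalences gives \ref{it:lift:1}$\iff$\ref{it:lift:PsiD}$\iff$\ref{it:lift:D} for $m=1$.

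Finally I would bootstrap to general $m$ by induction using only the $m=1$ form of \ref{it:lift:1}$\iff$\ref{it:lift:D}, namely that $v\in H^{\gamma'}_{p,\Theta',\theta'}(\D)$ iff $v\in H^{\gamma'-1}_{p,\Theta',\theta'}(\D)$ and $D_{x_i}v\in H^{\gamma'-1}_{p,\Theta'+p,\theta'+p}(\D)$ for $i=1,2$, which is valid for all real parameters. Assuming the statement for $m$, I apply it to $u$ and then apply the $m=1$ step to each $D^\alpha u$ with $\abs{\alpha}\le m$ at the shifted parameters $(\gamma-m,\Theta+\abs{\alpha}p,\theta+\abs{\alpha}p)$; since every multi-index $\beta$ with $\abs{\beta}\le m+1$ is either one of the $\alpha$ with $\abs{\alpha}\le m$ or of the form $\alpha+e_i$ for such an $\alpha$, the resulting conditions are precisely $D^\beta u\in H^{\gamma-(m+1)}_{p,\Theta+\abs{\beta}p,\theta+\abs{\beta}p}(\D)$ for all $\abs{\beta}\le m+1$, and the summed norms combine into the claimed equivalence. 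The only genuinely delicate point in the whole argument is the multiplier estimate for the commutator coefficient $a_i$ in the single step; everything else is the index shift, Leibniz bookkeeping, and a clean induction. I expect the boundedness of $\psi_\D/\rho_\circ$ (equivalently $\rho_\D\le\rho_\circ$, valid because the vertex lies on $\partial\D$) to be exactly what makes that estimate go through.
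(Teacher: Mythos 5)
Your proposal is correct and follows essentially the same route as the paper's proof: the equivalence \ref{it:lift:D}$\iff$\ref{it:lift:PsiD} via the index shift of \autoref{cor:pwmult:indexshift}\ref{it:indexshift}, the equivalence \ref{it:lift:PsiD}$\iff$\ref{it:lift:DPsi} via Leibniz' rule and pointwise-multiplier bounds for $\psi_\D^{-\abs{\beta}}D^{\alpha-\beta}(\psi_\D^{\abs{\alpha}})$, and the core case $m=1$ by passing to $w=\rho_\circ^{(\theta-\Theta)/p}u$ with \autoref{cor:Lot:doubleweight}, invoking the single-weight lifting of \autoref{lem:LotSpaces:properties}\ref{it:LotSpaces:lifting}, and absorbing exactly the same commutator coefficient $\sigma\,\tfrac{x_i}{\abs{x}}\,\tfrac{\psi_\D}{\rho_\circ}$ as a multiplier using \eqref{eq:rhocirc:deriv} and $\rho_\D\le\rho_\circ$. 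The only (minor, equally valid) difference is organizational: your induction on $m$ runs through formulation \ref{it:lift:D} applied to each $D^\alpha u$ at shifted parameters, which makes the inductive step commutator-free, whereas the paper's induction runs through \ref{it:lift:PsiD} and therefore needs an extra round of Leibniz bookkeeping in the step.
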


\begin{remark}
    Some comments are in order.
	\begin{enumerate}[label=(\textup{\roman*)}]
		\item The implication ``\ref{it:lift:1} $\Longrightarrow$ \ref{it:lift:D}'' in \autoref{thm:lifting} and the corresponding estimate especially imply that 
$D^\alpha\colon H^\gamma_{p,\Theta,\theta}(\D)\to H^{\gamma-\abs{\alpha}}_{p,\Theta+\abs{\alpha}p,\theta+\abs{\alpha}p}(\D)$ are bounded linear operators for each $\alpha\in\N_0^2$.  
		This has already been proven in~\cite[Lemma~2.5(v)]{KimLeeSeo2022b}. 
		\item 
		  For $m:=\gamma\in\N$,  
		assertion~\ref{it:lift:D} agrees with the representation of $H^{\gamma}_{p,\Theta,\theta}(\D)$ as weighted Sobolev space proven in \autoref{thm:coincidence} above. Furthermore,
		Condition~\ref{it:lift:DPsi} and the corresponding norm equivalence yield a similar representation: $H^m_{p,\Theta,\theta}(\D)$ consists of all $u\in L_{1,\loc}(\D)$ such that
		\[
		\sum_{\abs{\alpha}\leq m} \norm{ D^{\alpha} \big( \psi_\D^{\abs{\alpha}} u\big) \sep L_{p,\Theta,\theta}(\D)}^p<\infty.
		\]
Moreover,
		\[
		\norm{u \sep H^m_{p,\Theta,\theta}(\D)}
		\sim \ssgrklam{\sum_{\abs{\alpha}\leq m} \norm{ D^{\alpha} \big( \psi_\D^{\abs{\alpha}} u\big) \sep L_{p,\Theta,\theta}(\D)}^p }^{1/p},
		\qquad u\in H^m_{p,\Theta,\theta}(\D).
		\]
	\end{enumerate}
\end{remark}

\begin{proof}[Proof of \autoref{thm:lifting}]
Note that ``\ref{it:lift:D} $\Longleftrightarrow$ \ref{it:lift:PsiD}'' and the corresponding norm equivalence hold for all $m\in\N$ by \autoref{cor:pwmult:indexshift}\ref{it:indexshift}. To prove the other equivalences we argue as follows.

\emph{Step 1. }
We first prove ``\ref{it:lift:1} $\Longleftrightarrow$ \ref{it:lift:PsiD}'' and the corresponding norm equivalence by induction on $m\in\N$. We start with the base case $m=1$. Let $\zeta:=\rho_\circ^{(\theta-\Theta)/p}$. Then, by \autoref{cor:Lot:doubleweight}, $u\in H^\gamma_{p,\Theta,\theta}(\D)$ if, and only if, $\zeta u\in H^\gamma_{p,\Theta}(\D)$. 
Moreover, 
\begin{equation}\label{eq:norm:equiv:lifting:1}
    \norm{u \sep H^\gamma_{p,\Theta,\theta}(\D)}\sim \norm{\zeta u \sep H^\gamma_{p,\Theta}(\D)},
    \qquad u\in H^\gamma_{p,\Theta,\theta}(\D).
\end{equation}
By \autoref{lem:LotSpaces:properties}\ref{it:LotSpaces:lifting}, 
  we have  
$\zeta u\in H^\gamma_{p,\Theta}(\D)$ if, and only if, $\zeta u \in H^{\gamma-1}_{p,\Theta}(\D)$ and $\psi_\D D(\zeta u)\in H^{\gamma-1}_{p,\Theta}(\D)$. Moreover, taking into account~\eqref{eq:norm:equiv:lifting:1},
\begin{align*}
\norm{u \sep H^\gamma_{p,\Theta,\theta}(\D)}
&\sim
\norm{\zeta u \sep H^\gamma_{p,\Theta}(\D)}\\
&\sim
\norm{\zeta u \sep H^{\gamma-1}_{p,\Theta}(\D)}
+
\norm{\psi_\D D(\zeta u) \sep H^{\gamma-1}_{p,\Theta}(\D)}, 
\qquad u\in H^\gamma_{p,\Theta,\theta}(\D).
\end{align*}
Now integration   
by parts shows that
\[
\psi_\D D(\zeta u) 
= 
\zeta \psi_\D Du + \psi_\D D(\zeta) u
=
\zeta \psi_\D Du + \psi_\D \zeta^{-1} D(\zeta) \zeta u.
\]
Thus, the base case follows by means of \autoref{cor:Lot:doubleweight}\ref{it:pwmult:simple} from the fact that
\[
\abs{\psi_\D \zeta^{-1} D(\zeta)}_n^{(0)}<\infty \quad\text{for all } n\in\N
\]
which may be verified by using~\eqref{eq:rhocirc:deriv} and the fact that $\psi_\D\in\Regd(\D,\partial\D)$.

We move on to the induction step. Our induction hypothesis is that ``\ref{it:lift:1} $\Longleftrightarrow$ \ref{it:lift:PsiD}'' holds with $m=k$ for some $k\in\N$. This hypothesis and the base case show that $u\in H^\gamma_{p,\Theta,\theta}(\D)$ if, and only if, $\psi_\D^{\abs{\alpha}}D^\alpha u \in H^{\gamma-(k+1)}_{p,\Theta,\theta}(\D)$ and $\psi_\D^{\abs{\alpha}} D^\alpha(\psi_\D Du)\in H^{\gamma-(k+1)}_{p,\Theta,\theta}(\D)$ for all $\alpha\in\N_0^2$ with $\abs{\alpha}\leq m$ (and appropriate norm equivalences hold).
Thus, since by Leibniz' rule,
\[
    \psi_\D^{\abs{\alpha}} D^\alpha(\psi_\D Du)
    =
    \psi^{\abs{\alpha}+1} D^\alpha Du
    +
    \sum_{\beta<\alpha} 
    \binom{\alpha}{\beta}
    \psi^{\abs{\alpha-\beta}-1} D^{\alpha-\beta}(\psi)\cdot \psi^{\abs{\beta}+1}D^\beta Du
\]
and
\begin{equation}\label{eq:lifting:multiplier:2}
    \abs{\psi^{\abs{\alpha-\beta}-1} D^{\alpha-\beta}(\psi)}_n^{(0)}<\infty \quad\text{for all } n\in\N, 
\end{equation}
straightforward calculations show that ``\ref{it:lift:1} $\Longleftrightarrow$ \ref{it:lift:PsiD}'' and the corresponding norm equivalence hold for $m=k+1$. The fact that~\eqref{eq:lifting:multiplier:2} holds 
  is   verified by using $\psi_\D\in\Regd(\D,\partial\D)$ after suitable applications of the Leibniz rule and the generalized Fa\`a di Bruno formula~\cite[Corollary~2.10]{ConSav1996}.

\emph{Step 2. }
Finally, 
note that for all $\alpha\in\N_0^2$, by the Leibniz rule, 
it  holds  
\[
D^\alpha(\psi_\D^{\abs{\alpha}}u)
=
\psi_\D^{\abs{\alpha}} D^\alpha u
+
\sum_{\beta<\alpha} \binom{\alpha}{\beta} \psi^{-\abs{\beta}} D^{\alpha-\beta}(\psi_\D^{\abs{\alpha}})
\psi_\D^{\abs{\beta}}D^\beta u
\]
as well as  
\[
\abs{\psi^{-\abs{\beta}} D^{\alpha-\beta}(\psi_\D^{\abs{\alpha}})}_n^{(0)}<\infty
\quad\text{for all }n\in\N;
\]
the latter may be verified 
using Fa\`a di Bruno's formula  
and the fact that $\psi_\D\in\Regd(\D,\partial\D)$.
Thus, for all $m\in\N$, the equivalence ``\ref{it:lift:DPsi} $\Longleftrightarrow$ \ref{it:lift:PsiD}'' and the corresponding norm equivalence follow by standard arguments and \autoref{cor:pwmult:indexshift}\ref{it:pwmult:simple}.
\end{proof}
	
\subsection{Localization}
Let us now deduce a  
localization result which generalizes \autoref{prop:Lot_Local} for $\domain=\D$. 
\begin{theorem}
	\label{thm:Local}
	Let $1<p<\infty$ and $\gamma,\Theta,\theta\in\R$.
	Further let $\eta=(\eta_k)_{k\in\N}$ denote a collection of $C^\infty(\D)$-functions such that 
	$$
	\sup_{x\in\D} \sum_{k\in\N} \rho_\D(x)^{\abs{\alpha}} \abs{\partial^\alpha \eta_k(x)} 	\leq C_\alpha, \qquad \alpha\in\N_0^2.
	$$
	Then
	$$
	\sum_{k\in\N} \norm{\eta_k u \sep H^\gamma_{p,\Theta,\theta}(\D)}^p 
	\lesssim \norm{u \sep H^\gamma_{p,\Theta,\theta}(\D)}^p, \qquad u\in H^\gamma_{p,\Theta,\theta}(\D).
	$$
	If, in addition,
	$$
	\inf_{x\in\D} \sum_{k\in\N} \abs{\eta_k(x)}^p \geq \delta > 0,
	$$
	then 
	$$
	\norm{u \sep H^\gamma_{p,\Theta,\theta}(\D)}^p
	\lesssim \sum_{k\in\N} \norm{\eta_k u \sep H^\gamma_{p,\Theta,\theta}(\D)}^p, \qquad u\in H^\gamma_{p,\Theta,\theta}(\D).
	$$
\end{theorem}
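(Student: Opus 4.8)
The plan is to reduce the statement to its single-weight counterpart, \autoref{prop:Lot_Local} with $\domain=\D$, by transferring everything through the multiplication isomorphism furnished by \autoref{cor:Lot:doubleweight}. Writing $\zeta:=\rho_\circ^{(\theta-\Theta)/p}$, that corollary tells us that $M_\zeta\colon u\mapsto \zeta u$ is an isomorphism from $H^\gamma_{p,\Theta,\theta}(\D)$ onto $H^\gamma_{p,\Theta}(\D)$ with $\norm{u\sep H^\gamma_{p,\Theta,\theta}(\D)}\sim \norm{\zeta u\sep H^\gamma_{p,\Theta}(\D)}$, the implied constants being uniform over the whole space. The key elementary observation is that since $\zeta$ and each $\eta_k$ are scalar-valued functions, multiplication by them commutes, so that $\zeta\,(\eta_k u)=\eta_k\,(\zeta u)$ as distributions for every $k\in\N$.

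First I would apply \autoref{cor:Lot:doubleweight} termwise. Given $u\in H^\gamma_{p,\Theta,\theta}(\D)$, we have $v:=\zeta u\in H^\gamma_{p,\Theta}(\D)$, and applying the same corollary to each $\eta_k u\in H^\gamma_{p,\Theta,\theta}(\D)$ yields $\norm{\eta_k u\sep H^\gamma_{p,\Theta,\theta}(\D)}\sim \norm{\zeta\,\eta_k u\sep H^\gamma_{p,\Theta}(\D)}=\norm{\eta_k v\sep H^\gamma_{p,\Theta}(\D)}$ with constants independent of $k$. Summing the $p$-th powers gives
\[
\sum_{k\in\N}\norm{\eta_k u\sep H^\gamma_{p,\Theta,\theta}(\D)}^p
\sim
\sum_{k\in\N}\norm{\eta_k v\sep H^\gamma_{p,\Theta}(\D)}^p.
\]
Since $\eta_k\in C^\infty(\D)$, we have $\partial^\alpha\eta_k=D^\alpha\eta_k$, so the hypothesis on $\eta$ is exactly the derivative bound required by \autoref{prop:Lot_Local}. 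That proposition, applied to $v\in H^\gamma_{p,\Theta}(\D)$, then yields $\sum_{k}\norm{\eta_k v\sep H^\gamma_{p,\Theta}(\D)}^p\lesssim \norm{v\sep H^\gamma_{p,\Theta}(\D)}^p\sim \norm{u\sep H^\gamma_{p,\Theta,\theta}(\D)}^p$, and reassembling the equivalences proves the first inequality.

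For the second part I would note that the additional hypothesis $\inf_{x\in\D}\sum_{k}\abs{\eta_k(x)}^p\geq\delta>0$ coincides verbatim with the extra assumption in \autoref{prop:Lot_Local}. Applying the second conclusion of that proposition to $v=\zeta u$ gives $\norm{v\sep H^\gamma_{p,\Theta}(\D)}^p\lesssim \sum_{k}\norm{\eta_k v\sep H^\gamma_{p,\Theta}(\D)}^p$, and translating both sides back through \autoref{cor:Lot:doubleweight} as above yields the reverse inequality.

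There is no genuine obstacle here: the whole argument is a transfer through $M_\zeta$, and the only points requiring (minor) care are (i) that multiplication by the smooth functions $\zeta$ and $\eta_k$ commutes at the level of distributions; (ii) that the norm equivalence of \autoref{cor:Lot:doubleweight} has constants uniform in $k$, so that it may legitimately be applied inside the sum over $k$; and (iii) that the smoothness of the $\eta_k$ lets us identify $\partial^\alpha\eta_k$ with $D^\alpha\eta_k$, so that the hypotheses match those of \autoref{prop:Lot_Local} without any modification.
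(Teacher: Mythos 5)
Your proof is correct and follows essentially the same route as the paper: both reduce the claim to \autoref{prop:Lot_Local} via the isomorphism $u\mapsto\rho_\circ^{(\theta-\Theta)/p}u$ from \autoref{cor:Lot:doubleweight}, using that multiplication by $\rho_\circ^{(\theta-\Theta)/p}$ commutes with multiplication by the $\eta_k$. Your write-up is in fact slightly more careful than the paper's, making explicit the uniformity of the equivalence constants in $k$ and the identification of $\partial^\alpha\eta_k$ with $D^\alpha\eta_k$.
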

\begin{remark}
Note that in \autoref{thm:Local} we can replace $\N$ by any countable set. 
Moreover,
  let us stress that  
any $\eta:=(\eta_{k})_{k\in\Z}\in \mathscr{A}_{c}^{\textup{[L]}}(\D,\{0\})$ with  $c>1$ satisfies all assumptions from~\autoref{thm:Local} (with $\N$ replaced by $\Z$) and that
	\begin{align*}
    	\norm{u \sep H^\gamma_{p,\Theta,\theta}(\D)}^p
    	&\sim \sum_{k\in\Z} \norm{\eta_k u \sep H^\gamma_{p,\Theta,\theta}(\D)}^p 
    	\sim \sum_{k\in\Z} c^{k(\theta-\Theta)} \norm{\eta_k u \sep H^\gamma_{p,\Theta}(\D)}^p,  
     \quad u\in H^\gamma_{p,\Theta,\theta}(\D),
	\end{align*}
	see \autoref{rem:resolution}\ref{it:resolution:cover}   and \autoref{cor:Lot:doubleweight}. 
\end{remark}

\begin{proof}[Proof of \autoref{thm:Local}]
The proof is a direct consequence of \autoref{cor:Lot:doubleweight} and the corresponding localization assertion for the spaces $H^{\gamma}_{p,\Theta}(\domain)$ from  \autoref{prop:Lot_Local}. 
In particular,
\begin{align*}
    \sum_{k\in \mathbb{Z}} \norm{\eta_k u \sep H^{\gamma}_{p,\Theta,\theta}(\D)}^p
    &\sim \sum_{k\in \mathbb{Z}}\norm{\rho_\circ^{(\theta-\Theta)/p}\eta_k u \sep H^\gamma_{p,\Theta}(\D)}^p\\
    &\lesssim \norm{\rho_\circ^{(\theta-\Theta)/p} u \sep H^\gamma_{p,\Theta}(\D)}^p \\
    &\sim \norm{ u \sep H^{\gamma}_{p,\Theta,\theta}(\D) }^p
    ,  \qquad u\in H^\gamma_{p,\Theta,\theta}(\D).  
\end{align*}
Similar arguments for the second assertion complete the proof.
\end{proof}

\subsection{Embeddings and Hölder regularity}

In terms of embeddings of $H^\gamma_{p,\Theta,\theta}(\D)$ into spaces with smaller integrability, the following can be shown. 

\begin{theorem}\label{thm:right-embedding}
	Let $1<p_1 \leq p_0 < \infty$ as well as $\gamma,\Theta_0,\Theta_1,\theta_0,\theta_1\in\R$
	such that
	\begin{align}\label{eq:cond-emb}
		\Theta_1 - \frac{1}{p_1} > \Theta_0 - \frac{1}{p_0}
		\qquad \text{and} \qquad \theta_1>\theta_0.
	\end{align}
	Moreover, let $z\in\R^2$ and $R > 0$. 
	Then for all $u\in H^\gamma_{p_0,\Theta_0 p_0,\theta_0 p_0}(\D)$ with $\supp(u)\subset B_R(z)$ we have $u\in H^\gamma_{p_1,\Theta_1 p_1,\theta_1 p_1}(\D)$
	and
	$$
		\norm{u \sep  H^\gamma_{p_1,\Theta_1 p_1,\theta_1 p_1}(\D)} \lesssim \norm{u \sep  H^\gamma_{p_0,\Theta_0 p_0,\theta_0 p_0}(\D)}.
	$$
\end{theorem}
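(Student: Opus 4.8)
The plan is to reduce the statement to the boundedness of a single multiplication operator between the full spaces, and then to treat the smoothness parameter $\gamma$ by the familiar scheme: integers first, then interpolation, then duality. Fix a cutoff $\chi\in C_0^\infty(\R^2)$ with $\chi\equiv 1$ on a neighbourhood of $\overline{B_R(z)}$ and $\supp(\chi)\subset B_{2R}(z)$. Since $\chi u=u$ for every $u$ with $\supp(u)\subset B_R(z)$, it suffices to show that
\[
M_\chi\colon H^{\gamma}_{p_0,\Theta_0 p_0,\theta_0 p_0}(\D)\to H^{\gamma}_{p_1,\Theta_1 p_1,\theta_1 p_1}(\D),\qquad M_\chi v:=\chi v,
\]
is bounded; the claimed estimate then follows by restriction to the functions supported in $B_R(z)$. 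Working with the genuine operator $M_\chi$ rather than with the (unbounded) embedding is precisely what makes interpolation and duality applicable afterwards.

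\emph{Base case $\gamma\in\N_0$.} Here I would invoke the weighted-Sobolev description of \autoref{thm:coincidence}, expand $D^\alpha(\chi v)$ by Leibniz' rule, and use that $\rho_\D^{\abs{\beta}}\abs{D^\beta\chi}\lesssim 1$ on the bounded set $\supp(\chi)$ to reduce each summand to an integral of the form $\int_\D \abs{\rho_\D^{k}D^{\alpha-\beta}v}^{p_1}\,w_{\Theta_1 p_1,\theta_1 p_1}\,\dx$ with $k=\abs{\alpha-\beta}$. To each such integral I apply the generalized Hölder inequality with exponents $p_0/p_1$ and $s':=p_0/(p_0-p_1)$, absorbing the factor $\abs{\rho_\D^k D^{\alpha-\beta}v}^{p_1}w_{\Theta_0p_0,\theta_0p_0}^{p_1/p_0}$ which recombines into $\norm{v\sep H^\gamma_{p_0,\Theta_0p_0,\theta_0p_0}(\D)}^{p_1}$. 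The decisive point is that the leftover weight is $\rho_\D^{a}\rho_\circ^{b}$ with
\[
a=(\Theta_1-\Theta_0)p_1-2+\tfrac{2p_1}{p_0},\qquad b=\big[(\theta_1-\theta_0)-(\Theta_1-\Theta_0)\big]p_1,
\]
where all factors carrying the derivative order $k$ have been absorbed into the $p_0$-factor, so that $a,b$ are independent of $\alpha,\beta$. Hence the whole estimate hinges on the finiteness of $\int_{\D\cap\supp(\chi)}\rho_\D^{as'}\rho_\circ^{bs'}\,\dx$.

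To check this integrability I would split the bounded region $\D\cap\supp(\chi)$ into a part near the smooth boundary and a part near the corner. Near the smooth part of $\partial\D$ the factor $\rho_\circ$ is comparable to a positive constant, so integrability amounts to $as'>-1$; a direct computation gives $as'>-1\Leftrightarrow \Theta_1-\tfrac{1}{p_1}>\Theta_0-\tfrac{1}{p_0}$, the first condition in \eqref{eq:cond-emb}. Near the corner I would pass to polar coordinates via \autoref{lem:dist-polar}, writing $\rho_\circ=r$, $\rho_\D=r\sin(\mu(\phi))$ and $\dx=r\,\dr\,\mathrm d\phi$, so that the integrand becomes $r^{(a+b)s'+1}(\sin\mu(\phi))^{as'}$. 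The angular integral converges under the same condition $as'>-1$, while the radial integral converges iff $(a+b)s'>-2$; since $a+b=(\theta_1-\theta_0)p_1-2+\tfrac{2p_1}{p_0}$, this is equivalent to $\theta_1>\theta_0$, the second condition in \eqref{eq:cond-emb}. Thus both hypotheses of \eqref{eq:cond-emb} are used, and exactly, to guarantee finiteness, which proves the base case (with a constant allowed to depend on $R$ and $z$).

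\emph{Extension in $\gamma$.} For $\gamma\in[0,\infty)\setminus\N_0$ the operator $M_\chi$ is bounded at the integer levels $\floor{\gamma}$ and $\ceil{\gamma}$ by the base case, and \autoref{thm:interpolation} --- applied with only the smoothness parameter varying --- identifies the complex interpolation spaces at exponent $\vartheta=\gamma-\floor{\gamma}$ as $H^\gamma_{p_0,\Theta_0p_0,\theta_0p_0}(\D)$ and $H^\gamma_{p_1,\Theta_1p_1,\theta_1p_1}(\D)$; the interpolation property of $[\cdot,\cdot]_\vartheta$ then yields boundedness of $M_\chi$ for all $\gamma\ge 0$. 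For $\gamma<0$ I would dualise via \autoref{thm:dual}, exactly as in Step~3 of the proof of \autoref{thm:pw-mult_new}: since $M_\chi$ is self-adjoint for the pairing \eqref{eq:dualform:D}, boundedness of $M_\chi$ on the dual spaces $H^{-\gamma}_{p_1',(\Theta_1p_1)',(\theta_1p_1)'}(\D)\to H^{-\gamma}_{p_0',(\Theta_0p_0)',(\theta_0p_0)'}(\D)$ (a positive-smoothness case already settled) gives the bound for $\gamma<0$. One checks that the dual parameters $\widetilde\Theta_i=2-\Theta_i$, $\widetilde\theta_i=2-\theta_i$, $\widetilde p_i=p_i'$ again satisfy \eqref{eq:cond-emb} --- the ordering of the integrabilities and both strict inequalities are preserved --- so no new conditions arise. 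I expect the main obstacle to be the bookkeeping in the base case, namely verifying that $a,b$ are independent of the derivative order and matching $as'>-1$ and $(a+b)s'>-2$ to the two hypotheses, rather than the more routine interpolation and duality steps.
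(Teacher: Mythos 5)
Your proposal is correct and is essentially the paper's own argument: the paper likewise reduces the statement to multiplication by a smooth cutoff and invokes its multiplication theorem (\autoref{thm:pw-mult_new}, whose proof is exactly your Leibniz--H\"older/interpolation/duality scheme), so that the only real work is the polar-coordinate integrability check for the leftover weight, and your exponents $as'=(\Theta_1-\Theta_0)q-2$ and $bs'=\big[(\theta_1-\theta_0)-(\Theta_1-\Theta_0)\big]q$ coincide with the paper's $\sigma q$ and $\mu q$ and match the two conditions in~\eqref{eq:cond-emb} in the same way. One small patch: your H\"older exponent $s'=p_0/(p_0-p_1)$ is undefined when $p_1=p_0$; in that case either read the step as an $L_\infty$-bound on the (then bounded) leftover weight, or dispatch $p_1=p_0$ beforehand via the monotonicity results \autoref{lem:monotone} and \autoref{lem:monotone2}, as the paper does.
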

\begin{proof}
	W.l.o.g.\ we can assume that $B_R(z)\cap \D \neq\emptyset$ as well as $p_1<p_0$, as otherwise the assertion directly follows from the monotonicity statements w.r.t.\ the weight parameters proven in \autoref{lem:monotone} and \autoref{lem:monotone2}\ref{it:mon1},
	  respectively.  
	We choose $\epsilon>0$ and $\eta\in C^\infty(\R^d)$ such that $\eta\equiv 1$ on $B_R(z)$ and $\supp(\eta)\subset B_{R+\epsilon}(z)$. 
	If we can show that \autoref{thm:pw-mult_new} applies to $a:=\eta$, then this proves the claim since $M_\eta (u) = \eta u$ equals $u$ in $\mathscr{D}'(\D)$ as $\eta\equiv 1$ on $\supp(u)$.
	For this purpose, select $0\leq R_0 < R_1<\infty$ and $0\leq \Gamma < \Upsilon \leq \kappa_0$ such that
	$$
		B_{R+\epsilon}(z) \cap \D  \subset S:=\left\{x=\Phi(r,\phi)\in\D : \  (r,\phi)\in (R_0,R_1)\times (\Gamma,\Upsilon) \right\}.
	$$
	Due to \autoref{rem:pw} it then	suffices to check that for $1/q := 1/p_1-1/p_0$ and $\sigma := \Theta_1-\Theta_0-2/q$, as well as $\mu := (\theta_1-\theta_0)-(\Theta_1-\Theta_0)$ and arbitrarily fixed $m\in\N_0$ the expression
	$$
	\norm{\eta \sep H^m_{q,(\Theta_1-\Theta_0)q,(\theta_1-\theta_0)q}(\D)} 
    	= \sum_{\abs{\beta}\leq m} \norm{\rho_\circ^{\mu}\, \rho_\D^{\sigma+\abs{\beta}} \, D^\beta \eta \sep L_{q}(\D)} 
	    \lesssim \norm{\rho_\circ^{\mu}\,\rho_\D^{\sigma} \sep L_q(S)} 
	$$
	is finite. To see this, note that
	\autoref{lem:dist-polar} implies
	\begin{align*}
		\rho_\circ(x)^{\mu}\,\rho_\D(x)^{\sigma} 
		&= \rho_\circ(x)^{\theta_1-\theta_0}\, \left(\frac{\rho_\D(x)}{\rho_\circ(x)} \right)^{\Theta_1-\Theta_0} \, \rho_\D(x)^{-2/q} \\
		&\sim r^{\theta_1-\theta_0}\, \rho_\I(\phi)^{\Theta_1-\Theta_0} \, (r\, \rho_\I(\phi))^{-2/q}, \qquad x=\Phi(r,\phi)\in \D,
	\end{align*}
	where as usual $\I:=(0,\kappa_0)$.
	Hence, if $q=\infty$, i.e., $p_0=p_1$, then \eqref{eq:cond-emb} implies that $\rho_\circ^{\mu}\,\rho_\D^{\sigma}$ stays bounded on $S$. 
	If otherwise $q<\infty$, then \eqref{eq:cond-emb} also yields the finiteness of
	\begin{align*}
		\norm{\rho_\circ^{\mu}\,\rho_\D^{\sigma} \sep L_q(S)}^q 
		&\sim \int_{R_0}^{R_1} \int_{\Gamma}^{\Upsilon} \abs{r^{\theta_1-\theta_0-2/q}\, \rho_\I(\phi)^{\Theta_1-\Theta_0-2/q} }^q \, \d\phi\, r \d r  \\
		&= \int_{R_0}^{R_1} r^{(\theta_1-\theta_0)q-1} \d r \, \int_{\Gamma}^{\Upsilon} \rho_\I(\phi)^{(\Theta_1-\Theta_0)q-2} \d \phi,
	\end{align*}
	as then both exponents are strictly larger than $-1$.
\end{proof}

\begin{remark} We add some comments with regard to  condition~\eqref{eq:cond-emb}. 
\begin{itemize}
    \item[(i)] 
	If $p_1=p_0$, the proof shows that we can allow for equalities in condition~\eqref{eq:cond-emb}. 
	Moreover, note that if $p_1\leq p_0$ and $\dist{B_R(z)}{0}>0$, then the condition $\theta_1>\theta_0$ in \eqref{eq:cond-emb} can be dropped as we can choose $R_0>0$ in our proof.
	Finally, if even $\dist{B_R(z)}{\partial \D}>0$, then \eqref{eq:cond-emb} can be dropped completely as then also $\Gamma>0$ and $\Upsilon<\kappa_0$ might be chosen.
	
    \item[(ii)] In general,  however,  both inequalities in \eqref{eq:cond-emb} can not be relaxed (up to possible equality): 
	\begin{itemize}
	\item[$\bullet$] Assuming that $\Theta_1-1/p_1 < \sigma < \Theta_0 - 1/p_0$, it is possible to construct a function $g_{\sigma} \in H^{\gamma}_{p_0,\Theta_0 p_0}(\D)\setminus H^{\gamma}_{p_1,\Theta_1 p_1}(\D)$ with $\supp(g_{\sigma})$ concentrated around some arbitrarily fixed $z_0\in\partial\D$. 
    Choosing $z_0\neq 0$ we may assume that $\supp(g_\sigma) \subset B_R(0)\setminus B_r(0)$ for some $0<r<R<\infty$.
	But then from \autoref{lem:monotone2}  we deduce  $g_\sigma\in H^{\gamma}_{p_0,\Theta_0 p_0, \theta_0 p_0}(\D)\setminus H^{\gamma}_{p_1,\Theta_1 p_1,\theta_1 p_1}(\D)$ for arbitrary $\theta_0,\theta_1\in\R$.
	Therefore, in general $\Theta_1-1/p_1 \geq \Theta_0 - 1/p_0$ is necessary.
	
	\item[$\bullet$] We can proceed as follows to show that necessarily $\theta_1 \geq \theta_0$ if $\dist{B_R(z)}{0}=0$ and $B_R(z)\cap \D \neq \emptyset$: 
	In this case, there exist open cones 
	$$
	    C_i:=\Phi\big((0,R_i)\times (\Gamma_i, \Upsilon_i)\big) \subset B_R(z) \cap \D, \qquad i=0,1,
	$$
	with $0<R_0<R_1<\infty$ and $0<\Gamma_1<\Gamma_0<\Upsilon_0<\Upsilon_1<\kappa_0$, i.e., $C_0\subsetneq C_1$. 
	  Then we can further   
	define a smooth cut-off function $\eta\in\mathscr{D}'(\D)$ for $C_0$ supported on $C_1$ in polar coordinates $x=\Phi(r,\phi)$ as the tensor product of $a,b\in C^\infty(\R)$ with
	$$
	    a(r)= \begin{cases}
	    1, & \abs{r} \leq R_0,\\
	    0, & \abs{r} \geq R_1
	    \end{cases}
	    \qquad \text{and} \qquad
	    b(\phi) = \begin{cases}
	    1, & \phi \in (\Gamma_0,\Upsilon_0),\\
	    0, & \phi \notin (\Gamma_1,\Upsilon_1).
	    \end{cases}
	$$
Carefully estimating the derivatives of   this   $\eta$ shows that for $1< p <\infty$ and $\gamma,\Theta,\theta,\sigma\in\R$ the function $f_\sigma :=\eta \, \rho_\circ^{-\sigma}$ belongs to $H^\gamma_{p,\Theta p, \theta p}(\D)$ whenever $\sigma<\theta$ while it is not contained in this space if $\theta < \sigma$. 
	This shows that $\theta_1<\theta_0$ would give a contradiction in \autoref{thm:right-embedding}.
		\end{itemize}
	\end{itemize}
\end{remark}

The following result describes the growth and decay of functions in $H^\gamma_{p,\Theta,\theta}(\D)$ (and their derivatives) near and far away from the boundary~$\partial\D$,
  respectively.  
It has already been proven in~\cite[Lemma~2.5(vi)]{KimLeeSeo2022b} by means of the characterization from~\autoref{cor:Lot:doubleweight}. We refer to \autoref{sec:Notation} for the precise definition of the Hölder-Zygmund spaces $\mathcal{C}^s(\D)$. 
Recall that for domains $\domain\subset\R^d$ and closed sets $\emptyset\neq M\subset\partial\domain$, we write $\Regd(\domain,M)$ to denote the set of regularized distances to $M$ on $\domain$, see  \autoref{rem:resolution}\ref{it:resolution:construction}. 

\begin{theorem}\label{thm:growth}
    Let $1<p<\infty$ and $\gamma,\Theta,\theta\in\R$  such that $\gamma-2/p\geq m+s$ with  $m\in\N_0$  and $0<s\leq 1$. Moreover, let $\psi_\D\in \Regd(\D,\partial\D)$ and $\psi_\circ\in\Regd(\D,\{0\})$.  
    Then every $u \in H^\gamma_{p,\Theta p, \theta p}(\D)$ admits continuous partial derivatives $\partial^\alpha u$ up to order $m$ and 
    \begin{align*}
        \sum_{\abs{\alpha}\leq m} \norm{ \psi_\circ^{\theta-\Theta} \, \psi_\D^{\Theta+\abs{\alpha}}\, \partial^\alpha u \sep C(\D) }
        +
        \sum_{\abs{\alpha}= m}  &\left[ \psi_\circ^{\theta-\Theta} \, \psi_\D^{\Theta+s+\abs{\alpha}}\, \partial^\alpha u \right]_{\mathcal{C}^s(\D)}\\
        &\qquad\qquad\lesssim 
        \norm{u \sep H^\gamma_{p,\Theta p,\theta p}(\D)},
        \qquad u\in H^\gamma_{p,\Theta p,\theta p}(\D).
    \end{align*}
Moreover, if for some $R>0$ there holds $\supp(u) \subset B_R(0)$, then 
    $\psi_\circ^{\theta-\Theta} \, \psi_\D^{\Theta+s+\abs{\alpha}}\, \partial^\alpha u \in \mathcal{C}^{s}(\D)$
    for all $\alpha\in\N_0^2$ with $\abs{\alpha}=m$  and
    $$
   		 \norm{ \psi_\circ^{\theta-\Theta} \, \psi_\D^{\Theta+s+\abs{\alpha}}\, \partial^\alpha u \sep \mathcal{C}^s(\D)} 
   		 \lesssim R_1^s\, \norm{u \sep H^\gamma_{p,\Theta p,\theta p}(\D)}, \qquad u\in H^\gamma_{p,\Theta p,\theta p}(\D),
    $$
	where $R_1:=\max\{ R,\, 1\}$.
\end{theorem}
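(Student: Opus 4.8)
The plan is to reduce everything to the corresponding growth estimates for the single-weight spaces $H^\gamma_{p,\Theta p}(\D)$ from \autoref{prop:Lot_growth}, by means of the characterisation in \autoref{cor:Lot:doubleweight} (equivalently, the index shift in \autoref{cor:pwmult:indexshift}\ref{it:indexshift}). First I would reduce to the borderline smoothness: since $H^\gamma_{p,\Theta p,\theta p}(\D)\hookrightarrow H^{\gamma'}_{p,\Theta p,\theta p}(\D)$ for $\gamma'\leq\gamma$ by \autoref{prop:embeddings} (with $p_0=p_1=p$), and since the right-hand sides of all claimed estimates are $\norm{u\sep H^\gamma_{p,\Theta p,\theta p}(\D)}$, it suffices to treat $\gamma':=2/p+m+s$, for which $\gamma'-2/p=m+s$ with $m=\ceil{\gamma'-2/p}-1\in\N_0$, $0<s\leq1$, and $\gamma'>2/p$ — exactly the hypothesis of \autoref{prop:Lot_growth}. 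Setting $v:=\psi_\circ^{\theta-\Theta}u$, \autoref{cor:pwmult:indexshift}\ref{it:indexshift} gives $v\in H^{\gamma'}_{p,\Theta p}(\D)$ with $\norm{v\sep H^{\gamma'}_{p,\Theta p}(\D)}\sim\norm{u\sep H^{\gamma'}_{p,\Theta p,\theta p}(\D)}$, and \autoref{prop:Lot_growth} applied to $v$ on $\D\subset\R^2$ yields continuity of $D^\beta v$ up to order $m$ together with $\norm{\psi_\D^{\Theta+\abs{\beta}}D^\beta v\sep C(\D)}\lesssim\norm{v\sep H^{\gamma'}_{p,\Theta p}(\D)}$ and $[\psi_\D^{\Theta+s+\abs{\beta}}D^\beta v]_{\mathcal{C}^s(\D)}\lesssim\norm{v\sep H^{\gamma'}_{p,\Theta p}(\D)}$ for $\abs{\beta}\leq m$.

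Next I would transfer these bounds from $v$ to $u=\psi_\circ^{\Theta-\theta}v$. Since $\psi_\circ\in\Regd(\D,\{0\})$ is smooth on $\D$ and $v$ has continuous derivatives up to order $m$, so does $u$, and Leibniz' rule gives $\partial^\alpha u=\sum_{\beta\leq\alpha}\binom{\alpha}{\beta}\partial^{\alpha-\beta}(\psi_\circ^{\Theta-\theta})\,\partial^\beta v$. Multiplying by $\psi_\circ^{\theta-\Theta}\psi_\D^{\Theta+\abs{\alpha}}$ and splitting $\psi_\D^{\Theta+\abs{\alpha}}=\psi_\D^{\abs{\alpha-\beta}}\psi_\D^{\Theta+\abs{\beta}}$ writes the target as a sum of $g_{\alpha\beta}\cdot(\psi_\D^{\Theta+\abs{\beta}}\partial^\beta v)$ with $g_{\alpha\beta}:=\psi_\circ^{\theta-\Theta}\,\partial^{\alpha-\beta}(\psi_\circ^{\Theta-\theta})\,\psi_\D^{\abs{\alpha-\beta}}$. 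From the regularised-distance estimate $\abs{\partial^{\alpha-\beta}(\psi_\circ^{\Theta-\theta})}\lesssim\psi_\circ^{\Theta-\theta-\abs{\alpha-\beta}}$ (generalised Faà di Bruno formula, as in \eqref{eq:rhocirc:deriv}) together with $\psi_\D\sim\rho_\D\leq\rho_\circ\sim\psi_\circ$ one obtains $\abs{g_{\alpha\beta}}\lesssim(\psi_\D/\psi_\circ)^{\abs{\alpha-\beta}}\lesssim1$ uniformly on $\D$. Taking suprema then yields the claimed $C(\D)$-bounds for $\abs{\alpha}\leq m$.

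For the Hölder seminorms ($\abs{\alpha}=m$) I would argue analogously, now splitting $\psi_\D^{\Theta+s+m}=\psi_\D^{\abs{\alpha-\beta}}\psi_\D^{\Theta+s+\abs{\beta}}$ so that each summand is $g_{\alpha\beta}\cdot h_\beta$ with $h_\beta:=\psi_\D^{\Theta+s+\abs{\beta}}\partial^\beta v$. The diagonal term $\beta=\alpha$ is harmless: here $g_{\alpha\alpha}\equiv1$, so the summand equals $h_\alpha$, whose seminorm is controlled directly by \autoref{prop:Lot_growth}. The off-diagonal terms ($\abs{\alpha-\beta}\geq1$) are the main obstacle, since the naive product rule $[g_{\alpha\beta}h_\beta]_{\mathcal{C}^s}\leq\norm{g_{\alpha\beta}}_\infty[h_\beta]_{\mathcal{C}^s}+\norm{h_\beta}_\infty[g_{\alpha\beta}]_{\mathcal{C}^s}$ fails as stated: $\norm{h_\beta}_\infty=\infty$ in general, as the weight $\psi_\D^s$ grows at infinity. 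I expect this to be the technical heart of the proof. I would handle it by exploiting the dyadic/scaling structure underlying the very definition of $H^\gamma_{p,\Theta,\theta}(\D)$: for two points lying in comparable level sets $\D^{[\nu]}_{e,k_0}(\{0\})$ I rescale by the common scale $\sim e^{\nu}$, on which the rescaled coefficients $g_{\alpha\beta}(e^\nu\cdot)$ and their first derivatives are uniformly bounded (the factor $(\psi_\D/\psi_\circ)^{\abs{\alpha-\beta}}$ being scale invariant), so that the local product estimate on $\R^2$ applies uniformly in $\nu$; for points in far-apart level sets the difference quotient is instead dominated using the already-established $C(\D)$-bounds for $\psi_\circ^{\theta-\Theta}\psi_\D^{\Theta+\abs{\alpha}}\partial^\alpha u$ together with $\abs{x-y}\gtrsim\max(\rho_\circ(x),\rho_\circ(y))$ and $0<s\leq1$. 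Finite overlap of the level sets (\autoref{rem:resolution}\ref{it:resolution:cover}) then lets me take suprema to recover the global seminorm bound.

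Finally, under the support hypothesis $\supp(u)\subset B_R(0)$ I would note that $\supp(v)\subset\supp(u)\subset B_R(0)$ and that $\rho_\D\leq\rho_\circ=\abs{\cdot}\leq R$ there, so the second (supported) part of \autoref{prop:Lot_growth} applies to $v$ and gives $\norm{\psi_\D^{\Theta+s+\abs{\beta}}D^\beta v\sep\mathcal{C}^s(\D)}\lesssim R_1^s\norm{v\sep H^{\gamma'}_{p,\Theta p}(\D)}$. On the support the extra factor $\psi_\D^s\leq R^s$ is bounded, so combining the first-part $C$-bound (multiplied by $R^s$) with the seminorm bound from the previous paragraph upgrades the estimate to the full $\mathcal{C}^s(\D)$-norm with the stated factor $R_1^s=\max\{R,1\}^s$. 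Transferring to $u$ via the same Leibniz decomposition — now with all factors bounded on the support, so that the ordinary product rule already suffices and the obstacle of the previous paragraph disappears — completes the proof.
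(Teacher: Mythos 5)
Your architecture is the one the paper itself endorses: \autoref{thm:growth} is not proved in the paper but attributed to \cite[Lemma~2.5(vi)]{KimLeeSeo2022b}, ``by means of the characterization from \autoref{cor:Lot:doubleweight}'' --- which is exactly your substitution $v:=\psi_\circ^{\theta-\Theta}u\in H^{\gamma'}_{p,\Theta p}(\D)$ followed by \autoref{prop:Lot_growth}. Your reduction to the borderline smoothness $\gamma'=2/p+m+s$, the index shift via \autoref{cor:pwmult:indexshift}, the Leibniz decomposition with $\abs{g_{\alpha\beta}}\lesssim(\psi_\D/\psi_\circ)^{\abs{\alpha-\beta}}\lesssim 1$, the resulting $C(\D)$-bounds, and the treatment of far pairs via $\abs{x-y}\gtrsim\max\{\rho_\circ(x),\rho_\circ(y)\}$ are all correct; so is the assembly of the supported case from the $C$-bound multiplied by $R^s$ plus the seminorm bound.

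The gap sits in the ``comparable level sets'' half of your seminorm argument. There you conclude that the local product estimate applies because $g_{\alpha\beta}(e^\nu\cdot)$ and its first derivatives are uniformly bounded. But bounded gradients control H\"older quotients only along paths \emph{inside} the set, and for $\kappa>\pi$ the rescaled level set $\D^{[0]}_{e,k_0}(\{0\})$ (indeed $\D$ itself) is not convex: two points of comparable modulus lying close to the two different boundary rays can satisfy $\abs{\tilde x-\tilde y}\ll 1$ while every path joining them inside $\D$ must sweep the whole angle or pass near the vertex, hence has length $\gtrsim 1$. So no bound on $[\,g_{\alpha\beta}(e^\nu\cdot)\,]_{\mathcal{C}^s}$ follows from the gradient bound, and your near/far dichotomy leaves exactly these pairs uncovered. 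The repair is cheap and uses estimates you already derived: if the segment $[x,y]$ leaves $\D$, then $\rho_\D(x),\rho_\D(y)\leq\abs{x-y}$, and for $\beta<\alpha$ the pointwise bounds $\abs{g_{\alpha\beta}}\lesssim (\psi_\D/\psi_\circ)^{\abs{\alpha-\beta}}\leq C$ and $\abs{h_\beta}\lesssim\psi_\D^{s}\,\norm{v\sep H^{\gamma'}_{p,\Theta p}(\D)}$ give $\abs{(g_{\alpha\beta}h_\beta)(x)}+\abs{(g_{\alpha\beta}h_\beta)(y)}\lesssim\abs{x-y}^s\norm{v\sep H^{\gamma'}_{p,\Theta p}(\D)}$ directly, while the diagonal term $h_\alpha$ needs no product argument at all. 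Two further points. First, for $s=1$ the seminorm is the Zygmund seminorm, and the product inequality you quote ($[gh]\leq\norm{g}_\infty[h]+[g]\norm{h}_\infty$) is not valid in that form: the second difference of a product produces a cross term of first differences, which again has to be absorbed using the pointwise bound on $h_\beta$. Second, your closing sentence for the supported case --- that ``the ordinary product rule already suffices'' because all factors are bounded --- is false in general: compact support does not make $[\,g_{\alpha\beta}\,]_{\mathcal{C}^s(\D)}$ finite, since for $\theta\neq\Theta$ the function $g_{\alpha\beta}$ oscillates by an amount of order one over scales of order $\epsilon$ near the vertex ($\abs{Dg_{\alpha\beta}}\sim\psi_\circ^{-1}$ there), so the vertex obstruction persists regardless of support. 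Fortunately that sentence is redundant: your preceding sentence (first-part $C$-bound times $R^s$ combined with the already-established seminorm bound) finishes the supported case on its own.
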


\subsection{Characterization via polar coordinates}\label{sec:polar}
We now prove a characterization of the spaces $H^\gamma_{p,\Theta,\theta}(\D)$, $\gamma\in\N_0$, by means of polar coordinates that we shall  
use for the analysis of the Poisson equation in the   subsequent \autoref{sec:Poisson}.  
Recall that by $\Phi \colon (0,\infty)\times [0,2\pi)\to\R^2\setminus\{0\}$ we denote the transformation of polar coordinates into Cartesian coordinates which is a $C^\infty$ diffeomorphism from $\widetilde{\D}:=(0,\infty)\times\I$ onto $\D$, where $\I:=(0,\kappa)$.
In order to obtain a characterization of $H^\gamma_{p,\Theta,\theta}(\D)$, we will have to move from derivatives w.r.t.\  
Cartesian coordinates to derivatives in  
polar coordinates. In this context, the rotation matrices
\begin{equation}\label{matrix-A}
	A:=A(\phi):=\begin{pmatrix}
		\cos\phi & -\sin \phi \\
		\sin\phi & \cos\phi
	\end{pmatrix},
	\qquad \phi\in\I, 
\end{equation}
often appear naturally. 
Auxiliary results concerning switching from one system of coordinates to the other are collected in \autoref{app:polar:coord}.
Note in particular that if $\psi_\I\in \Regd(\I,\partial\I)$, then
\begin{align}\label{eq:reg_distances}
    \psi_\D(x)
    := r\,\psi_\I(\phi),
    \qquad x=\Phi(r,\phi)\in \D,
\end{align}
defines a regularized distance to $\partial\D$ on $\D$, i.e., $\psi_\D\in\Regd(\D,\partial\D)$. This follows by direct computation using \autoref{lem:higherorder}. 

\begin{defi}
    For $\gamma\in\N_0$, $1<p<\infty$, and $\Theta,\theta\in\R$, let
    \begin{align*}
        P^{\gamma}_{p,\Theta,\theta}(\widetilde{\D}) 
        := \left\{ \widetilde{u} \in L_{1,\loc}(\widetilde{\D}) :\  \norm{\widetilde{u} \sepb P^{\gamma}_{p,\Theta,\theta}(\widetilde{\D})} < \infty\right\},
	\end{align*}
	where
    \begin{align*}
        \norm{\widetilde{u} \sepb P^{\gamma}_{p,\Theta,\theta}(\widetilde{\D})} 
        := \left( \int_0^\infty r^{\theta-1} \sum_{j=0}^\gamma \norm{ ( r D_r )^j \widetilde{u}(r,\cdot) \sep H^{\gamma-j}_{p,\Theta-1+jp}(\I)}^p \d r \right)^{1/p}. 
	\end{align*}
\end{defi}
Based on this, our characterization reads as follows:

\begin{theorem}\label{thm:polar}
    Let $\gamma\in\N_0$, $1<p<\infty$, and $\Theta,\theta\in\R$.
    Then for $u\in \mathscr{D}'(\D)$ there holds $u\in H^{\gamma}_{p,\Theta,\theta}(\D)$ if, and only if, $\widetilde{u}=u\circ\Phi \in P^{\gamma}_{p,\Theta,\theta}(\widetilde{\D})$. Moreover,
    $$
        H^{\gamma}_{p,\Theta,\theta}(\D) \ni u \mapsto \norm{\widetilde{u} \sepb P^{\gamma}_{p,\Theta,\theta}(\widetilde{\D})}
    $$
    defines an equivalent norm in $H^{\gamma}_{p,\Theta,\theta}(\D)$. 
In particular,
\begin{align*}
    T_\Phi\colon H^\gamma_{p,\Theta,\theta}(\D)\to P^\gamma_{p,\Theta,\theta}(\widetilde{\D}),\qquad
    u \mapsto T_\Phi u := u\circ \Phi,
\end{align*}
defines an isomorphism.
\end{theorem}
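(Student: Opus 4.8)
The plan is to establish the norm equivalence directly for integer $\gamma$ and to deduce the isomorphism statement from it. First I would use \autoref{thm:coincidence} to replace $\norm{u\sep H^\gamma_{p,\Theta,\theta}(\D)}$ by the equivalent quantity $\normmod{u\sep H^\gamma_{p,\Theta,\theta}(\D)}=\big(\sum_{\abs{\alpha}\le\gamma}\int_\D\abs{\rho_\D^{\abs{\alpha}}D^\alpha u}^p\,w_{\Theta,\theta}\,\dx\big)^{1/p}$. Since $\Phi\colon\widetilde{\D}\to\D$ is a $C^\infty$-diffeomorphism, $T_\Phi\colon u\mapsto u\circ\Phi$ is a linear bijection of $\mathscr{D}'(\D)$ onto $\mathscr{D}'(\widetilde{\D})$ (with inverse $\widetilde u\mapsto\widetilde u\circ\Phi^{-1}$); hence it suffices to prove $\norm{\widetilde u\sepb P^\gamma_{p,\Theta,\theta}(\widetilde{\D})}\sim\normmod{u\sep H^\gamma_{p,\Theta,\theta}(\D)}$, after which bijectivity together with this equivalence yields that $T_\Phi$ is an isomorphism in the sense of the Notation section.

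Next I would transport the integral to $\widetilde{\D}$. By \autoref{lem:dist-polar} one has $\rho_\circ(\Phi(r,\phi))=r$ and $\rho_\D(\Phi(r,\phi))=r\sin(\mu(\phi))\sim r\,\psi_\I(\phi)$, so $w_{\Theta,\theta}(\Phi(r,\phi))\sim r^{\theta-2}\psi_\I(\phi)^{\Theta-2}$, and together with the Jacobian $r$ of polar coordinates this turns the weight-times-measure into $r^{\theta-1}\psi_\I^{\Theta-2}\,\d\phi\,\d r$. To handle the derivatives I invoke the coordinate-change formulas collected in \autoref{app:polar:coord}: by \autoref{lem:higherorder}, for $\widetilde u=u\circ\Phi$ each Cartesian derivative of order $k=\abs{\alpha}$ can be written as $D^\alpha u=r^{-k}L_\alpha\widetilde u$, where $L_\alpha=\sum_{i+j\le k}c^\alpha_{ij}(\phi)\,D_\phi^i(rD_r)^j$ is a differential operator of order at most $k$ in $(rD_r,D_\phi)$ with bounded trigonometric coefficients $c^\alpha_{ij}$ on $\I$. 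Using $\rho_\D\sim r\psi_\I$ this gives the pointwise equivalence $\abs{\rho_\D^{\abs{\alpha}}D^\alpha u}\sim\abs{\psi_\I^{\abs{\alpha}}L_\alpha\widetilde u}$, whence
\[
\normmod{u\sep H^\gamma_{p,\Theta,\theta}(\D)}^p\sim\int_0^\infty\!\!\int_\I r^{\theta-1}\sum_{\abs{\alpha}\le\gamma}\abs{\psi_\I^{\abs{\alpha}}L_\alpha\widetilde u}^p\,\psi_\I^{\Theta-2}\,\d\phi\,\d r.
\]

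On the other hand I would unfold the $P^\gamma$-norm. For integer order the one-dimensional weighted Sobolev spaces on $\I$ are described by \autoref{lem:LotSpaces:properties}\ref{it:LotSpaces:N} (with $d=1$ and $\rho_\I\sim\psi_\I$), so that
\[
\norm{(rD_r)^j\widetilde u(r,\cdot)\sep H^{\gamma-j}_{p,\Theta-1+jp}(\I)}^p\sim\sum_{i=0}^{\gamma-j}\int_\I\abs{\psi_\I^{i+j}D_\phi^i(rD_r)^j\widetilde u}^p\,\psi_\I^{\Theta-2}\,\d\phi,
\]
and summing over $j$ yields
\[
\norm{\widetilde u\sepb P^\gamma_{p,\Theta,\theta}(\widetilde{\D})}^p\sim\int_0^\infty\!\!\int_\I r^{\theta-1}\sum_{i+j\le\gamma}\abs{\psi_\I^{i+j}D_\phi^i(rD_r)^j\widetilde u}^p\,\psi_\I^{\Theta-2}\,\d\phi\,\d r.
\]
Comparing the two displays, the theorem reduces to the pointwise equivalence $\sum_{\abs{\alpha}\le\gamma}\abs{\psi_\I^{\abs{\alpha}}L_\alpha\widetilde u}^p\sim\sum_{i+j\le\gamma}\abs{\psi_\I^{i+j}D_\phi^i(rD_r)^j\widetilde u}^p$ on $\widetilde{\D}$.

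I expect this last equivalence to be the technical core. The bound ``$\lesssim$'' is immediate: since $0<\psi_\I\le1$ on $\I$, we have $\psi_\I^{\abs{\alpha}}\le\psi_\I^{i+j}$ whenever $\abs{\alpha}\ge i+j$, so each weighted $L_\alpha\widetilde u$ is dominated by the weighted polar monomials. For the converse I would induct on the total order $N=i+j$, exploiting the rotation structure from \eqref{matrix-A}. The first-order relation reads $\big(rD_r\widetilde u,\,D_\phi\widetilde u\big)^T=r\,A(\phi)^{-1}\,(Du)\circ\Phi$ with $A(\phi)\in SO(2)$, so $A^{-1}=A^T$ is bounded uniformly in $\phi$ and the case $N=1$ follows. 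For $N\ge2$ the top-order part of $\{r^N D^\alpha\}_{\abs{\alpha}=N}$ has symbol $\{(A(\phi)\xi)^\alpha\}_{\abs{\alpha}=N}$ in the variables $\xi=(rD_r,D_\phi)$; as $A(\phi)$ is an invertible linear change lying in the compact group $SO(2)$, the induced change of basis on homogeneous degree-$N$ polynomials is invertible with coefficients bounded uniformly in $\phi$. Hence every top-order monomial $D_\phi^i(rD_r)^j$ with $i+j=N$ is a bounded-coefficient combination of $\{L_\alpha\widetilde u\}_{\abs{\alpha}=N}$ modulo operators of order $<N$; these lower-order corrections are controlled by the induction hypothesis, the surplus weight $\psi_\I^N\le\psi_\I^{i'+j'}$ for $i'+j'<N$ guaranteeing that the powers of $\psi_\I$ match. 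The uniform-in-$\phi$ invertibility of this symbol change, rather than the routine weight bookkeeping and change of variables (which rest on \autoref{thm:coincidence}, \autoref{lem:dist-polar}, and \autoref{lem:higherorder}), is where the main work lies.
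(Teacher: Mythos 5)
Your proposal is correct, but it follows a genuinely different route from the paper's proof. The paper argues by induction on the smoothness parameter $\gamma$: its base case $\gamma=0$ is the same change-of-variables computation you perform, but its induction step never touches higher-order derivative formulas at all. Instead, it applies the induction hypothesis to $\psi_\D\, Du$, uses only the first-order identity from \autoref{lem:FirstDerivative} in the form $(\psi_\D\, Du)\circ\Phi=\psi_\I\, A\,\bigl(rD_r,\,D_\phi\bigr)^T\widetilde{u}$, removes the rotation matrix $A(\phi)$ via the norm equivalence of \autoref{lem:orthogonality} in $H^{\gamma-j}_{p,\Theta-1+jp}(\I)$, and reassembles everything with the lifting results (\autoref{thm:lifting} with $m=1$ and \autoref{lem:LotSpaces:properties}\ref{it:LotSpaces:lifting}). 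You instead unfold both norms into explicit weighted integrals (via \autoref{thm:coincidence} and \autoref{lem:LotSpaces:properties}\ref{it:LotSpaces:N} applied on $\I$) and reduce the theorem to a pointwise, operator-algebraic equivalence, proved by induction on the differential order $N$ using \autoref{lem:higherorder} and the uniform invertibility of the change of basis that $A(\phi)$ induces on homogeneous degree-$N$ polynomials in $(rD_r,D_\phi)$ --- a lemma the paper invokes only to check that $r\psi_\I(\phi)$ is a regularized distance. The paper's route buys a clean induction in which all higher-order combinatorics is absorbed into previously established functional-analytic machinery, so no symbol calculus or commutator bookkeeping is needed; your route buys independence from \autoref{thm:lifting} and \autoref{lem:orthogonality}, at the price of verifying that the lower-order corrections in your decomposition have coefficients bounded on $\I$ (true, as they are trigonometric polynomials and derivatives thereof) and that the surplus weight $\psi_\I^{N-(i'+j')}\le 1$ absorbs them --- details you correctly flag and which do close. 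One point to make explicit in a full write-up: the pointwise inequalities presuppose that all derivatives involved are locally integrable functions rather than mere distributions; as in the paper, this regularity transfer between the two coordinate systems is exactly what the ``in particular'' part of \autoref{lem:FirstDerivative} provides, and it should be stated before the a.e.\ estimates are used.
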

\begin{proof}
    First note that for $u\in \mathscr{D}'(\D)$ \autoref{lem:FirstDerivative} ensures that whenever $\norm{u\sep H^{\gamma}_{p,\Theta,\theta}(\D)}$ is well-defined, so is $\norm{u \circ \Phi \sepb P^{\gamma}_{p,\Theta,\theta}(\widetilde{\D})}$ and vice versa, in the sense that all distributional derivatives involved are regular according to \autoref{thm:coincidence} and \autoref{lem:LotSpaces:properties}\ref{it:LotSpaces:N}.
    It hence suffices to show that $\norm{u\sep H^{\gamma}_{p,\Theta,\theta}(\D)} \sim \norm{u\circ\Phi \sepb P^{\gamma}_{p,\Theta,\theta}(\widetilde{\D})}$ for $u\in L_{1,\loc}(\D)$ with $D^\alpha_x u\in L_{1,\loc}(\D)$ for all $\abs{\alpha}\leq\gamma$ (obviously, $\norm{\cdot \sepb P^{\gamma}_{p,\Theta,\theta}(\widetilde{\D})}$ is a norm).
    To prove this, we use mathematical induction on $\gamma\in\N_0$.
    We start with $\gamma=0$. In this case, by \autoref{thm:coincidence}, with $\mu$ as in~\autoref{lem:dist-polar},
    \begin{align*}
        \norm{u \sep H^0_{p,\Theta,\theta}(\D)}^p
        &\sim \int_\D \abs{u(x)}^p \rho_\circ(x)^{\theta-2}\sgrklam{\frac{\rho_\D(x)}{\rho_\circ(x)}}^{\Theta-2}\,\dx\\
        &= \int_0^\infty \int_\I \abs{ u(\Phi(r,\phi)) }^p r^{\theta-2} \, \sin(\mu(\phi))^{\Theta-2} \d \phi \,r \d r \\ 
        &\sim \int_0^\infty r^{\theta-1} \norm{ u(\Phi(r,\cdot)) \sep H^{0}_{p,\Theta-1}(\I)}^p \d r  
        = \norm{\widetilde{u} \sepb P^{\gamma}_{p,\Theta,\theta}(\widetilde{\D})}^p,
        \quad u\in L_{1,\loc}(\D).
    \end{align*}
    
    To verify the induction step $\gamma\mapsto\gamma+1$, we assume the assertion holds for some $\gamma\in\N_0$. 
    Let $\psi_\D$ and $\psi_\I$ be the regularized distances from \eqref{eq:reg_distances}. 
    Then the induction hypothesis yields
    \begin{align*}
        \norm{\psi_\D\, Du \sep H^{\gamma}_{p,\Theta,\theta}(\D)}^p
        \sim \int_0^\infty r^{\theta-1} \sum_{j=0}^{\gamma} \norm{ (r D_r)^j \big[(\psi_\D\, Du)\circ \Phi\big](r,\cdot) \sep H^{\gamma-j}_{p,\Theta-1+jp}(\I)}^p \d r
	\end{align*}
    with constants that do not depend on $u$. 
    Therein, by \autoref{lem:FirstDerivative} as well as \autoref{lem:orthogonality} (with $M:=A$ from~\eqref{matrix-A}), and \autoref{lem:LotSpaces:properties}\ref{it:LotSpaces:indexshift}, the inner norms satisfy
    \begin{align*}
        &\norm{ (r D_r)^j \big[(\psi_\D\, Du)\circ \Phi\big](r,\cdot) \sep  H^{\gamma-j}_{p,\Theta-1+jp}(\I)} \\
        &\quad = \norm{ (r D_r)^j \left[\psi_\I\, A\, \begin{pmatrix}
			(r D_r)  \\ 
			D_\phi 
		\end{pmatrix} (u\circ\Phi) \right](r,\cdot) \sep H^{\gamma-j}_{p,\Theta-1+jp}(\I)} \\
        &\quad = \norm{ A \left[\begin{pmatrix}
            \psi_\I\,(r D_r)^{j+1}  \\ 
            \psi_\I\,(r D_r)^j D_\phi 
        \end{pmatrix} \widetilde{u}\right] (r,\cdot) \sep H^{\gamma-j}_{p,\Theta-1+jp}(\I)}\\
        &\quad
        \sim \norm{\psi_\I \begin{pmatrix}
             (r D_r)^{j+1} \widetilde{u}\\ 
            D_\phi (r D_r)^j \widetilde{u}
        \end{pmatrix} (r,\cdot) \sep H^{\gamma-j}_{p,\Theta-1+jp}(\I)} \\
        &\quad
        \sim \norm{ \psi_\I \eklam{D_\phi (r D_r)^j \widetilde{u}}(r,\cdot) \sep H^{\gamma-j}_{p,\Theta-1+jp}(\I)} + \norm{ \eklam{(r D_r)^{j+1} \widetilde{u}}(r,\cdot) \sep H^{\gamma+1-(j+1)}_{p,\Theta-1+(j+1)p}(\I)}
    \end{align*}
    with constants that do neither depend on $u$ nor on $r$.
    Thus, by \autoref{thm:lifting} (with $m=1$), the induction hypothesis, and \autoref{lem:LotSpaces:properties}\ref{it:LotSpaces:lifting} we have
    \begin{align*}
        \norm{u \sep H^{\gamma+1}_{p,\Theta,\theta}(\D)}^p
        &\sim \norm{u \sep H^{\gamma}_{p,\Theta,\theta}(\D)}^p + \norm{\psi_\D\, Du \sep H^{\gamma}_{p,\Theta,\theta}(\D)}^p \\
        &\sim \int_0^\infty r^{\theta-1} \sum_{j=0}^{\gamma} \norm{ \eklam{(r D_r)^j \widetilde{u}}(r,\cdot) \sep H^{\gamma-j}_{p,\Theta-1+jp}(\I)}^p \d r \\
        &\qquad + \int_0^\infty r^{\theta-1} \sum_{j=0}^{\gamma} \norm{ \psi_\I\eklam{D_\phi (r D_r)^j \widetilde{u}}(r,\cdot) \sep H^{\gamma-j}_{p,\Theta-1+jp}(\I)} \d r\\		
        &\qquad + \int_0^\infty r^{\theta-1} \sum_{k=1}^{\gamma+1} \norm{\eklam{(r D_r)^{k} \widetilde{u}} (r,\cdot) \sep H^{\gamma+1-k}_{p,\Theta-1+kp}(\I)} \d r \\
        &\sim \int_0^\infty r^{\theta-1} \sum_{j=0}^{\gamma+1} \norm{ \eklam{(r D_r)^j \widetilde{u}}(r,\cdot) \sep H_{p,\Theta-1+jp}^{\gamma+1-j}(\I)}^p \d r 
        = \norm{\widetilde{u} \sepb P^{\gamma+1}_{p,\Theta,\theta}(\widetilde{\D})}
    \end{align*}
    for all $u\in L_{1,\loc}(\D)$ with $D^\alpha_x u\in L_{1,\loc}(\D)$ for all $\abs{\alpha}\leq\gamma$ and the constants that do not depend on $u$. 
\end{proof}

\subsection{Characterization via Mellin transform}

Finally, we present a characterization of the spaces $P^\gamma_{2,\Theta,\theta}(\widetilde{\D})$  by means of Mellin transforms, which, due to \autoref{thm:polar}, will also provide us with a new characterization of $H^\gamma_{2,\Theta,\theta}(\D)$. It will play a central role in our existence and uniqueness statement in \autoref{sec:Poisson}. Recall that for any domain $\domain\subset\real^d$, we write $L_{p,\Theta}(\domain)=L_p(\domain,\B(\domain),\rho_\domain^{\Theta-d}\lambda^d;\C)$, see \autoref{def:Lot_spaces} and \autoref{lem:LotSpaces:properties}.

For test functions $u\in C^\infty_0(\real_+)$, the Mellin transform is defined by
\beq \label{mellin-1}
(\mathcal{M}u)(\lambda)
:=
(\mathcal{M}_{r \rightarrow \lambda}u)(\lambda)
:=
\int_0^{\infty}r^{-\lambda-1}u(r)\d r, \qquad \lambda\in\C.
\eeq 
In the following lemma we list some basic properties of this transform, cf.~\cite[Lemma~3.3.6]{MazRos2010}, see also~\cite{ButJan1999}. For $c\in\real$ we write $\Gamma_c:=\ggklam{z\in \C\,\colon \mathrm{Re}(z)=c}$ and 
\[
Y^2_c:=\ggklam{u\colon \Gamma_c\to\C\,\sep\ \grklam{t\mapsto u(c+it)}\in L_2(\real)},
\]
endowed with the norm
\[
\norm{u\sep Y^2_c}:=\ssgrklam{\frac{1}{2\pi i}\int_{c-i\infty}^{c+i\infty} \abs{u(\lambda)}^2\d \lambda}^{1/2},\qquad u\in Y^2_c.
\]
Moreover, for $\domain\subset \C$, $u\colon\domain \to\C$, and $\alpha\in\real$ we write $z^\alpha u$ for the multiplication of $u$ by $z\mapsto z^{\alpha}$, i.e., 
\[
(z^\alpha u)(z):=z^\alpha u(z),\qquad z\in\domain.
\]

\begin{lemma}[Properties of the Mellin transform]\label{mellin-prop}
The following assertions hold.
\begin{enumerate}[label=\textup{(\roman*)}]
\item The transformation \eqref{mellin-1} realizes a linear 
mapping from $C^{\infty}_0(\real_+)$ into the space of analytic functions on $\mathbb{C}$. 
\item\label{it:Mellin:mult} $(\mathcal{M}_{r\to\lambda}(r\partial_ru))(\lambda)=\lambda (\mathcal{M}_{r\to\lambda}u)(\lambda)$ for all $u\in C^{\infty}_0(\real_+)$.
\item The (left) inverse Mellin transform is given by 
\[
u(r)=\frac{1}{2\pi i}\int_{-\beta-i\infty}^{-\beta+i\infty}r^{\lambda}\mathcal{M}{u}(\lambda)\d\lambda, \qquad r>0, 
\]
where $\beta\in\R$ is arbitrary.
\item\label{it:Mellin:iso} For  $\beta\in\real$ the transform \eqref{mellin-1} extends to an invertible linear isometry 
\[
\mathcal{M}_\beta\colon L_{2,2\beta}(\real_+)= L_2(\real_+,r^{2\beta-1}\dr)\rightarrow Y^2_{-\beta}.
\]
Moreover,  Parseval's identity holds, i.e., 
\[
\int_0^{\infty}r^{2\beta-1}u(r)\overline{v(r)}\dr =\frac{1}{2\pi i}\int_{-\beta-i\infty}^{-\beta+i\infty}\mathcal{M}_\beta{u}(\lambda)\overline{\mathcal{M}_\beta{v}(\lambda)}\d\lambda,\quad u,v\in L_{2,2\beta}(\real_+).
\]
\item If  for $\beta<\beta'$ we have $u\in L_{2,2\beta}(\real_+)\cap L_{2,2\beta'}(\real_+)$, then $\lambda\mapsto\mathcal{M}_{\mathrm{Re}\lambda}{u}(\lambda)$ is holomorphic in the strip $\{\lambda \in \C: \ -\beta'<\mathrm{Re}\lambda<-\beta\}$.
\end{enumerate}
\end{lemma}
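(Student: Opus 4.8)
The plan is to reduce the entire lemma to the classical theory of the Fourier transform by means of the exponential substitution $r=e^{-s}$, which turns the multiplicative group $\real_+$ into the additive group $\real$. Assertions~(i) and~(ii) are elementary and do not require this. For~(i), note that any $u\in C_0^\infty(\real_+)$ is supported in some compact interval $[a,b]\subset(0,\infty)$, so that the integral in~\eqref{mellin-1} extends only over $[a,b]$; there $(\lambda,r)\mapsto r^{-\lambda-1}u(r)$ is continuous and entire in $\lambda$, with all $\lambda$-derivatives bounded uniformly in $r\in[a,b]$. Differentiation under the integral sign (or Morera's theorem together with Fubini) then shows that $\mathcal{M}u$ is entire on $\C$, and linearity is clear. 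For~(ii), I would integrate by parts in $\int_0^\infty r^{-\lambda-1}(r\partial_r u)(r)\dr=\int_0^\infty r^{-\lambda}\partial_r u(r)\dr$; the boundary terms vanish by compact support, and $\partial_r(r^{-\lambda})=-\lambda\,r^{-\lambda-1}$ produces the asserted factor $\lambda$.

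The heart of the lemma is~(iv). Substituting $r=e^{-s}$ and setting $U_\beta(s):=e^{-\beta s}\,u(e^{-s})$, a direct change of variables gives $\norm{u\sep L_{2,2\beta}(\real_+)}=\norm{U_\beta\sep L_2(\real)}$, while for $\lambda=-\beta+it$ the definition~\eqref{mellin-1} becomes $(\mathcal{M}u)(\lambda)=\int_{\real}e^{its}U_\beta(s)\ds$, i.e.\ the Fourier transform of $U_\beta$ evaluated at $t$. Thus the isometry onto $Y^2_{-\beta}$, its invertibility, and Parseval's identity are nothing but Plancherel's theorem for the Fourier transform, provided one checks that the factor $\tfrac{1}{2\pi i}\d\lambda=\tfrac{1}{2\pi}\dt$ appearing in the $Y^2_{-\beta}$-norm is consistent with the chosen Fourier normalization; the extension from the dense subspace $C_0^\infty(\real_+)$ to all of $L_{2,2\beta}(\real_+)$ is then automatic. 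Assertion~(iii) is the corresponding Fourier inversion formula pulled back through the substitution, and the freedom in choosing $\beta$ simply reflects that $u$ may lie in several of the weighted spaces $L_{2,2\beta}(\real_+)$ simultaneously.

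Finally, for~(v) I would split the defining integral at $r=1$. On $(0,1)$ the Cauchy--Schwarz inequality together with $u\in L_{2,2\beta}(\real_+)$ bounds $\int_0^1 r^{-\mathrm{Re}\lambda-1}\abs{u(r)}\dr$ whenever $\mathrm{Re}\lambda<-\beta$, while on $(1,\infty)$ the same inequality with $u\in L_{2,2\beta'}(\real_+)$ controls the tail whenever $\mathrm{Re}\lambda>-\beta'$; hence the integral converges absolutely, and locally uniformly, precisely on the strip $\{-\beta'<\mathrm{Re}\lambda<-\beta\}$. Holomorphy on this strip then follows by differentiating under the integral sign (or by Morera's theorem), the required interchange being justified by these locally uniform bounds. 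The only genuinely delicate points are the bookkeeping of the $2\pi$-factors and the Jacobian in~(iv) and the verification that the Cauchy--Schwarz estimates in~(v) are locally uniform in $\lambda$; everything else is a routine transcription of standard Fourier-analytic facts.
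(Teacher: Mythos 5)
Your proof is correct. Note that the paper itself gives no proof of this lemma: it is stated as a collection of known facts, with references to \cite[Lemma~3.3.6]{MazRos2010} and \cite{ButJan1999}, and the text proceeds directly to \autoref{lem:Mellinchar}. What you have written is therefore a self-contained justification of the cited results, and your route is precisely the classical one taken in those references: the substitution $r=e^{-s}$ with $U_\beta(s):=e^{-\beta s}u(e^{-s})$ identifies $\mathcal{M}$ restricted to the line $\Gamma_{-\beta}$ with the Fourier transform of $U_\beta$ and turns $\norm{u\sep L_{2,2\beta}(\R_+)}$ into $\norm{U_\beta\sep L_2(\R)}$, so that (iii) and (iv) become Fourier inversion and Plancherel. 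Your bookkeeping checks out: the normalization $\tfrac{1}{2\pi i}\d\lambda=\tfrac{1}{2\pi}\dt$ built into the $Y^2_{-\beta}$-norm exactly absorbs the Plancherel constant, so $\mathcal{M}_\beta$ is a surjective isometry; the integration by parts in (ii) is valid because the boundary terms vanish by compact support; and the Cauchy--Schwarz splitting at $r=1$ in (v) gives absolute convergence exactly on the strip $-\beta'<\mathrm{Re}\,\lambda<-\beta$, locally uniformly in $\lambda$, so Morera (or differentiation under the integral sign) applies. Two minor points you could make explicit, neither of which affects the validity of the argument: in (iii), the absolute convergence of the contour integral for $u\in C_0^\infty(\R_+)$ requires decay of $\mathcal{M}u$ on vertical lines, which follows by iterating (ii), since $\abs{\lambda}^k\abs{(\mathcal{M}u)(\lambda)}=\abs{(\mathcal{M}((r\partial_r)^ku))(\lambda)}$ is bounded on each line $\Gamma_{-\beta}$; and in (v), for $-\beta'<\mathrm{Re}\,\lambda<-\beta$ one has $u\in L_{2,-2\mathrm{Re}\,\lambda}(\R_+)$ because $r^{-2\mathrm{Re}\,\lambda-1}\leq r^{2\beta-1}+r^{2\beta'-1}$, and one should observe that the absolutely convergent integral you analyse coincides a.e.\ with the $L_2$-extension $\mathcal{M}_{\mathrm{Re}\,\lambda}u$ to which the statement refers.
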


We will need the following generalization of \autoref{mellin-prop}\ref{it:Mellin:iso}. For $k\in\mathbb{N}_0$ and $c\in\real$ we write 
\[
Y^{2,k}_c:=\ggklam{u\in Y^2_c\,\colon\,\norm{u\sep Y^{2,k}_c}<\infty},
\]
where
\[
\norm{u\sep Y^{2,k}_c}:=\ssgrklam{\sum_{j=0}^k \norm{\grklam{\lambda\mapsto\lambda^ju(\lambda)}\sep Y^2_c}}^{1/2}.
\]

\begin{lemma}\label{lem:Mellinchar}
    Let $\beta\in\real$ and $\gamma\in\mathbb{N}_0$. Then 
    \[
    \mathcal{M}_{\beta,\gamma}\colon H^\gamma_{2,2\beta}(\real_+)\to Y^{2,\gamma}_{-\beta},
    \quad
    u\mapsto \mathcal{M}_{\beta,\gamma}u:=\mathcal{M}_\beta u,
    \]
    is an invertible linear operator with bounded inverse $\mathcal{M}_{\beta,\gamma}^{-1}:=\mathcal{M}_\beta^{-1}|_{Y^{2,\gamma}_{-\beta}}$. In particular,
    \begin{equation}\label{eq:Mellin:multiplier}
        \grklam{\mathcal{M}_\beta(r\mapsto rD_ru(r))}(\lambda)
        =
        \lambda (\mathcal{M}_\beta u)(\lambda),
        \quad \lambda\in\Gamma_{-\beta},
        \quad u\in H^1_{2,2\beta}(\R_+),
    \end{equation}
    and
   \[ 
   \norm{\mathcal{M}_\beta u \sep Y^{2,\gamma}_{-\beta}}
   \sim
   \norm{u \sep H^\gamma_{2,2\beta}(\real_+)},
   \quad u\in H^\gamma_{2,2\beta}(\real_+).
   \]
\end{lemma}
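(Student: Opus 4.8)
The plan is to transport the norm of $H^\gamma_{2,2\beta}(\R_+)$ through $\mathcal{M}_\beta$ using only the isometry and multiplier properties recorded in \autoref{mellin-prop}. Since $\domain=\R_+=(0,\infty)$ has dimension $d=1$ and $\rho_{\R_+}(r)=\dist{r}{\{0\}}=r$, \autoref{lem:LotSpaces:properties}\ref{it:LotSpaces:N} (with $p=2$ and $\Theta=2\beta$) gives the equivalent norm
\[
\normmod{u \sep H^\gamma_{2,2\beta}(\R_+)}^2 = \sum_{j=0}^\gamma \int_0^\infty \abs{r^j\,\partial_r^j u(r)}^2\,r^{2\beta-1}\,\d r = \sum_{j=0}^\gamma \norm{r^j\,\partial_r^j u \sep L_{2,2\beta}(\R_+)}^2.
\]
First I would rewrite this in terms of the Euler operator $rD_r$ via the two mutually inverse, triangular operator identities $r^n\partial_r^n=\prod_{i=0}^{n-1}(rD_r-i)$ and $(rD_r)^n=\sum_{k=0}^n S(n,k)\,r^k\partial_r^k$ (with $S(n,k)$ the Stirling numbers of the second kind), which hold for $u\in C_0^\infty(\R_+)$ and, by density of $C_0^\infty(\R_+)$ in $H^\gamma_{2,2\beta}(\R_+)$ (\autoref{lem:LotSpaces:properties}\ref{it:LotSpaces:density}), distributionally for all $u\in H^\gamma_{2,2\beta}(\R_+)$. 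As both relations are triangular with unit diagonal, the families $\{r^j\partial_r^j u\}_{j=0}^\gamma$ and $\{(rD_r)^j u\}_{j=0}^\gamma$ generate equivalent (summed) $L_{2,2\beta}$-norms, so that $\norm{u \sep H^\gamma_{2,2\beta}(\R_+)}^2\sim\sum_{j=0}^\gamma\norm{(rD_r)^j u \sep L_{2,2\beta}(\R_+)}^2$.

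Next I would upgrade the multiplier formula \autoref{mellin-prop}\ref{it:Mellin:mult}, which a priori holds only on $C_0^\infty(\R_+)$, to $H^1_{2,2\beta}(\R_+)$: approximating $u$ by test functions $u_n$ in $H^1_{2,2\beta}(\R_+)$ we get $u_n\to u$ and $rD_r u_n\to rD_r u$ in $L_{2,2\beta}(\R_+)$, hence $\mathcal{M}_\beta u_n\to\mathcal{M}_\beta u$ and $\lambda\,\mathcal{M}_\beta u_n=\mathcal{M}_\beta(rD_r u_n)\to\mathcal{M}_\beta(rD_r u)$ in $Y^2_{-\beta}$ by the isometry \autoref{mellin-prop}\ref{it:Mellin:iso}; passing to a.e.\ convergent subsequences on $\Gamma_{-\beta}$ identifies the limits and yields \eqref{eq:Mellin:multiplier}. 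Since $rD_r$ maps $H^j_{2,2\beta}(\R_+)$ boundedly into $H^{j-1}_{2,2\beta}(\R_+)$ (a Leibniz computation, $r^m(rD_r u)^{(m)}=r^{m+1}u^{(m+1)}+m\,r^m u^{(m)}$, controlled by the norm in the first display), I can iterate this to obtain $\mathcal{M}_\beta((rD_r)^j u)=\lambda^j\,\mathcal{M}_\beta u$ for all $u\in H^j_{2,2\beta}(\R_+)$ and $j\le\gamma$. Applying the isometry term by term then gives $\norm{(rD_r)^j u \sep L_{2,2\beta}(\R_+)}=\norm{\lambda^j\,\mathcal{M}_\beta u \sep Y^2_{-\beta}}$, and summing over $j\le\gamma$ turns the equivalence from the first paragraph into $\norm{u \sep H^\gamma_{2,2\beta}(\R_+)}\sim\norm{\mathcal{M}_\beta u \sep Y^{2,\gamma}_{-\beta}}$.

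This norm equivalence immediately shows that $\mathcal{M}_{\beta,\gamma}$ is a bounded, injective linear operator from $H^\gamma_{2,2\beta}(\R_+)$ into $Y^{2,\gamma}_{-\beta}$ whose inverse is bounded on its range; it remains to check surjectivity. Given $v\in Y^{2,\gamma}_{-\beta}$, I set $u:=\mathcal{M}_\beta^{-1}v\in L_{2,2\beta}(\R_+)$; since $\lambda^j v\in Y^2_{-\beta}$ for $j\le\gamma$, the multiplier formula forces $(rD_r)^j u=\mathcal{M}_\beta^{-1}(\lambda^j v)\in L_{2,2\beta}(\R_+)$, whence the triangular identities give $r^j\partial_r^j u\in L_{2,2\beta}(\R_+)$ and thus $u\in H^\gamma_{2,2\beta}(\R_+)$ with $\mathcal{M}_{\beta,\gamma}u=v$. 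I expect the main obstacle to be the passage of the multiplier identity from $C_0^\infty(\R_+)$ to the full Sobolev space and its correct iteration across the scale $H^j_{2,2\beta}(\R_+)$; once this is in place, everything reduces to bookkeeping with the Stirling identities and the isometry of \autoref{mellin-prop}\ref{it:Mellin:iso}.
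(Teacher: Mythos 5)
Your first two steps are sound and run parallel to the paper's argument: where the paper obtains the Euler-operator norm $\big(\sum_{k=0}^\gamma\norm{(rD_r)^ku\sep L_{2,2\beta}(\R_+)}^2\big)^{1/2}$ directly from the lifting property in \autoref{lem:LotSpaces:properties}\ref{it:LotSpaces:lifting}, you rederive it from the weighted-Sobolev characterization in \autoref{lem:LotSpaces:properties}\ref{it:LotSpaces:N} via the triangular Stirling identities, which is a legitimate substitute; and your density upgrade of \eqref{eq:Mellin:multiplier} from $C_0^\infty(\R_+)$ to $H^1_{2,2\beta}(\R_+)$, together with its iteration using the boundedness of $rD_r\colon H^j_{2,2\beta}(\R_+)\to H^{j-1}_{2,2\beta}(\R_+)$, works (the paper instead computes the norm equivalence on test functions and invokes density of $C_0^\infty(\R_+)$ in $H^\gamma_{2,2\beta}(\R_+)$).

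The genuine gap is in the surjectivity step. Given $v\in Y^{2,\gamma}_{-\beta}$ you set $u:=\mathcal{M}_\beta^{-1}v\in L_{2,2\beta}(\R_+)$ and assert that ``the multiplier formula forces $(rD_r)^ju=\mathcal{M}_\beta^{-1}(\lambda^jv)$''. This is circular: the formula \eqref{eq:Mellin:multiplier}, as you proved it, carries the hypothesis $u\in H^1_{2,2\beta}(\R_+)$ (resp.\ $u\in H^j_{2,2\beta}(\R_+)$ for the iterated version), whereas at this point $u$ is known only to lie in $L_{2,2\beta}(\R_+)$; that its \emph{distributional} Euler derivative is again a function in $L_{2,2\beta}(\R_+)$ is exactly what has to be shown. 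What is missing is the converse implication: if $u\in L_{2,2\beta}(\R_+)$ and $\lambda\mathcal{M}_\beta u\in Y^2_{-\beta}$, then the distribution $rD_ru$ is represented by $\mathcal{M}_\beta^{-1}(\lambda\mathcal{M}_\beta u)$. The paper closes precisely this step by a duality computation: using Parseval's identity (\autoref{mellin-prop}\ref{it:Mellin:iso}) and the multiplier rule on test functions (\autoref{mellin-prop}\ref{it:Mellin:mult}) it verifies
\[
(\mathcal{M}_{\beta}^{-1}(\lambda v),\psi)
=
-(u,\partial_r(r\psi)),
\qquad \psi\in C^\infty_0(\R_+),
\]
so that $\mathcal{M}_\beta^{-1}(\lambda v)$ acts on test functions exactly as $rD_ru$ does. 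Alternatively, you could truncate in frequency (take $v_n:=v$ on $\{\lambda\in\Gamma_{-\beta}\colon\abs{\Im\lambda}\le n\}$ and zero otherwise), check by differentiating under the inverse Mellin integral that $u_n:=\mathcal{M}_\beta^{-1}v_n$ is smooth with $rD_ru_n=\mathcal{M}_\beta^{-1}(\lambda v_n)$, and let $n\to\infty$ using the closedness of the distributional derivative together with the convergence $v_n\to v$, $\lambda v_n\to\lambda v$ in $Y^2_{-\beta}$. Either way, an argument of this kind must be supplied; as written, the surjectivity of $\mathcal{M}_{\beta,\gamma}$ --- and hence the invertibility claim --- is not established.
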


\begin{proof}
Let $\beta\in\real$.
Note first that the assertion is satisfied for $\gamma=0$ due to \autoref{mellin-prop}\ref{it:Mellin:iso}.
Moreover, due to Lemma~\ref{lem:LotSpaces:properties}\ref{it:LotSpaces:lifting}, for all $\gamma\in\N$, $u\in H^\gamma_{2,2\beta}(\real_+)$ if, and only if, $(rD_r)^k u\in L_{2,2\beta}(\real_+)$ for all $k\in\{0,1,\ldots,\gamma\}$ and
\[
\norm{u\sep H^\gamma_{2,2\beta}(\real_+)}
\sim
\ssgrklam{\sum_{k=0}^\gamma \norm{(rD_r)^k u\sep L_{2,2\beta}(\real_+)}^2}^{1/2},\quad u\in H^\gamma_{2,2\beta}(\real_+).
\]
Thus, for arbitrary $u\in C^\infty_0(\real_+)$, \autoref{mellin-prop}\ref{it:Mellin:iso} yields that
\begin{align*}
    \norm{u\sep H^\gamma_{2,2\beta}(\real_+)}
&\sim
\ssgrklam{\sum_{k=0}^\gamma \norm{(r\partial_r)^k u\sep L_{2,2\beta}(\real_+)}^2}^{1/2}\\
&=
\ssgrklam{\sum_{k=0}^\gamma \norm{\lambda^k \mathcal{M}u\sep Y^2_{-\beta}(\real_+)}^2}^{1/2}
=
\norm{ \mathcal{M}u\sep Y^{2,\gamma}_{-\beta}(\real_+)}.
\end{align*}
Since $C^\infty_0(\real_+)$ is dense in $H^\gamma_{2,2\beta}(\real_+)$ (see \autoref{lem:LotSpaces:properties}\ref{it:LotSpaces:density}), the assertion follows as soon as we prove that $\mathcal{M}_{\beta,\gamma}$ is surjective.
The latter follows by induction over $\gamma\in\N_0$ as soon as we can prove that if $v\in Y^2_{-\beta}$ and $\lambda v\in Y^2_{-\beta}$, then $u:=\mathcal{M}_\beta^{-1}v\in L_{2,2\beta}(\real_+)$ and $rD_r u=\mathcal{M}_\beta^{-1}(\lambda v)\in L_{2,2\beta}(\real_+)$---which also proves~\eqref{eq:Mellin:multiplier}. But this follows from the fact that
\[
(\mathcal{M}_{\beta}^{-1}(\lambda v),\psi)
=
-(u,\partial_r(r\psi)),
\quad
\psi\in C^\infty_0(\real_+),
\]
which may be checked by means of \autoref{mellin-prop}\ref{it:Mellin:mult}.
\end{proof}

As we will see below, the ranges of the spaces $P^\gamma_{2,\Theta,\theta}(\widetilde{\D})$ under the Mellin transform with respect to $r$ are given by the following Hilbert spaces.

\begin{defi}
Let $\Theta,\theta\in\real$. We write $Y^0_{2,\Theta,\theta}(\I)$ for the space of all functions $u\colon \Gamma_{-\theta/2}\times\I\to\C$, such that $(t,\phi)\mapsto u(-\theta/2 + it,\phi)$ is Borel measurable and
\[
\norm{u\sep Y^{0}_{2,\Theta,\theta}(\I)}
:=
\ssgrklam{
\frac{1}{2\pi i}\int_{-\theta/2-i\infty}^{-\theta/2+i\infty} \norm{u(\lambda,\cdot)\sep L_{2,\Theta-1}(\I)}^2\,\mathrm{d}\lambda}^{1/2}
<\infty.
\]
Moreover, for $\gamma\in\N$, we introduce the space 
    \[
    Y^{\gamma}_{2,\Theta,\theta}(\I)
    :=
    \ggklam{
    u \colon \lambda^jD^\alpha_\phi u \in Y^0_{2,\Theta+2j+2\alpha,\theta}(\I)\text{ for all } j\in\{0,\ldots,\gamma\},\,\alpha\in\{0,\ldots,\gamma-j\}},
    \]
    endowed with the norm
\[
\norm{u\sep Y^\gamma_{2,\Theta,\theta}(\I)}
:=
\ssgrklam{
\frac{1}{2\pi i}\sum_{j=0}^\gamma \int_{-\theta/2-i\infty}^{-\theta/2+i\infty} \abs{\lambda}^{2j}\norm{u(\lambda,\cdot)\sep H^{\gamma-j}_{2,\Theta-1+2j}(\I)}^2\,\mathrm{d}\lambda}^{1/2}.
\]
\end{defi}

Using the preparations above we can prove the following characterization of the spaces $P^\gamma_{2,\Theta,\theta}(\widetilde{\D})$ by means of the Mellin transform

\begin{theorem}\label{thm:Mellinrepresentation}
    Let $\gamma\in\N_0$ and let $\Theta,\theta\in\real$. Then the mapping
    \begin{align*}
        \mathcal{M}\colon P^\gamma_{2,\Theta,\theta}(\widetilde{\D}) \to Y^\gamma_{2,\Theta,\theta}(\I),\qquad u\mapsto \mathcal{M}u
    \end{align*}
    with 
    \[
        \grklam{\mathcal{M}u}(\lambda,\phi):=\mathcal{M}_{-\theta/2}
        \grklam{r\mapsto u(r,\phi)}(\lambda),
        \qquad (\lambda,\phi)\in\Gamma_{-\theta/2}\times \I,
    \]
    defines an invertible, bounded, linear operator with bounded inverse $\mathcal{M}^{-1}$. In particular, 
    \[
    \norm{\mathcal{M}u\sep Y^\gamma_{2,\Theta,\theta}(\I)}\sim \norm{u\sep P^\gamma_{2,\Theta,\theta}(\widetilde{\D})},
    \qquad u\in P^{\gamma}_{2,\Theta,\theta}(\widetilde{\D}).
    \]
\end{theorem}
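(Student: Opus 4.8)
The plan is to treat the angular variable $\phi$ as a parameter and to reduce the statement to the scalar Mellin isometry applied fibrewise in the radial variable $r$. I would fix $\beta:=\theta/2$, so that the radial weight $r^{\theta-1}$ in the definition of $\norm{\cdot\sep P^\gamma_{2,\Theta,\theta}(\widetilde{\D})}$ is exactly the weight $r^{2\beta-1}$ of $L_{2,2\beta}(\R_+)=L_{2,\theta}(\R_+)$, and the image of the transform lives on the line $\Gamma_{-\theta/2}$ underlying $Y^\gamma_{2,\Theta,\theta}(\I)$. The two facts I would rely on are the multiplier property $\mathcal{M}_\beta\grklam{(rD_r)^j v}=\lambda^j\,\mathcal{M}_\beta v$, obtained by iterating \eqref{eq:Mellin:multiplier}, and the Parseval identity / isometry of \autoref{mellin-prop}\ref{it:Mellin:iso}, both applied with $\phi$ held fixed; the $r$-Mellin transform commutes with $D_\phi^\alpha$ and with multiplication by the $r$-independent weight $\rho_\I$.

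First I would prove the norm equivalence on the dense subspace $T_\Phi\grklam{C_0^\infty(\D)}\subset P^\gamma_{2,\Theta,\theta}(\widetilde{\D})$, which is dense by \autoref{thm:density} together with the isomorphism $T_\Phi$ from \autoref{thm:polar}. For $\widetilde{u}=u\circ\Phi$ with $u\in C_0^\infty(\D)$ the support is a compact subset of $\widetilde{\D}=(0,\infty)\times\I$ bounded away from $r=0$, from $r=\infty$, and from $\partial\I$, so $r\mapsto D_\phi^\alpha\widetilde{u}(r,\phi)$ belongs to $C_0^\infty(\R_+)$ for every $\phi$ and every $\alpha$, and all interchanges below are legitimate. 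Expanding each inner norm $\norm{(rD_r)^j\widetilde{u}(r,\cdot)\sep H^{\gamma-j}_{2,\Theta-1+2j}(\I)}$ by the equivalent weighted-Sobolev expression of \autoref{lem:LotSpaces:properties}\ref{it:LotSpaces:N} (with $d=1$) turns $\norm{\widetilde{u}\sep P^\gamma_{2,\Theta,\theta}(\widetilde{\D})}^2$ into a finite sum over $0\le j\le\gamma$ and $0\le\alpha\le\gamma-j$ of the integrals $\int_0^\infty\!\int_\I r^{\theta-1}\,\abs{\rho_\I^\alpha D_\phi^\alpha (rD_r)^j\widetilde{u}}^2\,\rho_\I^{\Theta-2+2j}\,\d\phi\,\d r$.

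Next, writing $U:=\mathcal{M}\widetilde{u}$, I would apply Parseval fibrewise in $\phi$: the multiplier property sends $(rD_r)^j$ to $\lambda^j$, and \autoref{mellin-prop}\ref{it:Mellin:iso} converts $\int_0^\infty r^{\theta-1}\abs{\rho_\I^\alpha D_\phi^\alpha (rD_r)^j\widetilde{u}(r,\phi)}^2\,\d r$ into $\frac{1}{2\pi i}\int_{-\theta/2-i\infty}^{-\theta/2+i\infty}\abs{\lambda}^{2j}\,\abs{\rho_\I^\alpha D_\phi^\alpha U(\lambda,\phi)}^2\,\d\lambda$. Since the integrand is nonnegative, Tonelli allows interchanging the $\phi$- and $\lambda$-integrations, and reassembling the sum over $\alpha$ by \autoref{lem:LotSpaces:properties}\ref{it:LotSpaces:N} recovers $\norm{U(\lambda,\cdot)\sep H^{\gamma-j}_{2,\Theta-1+2j}(\I)}^2$ under the $\lambda$-integral. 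Summing over $j$ yields $\norm{\widetilde{u}\sep P^\gamma_{2,\Theta,\theta}(\widetilde{\D})}^2\sim\norm{\mathcal{M}\widetilde{u}\sep Y^\gamma_{2,\Theta,\theta}(\I)}^2$ on the dense subspace, so $\mathcal{M}$ is bounded and bounded below, hence injective, and the norm equivalence propagates to all of $P^\gamma_{2,\Theta,\theta}(\widetilde{\D})$.

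For surjectivity and the bounded inverse I would construct $\mathcal{M}^{-1}$ directly, setting $\grklam{\mathcal{M}^{-1}U}(r,\phi):=\mathcal{M}_\beta^{-1}\grklam{\lambda\mapsto U(\lambda,\phi)}(r)$ via the scalar inverse Mellin transform of \autoref{lem:Mellinchar}; for $U\in Y^\gamma_{2,\Theta,\theta}(\I)$ the fibre $\lambda\mapsto U(\lambda,\phi)$ lies in the relevant $Y^{2,\cdot}_{-\theta/2}$-space for almost every $\phi$ by Fubini, so this is well defined, and the identical Tonelli--Parseval computation run backwards gives $\norm{\mathcal{M}^{-1}U\sep P^\gamma_{2,\Theta,\theta}(\widetilde{\D})}\sim\norm{U\sep Y^\gamma_{2,\Theta,\theta}(\I)}$ together with $\mathcal{M}\mathcal{M}^{-1}=\mathrm{id}$ and $\mathcal{M}^{-1}\mathcal{M}=\mathrm{id}$ (checked on the smooth dense subspace and extended by the norm equivalence). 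I expect the genuinely delicate point to be the rigorous justification of the fibrewise Parseval step and of the commutation $\mathcal{M}_r D_\phi=D_\phi\mathcal{M}_r$ for \emph{arbitrary} elements rather than smooth ones: this requires the almost-everywhere-in-$\phi$ membership $\widetilde{u}(\cdot,\phi)\in H^\gamma_{2,\theta}(\R_+)$ and joint measurability of $(\lambda,\phi)\mapsto U(\lambda,\phi)$, which is precisely why I would first establish the identity on $T_\Phi\grklam{C_0^\infty(\D)}$ and only then pass to the limit, invoking the completeness of $P^\gamma_{2,\Theta,\theta}(\widetilde{\D})$ from \autoref{prop:basic} and \autoref{thm:polar}.
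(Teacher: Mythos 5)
Your proposal is correct and takes essentially the same route as the paper: both reduce the claim to the scalar Mellin isomorphism of \autoref{lem:Mellinchar} applied fibrewise in $\phi$, expand the interval norms via \autoref{lem:LotSpaces:properties}\ref{it:LotSpaces:N}, interchange the $\phi$- and $\lambda$-integrations by Tonelli, and justify the commutation of the radial Mellin transform with $D_\phi^\alpha$ on smooth functions plus a density argument. The only differences are organizational: the paper runs the computation directly for arbitrary $u\in P^\gamma_{2,\Theta,\theta}(\widetilde{\D})$ (using a.e.-in-$\phi$ fibre membership, with density invoked only for the commutation) and settles surjectivity in one line via invertibility of $\mathcal{M}_{\theta/2}$, whereas you establish the equivalence on the smooth dense subspace first and construct the fibrewise inverse explicitly---a fleshed-out version of the same argument.
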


\begin{proof}
Let $u\in P^\gamma_{2,\Theta,\theta}(\widetilde\D)$. 
Then,  $\grklam{r\mapsto D^\alpha_\phi(r,\phi)}\in H^{\gamma-\alpha}_{2,\theta}(\real_+)$ for almost all $\phi\in\I$, for all $\alpha\in\{0,1,\ldots,\gamma\}$.
Thus, applying \autoref{lem:Mellinchar} we get that 
\begin{align*}
    \norm{u\sep P^\gamma_{2,\Theta,\theta}(\widetilde\D)}^2
    &=
    \sum_{\alpha=0}^\gamma \sum_{j=0}^{\gamma-\alpha} \int_0^\kappa \int_0^\infty \Abs{(rD_r)^jD_\phi^\alpha u(r,\phi)}^2 r^{\theta-1}\dr \rho_\I(\phi)^{\Theta-2(1-j-\alpha)}\d\phi\\
    &\sim
    \sum_{\alpha=0}^\gamma \int_0^\kappa \norm{D^\alpha_\phi u(\cdot,\phi) \sep H^{\gamma-\alpha}_{2,\theta}(\R_+)}^2 \rho_\I(\phi)^{\Theta-2(1-j-\alpha)}\d\phi\\
     &\sim
    \sum_{\alpha=0}^\gamma \int_0^\kappa \norm{\lambda\mapsto \grklam{\mathcal{M}_{\theta/2} (D^\alpha_\phi u(\cdot,\phi))}(\lambda) \sep Y^{2,\gamma-\alpha}_{-\theta/2}}^2 \rho_\I(\phi)^{\Theta-2(1-j-\alpha)}\d\phi\\
    &\sim
    \sum_{\alpha=0}^\gamma \sum_{j=0}^{\gamma-\alpha} \int_0^\kappa \frac{1}{2\pi i}\int_{-\theta/2-i\infty}^{-\theta/2+i\infty} \Abs{\lambda^j\mathcal{M}_{\theta/2} (D_\phi^\alpha u(\cdot,\phi))(\lambda)}^2 \d\lambda \rho_\I(\phi)^{\Theta-2(1-j-\alpha)}\d\phi\\
    &\sim
    \norm{\mathcal{M}u\sep Y^\gamma_{2,\Theta,\theta}(\I)}^2.
\end{align*}
In the last step we used the fact that $\mathcal{M}_{\theta/2}D_\phi^\alpha u = D_\phi^\alpha\mathcal{M}_{\theta/2} u$, which can be checked for smooth $u$ by means of Lebesgue dominated convergence theorem and then extended to arbitrary $u\in P^\gamma_{2,\Theta,\theta}(\widetilde\D)$ by means of a density argument.
Thus, linearity, boundedness and injectivity are proven. Surjectivity follows from the invertibility of $\mathcal{M}_{\theta/2}$.
\end{proof}

\section{The Poisson equation in \texorpdfstring{$H^\gamma_{p,\Theta,\theta}(\D)$}{Hgamma,p,Theta,theta(D)}}
\label{sec:Poisson}

In this section we begin the study of the regularity of the Poisson equation~\eqref{eq:Poisson} 
within the scale  $H^\gamma_{p,\Theta,\theta}(\D)$.
On the one hand, we show that these spaces are suitable for establishing higher order regularity for the Poisson equation in the sense that the regularity of the solution within this scale of spaces can be lifted with the regularity of the forcing term. This works for arbitrary $p>1$.
On the other hand, we establish existence and uniqueness for the case $p=2$ and a sharp range of weight parameters $\Theta$ and $\theta$. 
The latter is done by means of the Mellin transform and suitable resolvent estimates for the Dirichlet Laplacian on an interval within the scale $H^\gamma_{2,\Theta}(\D)$ from~\cite{LinVer2020}.
The case $p\neq 2$ is postponed to a forthcoming paper. Our main result reads as follows.

\begin{theorem}\label{thm:Poisson:main}
Let $1<p<\infty$, $\theta,\Theta\in\R$, and $\gamma\in\N_0$. Then the following assertions hold.

\begin{enumerate}[label=\textup{(\roman*)}]
\item\label{it:Poisson:main:lifting} \textup{Lifting.} Let $1<\Theta<p+1$, let $f\in H^{\gamma}_{p,\Theta+p,\theta+p}(\D)$, and let $u\in H^{\gamma+1}_{p,\Theta-p,\theta-p}(\mathcal{D})$ be such that 
    \begin{equation}\label{eq:PoissonD}
        \Delta u = f \quad \text{on } \D.
    \end{equation}
Then $u\in H^{\gamma+2}_{p,\Theta-p,\theta-p}(\D)$ and 
    \[
        \norm{ u \sep H^{\gamma+2}_{p,\Theta-p, \theta-p}(\D) }
        \lesssim \norm{ f \sep H^{\gamma}_{p,\Theta+p, \theta+p}(\D) } + \norm{ u \sep H^{\gamma+1}_{p,\Theta,\theta-p}(\D)}
    \]
    with a constant that does not depend on $f$ and $u$.
\item \label{it:Poisson:main:existence}\textup{Existence.} Let $p=2$ and assume that
\begin{equation}\label{eq:PoissonRange}
    1<\Theta<3\qquad\text{and}\qquad 
    \frac{\theta-2}{2}\notin \sggklam{\pm n\frac{\pi}{\kappa}\colon n\in\N}.
\end{equation}
Then for all $f\in H^\gamma_{2,\Theta+2,\theta+2}(\D)$ there exists a unique $u\in H^{\gamma+2}_{2,\Theta-2,\theta-2}(\D)$ such that \eqref{eq:PoissonD} holds. Moreover, 
\begin{equation}\label{eq:a-priori}
\lVert u|H^{\gamma+2}_{2,\Theta-2, \theta-2}(\D)\rVert\lesssim \lVert f|H^{\gamma}_{2,\Theta+2, \theta+2}(\D)\rVert    
\end{equation}
with a constant that does not depend on $f$ and $u$.
\end{enumerate}
\end{theorem}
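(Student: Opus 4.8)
The plan is to treat the two assertions by genuinely different means. For the existence statement \ref{it:Poisson:main:existence} I would pass to polar coordinates via \autoref{thm:polar}, writing $\widetilde{u}=u\circ\Phi$ and $\widetilde{f}=f\circ\Phi$; since $\rho_\circ=r$, the identity $r^2\Delta=(rD_r)^2+D_\phi^2$ (obtained from the auxiliary results used for \autoref{thm:polar}) turns $\Delta u=f$ into $[(rD_r)^2+D_\phi^2]\widetilde{u}=r^2\widetilde{f}$ on $\widetilde{\D}$. A short bookkeeping with the index shifts of \autoref{cor:pwmult:indexshift} shows that multiplication by $r^2$ sends $P^\gamma_{2,\Theta+2,\theta+2}(\widetilde{\D})$ into $P^\gamma_{2,\Theta+2,\theta-2}(\widetilde{\D})$, so that $r^2\widetilde{f}$ and the sought $\widetilde{u}\in P^{\gamma+2}_{2,\Theta-2,\theta-2}(\widetilde{\D})$ sit over the \emph{same} Mellin contour $\Gamma_{-(\theta-2)/2}$. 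Applying the Mellin transform of \autoref{thm:Mellinrepresentation} and using $\mathcal{M}(rD_r\,\cdot)=\lambda\,\mathcal{M}(\cdot)$ from \autoref{lem:Mellinchar} together with $\mathcal{M}D_\phi=D_\phi\mathcal{M}$ decouples the equation into the fibrewise Dirichlet problems
\[
	[\lambda^2+D_\phi^2]\,\widehat{u}(\lambda,\cdot)=\widehat{r^2f}(\lambda,\cdot)\ \text{ on }\I,\qquad \widehat{u}(\lambda,0)=\widehat{u}(\lambda,\kappa)=0,
\]
parametrised by $\lambda\in\Gamma_{-(\theta-2)/2}$.

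The heart of the matter is a uniform, parameter-elliptic resolvent estimate for $A:=-D_\phi^2$ on $\I$. Writing $\lambda=a+it$ with $a=-(\theta-2)/2$, one checks that $\lambda^2$ coincides with a positive eigenvalue $(n\pi/\kappa)^2$ of $A$ only if $t=0$ and $a^2=(n\pi/\kappa)^2$; the hypothesis $\tfrac{\theta-2}{2}\notin\{\pm n\pi/\kappa:n\in\N\}$ excludes this, so $\lambda^2-A$ is invertible along the whole contour and $\mathrm{dist}(\lambda^2,\mathrm{spec}(A))\to\infty$ as $|t|\to\infty$. Invoking the resolvent bounds for the one-dimensional Dirichlet Laplacian in the range $1<\Theta<3$ from \cite{LinVer2020}, I would establish, uniformly in $\lambda$ on the contour,
\[
	\sum_{j=0}^{\gamma+2}|\lambda|^{2j}\,\norm{\widehat{u}(\lambda,\cdot)\sep H^{\gamma+2-j}_{2,\Theta-3+2j}(\I)}^2
	\lesssim
	\sum_{j=0}^{\gamma}|\lambda|^{2j}\,\norm{\widehat{r^2f}(\lambda,\cdot)\sep H^{\gamma-j}_{2,\Theta+1+2j}(\I)}^2,
\]
in which the spectral parameter $|\lambda|^2$ supplies exactly the powers of $|\lambda|$ appearing in the $Y^{\bullet}_{2,\bullet,\bullet}(\I)$-norms. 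Integrating over $\Gamma_{-(\theta-2)/2}$ and transporting back through the (isometric) Mellin transform and the polar isomorphism produces a unique $\widehat{u}$, hence a unique $u\in H^{\gamma+2}_{2,\Theta-2,\theta-2}(\D)$ solving \eqref{eq:PoissonD}, together with \eqref{eq:a-priori}; uniqueness is immediate from the fibrewise invertibility.

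For the lifting statement \ref{it:Poisson:main:lifting} I would argue locally, and thus for general $p$. By \autoref{thm:lifting} (with $m=2$) it suffices to place the pure second-order pieces $\psi_\D^{2}D^\alpha u$, $|\alpha|=2$, in $H^\gamma_{p,\Theta-p,\theta-p}(\D)$, the contributions of order $\le 1$ being already controlled by the hypothesis $u\in H^{\gamma+1}_{p,\Theta-p,\theta-p}(\D)$. The equation furnishes the trace $\psi_\D^2\,\Delta u=\psi_\D^2 f$, which by \autoref{cor:pwmult:indexshift} lies in the correct space $H^\gamma_{p,\Theta-p,\theta-p}(\D)$; to recover the full Hessian from this trace I would localise with the dilation-adapted resolution of unity $(\zeta_\nu)$ and rescale each piece onto the fixed model sector $\D^{[0]}_{e,k_0}(\{0\})$, on which the vertex is excluded and $\partial\D$ reduces to the two straight edges. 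There the standard elliptic a priori estimate for $\Delta$ in the edge-weighted Sobolev spaces applies with a constant independent of the scale (dilation invariance, cf.\ \autoref{lem:Lot_pw-mult-special}), and summing the rescaled estimates in the $\ell_p$-structure of the $H^\gamma_{p,\Theta,\theta}(\D)$-norm yields the claim; the commutators with $\zeta_\nu$ and the passage between the weights $\Theta-p$ and $\Theta$ account for the lower-order term $\norm{u\sep H^{\gamma+1}_{p,\Theta,\theta-p}(\D)}$ on the right-hand side.

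The main obstacle is the uniform parameter-elliptic estimate in part \ref{it:Poisson:main:existence}: one must track the precise powers of $|\lambda|$ across all the weighted interval norms and verify that the resolvent of the one-dimensional Dirichlet Laplacian from \cite{LinVer2020} decays like $|\lambda|^{-2}$ in exactly these norms, uniformly along $\Gamma_{-(\theta-2)/2}$ and up to the boundary of the admissible range $1<\Theta<3$. For part \ref{it:Poisson:main:lifting} the analogous difficulty is milder and consists in securing scale-independent constants in the model elliptic estimate and in correctly absorbing the cut-off commutators.
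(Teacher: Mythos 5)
Your proposal follows essentially the same route as the paper for both parts. For part \ref{it:Poisson:main:lifting}, your plan---localize with the dilation-adapted resolution of unity $(\zeta_\nu)$, rescale every piece onto a fixed model region near $\supp(\zeta_0)$, apply an elliptic a priori estimate in boundary-weighted Sobolev spaces with scale-independent constants, and absorb the cut-off commutators into the lower-order norm---is precisely the paper's argument; the paper makes the ``model region'' precise by choosing a bounded $C^1$ domain $G\supset \supp(\zeta_0)\cap\D$ with $\rho_G\sim\rho_\D$ on $\supp(\zeta_0)$ (note that the annular sector $\D^{[0]}_{e,k_0}(\{0\})$ itself has corners, so the weighted elliptic estimate, \cite[Theorem~2.11]{KimKry2004}, is invoked on $G$ rather than on the sector). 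For part \ref{it:Poisson:main:existence}, your factorization through polar coordinates (\autoref{thm:polar}), multiplication by $r^2$, and the Mellin transform (\autoref{thm:Mellinrepresentation}, \autoref{lem:Mellinchar}), leading to the fibrewise Dirichlet problems $(\lambda^2+D_\phi^2)\widehat{u}(\lambda,\cdot)=\widehat{r^2f}(\lambda,\cdot)$ over $\Gamma_{-(\theta-2)/2}$, with invertibility guaranteed by the spectral condition in \eqref{eq:PoissonRange}, is exactly the paper's factorization of $\Delta_x$ and the operator $B$.

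The one substantive difference is how part \ref{it:Poisson:main:existence} treats $\gamma\geq 1$, and there your plan carries an avoidable burden. The paper observes that, once part \ref{it:Poisson:main:lifting} is proven, it suffices to establish existence for $\gamma=0$; hence only the second-order resolvent estimate $\sum_{j=0}^2 \abs{\lambda}^j\norm{u_\lambda\sep H^{2-j}_{2,\Theta-3+2j}(\I)}\lesssim\norm{F(\lambda,\cdot)\sep L_{2,\Theta+1}(\I)}$, uniformly on the contour, is needed (\autoref{lem:cor-dir}). You instead attack all $\gamma$ at once and therefore need the full higher-order parameter-elliptic estimate, which you rightly flag as ``the main obstacle'' but leave as an assertion; it does not follow directly from \cite{LinVer2020} (differentiating the equation in $\phi$ destroys the Dirichlet boundary condition, so one needs a genuine bootstrap in the weighted scales). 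Relatedly, your heuristic that $\mathrm{dist}\big(\lambda^2,\sigma(-\Delta_{\mathrm{Dir}}^\I)\big)\to\infty$ along the contour is not by itself sufficient for a uniform resolvent bound, since the operator is not self-adjoint in the weighted space: the paper's proof of \autoref{lem:cor-dir} combines sectoriality with angle $\omega(-\Delta_{\mathrm{Dir}}^\I)=0$ (covering the part of the contour whose square lies outside a sector around the positive reals) with a compactness argument on the remaining bounded piece of $\Gamma_s$. Both issues disappear if you insert the reduction to $\gamma=0$ via your own part \ref{it:Poisson:main:lifting}, after which your sketch coincides with the paper's proof.
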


\begin{remark}
 The following remarks are in order.
\begin{enumerate}[label=\textup{(\roman*)}]
\item The solutions $u\in H^2_{p,\Theta-p,\theta-p}(\D)$ to~\eqref{eq:PoissonD} in Theorem~\ref{thm:Poisson:main} can be seen as a solution to the Poisson equation~\eqref{eq:Poisson} with zero Dirichlet boundary condition.
This is because for the range of $\Theta$ therein, i.e., for $1<\Theta<p+1$, any $u\in H^2_{p,\Theta-p,\theta-p}(\D)$ has trace zero, since for all $\zeta\in C_0^\infty(\R_+)$, 
\[
\zeta(\abs{\cdot}) u\in H^2_{p,\Theta-p}(\domain)
=
\ggklam{u\colon D^\alpha u\in L_p(\domain,\omega_{\Theta+p-d}^\domain),\alpha\leq 2, \textup{Tr}\,u=0}
\]
on a suitable bounded $C^2$ domain $\domain\subset\D$; the equality above is proven in Lemma~\ref{lem:domain:LinVer} below.

\item The ranges of parameters in Theorem~\ref{thm:Poisson:main} include the ranges obtained so far in the analysis of the (stochastic) heat equation within the spaces $H^\gamma_{p,\Theta,\theta}(\D)$ on angular domains, see~\cite{Cio20,KimLeeSeo2021,KimLeeSeo2022b}.
They are sharp in the following sense: As mentioned in \cite[Remark~2.7]{Kim2004}, the restriction $1<\Theta<p+1$ on the parameter $\Theta$ is necessary in order to obtain  the corresponding result for the (stochastic) heat equation within the scale $H^\gamma_{p,\Theta}(\domain)$ on $C^1$ domains $\domain\subset\R^2$. 
Therefore, since a solution to Equation~\eqref{eq:Poisson} on $\D$ that vanishes near the vertex can be considered as a solution to the corresponding steady-state equation on a suitable $C^1$ domain, the range of $\Theta$ in Theorem~\ref{thm:Poisson:main} is sharp.
The range of $\theta$ coincides with the one obtained for the non-degenerate Poisson equation with Dirichlet boundary condition on $\D$ in~\cite[Theorem~6.1.1]{KozMazRos1997}.
It includes the set of all $\theta \in\R$ satisfying 
\[
2\sgrklam{1-\frac{\pi}{\kappa}}<\theta<2\sgrklam{1+\frac{\pi}{\kappa}},
\]
which is exactly the range obtained for the non-degenerate heat equation with zero Dirichlet boundary conditions on the cone $\D$, see, e.g.,~\cite{Naz2001,KozNaz2014,PruSim2007}.
\end{enumerate}
\end{remark}

We are going to prove the two parts of  Theorem~\ref{thm:Poisson:main} separately. We start with part~\ref{it:Poisson:main:lifting}, the lifting. The proof uses a standard localization argument and corresponding regularity estimates in the scale of spaces $H^\gamma_{p,\Theta}(\domain)$ on $C^1$-domains.

\begin{proof}[Proof of Theorem \ref{thm:Poisson:main}\ref{it:Poisson:main:lifting}]
    Let $\zeta=(\zeta_\nu)_{\nu\in\Z}$ be as in \autoref{conv:zeta}, cf. also~\autoref{rem:resolution}\ref{it:resolution:constr:D}. 
Set $\eta:=\zeta_0$ and recall that $\zeta_\nu =\eta(e^{-\nu}\cdot)$ for all $\nu\in\Z$.
Choose a $C^1$-domain $G\subset\D$ such that
\begin{equation}\label{eq:DtoG}
 \supp(\zeta_0)\cap\D\subset G 
\qquad \text{and} \qquad
\rho_G\sim\rho_\D\text{ on }\supp(\zeta_0);   
\end{equation}
see the proof of \cite[Lemma~3.7]{CioKimLee2019} for a construction of an appropriate $G$. Note that~\eqref{eq:DtoG} guarantees that for all $m\in\N_0$ and all $\vartheta\in\R$, for all $g\in L_{1,\loc}(\D)$ and all smooth $\widetilde{\eta}$ with $\supp(\widetilde{\eta})\subseteq\supp(\eta)$ we have that $\widetilde{\eta} g\in H^m_{p,\vartheta}(\D)$ if, and only if, $\widetilde{\eta} g\in H^m_{p,\vartheta}(G)$ and that in this case
\begin{equation}\label{eq:normequiv:D:G}
\norm{\widetilde{\eta} g \sep H^m_{p,\vartheta}(\D)}
\sim
\norm{\widetilde{\eta} g \sep H^m_{p,\vartheta}(G)}    
\end{equation}
with constants that do not depend on $g$ and $\widetilde{\eta}$.
Moreover, since $\Delta u = f$ on $\D$, we obtain for all $\nu\in\Z$ that
\begin{align*}
     \Delta \left(\eta u(e^{\nu}\cdot)\right)=
     e^{2\nu}\eta f(e^{\nu}\cdot)+2\sum_{i=1}^2(\eta_{x_i}u(e^{\nu}\cdot))_{x_i}- 
     \Delta\eta\cdot  u(e^{\nu}\cdot)=:\widetilde{f}_\nu \qquad \text{on}\quad G.
\end{align*}
Thus, if we can show that $\eta u(e^\nu\cdot)\in H^{\gamma+1}_{p,\Theta-p}(G)$ and $\widetilde{f}_\nu\in H^{\gamma}_{p,\Theta+p}(G)$ for all $\nu\in\Z$, then \cite[Theorem~2.11]{KimKry2004} yields
\[
\norm{\eta u(e^\nu\cdot) \sep H^{\gamma+2}_{p,\Theta+p}(G)}
\lesssim
\norm{\widetilde{f}_\nu \sep H^{\gamma}_{p,\Theta-p}(G)},\qquad \nu\in\Z,
\]
which, in turn, due to Lemma~\ref{lem:norm-change} and the fact that both $(\eta_{x_i}(e^{-\nu}\cdot))_{\nu\in\Z}$ as well as $(\eta_{x_ix_i}(e^{-\nu}\cdot))_{\nu\in\Z}$ belong to $\mathscr{A}_{e}(\D,\{0\})$ (see~\autoref{def:resolutions:sets}),
yields
\begin{align*}
    \norm{u\sep H^{\gamma+2}_{p,\Theta-p,\theta-p}(\D)}^p
    &=
    \sum_{\nu\in \Z}e^{\nu(\theta-p)}\norm{(\zeta_\nu u)(e^{\nu}\cdot) \sep H^{\gamma+2}_{p,\Theta-p}(\D)}^p\\
    &\sim
    \sum_{\nu\in \Z}e^{\nu(\theta-p)}\norm{\eta u(e^\nu\cdot) \sep H^{\gamma+2}_{p,\Theta-p}(G)}^p\\
    &\lesssim
    \sum_{\substack{\nu\in \Z\\ i=1,2}}e^{\nu(\theta-p)}\norm{e^{2\nu}\eta f(e^\nu\cdot) + (\eta_{x_i} u(e^\nu\cdot))_{x_i} +\eta_{x_ix_i} u(e^\nu\cdot)\sep H^{\gamma}_{p,\Theta+p}(G)}^p\\
    &\lesssim
    \sum_{\nu\in \Z}e^{\nu(\theta+p)}\norm{(\zeta_\nu f)(e^\nu\cdot) \sep H^{\gamma}_{p,\Theta+p}(\D)}^p\\
    &\qquad + \sum_{i=1}^2\sum_{\nu\in \Z}e^{\nu(\theta-p)}\norm{(\eta_{x_i}(e^{-\nu}\cdot) u)(e^\nu\cdot)) \sep H^{\gamma+1}_{p,\Theta}(\D)}^p\\
    &\qquad + \sum_{i=1}^2\sum_{\nu\in \Z}e^{\nu(\theta-p)}\norm{(\eta_{x_ix_i}(e^{-\nu}\cdot) u)(e^\nu\cdot)\sep H^{\gamma}_{p,\Theta+p}(\D)}^p\\
    &\lesssim
    \norm{f \sep H^{\gamma}_{p,\Theta+p,\theta+p}(\D)}
    +
    \norm{u \sep H^{\gamma+1}_{p,\Theta,\theta-p}(\D)};
\end{align*}
note that in the second but last step we also used Lemma~\ref{lem:LotSpaces:properties}, parts~\ref{it:LotSpaces:indexshift} and~\ref{it:LotSpaces:lifting}.
Since a very similar calculation yields that, indeed, $\eta u(e^\nu\cdot)\in H^{\gamma+1}_{p,\Theta-p}(G)$ and $\widetilde{f}_\nu\in H^{\gamma}_{p,\Theta+p}(G)$ for all $\nu\in\Z$, the assertion follows.
\end{proof}

Now we move towards proving the existence part of Theorem~\ref{thm:Poisson:main}. Since the lifting part is already proven, it is obvious that it is enough to check existence for $\gamma=0$.
To this end, we use the fact that the Laplacian
\[
\Delta_x\colon H^2_{2,\Theta-2,\theta-2}(\D)\to L_{2,\Theta+2,\theta+2}(\D)
\]
is a bounded linear operator and that proving Theorem~\ref{thm:Poisson:main}\ref{it:Poisson:main:existence} means showing that this operator is invertible. Estimate~\eqref{eq:a-priori} follows then from the boundedness of the inverse of $\Delta_x$, which is a consequence of the open mapping theorem, as the spaces involved are Banach spaces. A close look at $\Delta_x$ for fixed $\theta,\Theta\in\R$ shows that
\[
\Delta_x = T_{\Phi,0}^{-1} \circ N_2^{-1} \circ \mathcal{M}_{\frac{2-\theta}{2},0}^{-1} \circ B \circ \mathcal{M}_{\frac{2-\theta}{2},2} \circ T_{\Phi,2},
\]
where 
\[
T_{\Phi,2}\colon H^2_{2,\Theta-2,\theta-2}(\D)\to P^2_{2,\Theta-2,\theta-2}(\widetilde{\D})
\quad\text{and}\quad
T_{\Phi,0}\colon H^0_{2,\Theta+2,\theta+2}(\D)\to P^0_{2,\Theta+2,\theta+2}(\widetilde{\D})
\]
are transformations from polar to Cartesian coordinates as introduced in Theorem~\ref{thm:polar},
\[
\mathcal{M}_{\frac{2-\theta}{2},2} \colon P^2_{2,\Theta-2,\theta-2}(\widetilde{\D}) \to Y^2_{2,\Theta-2,\theta-2}(\I)
\quad\text{and}\quad
\mathcal{M}_{\frac{2-\theta}{2},0} \colon P^0_{2,\Theta+2,\theta-2}(\widetilde{\D}) \to Y^0_{2,\Theta+2,\theta-2}(\I)
\]
are Mellin transforms as introduced in Theorem~\ref{thm:Mellinrepresentation},
\[
N_2\colon P^0_{2,\Theta+2,\theta+2}(\widetilde{\D})\to P^0_{2,\Theta+2,\theta-2}(\widetilde{\D}),\quad
\widetilde{u}\mapsto N_2\widetilde{u}:=\{(r,\phi)\mapsto r^2\widetilde{u}(r,\phi)\}
\]
is an isomorphism (see Corollary~\ref{cor:pwmult:indexshift} together with Theorem~\ref{thm:polar})
and
\begin{equation}\label{eq:ResolventY}
B\colon Y^2_{2,\Theta-2,\theta-2}(\I)\to Y^0_{2,\Theta+2,\theta-2}(\I),
\quad
v\mapsto Bv:=\{(\lambda,\phi)\mapsto (\lambda^2+D_\phi^2)v(\lambda,\phi)\}.
\end{equation}
As demonstrated above, except for $B$, all these operators are known to be isomorphisms. Thus, if we can prove that $B$ is also an isomorphism, so is $\Delta_x$ and the existence part of Theorem~\ref{thm:Poisson:main} is proven. Clearly, $B$ is linear and bounded. To obtain its invertibility we rely on results from~\cite{LinVer2020}. 
Therein, among others, for bounded $C^2$-domains $\domain\subset\R^d$, the Dirichlet Laplacian $\Delta_\textup{Dir}^\domain$ is analysed as an unbounded operator in the weighted $L_p$-spaces $L_p(\domain,\omega_\nu^\domain):=L_{p,\nu+d}(\domain)$, cf. Definition~\ref{def:Lot_spaces} and Lemma~\ref{lem:LotSpaces:properties}\ref{it:LotSpaces:N}. 
In~\cite{LinVer2020}, for $p-1<\nu<2p-1$, the Dirichlet Laplacian $\Delta_\textup{Dir}^\domain$ in $L_p(\domain,\omega_\nu^\domain)$ is given by 
\[
D(\Delta_\textup{Dir}^\domain)
:=
W^{2,p}_{\mathrm{Dir}}(\mathcal{O}, \omega_{\nu}^{\mathcal{O}})
:=
\{
u\in W^{2,p}(\mathcal{O}, \omega_{\nu}^{\mathcal{O}}): \ \mathrm{Tr}\,u=0 
\},
\qquad 
\Delta_\textup{Dir}^\domain u :=\Delta u,
\,\, u\in D(\Delta_\textup{Dir}^\domain),
\]
where for $\nu\in\R$, $k\in\N_0$, and $1<p<\infty$,
\[
W^{k,p}(\mathcal{O}, \omega_{\nu}^{\mathcal{O}}):=\{
u: \mathrm{D}^{\alpha}u \in L_p(\domain,\omega_\nu^\domain), \abs{\alpha}\leq k
\},
\]
endowed with the norm
\[
\norm{u\sep W^{k,p}(\mathcal{O}, \omega_{\nu}^{\mathcal{O}})}
:=
\ssgrklam{\sum_{\abs{\alpha}\leq k}
\int_\domain \Abs{D^\alpha u}^p\rho_\domain^{\nu}\,\dx}^{1/p},
\quad
u\in W^{k,p}(\mathcal{O}, \omega_{\nu}^{\mathcal{O}});
\]
$W^{2,p}_{\mathrm{Dir}}(\mathcal{O}, \omega_{\nu}^{\mathcal{O}})$ is endowed with the norm inherited from $W^{2,p}(\mathcal{O}, \omega_{\nu}^{\mathcal{O}})$.
Note that, other than for the spaces $H^\gamma_{p,\Theta}(\domain)$ with $\gamma\in\N_0$, see Lemma~\ref{lem:LotSpaces:properties}\ref{it:LotSpaces:N}, here the weight does not depend on the order of the derivatives. However, due to Hardy's inequality, the following holds.
\begin{lemma}\label{lem:domain:LinVer}
    Let $d\in\N$, let $\domain\subset\R^d$ be a bounded $C^2$-domain, let $1<p<\infty$, and let $d-1<\Theta<d+p-1$. Then
    \[
    W^{2,p}_\textup{Dir}(\domain,\omega_{\Theta+p-d}^\domain)
    =
    H^2_{p,\Theta-p}(\domain)
    \qquad\text{(equivalent norms)}.
    \]
\end{lemma}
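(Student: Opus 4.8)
The plan is to set $\nu := \Theta + p - d$, so that the hypothesis $d-1 < \Theta < d+p-1$ translates \emph{exactly} into the Hardy range $p-1 < \nu < 2p-1$, and to recall that $\omega_\nu^\domain = \rho_\domain^\nu$, whence $W^{2,p}(\domain,\omega_{\Theta+p-d}^\domain)$ carries the weight $\rho_\domain^\nu$ on \emph{all} derivative orders. By \autoref{lem:LotSpaces:properties}\ref{it:LotSpaces:N} the norm of $H^2_{p,\Theta-p}(\domain)$ is equivalent to $\big(\sum_{\abs{\alpha}\leq 2}\int_\domain\abs{D^\alpha u}^p\rho_\domain^{\nu-2p+\abs{\alpha}p}\,\dx\big)^{1/p}$, so the two spaces share the top-order term $\int_\domain\abs{D^2u}^p\rho_\domain^{\nu}\,\dx$ and differ only in the more singular weights $\rho_\domain^{\nu-2p}$ and $\rho_\domain^{\nu-p}$ on the zeroth- and first-order terms. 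The whole statement thus reduces to a two-sided norm comparison, and the key tool will be two weighted Hardy inequalities whose admissibility ranges are matched precisely by the two strict inequalities in our parameter range.

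First I would dispatch the inclusion $H^2_{p,\Theta-p}(\domain)\hookrightarrow W^{2,p}_{\mathrm{Dir}}(\domain,\omega_\nu^\domain)$. The estimate $\norm{u\sep W^{2,p}(\domain,\omega_\nu^\domain)}\lesssim\norm{u\sep H^2_{p,\Theta-p}(\domain)}$ is immediate from boundedness of $\domain$, since $\rho_\domain^{\nu}\leq R^{(2-\abs{\alpha})p}\,\rho_\domain^{\nu-2p+\abs{\alpha}p}$ for $\abs{\alpha}\leq 2$ with $R:=\diam(\domain)$. That the image actually lies in the Dirichlet space follows from the density of $C_0^\infty(\domain)$ in $H^2_{p,\Theta-p}(\domain)$ (\autoref{lem:LotSpaces:properties}\ref{it:LotSpaces:density}) together with continuity of the just-established embedding and of the trace on $W^{2,p}(\domain,\omega_\nu^\domain)$: every element is a $W^{2,p}$-limit of test functions and hence has vanishing trace.

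The converse inclusion is the crux. Applying the Hardy inequality \emph{without} boundary condition (valid when the weight exponent exceeds $p-1$, i.e.\ using $\nu>p-1\Longleftrightarrow\Theta>d-1$) to each component of $Du$ yields $\int_\domain\abs{Du}^p\rho_\domain^{\nu-p}\,\dx\lesssim\int_\domain(\abs{Du}^p+\abs{D^2u}^p)\rho_\domain^{\nu}\,\dx$, controlling the first-order term by the $W^{2,p}$-norm; no boundary condition is needed here, so this step applies to every $u\in W^{2,p}(\domain,\omega_\nu^\domain)$. Applying next the classical zero-trace Hardy inequality to $u\in W^{2,p}_{\mathrm{Dir}}(\domain,\omega_\nu^\domain)$ (valid when the weight exponent is below $p-1$, i.e.\ using $\nu-p<p-1\Longleftrightarrow\Theta<d+p-1$) gives $\int_\domain\abs{u}^p\rho_\domain^{\nu-2p}\,\dx\lesssim\int_\domain\abs{Du}^p\rho_\domain^{\nu-p}\,\dx$, so the zeroth-order term is in turn controlled. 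Summing the two bounds with the common top-order term yields $\norm{u\sep H^2_{p,\Theta-p}(\domain)}\lesssim\norm{u\sep W^{2,p}(\domain,\omega_\nu^\domain)}$, completing the equivalence of norms.

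I expect the main obstacle to be the bookkeeping and justification of the two Hardy inequalities: lining up the two sign conditions on the weight exponents with $p-1<\nu<2p-1$, and invoking them for general Sobolev functions rather than merely for $C_0^\infty(\domain)$. The delicate point is the zero-trace version in the form valid for all $u\in W^{1,p}(\domain,\rho_\domain^{\nu-p})$ with $\mathrm{Tr}\,u=0$ — equivalently, the identification of this trace-zero space with the closure of $C_0^\infty(\domain)$ in the relevant weight range — which on a bounded $C^2$-domain is standard and can be reduced, via a boundary partition of unity and local flattening, to the one-dimensional weighted Hardy inequalities. For these facts I would cite \cite{Kuf1980} and \cite{LinVer2020} rather than reprove them, since the latter already develops the Dirichlet Laplacian in precisely this weighted setting and range of parameters.
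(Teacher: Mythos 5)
Your proof is correct and takes essentially the same route as the paper's: both reduce the equivalence to the same two weighted Hardy inequalities from \cite{LinVer2020} and \cite{Kuf1980} — the no-boundary-condition one (requiring $\Theta>d-1$) controlling the first-order term by the second-order term, and the zero-trace one (requiring $\Theta<d+p-1$) controlling the zeroth-order term — combined with the density of test functions in $H^2_{p,\Theta-p}(\domain)$ (\autoref{lem:LotSpaces:properties}\ref{it:LotSpaces:density}) and the density of $C^2_0(\domain)$ in $W^{2,p}_{\textup{Dir}}(\domain,\omega_{\Theta+p-d}^\domain)$ from \cite[Proposition~3.8]{LinVer2020}. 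The only difference is organizational: the paper first reduces the norm comparison to $C^2_0(\domain)$-functions via density on both sides and then invokes Hardy only for compactly supported functions, whereas you apply the inequalities directly to Sobolev functions and justify this through the identification of the trace-zero space with the closure of $C_0^\infty(\domain)$, which is the same underlying fact.
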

\begin{proof}
    Since, on the one hand, by Lemma~\ref{lem:LotSpaces:properties}\ref{it:LotSpaces:density}, $C^2_0(\domain)\subset H^2_{p,\Theta-p}(\domain)$ dense, and, on the other hand, by \cite[Proposition~3.8]{LinVer2020},  $C^2_0(\domain)$ is dense in $W^{2,p}_\textup{Dir}(\domain,\omega_{\Theta+p-d}^\domain)$ for $d-1<\Theta<d+p-1$, it is enough to prove the norm equivalence 
    \begin{equation}
        \norm{u\sep W^{2,p}(\domain,\omega_{\Theta+p-d}^\domain)}
        \sim
        \norm{u\sep H^2_{p,\Theta-p}(\domain)},
        \quad u\in C^2_0(\domain).
    \end{equation}
    However, since $\domain$ is assumed to be bounded, it is enough to check that ``$\gtrsim$'' holds (the other direction is an immediate consequence of Lemma~\ref{lem:LotSpaces:properties}\ref{it:LotSpaces:N} and~\ref{it:LotSpaces:bddDom}). 
    Since the seminorms involving the second order derivatives in the two norms coincide, we merely have to prove that
    \[
    \int_\domain \abs{u}^p\rho_\domain^{\Theta-p-d}\,\dx
    +
    \int_\domain \abs{\partial_x u}^p\rho_\domain^{\Theta-d}\,\dx
    \lesssim
    \norm{u\sep W^{2,p}(\domain,\omega_{\Theta+p-d}^\domain)}^p,
        \quad u\in C^2_0(\domain).
    \]
    But this is a consequence of Hardy's inequality, which guarantees that for $\Theta<d+p-1$,
    \[
    \norm{u\sep W^{0,p}(\domain,\omega_{\Theta-p-d}^\domain)}
    \lesssim
    \norm{u\sep W^{1,p}(\domain,\omega_{\Theta-d}^\domain)},
    \quad
    u\in C^2_0(\domain),
    \]
    and
    \[
    \norm{u\sep W^{1,p}(\domain,\omega_{\Theta-d}^\domain)}
    \lesssim
    \norm{u\sep W^{2,p}(\domain,\omega_{\Theta+p-d}^\domain)},
    \quad
    u\in W^{2,p}(\domain,\omega_{\Theta+p-d}^\domain),
    \]
see \cite[Corollary~3.4]{LinVer2020} (or \cite[Theorems~8.2 and~8.4]{Kuf1980}).
\end{proof}

Recall that our goal is to prove the existence part in Theorem~\ref{thm:Poisson:main}. To this end we aim to prove that the operator $B$ from~\eqref{eq:ResolventY} is invertible for the range of weight parameters $\Theta$ and $\theta$ from~\eqref{eq:PoissonRange} .
For such $\Theta,\theta\in\R$, if $Bv=F$ for some $F\in Y^0_{2,\Theta+2,\theta-2}(\I)$ and some $v\in Y^2_{2,\Theta-2,\theta-2}(\I)$, then for almost all $\lambda\in \Gamma_{\frac{2-\theta}{2}}=\frac{2-\theta}{2}+i\R$ it holds that $v(\lambda,\cdot)\in H^2_{2,\Theta-3}(\I)$, $F(\lambda,\cdot)\in L_{2,\Theta+1}(\I)$ and
\[
(\lambda^2+D_\phi^2)v(\lambda,\cdot)=F(\lambda,\cdot)\quad \text{in } L_{2,\Theta+1}(\I).
\]
In view of Lemma~\ref{lem:domain:LinVer} this is the same as saying that 
\[
u(\lambda,\cdot)=R(\lambda^2,-\Delta_\textup{Dir}^\I)F(\lambda,\cdot),
\]
where $R(\mu,-\Delta_\textup{Dir}^\I)$ is the resolvent of the unbounded operator $(-\Delta_\textup{Dir}^\I,H^2_{2,\Theta-3}(\I))$ in $L_{2,\Theta+1}(\I)$ at $\mu\in\rho(-\Delta_\textup{Dir}^\I)$, where $\rho(-\Delta_\textup{Dir}^\I)$ is the resolvent set of $-\Delta_\textup{Dir}^\I$ (we refer to~\cite[Chapter~10 and Appendix~G]{HytNeeVer+2017} for notions from the theory of unbounded operators and operator semigroups).
The following lemma collects some properties of $-\Delta_\textup{Dir}^\I$ and its resolvent. 
It is a slight alteration of parts of \cite[Corollary~6.2]{LinVer2020} applied to the one-dimensional domain $\I=(0,\kappa)$. For $0<\sigma<\pi$ we write
\[
\Sigma_\sigma:=\ggklam{z\in\C\setminus\{0\}\colon \abs{\arg(z)}<\sigma}.
\]

\begin{lemma}\label{lem:cor-dir}
Let $1<p<\infty$, let $0<\Theta<p$, and let 
    \[
    (\Delta_\textup{Dir}^\I,D(\Delta_\textup{Dir}^\I)):=(\Delta,H^2_{p,\Theta-p}(\I))
    \]
    be the Dirichlet Laplacian in $L_{p,\Theta+p}(\I)$, $\I=(0,\kappa)$; cf.~Lemma~\ref{lem:domain:LinVer}. Then the following assertions hold. 
    \begin{enumerate}[label=\textup{(\roman*)}]
        \item\label{it:cor-dir:spectrum} The spectrum of   $-\Delta^\I_{\mathrm{Dir}}$   is given by $\sigma(  -\Delta^\I_{\mathrm{Dir}} )=\sggklam{\grklam{n\frac{\pi}{\kappa}}^2: \ n\in \mathbb{N}}$.
        
        \item\label{it:cor-dir:sectorial}
        $-\Delta_{\mathrm{Dir}}^\I$ is sectorial with   angle of sectoriality   $\omega(-\Delta_{\mathrm{Dir}}^\I)=0$. 
            
        \item\label{it:cor-dir:normequiv} $(\Delta^\I_{\mathrm{Dir}},D(\Delta^\I_{\mathrm{Dir}}))$ is a closed and densely defined operator in $L_{p,\Theta+p}(\mathcal{I})$. Moreover,
\[ 
   \norm{\Delta u\sep L_{p,\Theta+p}(\I)} 
            \sim 
            \norm{u \sep H^2_{p,\Theta-p}(\I)},
            \quad 
            u\in H^2_{p,\Theta-p}(\I). 
        \] 
        \item\label{it:cor-dir:resolventEst} 
        Let $s\in\R\setminus\ggklam{\pm n\frac{\pi}{\kappa}\colon n\in\N}$, let $\lambda\in\Gamma_s=s+i\R$, let $f\in L_{p,\Theta+p}(\I)$, and let 
        \[
        u_\lambda:=u_{\lambda,f}:=R(\lambda^2,-\Delta_\textup{Dir}^\I)f \in L_{p,\Theta+p}(\I).
        \]
        Then $u_{\lambda,f}$ is the unique $u\in H^2_{p,\Theta-p}(\D)$ such that $(\lambda^2+\Delta)u=f$. Moreover,
\begin{equation}\label{eq:cor-dir:resolventEst}
            \sum_{j=0}^2 \abs{\lambda}^j \norm{u_{\lambda,f}\sep H^{2-j}_{p,\Theta-p+jp}(\I)}      \lesssim 
            \norm{f\sep L_{p,\Theta+p}(\I)}, 
             \quad\lambda\in\Gamma_s,\,\, f\in L_{p,\Theta+p}(\I).
        \end{equation}
    \end{enumerate}    
\end{lemma}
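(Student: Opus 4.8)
The plan is to deduce the whole statement from the analysis of the Dirichlet Laplacian on bounded $C^2$-domains in \cite{LinVer2020} by specializing to the one-dimensional domain $\I=(0,\kappa)$; the bulk of the analytic content (sectoriality, a-priori estimates) is imported from there, and the work consists in matching the function-space scales and in assembling the resolvent estimate \ref{it:cor-dir:resolventEst}. First I would identify our operator with the one studied in \cite{LinVer2020}. Since $d=1$, the hypothesis $0<\Theta<p$ reads $d-1<\Theta<d+p-1$, so \autoref{lem:domain:LinVer} applies and yields $H^2_{p,\Theta-p}(\I)=W^{2,p}_{\textup{Dir}}(\I,\omega^\I_{\Theta+p-1})$ with equivalent norms, while $L_{p,\Theta+p}(\I)=L_p(\I,\omega^\I_{\Theta+p-1})$ by definition. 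Hence $(\Delta_{\textup{Dir}}^{\I},H^2_{p,\Theta-p}(\I))$ is precisely the Dirichlet Laplacian of \cite{LinVer2020} for the admissible exponent $\nu:=\Theta+p-1\in(p-1,2p-1)$. With this identification, assertion \ref{it:cor-dir:sectorial} as well as the closedness and dense domain in \ref{it:cor-dir:normequiv} are immediate from \cite[Corollary~6.2]{LinVer2020} (density also following since $C_0^\infty(\I)\subset H^2_{p,\Theta-p}(\I)$ is dense in $L_{p,\Theta+p}(\I)$ by \autoref{lem:LotSpaces:properties}\ref{it:LotSpaces:density}).

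For \ref{it:cor-dir:spectrum} I would argue that the eigenvalue problem $-u''=\mu u$ on $\I$ with $u(0)=u(\kappa)=0$ has nontrivial solutions exactly for $\mu=(n\pi/\kappa)^2$, $n\in\N$, with eigenfunctions $\phi\mapsto\sin(n\pi\phi/\kappa)$; these are smooth and vanish linearly at $\partial\I$, hence belong to $H^2_{p,\Theta-p}(\I)$, giving ``$\supseteq$''. The reverse inclusion follows from discreteness of the spectrum of $-\Delta_{\textup{Dir}}^{\I}$ on the bounded interval $\I$ (compact resolvent), so that $\sigma(-\Delta_{\textup{Dir}}^{\I})$ consists solely of the eigenvalues just found; alternatively, for $\mu>0$ with $\mu\neq(n\pi/\kappa)^2$ one solves $u''+\mu u=f$ explicitly by the interval's Dirichlet Green's function (well defined since $\sin(\sqrt\mu\,\kappa)\neq0$) and checks boundedness of the solution operator into $H^2_{p,\Theta-p}(\I)$, while $\C\setminus[0,\infty)\subset\rho(-\Delta_{\textup{Dir}}^{\I})$ already by \ref{it:cor-dir:sectorial}. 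In particular $0\notin\sigma(-\Delta_{\textup{Dir}}^{\I})$, which upgrades the a-priori estimate of \cite{LinVer2020} to the full norm equivalence in \ref{it:cor-dir:normequiv}: it gives $\|u\sep H^2_{p,\Theta-p}(\I)\|\lesssim\|\Delta u\sep L_{p,\Theta+p}(\I)\|$, the reverse inequality being trivial.

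The main work lies in \ref{it:cor-dir:resolventEst}. I would first check that $\lambda^2\in\rho(-\Delta_{\textup{Dir}}^{\I})$ for every $\lambda\in\Gamma_s$: writing $\lambda=s+it$ one has $\lambda^2=s^2-t^2+2ist$, and by \ref{it:cor-dir:spectrum} the only way $\lambda^2$ could hit the spectrum $\{(n\pi/\kappa)^2\}\subset(0,\infty)$ is $t=0$ together with $s^2=(n\pi/\kappa)^2$, excluded by $s\notin\{\pm n\pi/\kappa:n\in\N\}$; uniqueness of $u_{\lambda,f}$ and $u_{\lambda,f}=R(\lambda^2,-\Delta_{\textup{Dir}}^{\I})f$ follow. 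Next I would prove $\|R(\lambda^2,-\Delta_{\textup{Dir}}^{\I})\|\lesssim|\lambda|^{-2}$ uniformly in $\lambda\in\Gamma_s$ by splitting $\Gamma_s$. Since $\arg(\lambda^2)=2\arg(\lambda)\to\pm\pi$ as $|t|\to\infty$, for a fixed $\sigma\in(0,\pi)$ there is $T_0>0$ with $\lambda^2\notin\overline{\Sigma_\sigma}$ whenever $|t|\geq T_0$, so sectoriality \ref{it:cor-dir:sectorial} yields $\|R(\lambda^2,-\Delta_{\textup{Dir}}^{\I})\|\lesssim_\sigma|\lambda^2|^{-1}=|\lambda|^{-2}$ there; on the compact part $\{|t|\leq T_0\}$ the resolvent is continuous, hence bounded, and $|\lambda|$ is bounded away from $0$ and $\infty$, so the trivial bound again gives $\|R(\lambda^2,-\Delta_{\textup{Dir}}^{\I})\|\lesssim|\lambda|^{-2}$. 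This settles the $j=2$ term, $|\lambda|^2\|u_{\lambda,f}\sep L_{p,\Theta+p}(\I)\|\lesssim\|f\sep L_{p,\Theta+p}(\I)\|$. The $j=0$ term follows from $\Delta u_{\lambda,f}=f-\lambda^2 u_{\lambda,f}$ and the norm equivalence \ref{it:cor-dir:normequiv}, since $\|u_{\lambda,f}\sep H^2_{p,\Theta-p}(\I)\|\lesssim\|f\|+|\lambda|^2\|u_{\lambda,f}\sep L_{p,\Theta+p}(\I)\|\lesssim\|f\sep L_{p,\Theta+p}(\I)\|$. For the intermediate $j=1$ term I would invoke the complex interpolation identity $H^1_{p,\Theta}(\I)=[L_{p,\Theta+p}(\I),H^2_{p,\Theta-p}(\I)]_{1/2}$ from \autoref{lem:LotSpaces:properties}\ref{it:LotSpaces:interpolation} and estimate $|\lambda|\,\|u_{\lambda,f}\sep H^1_{p,\Theta}(\I)\|\lesssim\bigl(|\lambda|^2\|u_{\lambda,f}\sep L_{p,\Theta+p}(\I)\|\bigr)^{1/2}\,\|u_{\lambda,f}\sep H^2_{p,\Theta-p}(\I)\|^{1/2}\lesssim\|f\sep L_{p,\Theta+p}(\I)\|$, using the two previous bounds.

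The step I expect to be the main obstacle is the uniform-in-$\lambda$ resolvent bound in \ref{it:cor-dir:resolventEst}: sectoriality only controls the resolvent outside a fixed sector, whereas along $\Gamma_s$ the point $\lambda^2$ sweeps arbitrarily close to the positive real axis (near $s^2$) for small $|t|$. The hypothesis $s\notin\{\pm n\pi/\kappa:n\in\N\}$ is exactly what keeps $\lambda^2$ off the discrete spectrum in that regime, and the resolution is to handle this near-real range by a separate compactness/continuity argument rather than by sectoriality. A secondary point requiring care is the bookkeeping of weight exponents when passing between the $\omega^\I_\nu$-scale of \cite{LinVer2020} and the $H^\gamma_{p,\Theta}$-scale, which is reconciled through \autoref{lem:domain:LinVer} and \autoref{lem:LotSpaces:properties}\ref{it:LotSpaces:N}.
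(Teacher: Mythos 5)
Your treatment of parts (ii)--(iv) is essentially the paper's own proof: the same identification of $(\Delta_\textup{Dir}^\I,H^2_{p,\Theta-p}(\I))$ with the operator of \cite{LinVer2020} via \autoref{lem:domain:LinVer}, the same verification that $\lambda^2\in\rho(-\Delta_\textup{Dir}^\I)$ along $\Gamma_s$, the same splitting of $\Gamma_s$ into the region where $\lambda^2$ avoids a fixed sector (handled by sectoriality) and a compact remainder (handled by continuity of the resolvent), and the same complex-interpolation identity $[L_{p,\Theta+p}(\I),H^2_{p,\Theta-p}(\I)]_{1/2}=H^1_{p,\Theta}(\I)$ from \autoref{lem:LotSpaces:properties}\ref{it:LotSpaces:interpolation} for the $j=1$ term. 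One small slip: when $s=0$ the line $\Gamma_s$ passes through $\lambda=0$, so on the compact piece $\abs{\lambda}$ is \emph{not} bounded away from zero; this is harmless, since all that is needed there is $\sup\abs{\lambda}^2\,\norm{R(\lambda^2,-\Delta_\textup{Dir}^\I)}<\infty$, which follows from continuity of the resolvent together with boundedness of $\abs{\lambda}$ from above.

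The genuine divergence is part (i). The paper settles it by citation: the case $p=2$, $\Theta=1$ is classical (\cite[pp.~49--50]{Gri1992}), and \cite[Corollary~6.2(1)]{LinVer2020} states that the spectrum is independent of $p$ and $\Theta$, which gives both inclusions at once. You instead compute the spectrum by hand. Your inclusion ``$\supseteq$'' is sound: $\sin(n\pi\cdot/\kappa)$ vanishes linearly at $\partial\I$, and for $\Theta>0$ one checks directly that it lies in $H^2_{p,\Theta-p}(\I)$. But your ``$\subseteq$'' is only sketched, and that is where the substance sits. The compact-resolvent route needs compactness of the embedding $H^2_{p,\Theta-p}(\I)\hookrightarrow L_{p,\Theta+p}(\I)$, a weighted compactness statement that is not available from the paper's toolkit and would have to be proved. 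The Green's-function route needs weighted Hardy-type estimates showing that the explicit solution operator maps $L_{p,\Theta+p}(\I)$ boundedly into $H^2_{p,\Theta-p}(\I)$; note that $f\in L_{p,\Theta+p}(\I)$ need not be integrable up to $\partial\I$ when $\Theta>0$, so even the convergence of $\int_\I G_\mu(\phi,\psi)f(\psi)\,\d\psi$ requires exploiting the linear decay of the Green's function at the boundary together with $\Theta<p$ --- and one cannot shortcut via part (iii), since (iii) itself presupposes $0\in\rho(-\Delta_\textup{Dir}^\I)$, i.e.\ part (i). Either route can be completed, so I would call this extra unfinished work rather than an error; the citation used in the paper is precisely what lets one bypass it.
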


\begin{proof}
      It is well-known that~\ref{it:cor-dir:spectrum} holds for $p=2$ and $\Theta=1$, see, e.g., \cite[pp.~49-50]{Gri1992}. 
      The general case is thus a consequence of \cite[Corollary~6.2(1)]{LinVer2020}, which states the independence of the spectrum on $p$ and $\Theta$. 
      Assertion~\ref{it:cor-dir:sectorial} is an immediate consequence of~\cite[Corollary~6.2(2)]{LinVer2020}, whereas~\ref{it:cor-dir:normequiv} follows from~\cite[Corollary~6.2(3)]{LinVer2020} together with Lemma~\ref{lem:domain:LinVer} and the fact that $0\in\rho(-\Delta^\I_\textup{Dir})$, as follows from~\ref{it:cor-dir:spectrum}. 
      Thus, we only have to prove~\ref{it:cor-dir:resolventEst}, which mainly follows from the sectoriality of $-\Delta_\textup{Dir}^\I$, its spectral properties and the interpolation properties of the scale $H^\gamma_{p,\Theta}(\I)$. 
      We argue as follows: 
      First note that, due to~\ref{it:cor-dir:spectrum}, $\lambda^2\in \rho(-\Delta^\I_\textup{Dir})$ for all $\lambda\notin\ggklam{\pm n\frac{\pi}{\kappa}\colon n\in\N}$.
      Thus, by the definition of the resolvent, for all $\lambda\in\C\setminus\ggklam{\pm n\frac{\pi}{\kappa}\colon n\in\N}$ and for all $f\in L_{p,\Theta+p}(\I)$, $u_{\lambda,f}$ is the unique $u\in H^2_{p,\Theta-p}(\I)$ such that $\grklam{\lambda^2+\Delta}u=f$.
      It thus remains to prove~\eqref{eq:cor-dir:resolventEst}. 
      To this end, fix $s\in\R\setminus\ggklam{\pm n\frac{\pi}{\kappa}\colon n\in\N}$ and fix $0<\sigma<\pi$.
      The sectoriality of $-\Delta^\I_\textup{Dir}$ yields that there is a finite constant $C_\sigma>0$ such that
      \[
      \abs{\lambda}^2 \norm{u_{\lambda,f}\sep L_{p,\Theta+p}(\I)}
      \leq
      C_\sigma \norm{f\sep L_{p,\Theta+p}(\I)},
      \quad 
      \lambda\in\C,\,\, \lambda^2\in\C\setminus\overline{\Sigma_\sigma}, 
      \,\, f\in L_{p,\Theta+p}(\I).
      \]
      Since $ \Delta u_{\lambda,f} = \lambda^2u_{\lambda,f}+\Delta u_{\lambda,f}-\lambda^2u_{\lambda,f} = f-\lambda^2 u_{\lambda,f}$, the last estimate also yields that
      \[
      \norm{\Delta u_{\lambda,f}\sep L_{p,\Theta+p}(\I)}
      \leq
      (C_\sigma+1) \norm{f\sep L_{p,\Theta+p}(\I)},
      \quad 
      \lambda\in\C,\,\, \lambda^2\in\C\setminus\overline{\Sigma_\sigma}, 
      \,\, f\in L_{p,\Theta+p}(\I),
      \]
      so that, by~\ref{it:cor-dir:normequiv}, we obtain 
      \[
      \abs{\lambda}^2 \norm{u_{\lambda,f}\sep L_{p,\Theta+p}(\I)}
      +
      \norm{u_{\lambda,f}\sep H^2_{p,\Theta-p}(\I)}
      \leq
      \grklam{2C_\sigma+1} \norm{f\sep L_{p,\Theta+p}(\I)},
      \]
      for all $\lambda\in\C$ with $\lambda^2\in\C\setminus\overline{\Sigma_\sigma}$ and all $f\in L_{p,\Theta+p}(\I)$ with the same $C_\sigma$ as above.
      Moreover, using the fact that, by the interpolation statement from Lemma~\ref{lem:LotSpaces:properties}\ref{it:LotSpaces:interpolation},
      \[
      \geklam{H^2_{p,\Theta-p}(\I),\abs{\lambda}^2 L_{p,\Theta+p}(\I)}_{1/2}=\abs{\lambda}H^1_{p,\Theta}(\I),
      \]
      we obtain that
\begin{equation}\label{eq:cor-dir:resolventEst:a}            
\sum_{j=0}^2 \abs{\lambda}^j \norm{u_{\lambda,f}\sep H^{2-j}_{p,\Theta-p+jp}(\I)}
\lesssim_\sigma 
\norm{f\sep L_{p,\Theta+p}(\I)}, 
\quad\lambda\in\Gamma_s\cap S_\sigma,\,\, f\in L_{p,\Theta+p}(\I),
\end{equation}
with $S_\sigma:=\C\setminus\grklam{\overline{\Sigma_{\sigma/2}}\cup \Sigma^c_{\pi-\sigma/2}}$ (note that $\lambda^2\in\C\setminus\overline{\Sigma_\sigma}$ if, and only if, $\lambda\in S_\sigma$). 
Then $\Gamma_s\cap S_\sigma^c\subset \rho(-\Delta_\textup{Dir}^\I)$, so that $\Gamma_s\cap S_\sigma^c\ni \lambda\mapsto R(\lambda^2,-\Delta_\textup{Dir}^\I)$ is continuous. Thus, due to the compactness of $\Gamma_s\cap S_\sigma^c$, the set $\ggklam{R(\lambda^2,-\Delta_\textup{Dir}^\I)\colon\lambda\in \Gamma_s\cap S_\sigma^c}$ is bounded in the operator norm on $L_{p,\Theta+p}(\I)$. Therefore,
\[
\norm{u_{\lambda,f}\sep L_{p,\Theta+p}(\I)}
\lesssim
\norm{f\sep L_{p,\Theta+p}(\I)},
\quad
\lambda\in\Gamma_s\cap S_\sigma^c,\,\, f\in L_{p,\Theta+p}(\I).
\]
With similar arguments as above (and since $\I$ is bounded and Lemma~\ref{lem:LotSpaces:properties}\ref{it:LotSpaces:bddDom} holds), this yields that 
\begin{align*}
\sum_{j=0}^2 \abs{\lambda}^j \norm{u_{\lambda,f}\sep H^{2-j}_{p,\Theta-p+jp}(\I)}
&\lesssim
\norm{u_{\lambda,f}\sep H^2_{p,\Theta-p}(\I)}\\
&\lesssim
\norm{\Delta u_{\lambda,f}\sep L_{p,\Theta+p}(\I)}\\
&\lesssim
(1+\abs{\lambda}^2)\norm{f\sep L_{p,\Theta+p}(\I)}\\
&\lesssim
\norm{f\sep L_{p,\Theta+p}(\I)},
\qquad\qquad
\lambda\in\Gamma_s\cap S_\sigma^c,\,\, f\in L_{p,\Theta+p}(\I).    
\end{align*}
Together with~\eqref{eq:cor-dir:resolventEst:a}, this proves the assertion.
\end{proof}

Now we are finally ready to prove the existence part of Theorem~\ref{thm:Poisson:main}.

\begin{proof}[Proof of Theorem~\ref{thm:Poisson:main}\ref{it:Poisson:main:existence}] 
As outlined above, it is sufficient to prove that the operator $B$ introduced in~\eqref{eq:ResolventY} is invertible. 
To this end, let $F\in Y^0_{2,\Theta+2,\theta-2}(\I)$ with $\Theta$ and $\theta$ satisfying~\eqref{eq:PoissonRange}. 
We need to show that there exists a unique $v\in Y^2_{2,\Theta-2,\theta-2}(\I)$ satisfying
\begin{equation}\label{eq:Poisson:Mellin}
    Bv=F.
\end{equation}
Since $F\in Y^0_{2,\Theta+2,\theta-2}(\I)$, $F$ has a version (also denoted by $F$), such that $F(\lambda,\cdot)\in L_{2,\Theta+1}(\I)$ for all $\lambda\in \Gamma_{\frac{2-\theta}{2}}$. 
Thus, by Lemma~\ref{lem:cor-dir} and since $(\theta-2)/2\in\R\setminus\ggklam{\pm n\frac{\pi}{\kappa}\colon n\in\N}$ and $0<\Theta-1<2$ as $\theta$ and $\Theta$ satisfy~\eqref{eq:PoissonRange}, for all $\lambda\in\Gamma_{\frac{2-\theta}{2}}$, there exists a unique $u_\lambda\in H^2_{2,\Theta-3}(\I)$ such that $(\lambda^2+ D_\phi^2) u_\lambda=F(\lambda,\cdot)$ in $L_{2,\Theta+1}(\I)$ and
\begin{equation}\label{eq:Poisson:resolventEst}
\sum_{j=0}^2 \abs{\lambda}^j\norm{u_\lambda\sep H^{2-j}_{2,\Theta-3+2j}(\I)}
\lesssim
\norm{F(\lambda,\cdot)\sep L_{2,\Theta+1}(\I)},
\quad \lambda\in\Gamma_{\frac{2-\theta}{2}}.
\end{equation}
Moreover, there exists an essentially unique $v\colon\Gamma_{\frac{2-\theta}{2}}\times\I\to \C$, such that $(t,\phi)\mapsto v(\frac{2-\theta}{2}+it,\phi)$ is Borel measurable and $v(\lambda,\cdot)=u_\lambda$ in $L_{p,\Theta-3}(\I)$ for almost all $\lambda\in\Gamma_{\frac{2-\theta}{2}}$ (this may be verified by using the continuity of the resolvent together with basic arguments from measure theory, see, e.g.,~\cite[Proposition~1.12.25]{HytNeeVer+2016}).
Due to~\eqref{eq:Poisson:resolventEst},
\[
\frac{1}{2\pi i} 
\sum_{j=0}^2 
\int_{\frac{2-\theta}{2}-i\infty}^{\frac{2-\theta}{2}+i\infty}
\abs{\lambda}^{2j}\norm{v(\lambda,\cdot)\sep H^{2-j}_{2,\Theta-3+2j}(\I)}^2\,\mathrm{d}\lambda
\lesssim
\frac{1}{2\pi i} 
\int_{\frac{2-\theta}{2}-i\infty}^{\frac{2-\theta}{2}+i\infty}
\norm{F(\lambda,\cdot)\sep L_{2,\Theta+1}(\I)}^2\,\mathrm{d}\lambda.
\]
Thus $v\in Y^2_{2,\Theta-2,\theta-2}(\I)$, $Bv=F$, and
\[
\norm{v\sep Y^2_{2,\Theta-2,\theta-2}(\I)}
\lesssim
\norm{F\sep Y^0_{2,\Theta+2,\theta-2}(\I)}.
\]
Moreover, due to the uniqueness of $u_\lambda$, $\lambda\in\Gamma_{\frac{2-\theta}{2}}$, $v$ is the unique element in $Y^2_{2,\Theta-2,\theta-2}(\I)$ satisfying~\eqref{eq:Poisson:Mellin}. Thus, $B$ is surjective and injective and the theorem is proven.
\end{proof}

\begin{appendix}

\section{Differential calculus in polar coordinates}\label{app:polar:coord}

In this appendix we collect some fundamental results, which we need when switching from Cartesian to polar coordinates, in particular, when ``translating'' derivatives with respect to one coordinate system into derivatives with respect to the other.
Recall that for $\I:=(0,\kappa)$ by $\Phi\colon\widetilde{\D}:=(0,\infty)\times\I\to \D$ we denote the polar coordinate transform which is a $ C^\infty$ diffeomorphism.
Also recall the definition of the rotation matrices $A=A(\phi)$, $\phi\in\I$, from~\eqref{matrix-A}.
Finally, let $\tilde{\rho}_\circ:=\rho_\circ\circ\Phi$.

\begin{lemma}\label{lem:FirstDerivative}
If $g\in \mathscr{D}'(\D)$, then $g\circ\Phi\in \mathscr{D}'(\widetilde{\D})$ and
\begin{equation}\label{eq:FristDerivative}
(D_x g)\circ\Phi = A \begin{pmatrix}
D_r\\
\tilde{\rho}_\circ^{-1} D_\phi
\end{pmatrix}
(g\circ\Phi).    
\end{equation}
In particular, $D^\alpha_x g \in L_{1,\loc}(\D)$ for all $\abs{\alpha}\leq \gamma$ if, and only if, $D^{\alpha}_{(r,\phi)} (g\circ\Phi)\in L_{1,\loc}(\widetilde{\D})$ for all $\abs{\alpha}\leq \gamma$.
\end{lemma}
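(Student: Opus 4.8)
The plan is to establish the three assertions in order, taking as starting point the definition of the distributional pullback recalled in the Notation section. Since $\Phi\colon\widetilde{\D}\to\D$ is a $C^\infty$ diffeomorphism, the map $\psi\mapsto(\psi\circ\Phi^{-1})\,\abs{\det D\Phi^{-1}}$ sends $C_0^\infty(\widetilde{\D})$ continuously into $C_0^\infty(\D)$; hence $\psi\mapsto\big(g,(\psi\circ\Phi^{-1})\,\abs{\det D\Phi^{-1}}\big)$ is a well-defined generalized function, i.e.\ $g\circ\Phi\in\mathscr{D}'(\widetilde{\D})$, which settles the first claim. Note that $\Phi^\ast$ is a bijection from $\mathscr{D}'(\D)$ onto $\mathscr{D}'(\widetilde{\D})$ with inverse $(\Phi^{-1})^\ast$.

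For the identity \eqref{eq:FristDerivative} I would first record the classical computation for smooth $g$. Writing $\Phi=(\Phi_1,\Phi_2)$ with $\Phi_1=r\cos\phi$ and $\Phi_2=r\sin\phi$, the ordinary chain rule gives $\partial_r(g\circ\Phi)=\cos\phi\,(\partial_{x_1}g)\circ\Phi+\sin\phi\,(\partial_{x_2}g)\circ\Phi$ and $\partial_\phi(g\circ\Phi)=-r\sin\phi\,(\partial_{x_1}g)\circ\Phi+r\cos\phi\,(\partial_{x_2}g)\circ\Phi$, that is,
\[
\begin{pmatrix} D_r \\ \tilde{\rho}_\circ^{-1} D_\phi \end{pmatrix}(g\circ\Phi) = A^{-1}\,(D_x g)\circ\Phi ,
\]
since $A$ is a rotation (so $A^{-1}=A^{T}$) and $\tilde{\rho}_\circ(r,\phi)=r$; multiplying by $A$ yields \eqref{eq:FristDerivative} for smooth $g$. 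To pass to arbitrary $g\in\mathscr{D}'(\D)$ I would invoke the weak-$\ast$ density of $C_0^\infty(\D)$ in $\mathscr{D}'(\D)$ together with the weak-$\ast$ sequential continuity of every operation occurring on the two sides of \eqref{eq:FristDerivative}: the pullback $\Phi^\ast$, the distributional derivatives $D_x,D_r,D_\phi$, and multiplication by the fixed functions $\cos\phi,\sin\phi,\tilde{\rho}_\circ^{-1}=r^{-1}$, all of which are smooth on $\widetilde{\D}$ because $r>0$ there. Choosing $g_n\in C_0^\infty(\D)$ with $g_n\to g$ and letting $n\to\infty$ in the identity, valid for each $g_n$, then proves \eqref{eq:FristDerivative} in general.

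For the \emph{in particular} statement I would iterate \eqref{eq:FristDerivative}. Applying the first-order relation repeatedly shows that for every multi-index $\alpha$ with $\abs{\alpha}\le\gamma$ the pulled-back Cartesian derivative $(D^\alpha_x g)\circ\Phi$ is a finite linear combination of the polar derivatives $D^\beta_{(r,\phi)}(g\circ\Phi)$, $\abs{\beta}\le\abs{\alpha}$, with coefficients smooth on $\widetilde{\D}$ (finite sums of terms $r^{-k}$ times trigonometric polynomials in $\phi$); conversely, each $D^\alpha_{(r,\phi)}(g\circ\Phi)$ is such a combination of the $(D^\beta_x g)\circ\Phi$, $\abs{\beta}\le\abs{\alpha}$, now with coefficients that are even polynomials in $r,\cos\phi,\sin\phi$. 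A $C^\infty$ diffeomorphism maps $L_{1,\loc}$ functions to $L_{1,\loc}$ functions: for compact $K\subset\widetilde{\D}$ one has $\Phi(K)\subset\subset\D$ and $\int_K\abs{h\circ\Phi}\,\mathrm{d}r\,\mathrm{d}\phi=\int_{\Phi(K)}\abs{h}\,r^{-1}\,\mathrm{d}x<\infty$, because $r=\abs{x}$ is bounded away from $0$ on $\Phi(K)$, and symmetrically for $\Phi^{-1}$. Combining this with the fact that multiplication by a locally bounded (here smooth) function preserves local integrability, both implications follow: if all $D^\alpha_x g$, $\abs{\alpha}\le\gamma$, are regular, then so are all $D^\alpha_{(r,\phi)}(g\circ\Phi)$ via the first set of relations and the pushforward estimate, and conversely the second set of relations combined with $(\Phi^{-1})^\ast$ transports regularity back to $\D$.

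The main obstacle is the rigorous justification of \eqref{eq:FristDerivative} at the distributional level. The density argument is clean, but one must verify that each of the three operations is genuinely weak-$\ast$ continuous on the relevant space and that the coefficients $r^{-1},\cos\phi,\sin\phi$ cause no difficulty, which is precisely where it matters that the singular point $r=0$ has been removed by passing from $\R^2\setminus\{0\}$ to $\widetilde{\D}$. Once \eqref{eq:FristDerivative} is secured, the bookkeeping in the last part, namely tracking that every coefficient produced by iteration stays smooth on $\widetilde{\D}$, is routine.
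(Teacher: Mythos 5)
Your proposal is correct and follows essentially the same route as the paper, whose proof consists of exactly the two steps you carry out: the classical chain rule for $g\in C_0^\infty(\D)$, followed by extension to arbitrary $g\in\mathscr{D}'(\D)$ via the usual rules of distributional calculus (your density/weak-$\ast$ continuity argument being a standard instantiation of the latter). You simply supply the details the paper leaves implicit, including the change-of-variables bookkeeping for the $L_{1,\loc}$ equivalence, which the paper's proof does not spell out either.
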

\begin{proof}
For $g\in C_0^\infty(\D)$ the assertion follows simply by using the chain rule. The assertion for arbitrary $g\in\mathscr{D}'(\D)$ then follows from  the usual rules of distributional calculus.
\end{proof}

Formula~\eqref{eq:FristDerivative} generalizes to higher order derivatives the following way.

\begin{lemma}[{\cite[p.~1556]{DahWei2015}}]\label{lem:higherorder}
For $\alpha\in\N_0^2\setminus\{0\}$ let 
$$
    \Lambda_\alpha := \left\{ \beta=(\beta_1,\beta_2)\in\N_0^2\setminus\{0\} :\ \abs{\beta}\leq\abs{\alpha}\right\}.
$$
Then, for all $\beta\in\Lambda_{\alpha}$, there exist trigonometric polynomials $T_{\alpha,\beta}$ on $\I$ such that 
\begin{align}\label{eq:higherorder}
(D_x^\alpha g)\circ\Phi 
= 
\sum_{\beta\in\Lambda_{\alpha}} 
T_{\alpha,\beta} \, \tilde{\rho}_\circ^{\beta_1-\abs{\alpha}}\, D_r^{\beta_1}\,D_\phi^{\beta_2} (g\circ\Phi), \qquad g\in\mathscr{D}'(\D).
	\end{align}
\end{lemma}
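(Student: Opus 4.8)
The plan is to prove \eqref{eq:higherorder} by induction on $\abs{\alpha}\geq 1$, taking \autoref{lem:FirstDerivative} as the base case $\abs{\alpha}=1$. There the right-hand side of~\eqref{eq:FristDerivative} is already a sum of the asserted form: the two entries of the rotation matrix $A=A(\phi)$ are the trigonometric polynomials $\pm\sin\phi$ and $\cos\phi$, the admissible indices are $\beta\in\{(1,0),(0,1)\}=\Lambda_{e_i}$, and $\tilde{\rho}_\circ=r$ is smooth and strictly positive on $\widetilde{\D}$, so that all the products and multiplications below are legitimate at the distributional level.

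For the induction step I would fix $\alpha$ with $\abs{\alpha}=n+1$, write $\alpha=\alpha'+e_i$ with $\abs{\alpha'}=n$, and apply \autoref{lem:FirstDerivative} to $h:=D_x^{\alpha'}g\in\mathscr{D}'(\D)$ to obtain $(D_x^\alpha g)\circ\Phi=(\partial_{x_i}h)\circ\Phi=A_{i1}\,D_r(h\circ\Phi)+A_{i2}\,\tilde{\rho}_\circ^{-1}D_\phi(h\circ\Phi)$. Into this I would insert the induction hypothesis for $h\circ\Phi=(D_x^{\alpha'}g)\circ\Phi$ and expand the two operators $D_r$ and $\tilde{\rho}_\circ^{-1}D_\phi$ termwise by the Leibniz rule.

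The crux is then the bookkeeping of the $\tilde{\rho}_\circ$-exponents. Applying $D_r$ to a summand $T_{\alpha',\beta}\,\tilde{\rho}_\circ^{\beta_1-\abs{\alpha'}}D_r^{\beta_1}D_\phi^{\beta_2}(g\circ\Phi)$ produces two terms: one in which the $r$-power is differentiated, carrying factor $(\beta_1-\abs{\alpha'})\,r^{\beta_1-\abs{\alpha'}-1}$ and the same multi-index $\beta$, and one with $D_r^{\beta_1+1}$ and power $r^{\beta_1-\abs{\alpha'}}$; since $\abs{\alpha}=\abs{\alpha'}+1$, both powers equal $\tilde{\rho}_\circ^{\tilde\beta_1-\abs{\alpha}}$ for the respective new $r$-order $\tilde\beta_1\in\{\beta_1,\beta_1+1\}$. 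Applying $\tilde{\rho}_\circ^{-1}D_\phi$ instead leaves $r^{\beta_1-\abs{\alpha'}}$ and $D_r^{\beta_1}$ untouched (derivatives in $r$ and $\phi$ commute on distributions), either differentiates $T_{\alpha',\beta}$ into another trigonometric polynomial or raises $D_\phi^{\beta_2}$ to $D_\phi^{\beta_2+1}$, while the extra factor $r^{-1}$ again turns $r^{\beta_1-\abs{\alpha'}}$ into $r^{\beta_1-\abs{\alpha}}$. Multiplying throughout by the trigonometric-polynomial entries $A_{i1},A_{i2}$ preserves the trigonometric-polynomial coefficients. In every term the new multi-index $\tilde\beta$ is nonzero and satisfies $\abs{\tilde\beta}\leq\abs{\beta}+1\leq\abs{\alpha}$, hence $\tilde\beta\in\Lambda_\alpha$; collecting terms with equal $\tilde\beta$ and summing their coefficients yields the $T_{\alpha,\tilde\beta}$ and closes the induction.

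I expect the main obstacle to be organizational rather than conceptual: keeping track of the $\tilde{\rho}_\circ$-exponents so that they consistently land on $\tilde\beta_1-\abs{\alpha}$, and checking that the index range $\Lambda_\alpha$ together with the trigonometric-polynomial structure of the coefficients is reproduced exactly under the two elementary operations $D_r$ and $\tilde{\rho}_\circ^{-1}D_\phi$. The distributional aspects are routine, as \autoref{lem:FirstDerivative} already supplies the Leibniz and commutation rules on $\widetilde{\D}$, where $r>0$ renders all occurring powers of $\tilde{\rho}_\circ$ smooth.
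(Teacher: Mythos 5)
Your proposal is correct and follows essentially the same route as the paper's own proof: induction on $\abs{\alpha}$ with \autoref{lem:FirstDerivative} as the base case, decomposing $\alpha=\alpha'+e_i$, applying the first-order formula to $D_x^{\alpha'}g$, and expanding the operators $D_r$ and $\tilde{\rho}_\circ^{-1}D_\phi$ by the product rule while tracking the exponents of $\tilde{\rho}_\circ$. Your exponent bookkeeping (both new powers landing on $\tilde\beta_1-\abs{\alpha}$) and the verification that all resulting multi-indices lie in $\Lambda_\alpha$ match the paper's computation of the terms $I$ and $I\!I$ in its induction step.
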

\begin{proof}
We argue by mathematical induction on $\gamma:=\abs{\alpha}$.
The base case $\gamma=1$ immediately follows from \autoref{lem:FirstDerivative}, which yields (with $e_1:=(1,0)^T$ and $e_2:=(0,1)^T$)
\[
(D_xg)\circ\Phi
=
\begin{pmatrix}
(D_x^{e_1} g)\circ\Phi\\
(D_x^{e_2} g)\circ\Phi
\end{pmatrix}
=
\begin{pmatrix}
T_{e_1,e_1} \, D_r + T_{e_1,e_2}\, \tilde{\rho}_\circ^{-1}\,D_\phi\\
T_{e_2,e_1} \, D_r + T_{e_2,e_1}\, \tilde{\rho}_\circ^{-1}\,D_\phi
\end{pmatrix}
(g\circ\Phi)
\]
with $T_{e_1,e_1}(\phi):=T_{e_2,e_2}(\phi):=\cos(\phi)$, $T_{e_1,e_2}(\phi):=-\sin(\phi)$, and $T_{e_2,e_1}(\phi)=\sin(\phi)$, $\phi\in \I$.
For the induction step $\gamma\mapsto\gamma+1$ we assume that for some $\gamma\in\N$ the assertion holds for all $\alpha\in\N_0^2\setminus\{0\}$ with $\abs{\alpha}\leq\gamma$. 
Take $\widetilde{\alpha}\in\N_0^2$ with $\abs{\widetilde{\alpha}}=\gamma+1$. Then there exists $\alpha\in\N_0^2$ with $\abs{\alpha}=\gamma$ such that $\widetilde{\alpha}=\alpha + e_i$ for some $i\in\{1,2\}$. Thus, using the base step and the induction hypothesis, we obtain
\begin{align*}
(D_x^{\widetilde{\alpha}}g)\circ\Phi
&=
(D_x^{e_i}(D_x^\alpha)g)\circ\Phi\\
&=
T_{e_i,e_1} D_r((D_x^\alpha g)\circ\Phi)
+
T_{e_i,e_2} \tilde{\rho}_\circ^{-1} D_\phi((D_x^\alpha g)\circ\Phi)\\
&=
T_{e_i,e_1} D_r\ssgrklam{\sum_{\beta\in\Lambda_\alpha} 
T_{\alpha,\beta} \, \tilde{\rho}_\circ^{\beta_1-\abs{\alpha}}\, D_r^{\beta_1}\,D_\phi^{\beta_2} (g\circ\Phi)}\\
&\quad +
T_{e_i,e_2} \tilde{\rho}_\circ^{-1}D_\phi \ssgrklam{\sum_{\beta\in\Lambda_\alpha} 
T_{\alpha,\beta} \, \tilde{\rho}_\circ^{\beta_1-\abs{\alpha}}\, D_r^{\beta_1}\,D_\phi^{\beta_2} (g\circ\Phi)}
=:I+I\!I.
\end{align*}
Using the product rule, the first summand becomes
\begin{align*}
I
&=
\sum_{\beta\in\Lambda_\alpha} T_{e_i,e_1}  T_{\alpha,\beta} 
\sgrklam{ (\beta_1-\abs{\alpha}) \tilde{\rho}_\circ^{\beta_1-1-\abs{\alpha}}\, D_r^{\beta_1}\,D_\phi^{\beta_2} (g\circ\Phi)
+
\tilde{\rho}_\circ^{\beta_1-\abs{\alpha}}\, D_r^{\beta_1+1}\,D_\phi^{\beta_2} (g\circ\Phi)}\\
&=
\sum_{\beta\in\Lambda_\alpha} T_{e_i,e_1}  T_{\alpha,\beta} 
\sgrklam{ (\beta_1-\abs{\alpha}) \tilde{\rho}_\circ^{\beta_1-\abs{\widetilde{\alpha}}}\, D_r^{\beta_1}\,D_\phi^{\beta_2} (g\circ\Phi)
+
\tilde{\rho}_\circ^{\beta_1+1-\abs{\tilde{\alpha}}}\, D_r^{\beta_1+1}\,D_\phi^{\beta_2} (g\circ\Phi)}\\
&=
\sum_{\beta\in\Lambda_{\tilde{\alpha}}}
T_{\tilde{\alpha},\beta}^I
\tilde{\rho}_\circ^{\beta_1-\abs{\tilde{\alpha}}}
D_r^{\beta_1} D_\phi^{\beta_2}(g\circ\Phi)
\end{align*}
with suitable trigonometric polynomials $T_{\tilde{\alpha},\beta}^I$ for $\beta\in\Lambda_{\tilde{\alpha}}$.
Similarly,
\begin{align*}
I\!I
&=
\sum_{\beta\in\Lambda_\alpha} T_{e_i,e_2} \tilde{\rho}_\circ^{\beta_1-\abs{\tilde{\alpha}}}\,  D_r^{\beta_1} \sgrklam{
D_\phi(T_{\alpha,\beta}) \,D_\phi^{\beta_2} (g\circ\Phi)
+
T_{\alpha,\beta} \, D_\phi^{\beta_2+1}(g\circ\Phi)}\\
&=
\sum_{\beta\in\Lambda_\alpha} T_{e_i,e_2} D_\phi(T_{\alpha,\beta}) \tilde{\rho}_\circ^{\beta_1-\abs{\tilde{\alpha}}}\,  D_r^{\beta_1} \,D_\phi^{\beta_2} (g\circ\Phi)
+
T_{e_i,e_2} T_{\alpha,\beta} \tilde{\rho}_\circ^{\beta_1-\abs{\tilde{\alpha}}} \, D_\phi^{\beta_2+1}(g\circ\Phi)\\
&=
\sum_{\beta\in\Lambda_{\tilde{\alpha}}}
T^{I\!I}_{\tilde{\alpha},\beta}
\tilde{\rho}_\circ^{\beta_1-\abs{\tilde{\alpha}}} D_r^{\beta_1}D_{\phi}^{\beta_2}(g\circ\Phi)
\end{align*}
with suitable trigonometric polynomials  $T^{I\!I}_{\tilde{\alpha},\beta}$ for $\beta\in\Lambda_{\tilde{\alpha}}$.
Thus, 
\[
(D_x^{\tilde{\alpha}} g)\circ\Phi 
= 
\sum_{\beta\in\Lambda_{\tilde{\alpha}}} 
T_{\tilde{\alpha},\beta} \, \tilde{\rho}_\circ^{\beta_1-\abs{\tilde{\alpha}}}\, D_r^{\beta_1}\,D_\phi^{\beta_2} (g\circ\Phi)
\]
with $T_{\tilde{\alpha},\beta}:=T^{I}_{\tilde{\alpha},\beta}+T^{I\!I}_{\tilde{\alpha},\beta}$ for $\beta\in\Lambda_{\tilde{\alpha}}$.
\end{proof}

Since the matrix $A=A(\phi)$ is orthogonal, multiplication by  $A(\phi)\in\R^{2\times 2}$ preserves norms on $\R^2$, i.e., for $1<p<\infty$ it holds that
\begin{align}
\label{eq:orth-pnorm}
	\abs{A(\phi)\, x}_p \sim \abs{x}_p, \qquad \phi\in\I, \quad x\in\R^2;
\end{align}
the constants in the equivalence depend solely on $p$.
It is easily seen that the same is true for its (component-wise) classical derivatives $(\partial_\phi^n A)(\phi)$, $n\in\N$.
In particular, for all $n\in\N_0$,
\begin{align*}
    \partial_\phi^n A \in \mathcal{M}
    := \ggklam{M\colon \I \to \R^{2\times 2} :\  & M \text{ Borel-measurable, } M(\phi) \text{ orthogonal for all } \phi\in\I}.
\end{align*}
The following lemma extends~\eqref{eq:orth-pnorm} to $\norm{\,\cdot \sep H^\gamma_{p,\Theta}(\I)}$-norms and is used in the proof of \autoref{thm:polar}. 

\begin{lemma}\label{lem:orthogonality}
	Let $\gamma\in\N_0$, $1<p<\infty$, and $\Theta\in\R$.
	Furthermore, let $M\colon \I\to\R^{2\times 2}$ be such that $\partial_\phi^n M \in \mathcal{M}$ for all $n\in\N_0$.  
	Then 
	\begin{align}\label{eq:Aphi:estim}
		\norm{Mv \sep H^\gamma_{p,\Theta}(\I)} 
		\sim \norm{v \sep H^\gamma_{p,\Theta}(\I)},
		\qquad v=(v_1,v_2)\in L_{1,\loc}(\I;\R^2).
	\end{align}
\end{lemma}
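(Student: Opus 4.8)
The plan is to treat multiplication by $M$ as a matrix-valued pointwise multiplier and to reduce~\eqref{eq:Aphi:estim} to the scalar multiplier bound in \autoref{lem:LotSpaces:properties}\ref{it:LotSpaces:multiplier}, applied to each entry of $M$. First I would record that the hypothesis $\partial_\phi^n M\in\mathcal{M}$ for every $n\in\N_0$ makes each entry $M_{ij}$ smooth with uniformly bounded derivatives: since $\partial_\phi^n M(\phi)$ is orthogonal, it has operator norm $1$, whence $\abs{\partial_\phi^n M_{ij}(\phi)}\le 1$ for all $\phi\in\I$, all $n\in\N_0$, and all $i,j\in\{1,2\}$. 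As $\rho_\I\le\kappa/2$ is bounded on the interval $\I=(0,\kappa)$, this yields
\[
\abs{M_{ij}}^{(0)}_\gamma
=\sup_{\phi\in\I}\sum_{k\le\gamma}\rho_\I(\phi)^k\,\abs{\partial_\phi^k M_{ij}(\phi)}
\le\sum_{k\le\gamma}(\kappa/2)^k
<\infty,
\]
so that each $M_{ij}$ satisfies the hypothesis of \autoref{lem:LotSpaces:properties}\ref{it:LotSpaces:multiplier} with $n=\gamma\ge\abs{\gamma}$.

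Next I would combine this entrywise bound with the convention that, for a vector-valued function $w=(w_1,w_2)$, one has $\norm{w\sep H^\gamma_{p,\Theta}(\I)}=\norm{w_1\sep H^\gamma_{p,\Theta}(\I)}+\norm{w_2\sep H^\gamma_{p,\Theta}(\I)}$. Writing $(Mv)_i=\sum_j M_{ij}v_j$ and using the triangle inequality in $H^\gamma_{p,\Theta}(\I)$ followed by \autoref{lem:LotSpaces:properties}\ref{it:LotSpaces:multiplier}, I obtain
\[
\norm{Mv\sep H^\gamma_{p,\Theta}(\I)}
\le\sum_{i,j}\norm{M_{ij}v_j\sep H^\gamma_{p,\Theta}(\I)}
\lesssim\sum_{i,j}\abs{M_{ij}}^{(0)}_\gamma\,\norm{v_j\sep H^\gamma_{p,\Theta}(\I)}
\lesssim\norm{v\sep H^\gamma_{p,\Theta}(\I)},
\]
which is the estimate ``$\lesssim$'' in~\eqref{eq:Aphi:estim}.

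For the converse I would use that orthogonality makes $M$ pointwise invertible with $M(\phi)^{-1}=M(\phi)^T$. Since $\partial_\phi^n(M^T)=(\partial_\phi^n M)^T$ and the transpose of an orthogonal matrix is again orthogonal, the matrix function $M^T$ satisfies the same hypotheses as $M$. Applying the direction just proven to $M^T$ and to the function $Mv$, and using $M^T(Mv)=v$, yields $\norm{v\sep H^\gamma_{p,\Theta}(\I)}\lesssim\norm{Mv\sep H^\gamma_{p,\Theta}(\I)}$, which together with the previous step gives~\eqref{eq:Aphi:estim}.

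I do not anticipate a genuine difficulty here; the argument is essentially bookkeeping around \autoref{lem:LotSpaces:properties}\ref{it:LotSpaces:multiplier}. The only two points deserving care are that the strong hypothesis $\partial_\phi^n M\in\mathcal{M}$ is invoked solely through the uniform entrywise bound $\abs{\partial_\phi^n M_{ij}}\le 1$ (so that $\abs{M_{ij}}^{(0)}_\gamma<\infty$ for the bounded interval $\I$), and that the reverse estimate is obtained by exploiting the invertibility $M^{-1}=M^T$ together with the invariance of the hypotheses under transposition, rather than by differentiating $M^{-1}$ directly.
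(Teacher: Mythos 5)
Your proof is correct, and it reaches the result by a genuinely different route than the paper's. The paper argues by induction on $\gamma$: the base case $\gamma=0$ uses that orthogonal matrices preserve the pointwise $\ell_p$-norm, so the weighted $L_p$-norms of $v$ and $Mv$ coincide up to constants, and the induction step combines \autoref{lem:LotSpaces:properties}, parts \ref{it:LotSpaces:bddDom}, \ref{it:LotSpaces:indexshift}, and \ref{it:LotSpaces:lifting}, with the product rule $D_\phi(Mv)=(\partial_\phi M)v+M\,D_\phi v$, applying the induction hypothesis to $M$, to $\partial_\phi M$ (which satisfies the same assumptions), and to $M$ again. You instead collapse the whole forward estimate into one entrywise application of the scalar multiplier bound \autoref{lem:LotSpaces:properties}\ref{it:LotSpaces:multiplier}, after observing that entries of orthogonal matrices are bounded by $1$ and that $\rho_\I\leq\kappa/2$, so that $\abs{M_{ij}}^{(0)}_\gamma<\infty$; this verification is correct, and the condition $\abs{\gamma}\leq n$ is met with $n=\gamma$. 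Both proofs obtain the reverse inequality identically, by applying the forward bound to $M^T=M^{-1}$ and to the function $Mv$. Your version is shorter, avoids the induction, and, as you note, isolates what is really needed: orthogonality enters only through the uniform entrywise bounds and through the existence of a pointwise inverse enjoying the same bounds, so your argument proves a slightly more general multiplier statement. Two remarks for fairness: first, both arguments exploit the boundedness of $\I$ (you through $\rho_\I\leq\kappa/2$ in the estimate of $\abs{M_{ij}}^{(0)}_\gamma$, the paper through the embedding \autoref{lem:LotSpaces:properties}\ref{it:LotSpaces:bddDom} when passing from the weight $\Theta+p$ back to $\Theta$), so neither transfers verbatim to a domain where the distance to the boundary is unbounded; second, your route outsources the analytic work to the multiplier lemma of Kim cited as \autoref{lem:LotSpaces:properties}\ref{it:LotSpaces:multiplier}, whereas the paper's induction stays within the lifting and weight-shift machinery it develops, which is also the pattern reused immediately afterwards in the proof of \autoref{thm:polar}.
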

\begin{proof}
	We proceed by mathematical induction on $\gamma$.
	For $\gamma=0$ we have to consider $H^0_{p,\Theta}(\I)=L_{p,\Theta}(\I)$. 
	Obviously, $v$ is Borel-measurable if, and only if, $Mv$ is. 
	Further, since $M\in\mathcal{M}$, 
	\begin{align*}
		\norm{v \sep H^0_{p,\Theta}(\I)} 
		&\sim \bigg( \int_\I \abs{v(\phi)}_p^p  w_\Theta(\phi)\d\phi \bigg)^{1/p} \\
		&\sim \bigg( \int_\I 	\abs{M(\phi)\,v(\phi)}_p^p   w_\Theta(\phi)\d\phi \bigg)^{1/p} 
		\sim \norm{M\,v \sep H^0_{p,\Theta}(\I)}.
	\end{align*}
Now suppose that \eqref{eq:Aphi:estim} holds for some $\gamma\in\N_0$.
	Then \autoref{lem:LotSpaces:properties}\ref{it:LotSpaces:lifting} and~\ref{it:LotSpaces:indexshift} together with the chain rule show
	\begin{align*}
		\norm{M v \sep H^{\gamma+1}_{p,\Theta}(\I)}
		&\sim \norm{M v \sep H^\gamma_{p,\Theta}(\I)} 
		+ 
 		\norm{ D_\phi(M v) \sep H^\gamma_{p,\Theta+p}(\I)}\\
 		&\lesssim 
		\norm{M\, v \sep H^\gamma_{p,\Theta}(\I)}
		+
		\norm{ (\partial_\phi M)\, v \sep H^\gamma_{p,\Theta+p}(\I)} 
		+ 
		\norm{ M D_\phi v \sep H^\gamma_{p,\Theta+p}(\I)}.
	\end{align*}
Thus, applying three times the induction hypothesis, as well as \autoref{lem:LotSpaces:properties}\ref{it:LotSpaces:bddDom}, \ref{it:LotSpaces:indexshift}, and~\ref{it:LotSpaces:lifting},
	we obtain one of the asserted estimates:
	\begin{align*}
		\norm{M v \sep H^{\gamma+1}_{p,\Theta}(\I)}
		&\lesssim \norm{v \sep H^\gamma_{p,\Theta}(\I)} 
		+ \norm{ v \sep H^\gamma_{p,\Theta+p}(\I)} + \norm{ D_\phi v \sep H^\gamma_{p,\Theta+p}(\I)} \\
		&\lesssim \norm{v \sep H^\gamma_{p,\Theta}(\I)} + \norm{\psi\, D_\phi v \sep H^\gamma_{p,\Theta}(\I)} \\
		&\lesssim \norm{v \sep H^{\gamma+1}_{p,\Theta}(\I)}.
	\end{align*}
The reverse estimate follows from this estimate, too, since
  $M(\cdot)^{T}=M(\cdot)^{-1}$ and obviously $\partial_\phi^n M(\cdot)^T\in\mathcal{M}$ for all $n\in\N_0$. 
\end{proof}

\section{Bessel potential spaces}\label{sec:Bessel}
In this final appendix we recall the definition of Bessel potential spaces $H^\gamma_p(\R^d)$ and gather some of their well-known properties which are frequently used throughout the manuscript.
To do so, we use a Fourier analytical approach similar to~\cite[Section~1.3.2]{Tri1992}.

For $s\in\R$, let 
\[
    (1-\Delta)^s\colon\mathscr{S}'(\R^d)\to\mathscr{S}'(\R^d),\qquad f\mapsto \mathscr{F}^{-1}\grklam{\ggklam{\xi\mapsto (1+\abs{\xi})^s(\mathscr{F}f)(\xi)}},
\]
where $\mathscr{S}'(\R^d)\subset\mathscr{D}'(\R^d)$ denotes the space of tempered distributions (defined as the topological dual of the Schwartz space of rapidly decreasing functions), $\mathscr{F}\colon \mathscr{S}'(\R^d)\to\mathscr{S}'(\R^d)$ is the Fourier transform and $\mathscr{F}^{-1}$ its inverse.
Then for $d\in\N$, $\gamma\in\R$, and $1<p<\infty$,
\[
    H^\gamma_p(\R^d)
    := \ggklam{u\in\mathscr{S}'(\R^d)\colon (1-\Delta)^{\gamma/2}f\in L_p(\R^d)},
\]
denotes the space of Bessel potentials
endowed with the norm
\[
    \norm{f \sep H^\gamma_p(\R^d)} := \norm{(1-\Delta)^{\gamma/2}f \sep L_p(\R^d)}, \qquad f\in H^\gamma_p(\R^d).
\]

\begin{remark}\label{rem:Bessel:distribution}
In some texts, like, for instance,~\cite{Kry2008}, the space $H^\gamma_p(\R^d)$ is defined as the space of all distributions $f\in \mathscr{D}'(\R^d)$ (not necessarily tempered!) for which there exists $h\in L_p(\R^d)$ with $f=(1-\Delta)^{-\gamma/2}h$. 
However, note that such an $f$ can always be extended to become a tempered distribution, see~\cite[Theorem~13.1.2(i) together with Remark~13.3.3]{Kry2008}.
\end{remark}

For the convenience of the reader, the next lemma collects the properties of $H^\gamma_p(\R^d)$, as needed in our arguments. 
Therein, for $1<p<\infty$, we use $W^k_p(\R^d)$ to denote the classical $L_p$-Sobolev space of order $k\in\N_0$.

\begin{lemma}[Properties of Bessel potential spaces]\label{lem:bessel}
Let $d\in\N$, $1<p,p_0,p_1<\infty$, as well as $\gamma,\gamma_0,\gamma_1\in\R$. Then the following assertions hold.
\begin{enumerate}[label=\textup{(\roman*)}]
\item\label{it:bessel:BS} $H^{\gamma}_{p}(\R^d)$ is a reflexive Banach space.

\item\label{it:bessel:test} $ C_0^\infty(\R^d)$ is dense in $H^{\gamma}_{p}(\R^d)$.

\item\label{it:bessel:sobolev} If $\gamma\in\N_0$, then $H^{\gamma}_{p}(\R^d) = W_p^\gamma(\R^d)$ with equivalent norms.

\item\label{it:bessel:dual} $\big( H^{\gamma}_{p}(\R^d) \big)' = H^{-\gamma}_{p'}(\R^d)$, $\displaystyle \frac{1}{p}+\frac{1}{p'}=1$, with equivalent norms.

\item\label{it:bessel:interpolation} If for $0<\vartheta<1$ there holds
\[
    \frac{1}{p} = \frac{1-\vartheta}{p_0} + \frac{\vartheta}{p_1} 
    \qquad \text{and}\qquad		
    \gamma = (1-\vartheta) \gamma_0+ \vartheta \gamma_1,
\]
then
\[
    \big[ H^{\gamma_0}_{p_0}(\R^d), H^{\gamma_1}_{p_1}(\R^d)\big]_{\vartheta}
    = H^{\gamma}_{p}(\R^d)
\]
with equivalent norms.

\item\label{it:bessel:multiplier} 
Let $(\zeta_\nu)_{\nu\in\Z}$ denote a collection of $ C^\infty(\R^d)$-functions
such that for some $c>1$
\[
    \abs{\partial^\alpha \zeta_\nu(x)}
    \lesssim_\alpha c^{-\abs{\alpha}\nu},
    \qquad \nu\in\Z,\quad \alpha\in\N_0^d, \quad x\in\R^d,
\]
then
\[
    \norm{\zeta_\nu(c^\nu\cdot) f \sep H^{\gamma}_{p}(\R^d)}
    \lesssim \norm{f \sep H^{\gamma}_{p}(\R^d)}, \qquad f \in H^{\gamma}_{p}(\R^d),\quad \nu\in\Z.
\]

\item\label{it:bessel:differomorph} For $k\in\Z$ and $c>1$ we have
\[
    \norm{f(c^k\cdot) \sep H^{\gamma}_{p}(\R^d)} 
    \lesssim \norm{f \sep H^{\gamma}_{p}(\R^d)}, \qquad f \in H^{\gamma}_{p}(\R^d).
\]

\item\label{it:bessel:localization} Let $(\zeta_k)_{k\in\N_0}$ denote a collection of $ C^\infty(\R^d)$-functions such that
\[
\sup_{x\in\R^d} \sum_{k\in\N_0} \abs{\partial^\alpha \zeta_k(x)} \leq C_\alpha, \qquad \alpha\in\N_0^d.
\]
Then, with some constant depending on $d$, $\gamma$, and $C_\alpha$, we have
\[
    \sum_{k\in\N_0} \norm{\zeta_k f \sep H^\gamma_p(\R^d)}^p
    \lesssim \norm{f \sep H^\gamma_p(\R^d)}^p, \qquad f\in H^\gamma_p(\R^d).
\]
If, in addition,
\[
    \inf_{x\in\R^d} \sum_{k\in\N_0} \abs{\zeta_k(x)}^p \geq \delta>0,
\]
then, with some constant depending on $\delta$, $d$, $\gamma$, and $C_\alpha$, we have
\[
    \norm{f \sep H^\gamma_p(\R^d)}^p
    \lesssim \sum_{k\in\N_0} \norm{\zeta_k f \sep H^\gamma_p(\R^d)}^p, \qquad f\in H^\gamma_p(\R^d).
\]
\end{enumerate}
\end{lemma}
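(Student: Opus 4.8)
The plan is to derive all eight assertions from classical facts about Bessel potential spaces together with the elementary Fourier-analytic description fixed in the definition above; no new idea is required here, so the proof is essentially a matter of assembling references and performing routine scaling computations.

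First I would dispose of parts~\ref{it:bessel:BS}--\ref{it:bessel:interpolation}. By definition the lifting operator $(1-\Delta)^{\gamma/2}$ is an isometric isomorphism from $H^\gamma_p(\R^d)$ onto $L_p(\R^d)$; since $L_p(\R^d)$ is a reflexive Banach space for $1<p<\infty$, assertion~\ref{it:bessel:BS} follows at once. Assertion~\ref{it:bessel:test} is the standard density of $C_0^\infty(\R^d)$, obtained by mollification and truncation after transporting to $L_p$, cf.~\cite{Tri1992,Kry2008}. Assertion~\ref{it:bessel:sobolev}, the coincidence with the classical Sobolev space for integer smoothness, is the Mikhlin--H\"ormander multiplier theorem applied to the symbols $\xi^\alpha(1+\abs{\xi}^2)^{-\gamma/2}$; assertion~\ref{it:bessel:dual} is the duality $(H^\gamma_p)'=H^{-\gamma}_{p'}$ deduced from $L_p$-duality via the lifting; and assertion~\ref{it:bessel:interpolation} is the complex interpolation formula for Bessel potential spaces. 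All four are textbook results and I would simply cite \cite{Tri1992,BL76,Kry2008}.

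Next come the multiplier and dilation statements~\ref{it:bessel:multiplier} and~\ref{it:bessel:differomorph}. For~\ref{it:bessel:multiplier} the point is the scaling identity $\partial^\alpha[\zeta_\nu(c^\nu\,\cdot)](x)=c^{\nu\abs{\alpha}}(\partial^\alpha\zeta_\nu)(c^\nu x)$, which together with the hypothesis $\abs{\partial^\alpha\zeta_\nu}\lesssim_\alpha c^{-\abs{\alpha}\nu}$ yields $\abs{\partial^\alpha[\zeta_\nu(c^\nu\,\cdot)]}\lesssim_\alpha 1$ \emph{uniformly} in $\nu\in\Z$. Thus $(\zeta_\nu(c^\nu\,\cdot))_{\nu\in\Z}$ is a family of smooth functions whose derivatives of every order are bounded by constants independent of $\nu$, and the classical pointwise-multiplier theorem for $H^\gamma_p(\R^d)$ (for integer $\gamma$ via the Leibniz rule and~\ref{it:bessel:sobolev}, for general $\gamma\in\R$ by interpolation~\ref{it:bessel:interpolation} and duality~\ref{it:bessel:dual}) gives the asserted bound with a constant uniform in $\nu$. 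Assertion~\ref{it:bessel:differomorph} follows from the scaling behaviour $\mathscr{F}[f(c^k\,\cdot)](\xi)=c^{-kd}(\mathscr{F}f)(c^{-k}\xi)$: after substitution the only surviving factor is the Fourier multiplier with symbol $(1+\abs{c^{-k}\xi}^2)^{\gamma/2}(1+\abs{\xi}^2)^{-\gamma/2}$, which satisfies Mikhlin's condition and is therefore $L_p$-bounded.

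Finally, assertion~\ref{it:bessel:localization} is the localization property, which I would again trace back to \cite{Tri1992} (it is the $\R^d$-analogue of \autoref{prop:Lot_Local}). The upper estimate follows from multiplier bounds as in~\ref{it:bessel:multiplier} combined with the $\ell_p$-valued boundedness of the underlying Fourier multiplier, while the lower estimate is obtained by the usual duality argument exploiting the hypothesis $\inf_x\sum_k\abs{\zeta_k(x)}^p\geq\delta>0$. I expect this last part to be the only genuinely delicate point: transferring the localization from the scalar to the $\ell_p$-valued setting rests on a vector-valued Mikhlin (or Littlewood--Paley) argument that is routine but not one-line, and the lower bound must be carried out so as to keep the constant dependence exactly as stated. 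Everything else reduces to the lifting isomorphism and classical multiplier theory.
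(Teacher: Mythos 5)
Your proposal is correct and, like the paper's own proof, is essentially an assembly of classical facts; however, the technical route differs in several items, so a comparison is worthwhile. The paper disposes of (i)--(iii) by citing Krylov's book and then settles (iv)--(viii) by identifying $H^\gamma_p(\R^d)$ with the Triebel--Lizorkin space $F^\gamma_{p,2}(\R^d)$ and quoting Triebel's duality, interpolation, pointwise-multiplier and diffeomorphism theorems \cite{Tri1983,Tri1992}, together with Krylov's partition lemmas for (viii). You instead work throughout with the lifting isomorphism $(1-\Delta)^{\gamma/2}\colon H^\gamma_p(\R^d)\to L_p(\R^d)$ and scalar Mikhlin multiplier theory: reflexivity and duality are transported from $L_p$, (iii) comes from Mikhlin applied to $\xi^\alpha(1+\abs{\xi}^2)^{-\gamma/2}$, (vi) is proved rather than cited (Leibniz for $\gamma\in\N_0$, duality for negative integers, complex interpolation in between), and (vii) is a direct computation with the Mikhlin symbol comparing $(1+\abs{\xi}^2)^{\gamma/2}$ and $(1+\abs{c^{-k}\xi}^2)^{\gamma/2}$, where the paper invokes the diffeomorphism-invariance theorem; the $k$-dependence of your multiplier constant is harmless, since the statement only asserts uniformity in $f$. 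Crucially, for (vi) you and the paper use the identical key observation --- the rescaled cut-offs $\zeta_\nu(c^\nu\cdot)$ have derivative bounds uniform in $\nu$ --- which is the only nontrivial content of that item. Your treatment buys a more self-contained, elementary argument independent of the $F$-space machinery; the paper's route buys precise one-line citations. The one place where you fall short of the paper is (viii): the attribution to \cite{Tri1992} is imprecise --- the statement in exactly this form is due to Krylov, namely \cite[Lemma~6.7]{Kry1999} and \cite[Theorem~2.1]{Kry1994b}, which is what the paper cites --- and your sketch (upper bound via vector-valued multiplier estimates, lower bound via duality from $\inf_x\sum_k\abs{\zeta_k(x)}^p\geq\delta$) is the right strategy but is not carried out; since the paper itself only cites this item, this is a deficiency of references rather than a mathematical gap, but a self-contained proof would still require the vector-valued Littlewood--Paley work, e.g.\ constructing from $(\zeta_k)_k$ a dual family $(\eta_k)_k$ with $\sum_k\zeta_k\eta_k\equiv 1$ obeying the same derivative-sum bounds.
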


\begin{proof}
For proofs of~\ref{it:bessel:BS}-\ref{it:bessel:sobolev} see~\cite[Theorems~13.3.7 and 13.3.12]{Kry2008}. 
For the other assertions one may use the coincidence of $H^\gamma_p(\R^d)$ with so-called Triebel-Lizorkin spaces $F^\gamma_{p,2}(\R^d)$ for all $\gamma\in\R$ and $1<p<\infty$, see, e.g.,~\cite[Definition~2.3.2 and Theorem~2.5.6]{Tri1983}.
Then, the duality statement~\ref{it:bessel:dual} follows from~\cite[Theorem~2.11.2]{Tri1983} and the complex interpolation formula in~\ref{it:bessel:interpolation} is a consequence of~\cite[Theorem~2.4.7]{Tri1983}.
Assertion~\ref{it:bessel:multiplier} follows, for instance, from the following multiplier assertion, proven, e.g., in~\cite[Theorem~4.2.2]{Tri1992}:
If $m\in\N$ is sufficiently large (compared to $\abs{\gamma}$ and $p$), then 
\[
    \norm{af \sep H^{\gamma}_{p}(\R^d)} 
    \lesssim_m \norm{f \sep H^{\gamma}_{p}(\R^d)} \sum_{\abs{\alpha}\leq m} \norm{\partial^\alpha a \sep L_\infty(\R^d)},
    \qquad a\in C^m_b(\R^d),\quad f \in H^{\gamma}_{p}(\R^d),
\]
where $C^m_b(\R^d)$ consists of all $C^m(\R^d)$-functions with bounded derivatives up to order $m$.
Thus,~\ref{it:bessel:multiplier} follows if we choose $a:=a_\nu:=\zeta_\nu(c^\nu\cdot)$, since, by assumption, for all $\alpha\in\N_0^d$,
\[
    \norm{\partial^\alpha a_\nu \sep L_\infty(\R^d)}
    = \norm{c^{\abs{\alpha}\nu} \, (\partial^\alpha \xi_\nu)(c^\nu \cdot) \sep L_\infty(\R^d)} 
    \leq C_\alpha, \qquad \nu\in\Z.
\]
The statement~\ref{it:bessel:differomorph} follows from the fact that if $\Psi\colon\R^d\to\R^d$ is an $m$-diffeomorphism with $m\in\N$ large enough (again  depending on $\abs{\gamma}$ and $p$), then (see, e.g.,~\cite[Theorem~4.3.2]{Tri1992})
\[
    \norm{u\circ \Psi \sep H^{\gamma}_{p}} 
    \lesssim \norm{u \sep H^{\gamma}_{p}}, \qquad u \in H^{\gamma}_{p}(\R^d).
\]
Finally, the localization result~\ref{it:bessel:localization} can been found in~\cite[Lemma~6.7]{Kry1999} and~\cite[Theorem~2.1]{Kry1994b}.
\end{proof}

\end{appendix}

\phantomsection
\addcontentsline{toc}{section}{References}
\bibliographystyle{amsplain}

\footnotesize

\providecommand{\bysame}{\leavevmode\hbox to3em{\hrulefill}\thinspace}
\providecommand{\MR}{\relax\ifhmode\unskip\space\fi MR }
\providecommand{\MRhref}[2]{%
  \href{http://www.ams.org/mathscinet-getitem?mr=#1}{#2}
}
\providecommand{\href}[2]{#2}


\end{document}